\theoremstyle{plain}
\newtheorem{theorem}{Theorem}[section]
\newtheorem{corollary}[theorem]{Corollary}
\newtheorem{lemma}[theorem]{Lemma}
\newtheorem{subclaim}[theorem]{Subclaim}
\newtheorem{proposition}[theorem]{Proposition}
\newtheorem{claim}[theorem]{Claim}
\newtheorem{question}[theorem]{Question}
\theoremstyle{definition}
\newtheorem{definition}[theorem]{Definition}
\newtheorem{remark}[theorem]{Remark}
\newtheorem{notation}[theorem]{Notation}
\newtheorem{example}[theorem]{Example}
\theoremstyle{remark}
\newcommand{\ob}{\textup{Ob}}
\newcommand{\mor}{\textup{Mor}}
\newcommand{\dom}{\operatorname{dom}}
\newcommand{\tp}{\operatorname{tp}}
\newcommand{\stp}{\operatorname{stp}}
\newcommand{\bdd}{\operatorname{bdd}}
\newcommand{\acl}{\operatorname{acl}}
\newcommand{\dcl}{\operatorname{dcl}}
\newcommand{\cl}{\operatorname{cl}}
\newcommand{\id}{\operatorname{id}}
\newcommand{\ov}{\overline}
\newcommand{\be}{\begin{enumerate}}
\newcommand{\ee}{\end{enumerate}}
\renewcommand{\phi}{\varphi}
\newcommand{\la}{\langle}
\newcommand{\ra}{\rangle}
\def\Ind{\setbox0=\hbox{$x$}\kern\wd0\hbox to 0pt{\hss$\mid$\hss}
\lower.9\ht0\hbox to 0pt{\hss$\smile$\hss}\kern\wd0}
\def\Notind{\setbox0=\hbox{$x$}\kern\wd0\hbox to 0pt{\mathchardef
\nn=12854\hss$\nn$\kern1.4\wd0\hss}\hbox to
0pt{\hss$\mid$\hss}\lower.9\ht0 \hbox to
0pt{\hss$\smile$\hss}\kern\wd0}
\def\ind{\mathop{\mathpalette\Ind{}}}
\newbox\noforkbox \newdimen\forklinewidth
\noforkbox\hbox{\lower 2pt\box1\lower
2pt\box0\relax}
\def\unionstick{\mathop{\copy\noforkbox}\limits}
\def\nonfork_#1{\unionstick_{\textstyle #1}}
\newbox\doesforkbox
\doesforkbox\hbox{\lower 2pt\box1 \lower
2pt\box2\lower2pt\box0\relax}
\def\nunionstick{\mathop{\copy\doesforkbox}\limits}
\def\fork_#1{\nunionstick_{\textstyle #1}}
\newcommand{\C}{\operatorname{\mathfrak{C}}}
\renewcommand{\P}{{\mathcal P}}
\newcommand{\Aut}{\operatorname{Aut}}
\newcommand{\G}{\mathcal{G}}
\newcommand{\bd}{\partial}
\def\cA{\mathcal{A}}
\def\cC{\mathcal{C}}
\def\CF{\mathcal{F}}
\def\cT{\mathcal{T}}
\def\cV{\mathcal{V}}
\def\cD{\mathcal{D}}
\def\cG{\mathcal{G}}
\def\Z{\operatorname{Z}}
\def\CF{\mathcal{F}}
\def\bd{\partial}
\def\supp{\textup{supp}}
\def\cd{\cdot}
\def\ul{\underline}
\def\bsigma{\mbox{\boldmath $\sigma$}}
\def\bmu{\mbox{\boldmath $\mu$}}
\title[Homology groups in model theory]{Amalgamation functors and homology groups in model theory}
\author{John Goodrick}
\author{Byunghan Kim}
\author{Alexei Kolesnikov}
\address{Department of Mathematics\\ Universidad de los Andes \\
Bogot\'{a}, Colombia}
\address{Department of Mathematics\\ Yonsei University\\
134 Shinchon-dong, Seodaemun-gu\\
Seoul 120-749, South Korea}
\address{Department of Mathematics\\ Towson University, MD\\
USA}
\email{jr.goodrick427@uniandes.edu.co}
\email{bkim@yonsei.ac.kr}
\email{akolesnikov@towson.edu}
\thanks{The  2nd author was supported by NRF grant 2010-0016044.
The third author was partially supported by NSF grant DMS-0901315.}
\keywords{Amalgamation functors; Groupoids;  Homology groups; Model theory; Strong types in stable theories}
\begin{document}

\begin{abstract}
We present definitions of homology groups $H_n$, $n\ge 0$, associated to a family of ``amalgamation functors''. We show that if the generalized amalgamation properties hold, then the homology groups are trivial. We compute the group $H_2$ for strong types in stable theories and show that any profinite abelian group can occur as the group $H_2$ in the model-theoretic context.
\end{abstract}

\maketitle

The work described in this paper was originally inspired by Hrushovski's discovery \cite{Hr} of striking connections between amalgamation properties and definable groupoids in models of a stable first-order theory. Hrushovski showed that if the theory fails 3-uniqueness (the model-theoretic definition of this property is in Section~3), then there must exist a groupoid whose sets of objects and morphisms, as well as the composition of morphisms, are definable in models of a first-order theory. In~\cite{GK}, an explicit construction of such a groupoid was given and it was shown in~\cite{gkk} that the group of automorphisms of each object of such a groupoid must be abelian profinite. The morphisms in the groupoid construction in~\cite{GK} arise as equivalence classes of ``paths'', defined in a model-theoretic way. In some sense, the groupoid construction paralleled that of the construction of a fundamental groupoid in a topological space. Thus it seems natural to ask whether it is possible to define the notion of a homology group in model-theoretic context and, if yes, would the homology group be linked to the group described in~\cite{GK,gkk}. We find that the answer is ``yes'' to both questions.

In this paper, we describe a way to define homology groups for a wide class of first-order theories. We start by describing the construction using category-theoretic language; the few model-theoretic references in Section~1 are provided only as motivation for the notions introduced there. Our goal is to separate, as much as possible, general arguments that do not require heavy use of model-theoretic context from the arguments that do. It turns out that, even at this level of generality, it is possible to show that the homology groups have a fairly simple structure (see, for example, Theorem~\ref{Hn_shells} or Corollary~\ref{Hn_pockets}).

We then define, in Section 2, several natural homology groups in the model-theoretic context (e.g., the type-homology groups and set-homology groups) and show that such groups must be isomorphic. The latter is, of course, what one would expect to see; but the argument turned out to be fairly involved. We show that the homology groups of a complete type of the theory are related to its amalgamation properties: if a type $p$ has $k$-amalgamation for every $k \leq n$, then $H_{n-2}(p) = 0$ (Corollary~\ref{trivial_homology}), and when $4$-amalgamation fails, $H_2(p)$ can be nontrivial, even in a stable theory (see the examples in Section~5). In particular, we show that \emph{any} profinite abelian group can be the group $H_2(p)$ for a suitable $p$ in a stable theory.

Section 4 is devoted to the proof of an analogue of Hurewicz's theorem: in a stable theory, the group $H_2(p)$ is isomorphic to a certain automorphism group $\Gamma_2(p)$ which is analogous to a fundamental group.  It turns out that $\Gamma_2(p)$ is always abelian, so there is no need to take the abelianization as in the usual Hurewicz theorem. But in Section~7, we construct a different canonical ``fundamental" group for the type $p$ which seems to give more information: this new group need not be abelian, and the group $\Gamma_2(p)$ is in the center of the new group.


Section 6 gives examples of homology groups in unstable theories, where it is still unclear what model-theoretic properties are measured by these groups.

Amalgamation properties have already been much studied by researchers in simple theories (for instance, in \cite{KKT}), and recently the first and third authors of this note investigated analogies with homotopy theory rather than homology theory in \cite{GK}. In some sense, this  paper is a companion to \cite{GK}.
For general background on simple theories, the reader is referred to the book \cite{wagner}, which explains nonforking, hyperimaginaries, and much more.

\section{Simplicial homology in a category}

In this section, we define simplicial homology groups in a more general category-theoretic setting than our intended applications to model theory.  We aim to provide a general framework for our homological computations and separate some of the category-theoretic arguments from the model-theoretic ones.  This section uses model theory only as a source of examples.

The homology groups defined in this section are not, in general, homology groups associated with the entire category $\cC$, but rather are homology groups of a particular class of ``amalgamation functors'' $\cA$ from a class of certain partially ordered sets viewed as categories into the category $\cC$. The class $\cA$ is assumed to satisfy certain basic closure properties (see Definition~\ref{amenable} below).

\subsection{Basic definitions and facts.}

Throughout this section, let $\cC$ be a category.  If $s$ is a set, then we consider the power set $\P(s)$ of $s$ to be a category with a single inclusion map $\iota_{u,v} : u \rightarrow v$ between any pair of subsets $u$ and $v$ with $u \subseteq v$.  A subset $X \subseteq \P(s)$ is called \emph{downward-closed} if whenever $u \subseteq v \in X$, then $u \in X$. In this case we consider $X$ to be a full subcategory of $\P(s)$.  An example of a downward-closed collection that we will use often below is $\P^-(s) := \P(s) \setminus \{s\}$.

We are interested in subfamilies  of functors $f:X \rightarrow \cC$ for downward-closed subsets $X \subseteq \P(s)$ for various finite subset sets $s$ of the set of natural numbers. Before specifying the desirable closure properties of a collection $\cA$ of such functors, we need some auxiliary definitions.

\begin{definition}
(1) Let $X$ be a downward closed subset of $\P(s)$ and let $t\in X$. The symbol $X|_t$ denotes the set $\{u \in \P(s\setminus t)\mid t\cup u\in X\}\subseteq X$.

(2) For $s$, $t$, and $X$ as above, let $f:X\to \cC$ be a functor. Then \emph{the localization of $f$ at $t$} is the functor $f|_t:X|_t\to \cC$ such that
$$f |_t (u) = f(t \cup u)$$ and whenever $u \subseteq v \in X|_t$, $$f |_t(\iota_{u,v}) = f(\iota_{u \cup t, v \cup t}).$$

(3) Let $X\subset \P(s)$ and $Y\subset \P(t)$ be downward closed subsets, where $s$ and $t$ are finite sets of natural numbers. Let $f:X\to \cC$ and $g:Y\to \cC$ be functors. We say that $f$ and $g$ are \emph{isomorphic} if there is an order-preserving bijection $\sigma: s \rightarrow t$ such that $Y = \{\sigma(u) : u \in X\}$ and a family of isomorphisms $\langle h_u : f(u) \rightarrow g(\sigma(u)) : u \in X \rangle$ in $\cC$ such that for any $u \subseteq v \in X$, the following diagram commutes:
$$
\begin{CD}
f(u) @>h_u>> g(\sigma(u))\\
@VV{f(\iota_{u,v})}V @VV{g(\iota_{\sigma(u),\sigma(v)})}V\\
f(v) @>h_v>> g(\sigma(v))
\end{CD}
$$
\end{definition}

\begin{remark}
If $X$ is a downward closed subset of $\P(s)$ and $t\in X$, then $X|_t$ is a downward closed subset of $\P(s\setminus t)$. Moreover $X|_t$ does not depend on the choice of $s$.
\end{remark}

\begin{definition}
\label{amenable}
Let $\cA$ be a non-empty collection of functors $f: X \rightarrow \cC$ for various non-empty downward-closed
subsets $X \subseteq \P(s)$ for all finite sets $s$ of natural numbers. We say that $\cA$ is \emph{amenable} if it
satisfies all of the following properties:
\begin{enumerate}
\item (Invariance under isomorphisms) Suppose that $f: X \rightarrow \cC$ is in $\cA$ and $g:Y\to \cC$ is isomorphic to $f$. Then $g\in \cA$.

\item (Closure under restrictions and unions) If $X \subseteq \P(s)$ is downward-closed and $f: X \rightarrow \cC$ is a functor, then $f \in \cA$ if and only if for every $u \in X$, we have that $f \upharpoonright \P(u) \in \cA$.

\item (Closure under localizations) Suppose that $f : X \rightarrow \cC$ is in $\cA$ for some $X \subseteq \P(s)$ and $t \in X$. Then $f|_t:X|_t\to \cC$ is also in $\cA$.

\item (De-localization) Suppose that $f: X \rightarrow \cC$ is in $\cA$ and $t \in X \subseteq \P(s)$ is such that $X|_t=X \cap \P(s \setminus t)$. Suppose that the localization $f |_t : X \cap \P(s \setminus t) \rightarrow \cC$ has an extension $g : Z \rightarrow \cC$ in $\cA$ for some $Z \subseteq \P(s \setminus t)$.  Then there is a map $g_0 : Z_0 \rightarrow \cC$ in $\cA$ such that $Z_0 = \{u \cup v : u \in Z, v \subseteq t\}$, $f \subseteq g_0$, and $g_0 |_t = g$.
\end{enumerate}
\end{definition}

\begin{remark}
For example, we could take $\cC$ to be all boundedly (or algebraically) closed subsets of the monster model of a first-order theory, and let $\cA$ be all functors which are ``independence-preserving'' (in Hrushovski's terminology
\cite{Hr}) and such that every face $f(u)$ is the bounded (or algebraic) closure of its vertices; then $\cA$ is amenable.
Other examples of amenable collections come from further restricting $\cA$ by requiring, for instance, that all the ``vertices'' $f(\{i\})$ of functors $f \in \cA$ be of a certain type, or by restricting the possible types of edges, faces, et cetera. These examples will  be explained more precisely in Section 2.
\end{remark}

\begin{remark}
Note that any functor $f: X \rightarrow \cC$ has a ``base'' $f(\emptyset)$ which is embedded into each $f(u)$ for $u \in X$. This base does not play an important role in computing the homology groups, but it does have model-theoretic significance. In particular, it is often convenient to fix this base.
\end{remark}

\begin{definition}
Let $B \in \ob(\cC)$ and suppose $f(\emptyset)=B$.  We say that $f$ \emph{is over $B$} and we let $\cA_B$ denote the set of all functors $f \in \cA$ that are over $B$.
\end{definition}

In model-theoretic applications, there will always be an initial object of $\cA$ which will be the natural choice for $B$ (either the ``empty type'' for type homology, or the ``empty tuple'' for set homology).

\begin{remark}
\label{amenable2}
It is easy to see that condition~(2) in Definition~\ref{amenable} is equivalent to the conjunction of the following two conditions:

(Closure under restrictions) If $f : X \rightarrow \cC$ is in $\cA$ and $Y \subseteq X$ with $Y$ downward-closed, then $f \upharpoonright Y$ is also in $\cA$.

(Closure under unions) Suppose that $f: X \rightarrow \cC$ and $g: Y \rightarrow \cC$ are both in $\cA$ and that
$f\upharpoonright X\cap Y=g\upharpoonright X\cap Y$.
Then the union $f \cup g: X \cup Y \rightarrow \cC$ is also in $\cA$.

For instance, if these two conditions are true and $f: X \rightarrow \cC$ is a functor from a downward-closed set $X$ such that $f \upharpoonright \P(u) \in \cA$ for every $u \in X$, then if $u_1, \ldots, u_n$ are maximal sets in $X$, we can use closure under unions $(n-1)$ times to see that $f \in \cA$ (since it is the union of the functors $f \upharpoonright \P(u_i)$).
\end{remark}

From now on, we assume that $\cA$ is a nonempty amenable collection of functors mapping into the category $\cC$. As we mentioned in the above remark, every functor in $\cA$ can be described as the union of functors whose domain is $\P(s)$ for for finite set $s$. Such functors will play a central role in this paper.

\begin{definition}
Let $n\ge 0$ be a natural number. A \emph{(regular) $n$-simplex in $\cC$} is a functor $f : \P(s) \rightarrow \cC$ for some set $s \subseteq \omega$ with $|s| = n+1$.  The set $s$ is called the \emph{support of $f$}, or $\supp(f)$.

Let $S_n(\cA; B)$ denote the collection of all regular $n$-simplices in $\cA_B$.
Then put $S(\cA;B):=\bigcup_{n} S_n(\cA;B)$
and $S(\cA):=\bigcup_{B \in \ob(\cC)} S(\cA;B)$.

Let $C_n (\cA; B)$ denote the free abelian group generated by $S_n(\cA; B)$; its elements are called \emph{$n$-chains in $\cA_B$}, or \emph{$n$-chains over $B$}.  Similarly, we define $C (\cA; B):=\bigcup_{n} C_n(\cA;B)$ and  $C (\cA):=\bigcup_{B \in \ob(\cC)} C(\cA;B)$. The \emph{support of a chain $c$} is the union of the supports of all the simplices that appear in $c$ with a nonzero coefficient.
\end{definition}

\begin{remark}
This is more or less a special case of what are known as ``simplicial objects in the category $\cC$,'' except that we do not equip our simplices with degeneracy maps.
\end{remark}

The adjective ``regular'' in the definition above is to emphasize that none of our simplices are ``degenerate:'' their domains must be \emph{strictly} linearly ordered.  It is more usual to allow for degenerate simplices, but for our purposes, this extra generality does not seem to be useful.  Since all of our simplices will be regular, we will omit the word ``regular'' in what follows.

\begin{definition}
If $n \geq 1$ and $0 \leq i \leq n$, then the \emph{ith boundary operator} $\bd^i_n : C_n (\cA;B) \rightarrow C_{n-1} (\cA; B)$ is defined so that if $f$ is a regular $n$-simplex with domain $\P(s)$, where $s = \{s_0, \ldots, s_n\}$ with $s_0 < \ldots < s_n$, then $$\bd^i_n(f) = f \upharpoonright \P(s \setminus \{s_i\})$$ and extended linearly to a group map on all of $C_n (\cA; B)$.

If $n \geq 1$ and $0 \leq i \leq n$, then the \emph{boundary map} $\bd_n : C_n(\cA; B) \rightarrow C_{n-1}(\cA; B)$ is defined by the rule
$$\bd_n(c) = \Sigma_{0 \leq i \leq n} (-1)^i \bd^i_n (c).$$

We write $\bd^i$ and $\bd$ for $\bd^i_n$ and $\bd_n$, respectively, if $n$ is clear from context.
\end{definition}

\begin{definition}
The kernel of $\bd_n$ is denoted $Z_n(\cA; B)$, and its elements are called \emph{($n$-)cycles}.  The image of $\bd_{n+1}$ in $C_n(\cA; B)$ is denoted $B_n(\cA; B)$. The elements of $B_n(\cA; B)$ are called \emph{($n$-)boundaries}.
\end{definition}

It can be shown (by the usual combinatorial argument) that $B_n(\cA; B) \subseteq Z_n
(\cA; B)$, or more briefly, ``$\bd_n\circ \bd _{n+1} = 0$.''  Therefore we can define simplicial homology groups relative to $\cA$:

\begin{definition}
The \emph{$n$th (simplicial) homology group of $\cA$ over $B$} is $$H_n(\cA; B) = Z_n(\cA; B) / {B_n(\cA; B)}.$$
\end{definition}

There are two natural candidates for the definition of the boundary of a 0-simplex. One possibility is to define $\partial_0 (f) = 0$ for all $f\in S_0(\cA;B)$. Another possibility is to extend the definition of an $n$-simplex to $n=-1$; namely a $(-1)$-simplex $f$ is an object $f(\emptyset)$ in $\cC$. Then the definition of a boundary operator extends naturally to the operator $\bd_0:f\in S_0(\cA;B)\mapsto B$.

As we show in Lemma~\ref{h0}, computing the group $H_0$ in a specific context using the first definition gives $H_0\cong \mathbb{Z}$ while using the second definition we get $H_0= 0$. Thus, the difference between the approaches is parallel to that between the homology and reduced homology groups \cite{B}.

Next we define the amalgamation properties.  We use the convention that $[n]$ denotes the $(n+1)$-element set $\{0, 1, \ldots, n\}$.

\begin{definition}
Let $\cA$ be a non-empty amenable family of functors into a category $\cC$ and let $n\ge 1$.
\begin{enumerate}
\item $\cA$ has \emph{$n$-amalgamation} if for any functor $f : \P^-([n-1]) \rightarrow \cC$, $f\in \cA$, there is an $(n-1)$-simplex $g \supseteq f$ such that $g \in \cA$.

\item $\cA$ has \emph{$n$-complete amalgamation} or \emph{$n$-CA} if $\cA$ has $k$-amalgamation for every $k$ with $1 \leq k \leq n$.

\item $\cA$ has \emph{strong $2$-amalgamation} if whenever $f: \P(s) \rightarrow \cC$, $g: \P(t) \rightarrow \cC$ are simplices in $\cA$ and $f \upharpoonright \P(s\cap t) = g \upharpoonright \P(s \cap t)$, then $f \cup g$ can be extended to a simplex $h: \P(s\cup t) \rightarrow \cC$ in $\cA$.

\item $\cA$ has \emph{$n$-uniqueness} if for any functor $f: \P^-([n-1]) \rightarrow \cA$ and any two $(n-1)$-simplices $g_1$ and $g_2$ in $\cA$ extending $f$, there is a natural isomorphism $F: g_1 \rightarrow g_2$ such that $F \upharpoonright \dom(f)$ is the identity.

\end{enumerate}
\end{definition}

\begin{remark}
The definition of $n$-amalgamation can be naturally extended to $n=0$: $\cA$ has $0$-amalgamation if it contains a functor $f:\{\emptyset\} \to \cC$. This condition is satisfied for any non-empty amenable family of functors.
\end{remark}

\begin{definition}
We say that an amenable family of functors $\cA$ is \emph{non-trivial} if $\cA$ is non-empty, has 1-amalgamation, and satisfies the strong 2-amalgamation property.
\end{definition}

Everywhere below, we are dealing with {\em non-trivial} amenable families of functors. The following claim is immediate from the definitions; we include the proof because it illustrates a typical use of 1-amalgamation, strong 2-amalgamation, and other properties of the amenable families.

\begin{claim}\label{adding_vertex}
Let $\cA$ be a non-trivial amenable family of functors and let $f$ be an $n$-simplex with support $s=\{s_0,\dots,s_n\}$. For any $m\in \omega$ such that $m > s_i$ for $i=0,\dots n$, there is an $(n+1)$-simplex $h$ with support $s\cup \{m\}$ such that $\bd _{n+1}^{n+1}(h)=f$.
\end{claim}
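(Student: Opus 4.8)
The plan is to build the $(n+1)$-simplex $h$ in two stages: first produce a functor on $\P^-(s\cup\{m\})$ whose restriction to $\P(s)$ equals $f$, and then use a suitable amalgamation property to fill in the top face. First I would note that since $m$ is larger than every $s_i$, the set $s\cup\{m\}$ is a valid support (an $(n+2)$-element subset of $\omega$), and $\P(s\cup\{m\})$ as a category has $\P(s)$ as the full subcategory of subsets not containing $m$. The faces of the desired $h$ that omit a vertex other than $m$ are exactly the faces obtained from $f$ together with faces through $m$; so the real content is to specify $h$ on all subsets containing $m$ in a way that is compatible with $f$ on $\P(s)$.

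The key idea is to use a ``localization at $\{m\}$'' picture together with de-localization. Concretely, I would first build a functor $g$ on $\P(s)$ that will serve as $h|_{\{m\}}$: the natural choice is to take a copy of $f$ itself, i.e. use Invariance under isomorphisms to get that $f$ (as a functor on $\P(s)$, which is also $\P(s\setminus\emptyset)$) lies in $\cA_B$ where $B=f(\emptyset)$ — more carefully, I want a functor on $\P((s\cup\{m\})\setminus\{m\})=\P(s)$ in $\cA$, and $f$ itself works. Then $\P(s\cup\{m\})|_{\{m\}}=\P(s)=\P(s\cup\{m\})\cap\P(s)$, so the De-localization property (condition (4) of Definition~\ref{amenable}) applies: starting from $f\in\cA$ on $\P(s)$, viewed as the localization at $\emptyset$, and the extension $g=f$ of its localization... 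Actually the cleanest route is to invoke strong $2$-amalgamation directly. Take the $n$-simplex $f$ on $\P(s)$, and take a $0$-simplex (a point) on $\P(\{m\})$ whose value at $\emptyset$ is $B=f(\emptyset)$; such a point exists in $\cA_B$ by $1$-amalgamation applied over $B$ (equivalently, $0$-amalgamation together with the base being fixed), and the two agree on $\P(s\cap\{m\})=\P(\emptyset)$ since both have base $B$. By strong $2$-amalgamation, $f\cup(\text{point})$ extends to a simplex $h:\P(s\cup\{m\})\to\cC$ in $\cA$.

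It remains to check that $\bd^{n+1}_{n+1}(h)=f$. By definition of the boundary operator, $\bd^{n+1}_{n+1}(h)=h\upharpoonright\P((s\cup\{m\})\setminus\{m\})=h\upharpoonright\P(s)$, since $m$ is the largest element of $s\cup\{m\}=\{s_0,\dots,s_n,m\}$ and hence occupies the index-$(n+1)$ slot. But $h$ extends $f\cup(\text{point})$, so in particular $h\upharpoonright\P(s)=f$, which is exactly what we want. I expect the main (minor) obstacle to be purely bookkeeping: making sure that ``adding a point over $B$'' is genuinely available from the non-triviality hypotheses — this is where the remark that $0$-amalgamation holds for any non-empty amenable family, combined with the base being fixed at $B$ and strong $2$-amalgamation, does the work — and confirming that the index of $m$ in the ordered support is indeed $n+1$ so that the $(n+1)$-st face is the one omitting $m$. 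No deep argument is needed; the claim is, as stated, immediate once the definitions are unwound.
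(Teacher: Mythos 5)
Your proposal is correct and follows essentially the same route as the paper: restrict $f$ to $\{\emptyset\}$, use $1$-amalgamation to produce a $0$-simplex over $B=f(\emptyset)$ (the paper additionally notes the relabeling of its support to $\{m\}$ via isomorphism invariance), then apply strong $2$-amalgamation to $f$ and this point and observe that, since $m$ is the largest element of the support, the $(n+1)$st face of the resulting simplex is $f$. The only difference is cosmetic: you briefly considered a de-localization argument before settling on the same strong $2$-amalgamation argument the paper uses.
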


\begin{proof}
Let $\cA$, $f$, and $m$ satisfy the assumptions of the claim. Since $\cA$ is closed under restrictions, the functor $f\upharpoonright \{\emptyset\}$ is in $\cA$. By 1-amalgamation, the functor $f\upharpoonright \{\emptyset\}$ has an extension to a functor $g:\P([0])\to \cC$, $g\in \cA$. If $\sigma$ is the natural functor from the category $\P(\{m\})$ to $\P(\{0\})$, then the functor $g':\P(\{m\})\to \cC$ defined by $g'=g\circ \sigma$ is isomorphic to $g$. Since $\cA$ is closed under isomorphisms, we have $g'\in \cA$. Finally, using strong 2-amalgamation, we obtain a simplex $h$ with support $\{s_0,\dots,s_n,m\}$ that extends $f$ and $g'$. By construction, $h$ is an $(n+1)$-simplex such that $\bd _{n+1}^{n+1}(h)=f$.
\end{proof}

In particular, we get the following corollary.

\begin{corollary}
If $\cA$ is a non-trivial amenable family of functors, then $\cA$ contains an $n$-simplex for each $n\ge 1$.
\end{corollary}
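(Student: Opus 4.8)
The plan is to bootstrap from the existence of a single $0$-simplex and then climb up in dimension by repeatedly invoking Claim~\ref{adding_vertex}.

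First I would produce a $0$-simplex in $\cA$. By closure under restrictions applied to any member of the (non-empty) family $\cA$, there is a functor $f_0 \colon \{\emptyset\} \to \cC$ in $\cA$; equivalently, as noted in the remark following the definition of amalgamation, every non-empty amenable family has $0$-amalgamation. Now $\P^-([0]) = \P(\{0\}) \setminus \{\{0\}\} = \{\emptyset\}$, so $f_0$ is precisely a functor defined on $\P^-([0])$, and since $\cA$ is non-trivial it has $1$-amalgamation; applying it to $f_0$ yields a $0$-simplex $g_0 \supseteq f_0$ with $g_0 \in \cA$ and $\supp(g_0) = \{0\}$.

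Next I would argue by induction on $n \geq 0$ that $\cA$ contains an $n$-simplex. The base case $n = 0$ is witnessed by $g_0$. For the inductive step, suppose $f \in \cA$ is an $n$-simplex with support $s = \{s_0 < \cdots < s_n\}$. Pick any natural number $m > s_n$; then $m > s_i$ for every $i \leq n$, and Claim~\ref{adding_vertex} produces an $(n+1)$-simplex $h \in \cA$ with support $s \cup \{m\}$ (in fact with $\bd^{n+1}_{n+1}(h) = f$). This completes the induction, so $\cA$ contains an $n$-simplex for every $n \geq 0$, and in particular for every $n \geq 1$.

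I do not expect any genuine obstacle here: all the work is already done in Claim~\ref{adding_vertex}. The only point requiring a moment's care is the base case, since non-emptiness of $\cA$ by itself only supplies a functor on some non-empty downward-closed $X$, which may be just $\{\emptyset\}$ — a $(-1)$-simplex rather than a $0$-simplex. Thus one genuinely needs the $1$-amalgamation clause in the definition of non-triviality to manufacture the initial $0$-simplex before the inductive climb can begin.
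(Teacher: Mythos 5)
Your proof is correct and follows essentially the paper's intended route: the corollary is stated as an immediate consequence of Claim~\ref{adding_vertex}, obtained by iterating that claim, and your careful treatment of the base case (restriction to $\{\emptyset\}$ plus $1$-amalgamation to get an initial $0$-simplex) is exactly the step the paper leaves implicit (it also appears inside the proof of Claim~\ref{adding_vertex} itself).
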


\subsection{Computing homology groups}
We now establish facts that will describe the general structure of the homology groups. The goal is to show that, under appropriate assumptions, the homology group $H_n$ is equal to the set of cosets $c+B_n(\cA;B)$ for a set of very simple $n$-cycles $c$. This, in turn, will help with the calculation of the groups in the model-theoretic examples.

We start by defining special kinds of $n$-chains which are useful for computing homology groups.

\begin{definition}
\label{shell}
If $n \geq 1$, an \emph{$n$-shell} is an $n$-chain $c$ of the form $$\pm \sum_{0 \leq i \leq n + 1} (-1)^i f_i,$$ where $f_0, \ldots, f_{n+1}$ are $n$-simplices such that whenever $0 \leq i < j \leq n+1$, we have $\bd^i f_j = \bd^{j-1} f_i$.
\end{definition}

Note that any $n$-shell is an $n$-cycle.
In addition, if $f$ is any $(n+1)$-simplex, then $\bd f$ is an $n$-shell.  An $n$-shell should be thought of as an attempt to create an $(n+1)$-simplex by piecing together the simplices $f_0, \ldots, f_{n+1}$ along their faces, and the equation $\bd^i f_j = \bd^{j-1} f_i$ says that we may make the appropriate identifications between these faces.

\begin{definition}
\label{fan}
 If $n \geq 1$, and \emph{$n$-fan} is
an $n$-chain of the form
$$\pm\sum_{i\in\{0,..,\widehat k,...,  n+1\}} (-1)^i f_i$$ for some $k \in [n+1]$, where the $f_i$ are $n$-simplices such that
whenever $0 \leq i < j \leq n$, we have  $\bd^i f_j = \bd^{j-1} f_i$. In other words,
an $n$-fan is an $n$-shell missing one term.
\end{definition}

\begin{remark}
\begin{enumerate}
\item If $c$ is an $n$-fan, then $\bd c$ is an $(n-1)$-shell.
\item $\cA$ has $n$-amalgamation if and only if every $(n-2)$-shell in $\cA$ is the boundary of an $(n-1)$-simplex in $\cA$. And $\cA$ has $n$-uniqueness if and only if every $(n-2)$-shell in $\cA$ is the boundary of at most one $(n-1)$-simplex in $\cA$ up to isomorphism.
\end{enumerate}
\end{remark}

\begin{lemma}
\label{shellcycle}
If $n \geq 2$ and $\cA$ has $n$-CA, then every
$(n-1)$-cycle is a sum of $(n-1)$-shells. Namely, for each $c\in Z_{n-1}(\cA; B)$, $c = \sum_i \alpha_i f_i$, there is a
finite collection of  $(n-1)$-shells  $c_i\in Z_{n-1}(\cA; B)$
such that $c=\sum_i (-1)^n \alpha_i c_i$.

Moreover, if $S$ is the support of the chain $c$ and $m$ is any element not in $S$, then we can choose $\sum_i \alpha_i c_i$ so that its support is $S \cup \{m\}$.
\end{lemma}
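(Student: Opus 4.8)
The plan is to realize $c$ directly as an explicit integer combination of $(n-1)$-shells obtained by ``coning'' each simplex of $c$ over the new vertex $m$, so that the required identity $c=\sum_i(-1)^n\alpha_i c_i$ falls out of a single telescoping computation. The first (and only substantial) step is to build a \emph{coherent cone over $m$}. Using $1$-amalgamation and invariance under isomorphisms, fix a $0$-simplex $v_m\in\cA_B$ with support $\{m\}$. Let $\Sigma$ be the finite, face-closed collection of all iterated faces of the $(n-1)$-simplices $f_i$ occurring in $c$; every $e\in\Sigma$ lies in $\cA_B$, has dimension at most $n-1$, and satisfies $m\notin\supp(e)\subseteq S$. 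By induction on the dimension $k=0,1,\dots,n-2$, I would attach to each $k$-simplex $e\in\Sigma$ a $(k+1)$-simplex $\hat e\in\cA_B$ with $\supp(\hat e)=\supp(e)\cup\{m\}$ such that (i) the face of $\hat e$ opposite $m$ is $e$; (ii) $\hat e\upharpoonright\P(\{m\})=v_m$; and (iii) the cone commutes with faces, in the sense that each face of $\hat e$ still containing $m$ equals $\widehat{\bd^{\ell}e}$ for the matching face $\bd^{\ell}e$ of $e$. At stage $k$, properties (i) and (iii) from the earlier stages together with the simplicial identities $\bd^{p}\bd^{q}=\bd^{q-1}\bd^{p}$ ($p<q$) show that the $k+2$ simplices $\widehat{\bd^{0}e},\dots,\widehat{\bd^{k}e},e$ meet the matching conditions of a $k$-shell in $\cA_B$; since $k+2\le n$ and $\cA$ has $n$-CA, $(k+2)$-amalgamation lets this shell bound a $(k+1)$-simplex in $\cA_B$, which we take to be $\hat e$. (The base case $k=0$ just joins $e$ to $v_m$ by an edge via $2$-amalgamation.) Extend $e\mapsto\hat e$ linearly to the free abelian group on $\Sigma$.

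Given the cone, for each $(n-1)$-simplex $f_i$ in $c$, with support $s_i$, let $c_i$ be a suitable sign times the formal boundary of the ``would-be'' $n$-simplex on $s_i\cup\{m\}$ whose face opposite $m$ is $f_i$ and whose other $n$ faces are the cones $\widehat{\bd^{\ell}f_i}$ of the (dimension $n-2$) faces of $f_i$. This $n$-simplex is never constructed, but property (iii) and the simplicial identities make $c_i$ a genuine $(n-1)$-shell built from simplices of $\cA_B$ (hence a cycle), with $\supp(c_i)=s_i\cup\{m\}$. A short sign computation shows that the sign can be chosen so that $(-1)^n c_i = f_i - \overline{\bd f_i}$, where $\overline{\;\cdot\;}$ is the cone with each simplex $e$ multiplied by $(-1)^{p(e)}$, $p(e)$ being the position of $m$ in $\supp(e)\cup\{m\}$ (so $\overline{\;\cdot\;}=\pm\widehat{\;\cdot\;}$ and it is again linear); when $m>\max S$ this reads simply $(-1)^n c_i=f_i+(-1)^n\widehat{\bd f_i}$. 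Summing over $i$ with the coefficients $\alpha_i$, and using $\bd c=0$ together with linearity of $\overline{\;\cdot\;}$,
$$ \sum_i(-1)^n\alpha_i c_i \;=\; \sum_i\alpha_i f_i \;-\;\overline{\bd c}\;=\;c\;-\;\overline{\bd c}\;=\;c. $$
Finally $\bigcup_i\supp(c_i)=\bigcup_i(s_i\cup\{m\})=S\cup\{m\}$, which gives the ``moreover'' clause.

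I expect the entire difficulty to be in the first paragraph: at each stage of the induction one must check that the prescribed faces really do assemble into a $k$-shell (so that the appropriate amalgamation axiom applies) and that the new cone again commutes with faces. This is exactly the place where $n$-\emph{complete} amalgamation is needed rather than merely $n$-amalgamation — the stage $k$ invokes $(k+2)$-amalgamation and the top stage $k=n-2$ invokes $n$-amalgamation. Everything after that is routine bookkeeping with signs and the simplicial identities; in particular no $(n+1)$-amalgamation is required, since the would-be $n$-simplices are used only through their formal boundaries.
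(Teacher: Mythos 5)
Your proposal is correct and follows essentially the same route as the paper's proof: both build a cone over the new vertex $m$ by induction on dimension, using $k$-amalgamation for $k\le n$ at each stage, take $c_i$ to be the shell formed by $f_i$ together with the cones of its codimension-one faces (without ever constructing the would-be $n$-simplex), and let the cone terms cancel in the sum because $\bd c=0$ and the cone of a face is chosen independently of which $f_i$ it came from. Your per-face organization of the cone (which makes that coherence automatic, where the paper instead stipulates ``choosing the same amalgam'' for shared faces) and your explicit sign bookkeeping for a non-maximal $m$ are only cosmetic refinements of the same argument.
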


\begin{proof}
Suppose that $c = \sum_i \alpha_i f_i$ is a cycle in $Z_{n-1}(\cA; B)$, where each $f_i$ is an $(n-1)$-simplex, and $\alpha_i \in \mathbb{Z}$.
Let $I$ be the set of all pairs $(i,j)$ such that $f_i$ appears in $c$ and $0 \leq j \leq n-1$.  For each $(i,j) \in I$, let $g_{ij} = \bd^j f_i$
(so $g_{ij}$ is an $(n-2)$-simplex).

\begin{claim}\label{cancelling}
There are $(n-1)$-simplices $h_{ij} \in \cA_B$ for each pair $(i,j)\in I$ such that:

\begin{enumerate}
\item[(a)] If $\supp(g_{ij}) = s_{ij}$, then $\supp(h_{ij}) = s_{ij} \cup \{m\}$;
\item[(b)] If $g_{ij}=g_{k\ell}$, then $h_{ij}=h_{k\ell}$;
\item[(c)] For each $i$, $c_i:=(\sum_{0\leq j\leq n-1} (-1)^jh_{ij})+(-1)^{n}f_i$ is an $(n-1)$-shell.
(Namely, for $j'<j<n$, we have $\bd^{j'} h_{ij}=\bd^{j-1}h_{ij'}$ and $\bd^{j} f_i=g_{ij}=\bd^{n-1}h_{ij}$.)
\end{enumerate}
\end{claim}

\begin{proof}[Proof of Claim.]
First, pick any $0$-simplex $g^*$ in $\cA_B$ with support $\{m\}$. We define $h_{ij}$ with ``bottom face'' $g_{ij}$ as follows. First set  $h_{ij}\upharpoonright \P(s_{ij})=g_{ij}$. Then set $h_{ij}(\{m\})=g^*(\{m\})$. For $k\in s_{ij}$, we use $2$-amalgamation and let $h_{ij}(\{k,m\}) \in \cA$ be an amalgam of $g^*$ and $g_{ij}\upharpoonright \P(\{k\})$.  Also, for any $k \in s_{ij} \cap s_{pq}$ such that $g_{ij}(\{k\}) = g_{pq}(\{k\})$, we can assume that $h_{ij}(\{k,m\}) = h_{pq}(\{k,m\})$ (by choosing the same amalgam for each).

If $n=2$, stop here, and we have constructed all $h_{ij}$. Now assume $n>2$.
For $k, \ell \in s_{ij}$ with $k < \ell$, we can use $3$-amalgamation to pick a $2$-simplex $h_{ij}(\{k, \ell, m\}) \in \cA$ such that $$\bd(h_{ij}(\{k, \ell, m\}) = h_{ij}(\{\ell,m\}) - h_{ij}(\{k,m\}) + g_{ij}(\{k,\ell\}),$$ and again we can ensure that if $\{k,\ell\} \subseteq s_{ij} \cap s_{pq}$ then $h_{ij}(\{k,\ell,m\}) = h_{pq}(\{k,\ell,m\})$.

We can continue this procedure inductively, using $n$-CA
to build the simplices $h_{ij}(t)$ for $t \subseteq s_{ij} \cup \{m\}$ of size $4, 5, \ldots, n$, and we can ensure that $\bd^{n-1} h_{ij} = g_{ij}$ and that conditions (a) and (b) hold.  Let $(s_{ij})_k$ denote the $k$th element of $s_{ij}$ in increasing order (where $0 \leq k \leq n-3$).  If $j'<j\leq n-1$, due to our construction,
$$(\bd^{j'}h_{ij})\upharpoonright \P(s_{ij} \setminus \{(s_{ij})_{j'}\})=
\bd^{j'}g_{ij}=\bd^{j-1}g_{ij'}=(\bd^{j-1}h_{ij'})\upharpoonright \P(s_{ij'} \setminus \{(s_{ij'})_{j-1}\}).$$
(so, $t:=s_{ij} \setminus \{(s_{ij})_{j'}\}= s_{ij'} \setminus \{(s_{ij'})_{j-1}\}$). Notice then
$$t\cup\{m\}=\supp(\bd^{j'}h_{ij})=\supp(\bd^{j-1}h_{ij'}).$$
Then again by our construction, $\bd^{j'}h_{ij}=\bd^{j-1}h_{ij'}$.
Therefore $h_{ij}$ satisfies (c) as well, so we have proved the Claim.
\end{proof}

Now for the sum of $(n-1)$-shells $d:=\sum_i \alpha_i c_i$, we have
$$d=\sum_i \alpha_i\left[\left(\sum_{0\leq j\leq n-1} (-1)^jh_{ij}\right)+(-1)^{n}f_i\right]$$
$$= \left[\sum_i \sum_{0 \leq j \leq n-1} (-1)^j \alpha_i h_{ij}\right] + (-1)^{n} c.$$
Since $\bd c = 0$, and $h_{i j} = h_{k \ell}$ whenever $\bd^jf_i = \bd^\ell f_k$, the first term above is zero.
We have proved the lemma.
\end{proof}

The above lemma allows to make the following conclusions about the structure of the groups $H_n(\cA; B)$.

\begin{corollary}\label{shellgen} Assume $\cA$ has $n$-CA for some $n\geq 2$.
Then $H_{n-1}(\cA; B)$ is generated by $$\{[c] \,  : \, c\mbox{ is an } (n-1)\mbox{-shell over } B\}.$$
In particular, if any $(n-1)$-shell over $B$ is a boundary, then so is any $(n-1)$-cycle.
\end{corollary}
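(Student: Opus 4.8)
The plan is to derive Corollary~\ref{shellgen} essentially for free from Lemma~\ref{shellcycle}, plus a short observation about boundaries. First, let me recall what Lemma~\ref{shellcycle} gives us: under the hypothesis of $n$-CA with $n \geq 2$, any $(n-1)$-cycle $c = \sum_i \alpha_i f_i \in Z_{n-1}(\cA; B)$ can be written as $c = \sum_i (-1)^n \alpha_i c_i$, where each $c_i$ is an $(n-1)$-shell in $Z_{n-1}(\cA; B)$. Since by a prior observation every $n$-shell is an $n$-cycle (so in particular each $c_i$ really is an element of $Z_{n-1}$), passing to homology classes immediately yields $[c] = \sum_i (-1)^n \alpha_i [c_i]$ in $H_{n-1}(\cA; B)$. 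As the $[c_i]$ are classes of $(n-1)$-shells over $B$ (note that the $c_i$ constructed in the proof of the lemma live in $\cA_B$, hence are shells over $B$), this shows exactly that $H_{n-1}(\cA; B)$ is generated by the set $\{[c] : c \text{ is an } (n-1)\text{-shell over } B\}$.

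For the ``in particular'' clause, I would argue as follows. Suppose every $(n-1)$-shell over $B$ is a boundary, i.e.\ lies in $B_{n-1}(\cA; B)$. Let $c$ be an arbitrary $(n-1)$-cycle over $B$. By the first part of the corollary (or directly by Lemma~\ref{shellcycle}), we can write $c = \sum_i (-1)^n \alpha_i c_i$ with each $c_i$ an $(n-1)$-shell over $B$. By hypothesis each $c_i \in B_{n-1}(\cA; B)$, and since $B_{n-1}(\cA; B)$ is a subgroup of $C_{n-1}(\cA; B)$, any integer linear combination of the $c_i$ also lies in $B_{n-1}(\cA; B)$; hence $c \in B_{n-1}(\cA; B)$, i.e.\ $c$ is a boundary. (Equivalently: if every shell has trivial homology class, then since the shell classes generate $H_{n-1}$, the whole group $H_{n-1}(\cA; B)$ is trivial, which says precisely that every cycle is a boundary.)

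There is really no serious obstacle here; the corollary is a formal consequence of the lemma. The only points worth a moment's care are (a) confirming that the shells $c_i$ produced in the proof of Lemma~\ref{shellcycle} are genuinely over $B$ and genuinely cycles, so that $[c_i]$ makes sense as an element of $H_{n-1}(\cA; B)$ --- this is immediate since the construction there only amalgamates simplices already in $\cA_B$ with a fixed $0$-simplex $g^*$ over $B$, and every shell is a cycle --- and (b) noting that the sign $(-1)^n$ and the coefficients $\alpha_i$ are irrelevant for a generation statement, since we are describing a generating set, not a basis. So the write-up will be just a few lines invoking Lemma~\ref{shellcycle} and the subgroup property of $B_{n-1}(\cA; B)$.
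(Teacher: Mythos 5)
Your proposal is correct and follows exactly the route the paper intends: the corollary is stated as an immediate consequence of Lemma~\ref{shellcycle} (the paper gives no separate proof), and your derivation — writing any $(n-1)$-cycle as an integer combination of $(n-1)$-shells over $B$ and then using that $B_{n-1}(\cA;B)$ is a subgroup for the ``in particular'' clause — is precisely the intended argument.
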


\begin{corollary}
\label{trivial_homology}
If $\cA$ has $n$-CA for some $n \geq 3$, then $H_{n-2} (\cA; B) = 0$.
\end{corollary}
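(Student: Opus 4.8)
The plan is to combine the two corollaries just proved with $n$-amalgamation (which is available, since $n$-CA implies $k$-amalgamation for all $k\le n$). By Corollary~\ref{shellgen}, applied with $n-1$ in place of $n$ — note $n-1\ge 2$ since $n\ge 3$, and $\cA$ has $(n-1)$-CA because it has $n$-CA — the group $H_{n-2}(\cA;B)$ is generated by the classes $[c]$ of $(n-2)$-shells $c$ over $B$. So it suffices to show that every $(n-2)$-shell over $B$ is a boundary, i.e. lies in $B_{n-2}(\cA;B)$.

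Now I would invoke the second part of the remark following Definition~\ref{fan}: $\cA$ has $n$-amalgamation if and only if every $(n-2)$-shell in $\cA$ is the boundary of an $(n-1)$-simplex in $\cA$. Since $\cA$ has $n$-CA, it in particular has $n$-amalgamation, so every $(n-2)$-shell $c$ satisfies $c=\bd g$ for some $(n-1)$-simplex $g\in\cA$. One should check that this $g$ can be taken over $B$: given a shell $c$ over $B$, each of its faces has base $B$, so the amalgamated $(n-1)$-simplex $g$ produced by $n$-amalgamation also has $g(\emptyset)=B$ (the base is inherited from the faces); alternatively, localize/de-localize at $\emptyset$ using the amenability axioms, or simply observe that $n$-amalgamation as stated already operates within $\cA_B$ once the input functor is over $B$. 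Hence $c\in B_{n-2}(\cA;B)$.

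Therefore every generator $[c]$ of $H_{n-2}(\cA;B)$ is zero, so $H_{n-2}(\cA;B)=0$. (Equivalently, one can phrase the last two steps as: "$\cA$ has $n$-amalgamation, hence every $(n-2)$-shell over $B$ is a boundary, hence by the last sentence of Corollary~\ref{shellgen} every $(n-2)$-cycle over $B$ is a boundary, so $H_{n-2}(\cA;B)=Z_{n-2}/B_{n-2}=0$.")

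The only mild obstacle is the bookkeeping about the base $B$: Corollary~\ref{shellgen} and the shell-to-boundary equivalence are stated for shells "over $B$", but the cleanest form of $n$-amalgamation in Definition of amalgamation properties is stated for arbitrary functors $f:\P^-([n-1])\to\cC$. I expect this to be routine — the base of the amalgamating simplex agrees with the common base of its boundary faces, which is $B$ — but it is the one point that genuinely needs a sentence of justification rather than a direct citation. Everything else is a two-line deduction from results already in hand.
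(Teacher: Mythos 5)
Your proposal is correct and follows essentially the same route as the paper: apply Corollary~\ref{shellgen} (using $(n-1)$-CA) to reduce to $(n-2)$-shells, then use $n$-amalgamation to realize each such shell as the boundary of an $(n-1)$-simplex. The base-set bookkeeping you flag is indeed routine and is treated as such in the paper's own two-line proof.
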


\begin{proof}
Since $\cA$ has $n$-amalgamation, any $(n-2)$-shell is  the boundary of
an $(n-1)$-simplex. Then due to $(n-1)$-CA and Corollary~\ref{shellgen},
 any $(n-2)$-cycle over $B$ is a  boundary.
\end{proof}

In fact, Corollary~\ref{shellgen} can be strengthened. We show in Theorem~\ref{Hn_shells} that if $\cA$ has $(n+1)$-CA for some $n \geq 1$, then the group $H_n(\cA; B)$ has the universe $\{ [c] : c \textup{ is an } n\textup{-shell over } B \textup{ with support } [n+1] \}$. That is, we are able to show that a linear combination of $n$-shells with integer coefficients is equal, up to a boundary, to an $n$-shell; and we show that the support can be restricted to a prescribed set. To accomplish this, we need to introduce an additional notion.

\begin{definition}
If $n\ge 1$, an \emph{$n$-pocket} is an $n$-cycle of the form $f-g$, where $f$ and $g$ are $n$-simplices with support $S$ (where $S$ is an $(n+1)$-element set).
\end{definition}

The condition that the boundary of an $n$-pocket $f-g$ is zero implies that $f\restriction \P(s) = g\restriction \P(s)$ for each $n$-element subset $s$ of $S$.

\begin{lemma}\label{trivial_pocket}
Let $\cA$ be a non-trivial amenable family of functors and suppose that $f,g\in S_n(\cA)$ are \emph{isomorphic} functors such that $\partial_n f = \partial_n g$. Then the $n$-pocket $f-g$ is a boundary.
\end{lemma}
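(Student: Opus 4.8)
The plan is to exhibit $f-g$ --- up to sign and modulo a lower‑order error term --- as $\partial_{n+1}$ of a difference of two ``cones'' over a fresh vertex, the second cone obtained from the first by transport along the given isomorphism, and to run the whole argument by induction on $n$. First, some bookkeeping: since $\partial_n f=\partial_n g$ and the summands $\partial^i_n f$ have pairwise distinct supports (the $n$‑element subsets of $\supp f$), comparing supports forces $\supp f=\supp g=:S$ and then matching coefficients forces $f\restriction\P^-(S)=g\restriction\P^-(S)=:p$. Hence the given isomorphism $h=\langle h_u:u\in\P(S)\rangle$ restricts to an automorphism $\alpha:=\langle h_u:u\in\P^-(S)\rangle$ of $p$, while $h_S\colon f(S)\to g(S)$ is an isomorphism compatible with $\alpha$. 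The base case $n=0$ (nothing to match below the top) is easy: choosing $m\in\omega\setminus S$, $1$‑amalgamation and invariance under isomorphisms give a $0$‑simplex $w$ over the common base with support $\{m\}$, and strong $2$‑amalgamation gives $1$‑simplices $e_f\supseteq f\cup w$ and $e_g\supseteq g\cup w$ on support $S\cup\{m\}$, whence $f-g=\pm\partial_1(e_g-e_f)$.

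Now assume $n\ge1$ and fix $m\in\omega$ larger than every element of $S$. By Claim~\ref{adding_vertex} extend $f$ to an $(n+1)$‑simplex $Cf$ with support $S\cup\{m\}$ and $\partial^{n+1}_{n+1}(Cf)=f$. Then transport $Cf$ along $h$: define a functor $Cg$ on $\P(S\cup\{m\})$ by putting $Cg(u)=g(u)$ with the maps of $g$ when $u\subseteq S$, $Cg(u)=Cf(u)$ with the maps of $Cf$ when $m\in u$, and $Cg(\iota_{u,w})=Cf(\iota_{u,w})\circ h_u^{-1}$ when $u\subseteq S$, $m\notin u$, $m\in w$. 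A routine verification shows $Cg$ is a functor, that $\langle h_u\ (u\subseteq S),\ \id\ (m\in u)\rangle$ is an isomorphism $Cf\to Cg$ --- so $Cg\in\cA$ by invariance under isomorphisms --- and that $\partial^{n+1}_{n+1}(Cg)=g$. The boundary formula then gives
$$\partial_{n+1}(Cf-Cg)=(-1)^{n+1}(f-g)+E,\qquad E:=\sum_{0\le i\le n}(-1)^i\bigl(\partial^i_{n+1}Cf-\partial^i_{n+1}Cg\bigr),$$
and $\partial_n E=\partial_n\partial_{n+1}(Cf-Cg)-(-1)^{n+1}\partial_n(f-g)=0$, so $E$ is an $n$‑cycle and $f-g$ is a boundary if and only if $E$ is.

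The main obstacle is showing that $E$ is a boundary. Every simplex occurring in $E$ contains $m$, and each summand $\partial^i_{n+1}Cf-\partial^i_{n+1}Cg$ is a difference of two \emph{isomorphic} $n$‑simplices on the same support $(S\setminus\{s_i\})\cup\{m\}$ --- morally an $n$‑pocket. The trouble is that $\alpha=h\restriction\P^-(S)$ need not be the identity, and the transport construction copies this twist onto all the lower faces of $Cg$: for $j\ne i$, the face of $\partial^i_{n+1}Cf$ obtained by deleting $s_j$ differs from the corresponding face of $\partial^i_{n+1}Cg$ by a restriction of $\alpha$, so these $n$‑simplices do not have the same boundary and their difference is not literally a pocket. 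The way through --- the technical heart of the proof --- is to peel this twist off one dimension at a time, using the inductive hypothesis on the lower‑dimensional pockets produced by $\alpha$ acting on the common faces $\partial^i f=\partial^i g$ to build, coherently, an $(n+1)$‑chain $d$ with $\partial d=E$; the delicate point is checking that these lower‑dimensional corrections are compatible along shared faces, so that they assemble into a single chain, after which the induction closes.
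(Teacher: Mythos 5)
Your $Cf$ and $Cg$ are exactly the $\widehat f$ and $\widehat g$ of the paper's proof, and your preliminary bookkeeping (equal supports, and $f\restriction\P^-(S)=g\restriction\P^-(S)$) matches the remark the paper makes after defining pockets. The divergence is in what is done with the witnessing isomorphism. The identity $f-g=\bd\bigl((-1)^{n+1}(\widehat f-\widehat g)\bigr)$ needs the faces $\bd^i\widehat f$ and $\bd^i\widehat g$ to coincide for all $i\le n$, and this holds precisely when the witnessing family is the identity on $\P^-([n])$ --- which is evidently the intended hypothesis (compare the definition of $n$-uniqueness, where the natural isomorphism is required to restrict to the identity on $\dom(f)$). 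You instead allow an arbitrary isomorphism family, correctly observe that the lower faces of $Cg$ are then twisted by the automorphisms $h_u$, $u\subsetneq S$, so that an error cycle $E$ survives, and you relegate the proof that $E$ is a boundary to an inductive ``peeling'' argument that is described but never carried out: no chain $d$ with $\bd d=E$ is produced and the compatibility you yourself call delicate is not checked. Since in your reading that step is the entire content of the lemma, this is a genuine gap, not a routine omission.

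Worse, the gap cannot be filled: with an arbitrary isomorphism the statement is false, so no inductive correction scheme can succeed. Take the Section~5 theory built from a single connected groupoid with vertex group $\mathbb{Z}_2$ (so $H_2(p)\cong\mathbb{Z}_2$), let $f$ be the $2$-simplex with $f(u)=\ov{a_u}$ and inclusion transition maps, and let $g$ agree with $f$ except that $g^{02}_{[2]}$ multiplies $\mor_{\cG}(a_0,a_2)$ and $\mor_{\cG}(a_2,a_0)$ by the nontrivial central element $z$. Then $\bd f=\bd g$, and the invariant of Section~4 gives $\epsilon_i(f-g)=z\neq 0$, so $f-g$ is not a boundary by Lemma~\ref{epsilon_boundaries}. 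Nevertheless $f\cong g$: let $\tau$ be the automorphism of $\C$ fixing every object and multiplying by $z$ exactly those morphisms with exactly one endpoint equal to $a_0$ (it preserves composition because $z$ is central and $z^2=1$, and it fixes $\acl^{eq}(\emptyset)$ pointwise because it fixes $\mor_{\cG}(a_1,a_2)$ pointwise, cf.\ Lemma~\ref{acl2}); then the family $h_u:=\tau\restriction f(u)$ for $u\neq\{0,2\}$ together with $h_{02}:=\bigl(g^{02}_{[2]}\bigr)^{-1}\circ\bigl(\tau\restriction\ov{a_{02}}\bigr)$ consists of isomorphisms in $\cC_A$ and makes every required square commute. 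So the lemma has to be read with the isomorphism fixing the common faces; under that hypothesis your error term $E$ vanishes identically and your cone construction --- which is the paper's --- finishes the proof at once, with no induction and no base case needed.
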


\begin{proof}
We may assume that the support of both $f$ and $g$ is $[n]$. Using Claim~\ref{adding_vertex}, we can pick an $(n+1)$-simplex $\widehat{f}$ with the support $[n+1]$ such that $\bd^{n+1}_{n+1}\widehat{f}=f$. Let $\langle \alpha_u:g(u)\to f(u) \mid u\in \P([n])\rangle$ be a family of isomorphisms in $\cC$ that witness the isomorphism of $f$ and $g$.
Define an $(n+1)$-simplex $\widehat{g}$ by letting, for $u\subset v\in \P([n+1])$,
$$
\widehat{g}(u):=\begin{cases} \widehat{f}(u),  & u\ne [n], \\
                            g([n]), & u=[n],
            \end{cases}
$$
and
$$
\widehat{g}(\iota_{u,v}):=\begin{cases} g(\iota_{u,v}), & u,v\in \P([n]),\\
                                    \widehat{f}(\iota_{u,v})\circ \alpha_u, & u\in \P([n]), v\notin \P([n]),\\
                                    \widehat{f}(\iota_{u,v}), & u,v\notin \P([n]).
                        \end{cases}
$$
It is routine to check that $\widehat{g}$ is indeed an $(n+1)$-simplex and that $f-g$ is the boundary of the $(n+1)$-chain $(-1)^{n+1}(\widehat{f}-\widehat{g})$.
\end{proof}

\begin{lemma}
\label{fans and shells}
Suppose that $n \ge 1$ and $\cA$ has $(n+1)$-amalgamation.  Then for any $n$-fan

$$g = \pm\sum_{i\in\{0, \ldots,\widehat k, \ldots,  n+1\}} (-1)^i f_i$$
there is some $n$-simplex $f_k$ and some $(n+1)$-simplex $f$ such that $g + (-1)^k f_k = \bd f$.

\end{lemma}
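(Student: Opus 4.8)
The plan is to realize the $n$-fan $g$ as (a sign times) the boundary of a single $(n+1)$-simplex, by first completing the underlying data of the fan to an $(n-1)$-shell that admits an amalgam. Recall that if $g=\pm\sum_{i\neq k}(-1)^i f_i$ is an $n$-fan with $f_i$ being $n$-simplices satisfying $\bd^i f_j=\bd^{j-1}f_i$ for $0\le i<j\le n+1$ with $i,j\neq k$, then $\bd g$ is an $(n-1)$-shell. First I would take the $(n-1)$-simplices $\bd^j f_i$ (for the indices present) and observe, as in Definition~\ref{fan} and the Remark following it, that the compatibility conditions $\bd^i f_j=\bd^{j-1}f_i$ say precisely that the faces of the would-be $(n+1)$-simplex $f$ are already prescribed on all of $\P([n+1])\setminus\{[n+1]\setminus\{k\}\}$ except for the one missing $n$-face $f_k$. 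In other words, the family $\langle f_i : i\neq k\rangle$ together with all their common faces assembles (by closure under unions, Remark~\ref{amenable2}) into a functor $F$ defined on the downward-closed subset of $\P([n+1])$ consisting of all proper subsets of $[n+1]$ except the single $n$-element set $w:=[n+1]\setminus\{k\}$; this $F$ lies in $\cA$.

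Next I would use $(n+1)$-amalgamation twice, or rather once in the right form. The domain of $F$ is not quite $\P^-([n+1])$ — it is missing the face $w$. So the plan is: first apply $n$-amalgamation (which we have, since $(n+1)$-CA would give it, but here we only assume $(n+1)$-amalgamation — so I should instead argue directly) to fill in a candidate $f_k$ on support $w$. The cleaner route: the restriction of $F$ to $\P^-(w)$ is an element of $\cA$, and by $n$-amalgamation — wait, we only have $(n+1)$-amalgamation as a hypothesis, not $n$-amalgamation. So the honest approach is to instead first extend $F$ over the missing face using Claim~\ref{adding_vertex} applied to an appropriate $(n-1)$-simplex, building some $n$-simplex $f_k$ with support $w$ agreeing with $F$ on $\P^-(w)$; one gets this by amalgamating along the already-defined faces $\bd^i f_j$ via repeated use of strong $2$-amalgamation and the closure properties, exactly as in the construction of the $h_{ij}$ in Claim~\ref{cancelling}. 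This produces an extension $F^+$ of $F$ with domain all of $\P^-([n+1])$, still in $\cA$. Now apply $(n+1)$-amalgamation to $F^+$ to obtain an $(n+1)$-simplex $f\supseteq F^+$ in $\cA$. Then $\bd f=\sum_{i=0}^{n+1}(-1)^i\,\bd^i f=\sum_{i\neq k}(-1)^i f_i+(-1)^k f_k=\pm g+(-1)^k f_k$, so after absorbing the overall sign $\pm$ of the fan into $f$ (replacing $f$ by $-f$ as a chain if necessary, i.e.\ tracking the sign), we get $g+(-1)^k f_k=\bd f$ with $f$ an $(n+1)$-simplex and $f_k$ an $n$-simplex, as desired.

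The main obstacle I expect is the bookkeeping in the first step: verifying that the prescribed faces $\bd^j f_i$ really do glue to a single coherent functor $F$ on the punctured boundary of $\P([n+1])$. One must check that on overlaps $\P(s)\cap\P(t)$ the various $f_i$ already agree, which is exactly the content of the simplicial identities $\bd^i f_j=\bd^{j-1}f_i$ together with the definition of an $n$-fan, and then invoke closure under unions (Remark~\ref{amenable2}) finitely many times. A secondary subtlety is the extension across the missing face $w$: one should not assume $n$-amalgamation (it is not hypothesized), so this extension must be done ``by hand'' using strong $2$-amalgamation and localization/de-localization to build the simplex $f_k$ face-by-face, mirroring the inductive construction in the proof of Claim~\ref{cancelling}; the compatibility conditions guarantee the required faces are present at each stage. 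Once $F^+\in\cA$ is in hand, the single application of $(n+1)$-amalgamation and the boundary computation are routine.
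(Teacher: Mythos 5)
There is a genuine gap, and it is an off-by-one error at the decisive step. Your chain $F^+$ (the fan together with a filled-in face $f_k$) has domain $\P^-([n+1])$, and $[n+1]$ has $n+2$ elements; extending a functor on $\P^-([n+1])$ to a full $(n+1)$-simplex is by definition $(n+2)$-amalgamation, which is not hypothesized. This cannot be patched by choosing $f_k$ more carefully and then ``filling the shell'': under $(n+1)$-amalgamation alone, an $n$-shell with support $[n+1]$ need not bound at all --- that failure is exactly what $H_n(\cA;B)$ measures (Theorem~\ref{Hn_shells}) --- so any argument that first completes the fan to a shell and only afterwards looks for $f$ is doomed. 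The order must be reversed: one must produce the $(n+1)$-simplex $f$ first and read off $f_k$ as its remaining face. A secondary problem is your construction of $f_k$ ``by hand'': imitating the $h_{ij}$ of Claim~\ref{cancelling} requires $n$-CA (amalgamation in all dimensions up to $n$), not just strong $2$-amalgamation, so it is not available here either; ironically, the step you balked at is fine as stated, since $w=[n+1]\setminus\{k\}$ has $n+1$ elements and extending $F\upharpoonright\P^-(w)$ to an $n$-simplex is precisely an instance of the hypothesized $(n+1)$-amalgamation --- but, as explained, producing some $f_k$ this way does not help.

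The paper's proof avoids the dimension problem by exploiting the vertex common to all faces of the fan: after arranging $\supp(g)=[n+1]$ and $k=n+1$, every $f_i$ ($i\in[n]$) contains the vertex $n+1$, so the union $\widetilde f$ of the $f_i$ (closure under unions) can be localized at $\{n+1\}$, yielding a functor $h=\widetilde f|_{\{n+1\}}$ with domain $\P^-([n])$ --- one dimension lower. Now $(n+1)$-amalgamation applies to $h$, giving an $n$-simplex $\hat f\supseteq h$, and the de-localization axiom (condition (4) of Definition~\ref{amenable}) converts $\hat f$ back into an $(n+1)$-simplex $f:\P([n+1])\to\cC$ extending all the $f_i$; the capping face is then $f_{n+1}:=f\upharpoonright\P([n])$, and the boundary computation you describe at the end goes through. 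So the correct mechanism is localization at the shared vertex plus de-localization, which your proposal does not use, and without which the hypotheses are too weak for your final step.
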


\begin{proof}
Without loss of generality, $\supp(g) = [n+1]$ and $k = n+1$. Because $\cA$ is closed under unions, the union $\widetilde{f}$ of all the simplices $f_i$ (for $i \in [n]$) is also in $\cA$.  Let $h=\widetilde{f} |_{\{n+1\}}$, which is in $\cA$ by localization.  By $(n+1)$-amalgamation, there is a functor $\hat f \in \cA$ with support $[n]$ extending $h$.  Finally, applying de-localization to $\hat f$, we obtain a functor $ f : \P([n+1]) \rightarrow \cC$ in $\cA$ such that for every $i$ with $0 \leq i \leq n$,

$$ f \upharpoonright \P(\{0, \ldots, \hat i, \ldots, n+1 \}) = f_i.$$

Letting $f_{n+1} = f \upharpoonright \P([n])$, we get the result.

\end{proof}

The next lemma says that $n$-pockets are equal to $n$-shells, ``up to a boundary.''

\begin{lemma}\label{shells and pockets}
Let $\cA$ be a non-trivial amenable family of functors that has the $(n+1)$-amalgamation property for some $n\ge 1$. For any $B\in \cC$, any $n$-shell in $\cA_B$ with support $[n+1]$ is equivalent, up to a boundary in $B_n(\cA; B)$, to an $n$-pocket in $\cA_B$ with support $[n]$. Conversely, any $n$-pocket with support $[n]$ is equivalent, up to a boundary, to an $n$-shell with support $[n+1]$.

\end{lemma}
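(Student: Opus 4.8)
The plan is to prove both directions by exhibiting explicit $(n+1)$-chains whose boundaries realize the claimed equivalences, using $(n+1)$-amalgamation to produce the required $(n+1)$-simplices. For the first direction, start with an $n$-shell $c = \pm\sum_{0 \le i \le n+1} (-1)^i f_i$ with support $[n+1]$. Delete the last term to obtain an $n$-fan $g = \pm\sum_{i \in \{0,\dots,\widehat{n+1}\}} (-1)^i f_i$, and apply Lemma~\ref{fans and shells}: this yields an $n$-simplex $f_{n+1}'$ and an $(n+1)$-simplex $f$ with $g + (-1)^{n+1} f_{n+1}' = \bd f$. Since $c - (-1)^{n+1} f_{n+1}' + (-1)^{n+1} f_{n+1} = g + (-1)^{n+1} f_{n+1}$ equals a boundary plus possibly a sign adjustment, one sees that $c$ is equivalent up to a boundary to $\pm(-1)^{n+1}(f_{n+1} - f_{n+1}')$; this is (a scalar multiple of) an $n$-pocket with support $[n]$, because both $f_{n+1}$ and $f_{n+1}'$ extend the same $(n-1)$-skeleton coming from the $\bd^i f_j = \bd^{j-1}f_i$ identifications, hence have all their proper faces in common. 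One has to check that $f_{n+1}$ and $f_{n+1}'$ genuinely share all $n$-element faces: this follows because both are faces (the $(n+1)$-st face) of $(n+1)$-simplices extending the same fan, so their $\bd^i$ for $i \le n$ are forced by the fan data.

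For the converse, start with an $n$-pocket $f - g$ with support $[n]$. By Claim~\ref{adding_vertex} there is an $(n+1)$-simplex $\widehat f$ with support $[n+1]$ and $\bd^{n+1}_{n+1} \widehat f = f$; its boundary $\bd \widehat f$ is an $n$-shell with support $[n+1]$ containing $f = \bd^{n+1}\widehat f$ as its last term. Similarly one wants an $(n+1)$-simplex $\widehat g$ whose last face is $g$ and whose other faces agree with those of $\widehat f$; but here the condition $\bd f = \bd g$ (coming from the pocket being a cycle) guarantees that $f$ and $g$ have the same $(n-1)$-skeleton, so the other $n$ faces $\bd^i \widehat f$ ($i \le n$) already form, together with $g$, a consistent $n$-shell minus one term — an $n$-fan — and $(n+1)$-amalgamation (via Lemma~\ref{fans and shells} again, or directly) produces $\widehat g$ with $\bd^{n+1}_{n+1}\widehat g = g$ and $\bd^i \widehat g = \bd^i \widehat f$ for $i \le n$. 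Then $\bd\big((-1)^{n+1}(\widehat f - \widehat g)\big) = (-1)^{n+1}\sum_i (-1)^i(\bd^i\widehat f - \bd^i\widehat g) = (-1)^{n+1}(-1)^{n+1}(f - g) = f - g$ up to the terms $i \le n$ which cancel, so $f - g$ differs from the $n$-shell $\bd\widehat f$ (with support $[n+1]$) by a boundary.

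I expect the main obstacle to be the careful bookkeeping of faces and signs: verifying that the faces $\bd^i \widehat f$ for $i \le n$ together with $g$ in place of $f$ really do satisfy the compatibility conditions $\bd^i f_j = \bd^{j-1} f_i$ required to form an $n$-fan, so that Lemma~\ref{fans and shells} applies to construct $\widehat g$ with the correct faces. This uses precisely that $\bd f = \bd g$, i.e., $\bd^i f = \bd^i g$ as $(n-1)$-simplices for all $i$, which is the content of the remark following the definition of an $n$-pocket. The rest is routine: tracking the sign $\pm$ in the definitions of shells and fans, and invoking closure of $\cA$ under unions, restrictions, localizations and de-localizations as packaged in Lemma~\ref{fans and shells} and Claim~\ref{adding_vertex}. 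I would also remark that the two constructions are essentially inverse to each other, which makes the statement symmetric as phrased.
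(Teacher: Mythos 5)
Your first direction is correct and is essentially the paper's argument: delete the last term of the shell to get a fan, complete it to a boundary shell via Lemma~\ref{fans and shells}, and observe that the difference of the two shells is a pocket with support $[n]$ (its being a cycle, hence a genuine pocket, follows simply because both shells are cycles).

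The converse, however, has a genuine gap. You propose to build an $(n+1)$-simplex $\widehat g$ with \emph{all} of its faces prescribed: $\bd^{n+1}\widehat g = g$ and $\bd^i\widehat g = \bd^i\widehat f$ for $i \le n$. That is $n+2$ prescribed faces, i.e.\ you are asking that a given $n$-shell (not an $n$-fan --- note that $\{\bd^i\widehat f : i\le n\}\cup\{g\}$ has $n+2$ members) be realized as the boundary of an $(n+1)$-simplex. Lemma~\ref{fans and shells} does not do this: from $(n+1)$-amalgamation it completes a fan of $n+1$ compatible simplices by \emph{some} missing face $f_k$ which the lemma produces and which you cannot choose to be $g$. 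Filling in a fully prescribed $n$-shell is exactly $(n+2)$-amalgamation, which is not assumed. Worse, if such a $\widehat g$ always existed, your computation would show that every $n$-pocket $f-g$ is itself a boundary, whence (by Corollary~\ref{Hn_pockets}) $H_n(\cA;B)=0$ whenever $(n+1)$-CA holds --- contradicting the examples in Section~5, where strong types in stable theories (which have $3$-CA) have nontrivial $H_2$. The final step of your converse also conflates "is a boundary" with "differs from $\bd\widehat f$ by a boundary": since $\bd\widehat f$ is itself a boundary, these say the same (too strong) thing. The repair is that no $\widehat g$ is needed at all: with $\widehat f$ as you constructed it via Claim~\ref{adding_vertex}, the chain $d:=\bd\widehat f-(-1)^{n+1}(f-g)$ is already an $n$-shell with support $[n+1]$ --- its terms are $\bd^i\widehat f$ for $i\le n$ together with $(-1)^{n+1}g$, and the compatibility conditions hold because $\bd f=\bd g$ gives $\bd^i g=\bd^i\bd^{n+1}\widehat f=\bd^n\bd^i\widehat f$ --- and $d$ differs from $\pm(f-g)$ exactly by the boundary $\bd\widehat f$. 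Note that this direction then uses only non-triviality of $\cA$, not $(n+1)$-amalgamation.
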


\begin{proof}
Suppose $c$ is an $n$-shell with support $[n+1]$ and $$c = \pm \sum_{0 \leq i \leq n + 1} (-1)^i f_i$$ where $\supp(f_i) = [n + 1] \setminus \{i\}$.  Applying Lemma~\ref{fans and shells} to the $n$-fan $c - f_{n+1}$, we obtain a second $n$-shell

$$
c' = \left(\pm \sum_{0 \leq i \leq n} (-1)^i f_i\right) + (-1)^{n+1} f'
$$
such that $c'$ is a boundary.  Then $c - c' = \pm (f_{n+1} - f')$ is an $n$-pocket.

Conversely, let $h-h'$ be an $n$-pocket with support $[n]$. By Claim~\ref{adding_vertex}, there is
$\hat h$ extending $h$ with support $[n+1]$ such that $\partial^{n+1}\hat h=h$.  Then clearly
$$d:=\partial \hat h- (-1)^{n+1}(h-h')= \sum^{n+1}_{i=0}(-1)^i\partial^i\hat h -(-1)^{n+1}(h-h')$$
is an $n$-shell equivalent to $\pm (h-h')$.
\end{proof}

From Corollary~\ref{shellgen} and Lemma~\ref{shells and pockets} we get the following.

\begin{corollary}
If $\cA$ has $3$-amalgamation, then $H_2(\cA; B)$ is generated by equivalence classes of $2$-pockets.
\end{corollary}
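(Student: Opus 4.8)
The plan is to chain together the two results cited immediately before the statement. Corollary~\ref{shellgen} reduces the description of $H_2(\cA;B)$ to $2$-shells, and Lemma~\ref{shells and pockets} trades a $2$-shell for a $2$-pocket up to a boundary; putting these together produces exactly the asserted generating set.

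Concretely, I would first record that the hypotheses in force --- $\cA$ non-trivial together with $3$-amalgamation --- give $\cA$ the $3$-CA property: non-triviality supplies $1$-amalgamation and strong $2$-amalgamation, strong $2$-amalgamation immediately yields ordinary $2$-amalgamation (amalgamate the two $0$-simplex restrictions of a functor on $\P^-([1])$), and $3$-amalgamation is assumed. Thus Corollary~\ref{shellgen} applies with $n=3$ and shows that $H_2(\cA;B)$ is generated by the classes $[c]$ of $2$-shells $c$ over $B$. Next, for each such $c$, I would apply Lemma~\ref{shells and pockets} --- whose $(n+1)$-amalgamation hypothesis for $n=2$ is precisely the available $3$-amalgamation --- to conclude that $c$ differs from $\pm$(a $2$-pocket over $B$) by an element of $B_2(\cA;B)$; hence $[c]$ lies in the subgroup of $H_2(\cA;B)$ generated by classes of $2$-pockets. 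Since the classes $[c]$ already generate $H_2(\cA;B)$, the classes of $2$-pockets do too, which is the corollary.

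The only point that needs care --- and the closest thing here to an obstacle --- is a bookkeeping mismatch of supports: Corollary~\ref{shellgen} produces $2$-shells of arbitrary $4$-element support, whereas Lemma~\ref{shells and pockets} is phrased for shells whose support is exactly $[3]$. I would handle this by noting that the forward direction of the proof of Lemma~\ref{shells and pockets} appeals only to Lemma~\ref{fans and shells}, which is stated for $n$-fans of \emph{arbitrary} support; so given a $2$-shell $c=\pm\sum_{0\le i\le 3}(-1)^i f_i$ with support $S=\{s_0<s_1<s_2<s_3\}$, one applies Lemma~\ref{fans and shells} to the $2$-fan obtained by deleting the $i=3$ term (with $k=3$), obtaining a $2$-simplex $f_3'$ and a $3$-simplex $f$ with $\bd f$ equal to that fan minus $f_3'$; since the fan and $\pm c$ differ only by the single term $\pm f_3$, a short sign computation shows $c$ is homologous to $\pm(f_3'-f_3)$, a $2$-pocket whose support $S\setminus\{s_3\}$ has three elements. (Alternatively, one may first relabel $S$ onto $[3]$ via invariance of $\cA$ under isomorphisms and then quote Lemma~\ref{shells and pockets} verbatim.) With this observation in place the argument is complete, and no further input is required.
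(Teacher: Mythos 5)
Your proposal is correct and follows essentially the same route as the paper, which states the corollary as an immediate consequence of Corollary~\ref{shellgen} and Lemma~\ref{shells and pockets}; your observations that strong $2$-amalgamation yields $2$-amalgamation (hence $3$-CA) and that the support mismatch is resolved by applying Lemma~\ref{fans and shells} directly to a fan of arbitrary support are exactly the details the paper leaves implicit. One caution: the parenthetical alternative of relabeling the support by isomorphism invariance and quoting Lemma~\ref{shells and pockets} verbatim is not by itself sufficient, since an isomorphic copy of a shell need not represent the same homology class (the paper handles support changes via the prism lemma instead), but your main argument does not rely on it.
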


\begin{lemma}[Prism lemma]
\label{prism} Let $n\ge 1$.
Suppose that $\cA$ is a non-trivial amenable family of functors that has $(n+1)$-amalgamation.
Let $f-f'$ be an $n$-pocket in $\cA_B$ with support $s$, where $|s|=n+1$. Let $t$ be an $(n+1)$-element set disjoint from $s$.
Then given  $n$-simplex  $g$ in $\cA_B$ with the domain $\P(t)$, there is an $n$-simplex $g'$  such that $g-g'$ forms an $n$-pocket  in $\cA_B$
and  is equivalent, modulo $B_n(\cA; B)$, to the pocket $f-f'$. We may choose $g'$ first and then find $g$ to have the same conclusion.
\end{lemma}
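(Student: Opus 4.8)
The statement is the combinatorial shadow of the classical prism operator $\Delta^n\times I$ from singular homology: it should exhibit the two pockets as the two ends of a prism whose lateral boundary, being built only from the faces of $f$ (equivalently of $f'$, by the pocket hypothesis $\partial^i f=\partial^i f'$), cancels when one forms the difference of the two prisms. In our setting one cannot build such a prism directly: a genuine prism over $f$ has $(n+1)$-dimensional cells, and constructing those would require $(n+2)$-amalgamation, which is not assumed. The plan is therefore to realize the prism as a composite of \emph{elementary transports}, each of which adds a single new vertex and so can be carried out using only non-triviality and $(n+1)$-amalgamation, via Claim~\ref{adding_vertex} and Lemma~\ref{fans and shells}.

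First I would isolate the elementary transport: if $h-h'$ is an $n$-pocket over $B$ with support $r=\{r_0<\dots<r_n\}$, if $j\le n$, and if $m\notin r$, then there is an $n$-pocket $\bar h-\bar h'$ over $B$ with support $(r\setminus\{r_j\})\cup\{m\}$ which is equal to $h-h'$ modulo $B_n(\cA;B)$ up to sign. To see this, apply the construction in the proof of Claim~\ref{adding_vertex} — which uses only $1$- and strong $2$-amalgamation, and which works with the new vertex $m$ inserted at \emph{any} position of the order, not just at the top — to obtain an $(n+1)$-simplex $\widehat h$ with support $r\cup\{m\}$ whose face deleting $m$ is $h$. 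Next build an $(n+1)$-simplex $\widehat{h'}$ with the same support which agrees with $\widehat h$ on every face except the face deleting $m$ (set equal to $h'$) and the face deleting $r_j$ (left to be produced): the $n+1$ prescribed faces form an $n$-fan, where the verification of the fan-compatibility identities reduces precisely to the pocket equalities $\partial^i h=\partial^i h'$, so Lemma~\ref{fans and shells} supplies $\widehat{h'}$ together with its remaining face. Setting $\bar h,\bar h'$ to be the faces of $\widehat h,\widehat{h'}$ obtained by deleting $r_j$, one checks that $\bar h-\bar h'$ is an $n$-pocket and that $\partial(\widehat h-\widehat{h'})=\pm(h-h')\pm(\bar h-\bar h')$, since $\widehat h$ and $\widehat{h'}$ share all remaining faces; hence $h-h'$ and $\bar h-\bar h'$ agree modulo $B_n(\cA;B)$. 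Iterating this transport $n+1$ times, replacing $s_0,s_1,\dots,s_n$ one at a time by $t_0,t_1,\dots,t_n$, moves the pocket $f-f'$ to a pocket $q-q'$ with support $t$ that is equal to $f-f'$ modulo $B_n(\cA;B)$, up to an overall sign.

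It then remains to replace $q$ by the prescribed simplex $g$ while keeping the support $t$. Here I would route the last few transports through an auxiliary block of fresh vertices so as to arrange that, just before the final transport, the running $n$-simplex and $g$ agree on their common $(n-1)$-face $\partial^n g$; to prepare this one cones $\partial^n g$ off to a simplex on an auxiliary support by Claim~\ref{adding_vertex} and feeds it into the transports. With this agreement in hand, the final step builds the connecting $(n+1)$-simplex $\widehat q$ by strong $2$-amalgamation out of its two prescribed faces (the running simplex and $g$), the remaining ``mixed'' faces being whatever the cascade of strong $2$-amalgamations produces; then one builds the partner $(n+1)$-simplex via Lemma~\ref{fans and shells} as before, and the face of the partner that deletes the auxiliary vertex is the desired $g'$. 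One checks $g-g'$ is an $n$-pocket and that $g-g'\equiv f-f'$ modulo $B_n(\cA;B)$, up to a sign absorbed by reversing the pocket. The symmetric assertion — choosing $g'$ first — follows by running the whole argument with the roles of $(f,f')$ and $(g,g')$ interchanged. The main obstacle is exactly this last step: with only $(n+1)$-amalgamation one cannot prescribe all faces of a newly built simplex, so steering the transported simplex until a designated face becomes a given face of $g$ requires careful placement of the auxiliary vertices and repeated use of the fact that an elementary transport in which one target face is constrained to a prescribed simplex consistent with the source can still be performed. A secondary, purely bookkeeping, burden is verifying the fan-compatibility hypotheses at each use of Lemma~\ref{fans and shells} and tracking the accumulated signs.
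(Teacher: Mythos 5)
Your ``elementary transport'' is sound: gluing a fresh vertex onto $h$ by strong $2$-amalgamation, completing the partner $(n+1)$-simplex from the resulting $n$-fan via Lemma~\ref{fans and shells}, and reading off the two faces opposite the deleted vertex does produce a new pocket equivalent (up to sign) to the old one, and iterating it moves the pocket onto the support $t$. This part is close in spirit to the paper's argument, which also works leg-by-leg (an ``$(n+1)$-path'' whose boundary is $g-f$ plus wall faces, Claim~\ref{path_claim}) and also produces the partner end by $(n+1)$-amalgamation after localizing at the new corner and de-localizing -- essentially what Lemma~\ref{fans and shells} packages for you.

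The genuine gap is the endgame, which you yourself flag as ``the main obstacle'': your transports give no control over which simplex sits at the far end, whereas the lemma requires that end to be the \emph{prescribed} $g$ (this is what makes the lemma usable in Theorem~\ref{Hn_shells}, where a fixed fan must be matched). To glue the running simplex $q$ to $g$ by strong $2$-amalgamation you need $q\upharpoonright \P(t\cap t') = g\upharpoonright\P(t\cap t')$, i.e.\ exact agreement on a full $(n-1)$-face; to arrange that at the penultimate transport you would need agreement with the relevant $(n-2)$-face of $g$ one step earlier, and so on down in dimension. In other words, a growing restriction of $g$ has to be threaded through \emph{every} strong $2$-amalgamation in the sequence -- this is exactly how the paper builds its path in Claim~\ref{second_prism}, where the $k$-th leg is required to extend both the previous destination face and $g\upharpoonright\P(\{n+1,\dots,k+n+1\})$. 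Your one-line fix (``cone $\partial^n g$ off by Claim~\ref{adding_vertex} and feed it into the transports'') does not achieve this: the coned simplex is built independently of the running pocket, so it need not agree with the running simplex on their common support, and without that agreement the strong $2$-amalgamation you want to ``feed'' it into cannot be performed. A secondary, smaller issue: the final sign cannot be ``absorbed by reversing the pocket,'' since $g$ is a prescribed end and $f-f'$ has a fixed orientation, and $f-f'\equiv f'-f$ modulo boundaries is not automatic; this is repairable by choosing the positions (hence parities) of the inserted vertices, but it does need to be tracked rather than waved off.
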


\begin{proof}
We begin with some auxiliary definitions.

\begin{definition}\label{n-path}
Let $f:\P(\{0,\dots,n\})\to \cC$ and $g:\P(\{n+1,\dots,2n+1\})\to \cC$ be $n$-simplices. An \emph{$(n+1)$-path from $f$ to $g$} is a chain $p$ of $(n+1)$-simplices
$$
p=\sum_{k=0}^n (-1)^{n(k+1)}h_k
$$
such that:
\begin{enumerate}
\item
$\dom (h_k) = \P(\{k,\dots,k+n+1\})$ for each $k=0,\dots,n$;
\item
$\partial^0 h_n = g$ and $\partial^{n+1} h_0 = f$; and
\item
$\partial^0 h_k = \partial^{n+1} h_{k+1}$ for each $k=0,\dots,n-1$.
\end{enumerate}
\end{definition}

\begin{definition}
Let $f:\P(\{0,\dots,n\})\to \cC$ and $g:\P(\{n+1,\dots,2n+1\})\to \cC$ be $n$-simplices; let $p=\displaystyle \sum_{k=0}^n (-1)^{n(k+1)}h_k$ be an $(n+1)$-path from $f$ to $g$.

(1)\
For each $k=0,\dots,n$, the $(n+1)$-simplex $h_k$ will be called the \emph{$k$-th leg of the path}.

(2)\
For each leg $h_k$ of the path, the face $\partial^{n+1} h_k$ is the \emph{origin face of the leg}, $\partial^{0} h_k$ is the \emph{destination face of the leg}, and the remaining faces of the simplex $h_k$ are the \emph{wall faces}.
\end{definition}

With this terminology, the conditions (2) and (3) of Definition~\ref{n-path} are saying that the destination face of the last leg in the path is the simplex $g$, the origin face of the first leg is $f$, and that the destination face of $k$th leg is the same as the origin face of the $(k+1)$st leg.

\begin{claim}\label{path_claim}
Let $p = \displaystyle \sum_{k=0}^n (-1)^{n(k+1)} h_k$ be a path from $f$ to $g$. Then
$$
\partial p = g-f+\sum_{i=1}^n\sum_{k=0}^n (-1)^{i+n(k+1)} \partial^ih_k.
$$
That is, the boundary of the path is $g-f$ plus a linear combination of the wall faces of the path.
\end{claim}

\begin{proof}
Using the definition of the boundary of a chain and changing the order of summation, we have:
\begin{multline*}
\partial p = \partial \left(\sum_{k=0}^n (-1)^{n(k+1)} h_k\right)
\\
  =\sum_{k=0}^n\sum_{i=0}^{n+1} (-1)^{i+n(k+1)}\partial^ih_k
   =\sum_{i=0}^{n+1} \sum_{k=0}^n (-1)^{i+n(k+1)}\partial^ih_k
\\
  =\sum_{k=0}^n (-1)^{n(k+1)} \partial^0h_k +\sum_{i=1}^{n} \sum_{k=0}^n (-1)^{i+n(k+1)}\partial^ih_k
    +\sum_{k=0}^n (-1)^{n+1+n(k+1)}\partial^{n+1}h_k.
\end{multline*}
It remains to show that
$$
\sum_{k=0}^n (-1)^{n(k+1)} \partial^0h_k +\sum_{k=0}^n (-1)^{n+1+n(k+1)}\partial^{n+1}h_k = g-f.
$$
Indeed, the first sum can be rewritten as follows:
\begin{multline*}
\sum_{k=0}^n (-1)^{n(k+1)} \partial^0h_k
\\
= \sum_{k=0}^{n-1}(-1)^{n(k+1)}\partial^0 h_k + (-1)^{n(n+1)}\partial^0h_n
=\sum_{k=0}^{n-1}(-1)^{n(k+1)}\partial^0 h_k + g.
\end{multline*}
Taking into account that $(-1)^{n+1+n(k+1)} = -(-1)^{nk}$, we rewrite the second sum as
$$
\sum_{k=0}^n -(-1)^{nk}\partial^{n+1}h_k = -f - \sum_{k=1}^n (-1)^{nk} \partial^{n+1}h_k
=-f -\sum_{k=0}^{n-1} (-1)^{n(k+1)} \partial^{n+1}h_{k+1}.
$$
Condition (3) in Definition~\ref{n-path} implies that
$$
\sum_{k=0}^{n-1}(-1)^{n(k+1)}\partial^0 h_k = \sum_{k=0}^{n-1} (-1)^{n(k+1)} \partial^{n+1}h_{k+1},
$$
so the claim follows.
\end{proof}

Here is the plan for the rest of the proof of Lemma~\ref{prism}. We start with the $n$-pocket $f-f'$ in $\cA_B$ whose support is $s$ for an $(n+1)$-element set $s$.  Then given an arbitrary $n$-simplex $g \in \cA_B$ with domain $t$, we build a path $p$ from $f$ to $g$ in $\cA_B$. This is done in Claim~\ref{second_prism} below.

Next we use the walls of the path $p$ and the the simplex $f'$ to gradually build path $p'$ in $\cA_B$ to some simplex $g'$ such that the walls of $p$
are the same as the walls of $p'$. It will follow immediately from the Claim~\ref{path_claim} that
$$
f-f' +\partial (p-p') = g-g',
$$
since the walls of the paths will cancel.

\begin{claim}
\label{second_prism}
If $\cA$ is a non-trivial amenable family of functors, and $f: \P([n]) \rightarrow \cC$ and $g: \P(\{n+1, \ldots, 2n+1\})$ are $n$-simplices in $\cA$, then there is an $(n+1)$-path from $f$ to $g$.
\end{claim}

\begin{proof}
For $k=0,\dots,n$, let $U_k=\{k,\dots,k+n+1\}$.  We build $(n+1)$-simplices $h_k : \P(U_k) \rightarrow \cC$ in $\cA_B$ by induction on $k$.  For $k=0$, use strong $2$-amalgamation to find an $h_0 \in \cA_B$ extending both $f$ and $g \upharpoonright \P(\{n+1\})$.  Given $h_{k-1}$ in $\cA_B$, we can use strong $2$-amalgamation again to build $h_k : \P(U_k) \rightarrow \cC$ in $\cA_B$ such that $\bd^{n+1} h_k = \bd^0 h_{k-1}$ and $h_k$ extends $g \upharpoonright \P(\{n+1, n+2, \ldots, k+n+1 \})$.  Then $p=\displaystyle \sum_{k=0}^n (-1)^{n(k+1)}h_k$ is a path from $f$ to $g$.
\end{proof}

Now we construct a path from $f'$ to some $n$-simplex $g'$ using the walls of $p$. The walls of $p$ are the following simplices:
$$
\{\partial^i h_k \mid i=1,\dots,n;\ k=0,\dots,n\}.
$$
By induction on $k=0,\dots,n$, we construct simplices $h'_k$ in $\cA_B$ such that:
\begin{enumerate}
\item
$\partial^{n+1}h'_0 = f'$;
\item
for $i=1,\dots,n$ and $k=0,\dots,n$ we have $\partial^i h'_k = \partial^i h_k$;
\item
$\partial^0 h'_k = \partial^{n+1} h'_{k+1}$ for each $k=0,\dots,n-1$.
\end{enumerate}

For $k=0$, consider the $(n-1)$-simplices $(\partial^ih_0)|_{\{0\}}$, for $i=1,\dots,n$, and the $(n-1)$-simplex $f'|_{\{0\}}$ (that is, we take the walls of the 0th leg of the path and the starting face, which are all $n$-simplices, and localize them at the corner $\{0\}$).
Using $(n+1)$-amalgamation, we can embed these into an $n$-simplex $h''_0$ in $\cA_{f'(\{0\})}$.   Then we apply de-localization to $h''_0$ to get $h'_0 \in \cA_B$ into which the wall faces and the starting face are all embedded. Taking the face $\partial^0h'_0$, we now have the starting face for the leg $h'_1$, and so on, until we get to $g'$.  This completes the proof of Lemma~\ref{prism}. The same argument works when $g'$ is chosen first.

\end{proof}

\begin{corollary}
\label{Hn_generators} Let $n\ge 1$.
Suppose $\cA$ is a non-trivial amenable family that has $(n+1)$-CA.
The group $H_n(\cA; B)$ is generated by equivalence classes $n$-shells with support $[n+1]$.
\end{corollary}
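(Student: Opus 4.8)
\emph{Proof proposal.} The plan is to start from Corollary~\ref{shellgen}, which already presents $H_n(\cA;B)$ as generated by $n$-shells of \emph{arbitrary} support, and then to use the shell--pocket correspondence of Lemma~\ref{shells and pockets} together with the Prism Lemma~\ref{prism} to replace each such $n$-shell by a homologous one whose support is the prescribed set $[n+1]$.

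First, since $\cA$ has $(n+1)$-CA and $n+1\ge 2$, Corollary~\ref{shellgen} (applied with its ``$n$'' taken to be $n+1$) shows that $H_n(\cA;B)$ is generated by the classes $[c]$ of $n$-shells $c$ over $B$, where $\supp(c)$ may be any $(n+2)$-element subset of $\omega$. Hence it suffices to show that every $n$-shell $c$ over $B$ satisfies $[c]=[c^{\ast}]$ in $H_n(\cA;B)$ for some $n$-shell $c^{\ast}$ over $B$ with $\supp(c^{\ast})=[n+1]$.

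Fix an $n$-shell $c$ over $B$ with support $T$. Carrying out the first half of the proof of Lemma~\ref{shells and pockets} with $T$ in place of $[n+1]$---this is legitimate, since the only external input there is Lemma~\ref{fans and shells}, which, via the relabeling in its own proof, holds for $n$-fans of arbitrary support---produces an $n$-pocket $p$ over $B$ with support $T\setminus\{\max T\}$ such that $c-p\in B_n(\cA;B)$. Next, pick an $(n+1)$-element set $U\subseteq\omega$ disjoint from $(T\setminus\{\max T\})\cup[n]$. Relabeling one of the $n$-simplices occurring in $c$ by an order isomorphism yields an $n$-simplex over $B$ with any prescribed $(n+1)$-element domain (it stays in $\cA$ by invariance under isomorphisms and keeps the base $f(\emptyset)=B$), so we may apply the Prism Lemma~\ref{prism} to $p$, transporting it from $T\setminus\{\max T\}$ to the disjoint set $U$; this gives an $n$-pocket $q$ over $B$ with support $U$ and $p-q\in B_n(\cA;B)$. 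Applying the Prism Lemma once more, transporting $q$ from $U$ to the disjoint set $[n]$, gives an $n$-pocket $r$ over $B$ with support $[n]$ and $q-r\in B_n(\cA;B)$. Finally, the second half of Lemma~\ref{shells and pockets} converts $r$ into an $n$-shell $c^{\ast}$ over $B$ with support $[n+1]$ and $r-c^{\ast}\in B_n(\cA;B)$. Chaining the four congruences gives $[c]=[c^{\ast}]$, completing the reduction and hence the proof.

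The hypotheses of the lemmas invoked---non-triviality and $(n+1)$-amalgamation (hence $k$-amalgamation for all $k\le n+1$)---are immediate from the assumption of $(n+1)$-CA, and the remaining checks (relabeling preserves $\cA$-membership and the base $B$; the Prism Lemma may be fed a freely chosen $n$-simplex of the target support; the two target sets are indeed disjoint from the respective source supports) are mechanical. The one point worth flagging rather than routine is that relabeling the support of an $n$-shell is \emph{not} a priori a homology-preserving operation, so one cannot simply reduce to the case $\supp(c)=[n+1]$; this is exactly why the argument must route through $n$-pockets and the Prism Lemma. Closely related is the small observation, used above, that the shell-to-pocket half of Lemma~\ref{shells and pockets} goes through verbatim for an $n$-shell of arbitrary support. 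Managing the disjointness conditions for the two applications of the Prism Lemma is the only other thing requiring a little care.
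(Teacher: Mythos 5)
Your proof is correct and follows essentially the same route as the paper: reduce via Corollary~\ref{shellgen} to $n$-shells, pass to an $n$-pocket by Lemma~\ref{shells and pockets}, move the pocket to support $[n]$ by the Prism Lemma (the paper says ``one or two applications''), and convert back to a shell with support $[n+1]$ by the converse clause of Lemma~\ref{shells and pockets}. Your explicit handling of arbitrary supports and of the disjointness conditions via the intermediate set $U$ simply fills in details the paper leaves implicit.
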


\begin{proof}
We know by Corollary~\ref{shellgen} above that $H_n(\cA; B)$ is generated by the equivalence classes of $n$-shells.  By Lemma~\ref{shells and pockets}, each equivalence class containing an $n$-shell also contains an $n$-pocket. Each $n$-pocket is equivalent to an $n$-pocket with support $[n]$, via one or two applications of the prism lemma above. The conclusion now follows from the converse clause of Lemma~\ref{shells and pockets}.
\end{proof}

We have a shell version of the prism lemma as well.

\begin{lemma}[Prism lemma, shell version]
\label{shellprism}
Let $\cA$ be a non-trivial amenable family of functors that satisfies $(n+1)$-amalgamation for some $n \geq 1$.
Suppose that an $n$-shell $f:=\sum_{0\leq i\leq n+1} (-1)^{i}f_i$ and an $n$-fan
$g^-:=\sum_{i\in \{0,...,\hat k,... ,n+1\}} (-1)^{i}g_i$ are given, where $f_i,g_i$ are $n$-simplices over $B$, $\supp(f)=s$ with $|s|=n+2$, and $\supp(g^-)=t=\{t_0,...,t_{n+1}\}$, where $t_0<...<t_{n+1}$ and $s\cap t=\emptyset$. Then there is an
$n$-simplex $g_k$ over $B$ with support $\bd_k t := t \setminus \{t_k\}$ such that  $g:=g^-+(-1)^{k}g_{k}$
is  an $n$-shell over $B$ and  $f-g\in B_n(\cA;B)$.
\end{lemma}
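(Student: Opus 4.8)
The plan is to deduce this \emph{shell} prism lemma from the \emph{pocket} prism lemma (Lemma~\ref{prism}). The idea: convert the given shell $f$ into a homologous pocket; complete the fan $g^-$ in an arbitrary first way using Lemma~\ref{fans and shells}; and then use Lemma~\ref{prism} to replace that first completion by another one whose boundary is the same (so that it still completes $g^-$ to a shell) but which shifts the homology class of the resulting shell from $0$ to $[f]$.

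\emph{Step 1 (turn $f$ into a pocket).} By Lemma~\ref{shells and pockets}, together with invariance of $\cA$ under isomorphisms to absorb the arbitrary $(n+2)$-element support $s$, the shell $f$ equals, modulo $B_n(\cA;B)$, $\pm(p_0-p_1)$ for some $n$-pocket $p_0-p_1$ in $\cA_B$ whose support $s'$ is an $(n+1)$-element subset of $s$; in particular $s'\cap t=\emptyset$ since $s\cap t=\emptyset$. \emph{Step 2 (a first completion of $g^-$).} Apply Lemma~\ref{fans and shells} to the $n$-fan $g^-$; since $\cA$ has $(n+1)$-amalgamation this yields an $n$-simplex $g^\circ$ over $B$ with support $\bd_k t$ and an $(n+1)$-simplex $G\in\cA_B$ with $g^-+(-1)^k g^\circ=\bd G$. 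Thus $g^\circ$ satisfies exactly the boundary relations that any $n$-simplex completing $g^-$ to an $n$-shell must satisfy, and $g^-+(-1)^k g^\circ$ is a shell which happens to be a boundary.

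\emph{Step 3 (adjust the completion via the prism lemma, and conclude).} Apply Lemma~\ref{prism} in the form where one of the two simplices of the target pocket is prescribed in advance: feed in the pocket $\pm(p_0-p_1)$ from Step 1 (support $s'$), the $(n+1)$-element set $\bd_k t$ (disjoint from $s'$), and the prescribed simplex $g':=g^\circ$ with domain $\P(\bd_k t)$. This produces an $n$-simplex $g_k$ over $B$ with support $\bd_k t$ such that $g_k-g^\circ$ is an $n$-pocket in $\cA_B$ equivalent, modulo $B_n(\cA;B)$, to $\pm(p_0-p_1)$, hence — choosing the orientation of the input pocket appropriately — to $(-1)^k f$. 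Because $g_k-g^\circ$ is a pocket we have $\bd g_k=\bd g^\circ$, so $g_k$ also completes $g^-$ to an $n$-shell; set $g:=g^-+(-1)^k g_k$, which has support $t$. Then $f-g=(f-\bd G)-(-1)^k(g_k-g^\circ)$, and since $f-\bd G\equiv f$ and $(-1)^k(g_k-g^\circ)\equiv f$ modulo $B_n(\cA;B)$, we get $f-g\in B_n(\cA;B)$, as required.

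The step I expect to require the most care is the sign bookkeeping in Steps 1 and 3: one must track the parities produced by Lemma~\ref{shells and pockets} and by Lemma~\ref{prism} and match them against the factor $(-1)^k$ appearing in the definition of $g$. This is not a genuine obstacle, because a pocket $p_0-p_1$ may be fed into Lemma~\ref{prism} as either $p_0-p_1$ or $p_1-p_0$, so the correct overall sign is always attainable; one simply has to do the accounting once and carefully. A secondary, essentially immediate point is the verification that the simplex $g_k$ delivered by Lemma~\ref{prism} genuinely completes the fan $g^-$ to a shell — this holds because $g_k$ has the same boundary as $g^\circ$, which by Step 2 does complete $g^-$.
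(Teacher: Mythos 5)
Your proposal is correct and follows essentially the same route as the paper's proof: the paper likewise converts $f$ into an $n$-pocket and completes $g^-$ by two applications of Lemma~\ref{fans and shells}, then invokes the prism lemma (Lemma~\ref{prism}) in its ``choose $g'$ first'' form to produce $g_k$, with the same cancellation $f-g=(f-\bd G)-(-1)^k(g_k-g^\circ)$ and the same observation that $\bd g_k=\bd g^\circ$ makes $g$ a shell. The only cosmetic difference is that you quote Lemma~\ref{shells and pockets} (plus a relabeling remark) for Step 1, whereas the paper applies Lemma~\ref{fans and shells} directly to $f$ minus its $k$-th term, which also sidesteps any worry about the support $[n+1]$ normalization since that lemma is support-agnostic.
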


\begin{proof}
Assume $f$ and
$g^-$ are given, as supposed.
Now by Lemma~\ref{fans and shells}, there are an $n$-simplex $f'_k$ with $\dom(f'_{k})=\dom(f_{k})$ and an $(n+1)$-chain (indeed a single $(n+1)$-simplex) $c$, such that $f+(-1)^{k}(f'_{k}-f_{k})=\bd c$, i.e. $f$ is equivalent to
an $n$-pocket $(-1)^{k+1}(f'_{k}-f_{k})$. Again by Lemma~\ref{fans and shells},
there are an $(n+1)$-simplex $d$ with $\dom(d)=\P(t)$ and an $n$-simplex $g'_{k}$
such that $\bd d=g^-+(-1)^{k}g'_{k}.$ Hence
$$f-g^-=(-1)^{k+1}(f'_{k}-f_{k})+(-1)^{k}g'_{k}+\bd(c-d).$$
Then by the prism lemma (Lemma~\ref{prism}), there is an $n$-simplex $g_{k}$ such that
$(-1)^{k+1}(f'_{k}-f_{k})+(-1)^{k}g'_{k}$ is equivalent to $(-1)^{k}g_{k}$ up to
a boundary. Hence $f$ is equivalent to $g=g^-+(-1)^{k}g_{k}$ up to a boundary.
Moreover, since $g-g'$ is a pocket, in particular a cycle, clearly $g$ is an
$n$-shell.
\end{proof}

The next theorem gives an even simpler standard form for elements of $H_n(\cA; B)$.  Note that it is a strengthening of Corollary~\ref{Hn_generators} above, which only says that $H_n(\cA; B)$ is \emph{generated} by shells with support $[n+1]$.

\begin{theorem}
\label{Hn_shells}
If $\cA$ is a non-trivial amenable family of functors with $(n+1)$-CA for some $n \geq 1$, then $$H_n(\cA; B) = \left\{ \left[c\right] : c \textup{ is an } n\textup{-shell over } B \textup{ with support } [n+1] \right\}.$$
\end{theorem}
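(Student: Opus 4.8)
The plan is to deduce the theorem from Corollary~\ref{Hn_generators}, Lemma~\ref{shells and pockets}, and the observation that the relevant classes form a subgroup. By Corollary~\ref{Hn_generators}, $H_n(\cA;B)$ is generated by the classes of $n$-shells with support $[n+1]$, and by Lemma~\ref{shells and pockets} every such class coincides with the class of some $n$-pocket in $\cA_B$ with support $[n]$. Thus it suffices to prove that
$$
\mathcal{H}:=\{\,[f-f']\in H_n(\cA;B) : f-f'\textup{ is an }n\textup{-pocket in }\cA_B\textup{ with support }[n]\,\}
$$
is a subgroup of $H_n(\cA;B)$: since it contains a generating set, it will then equal $H_n(\cA;B)$, and feeding each of its elements through the converse clause of Lemma~\ref{shells and pockets} produces, for every class in $H_n(\cA;B)$, a representative that is an $n$-shell with support $[n+1]$. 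The reverse inclusion is immediate because every $n$-shell is a cycle.

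Closure of $\mathcal{H}$ under negation and the fact that $0=[f-f]\in\mathcal{H}$ are trivial, since $f'-f$ is again an $n$-pocket with support $[n]$ whenever $f-f'$ is. The substance is closure under addition, and here I would use the flexibility built into the Prism Lemma (Lemma~\ref{prism}), which allows one of the two simplices of the output pocket to be chosen in advance. The first step is to show: for every class $[P]\in\mathcal{H}$ and every $n$-simplex $g\in S_n(\cA;B)$ with support $[n]$, there is an $n$-simplex $f_0$ with support $[n]$ such that $[P]=[f_0-g]$. To see this, fix an $(n+1)$-element set $t$ disjoint from $[n]$ (say $t=\{n+1,\dots,2n+1\}$); apply Lemma~\ref{prism} once to transport $P$ to an equivalent $n$-pocket supported on $t$, and then apply it again to transport that pocket back to one supported on $[n]$, this time prescribing the second simplex of the output to be $g$. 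Granting this, take $[P_1],[P_2]\in\mathcal{H}$ and write $[P_2]=[g-h]$ with $\supp(g)=\supp(h)=[n]$; using the first step, write $[P_1]=[f_0-g]$ with $\supp(f_0)=[n]$. Since $f_0-g$ and $g-h$ are cycles, $\partial f_0=\partial g=\partial h$, so $f_0-h$ is again an $n$-pocket with support $[n]$, and $[P_1]+[P_2]=[f_0-g]+[g-h]=[f_0-h]\in\mathcal{H}$.

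The one genuinely delicate point is the first step of the previous paragraph: a single application of the Prism Lemma transports a pocket only onto a support \emph{disjoint} from the original, so one cannot reach a pocket on $[n]$ with a prescribed simplex in one move — the detour through the auxiliary set $t$ and back is exactly what is needed, at the cost of invoking Lemma~\ref{prism} twice. Everything else, including tracking the signs and the ``modulo $B_n(\cA;B)$'' equivalences through these transports (all of which are handled inside Lemmas~\ref{prism} and~\ref{shells and pockets}) and checking that the auxiliary $n$-simplices over $B$ on the various supports exist (built from a $0$-simplex over $B$ by repeated use of Claim~\ref{adding_vertex}), is routine.
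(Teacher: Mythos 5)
Your argument is correct, but it is not the route the paper takes, so a comparison is in order. The paper also begins from Corollary~\ref{Hn_generators}, but then stays entirely at the level of shells with support $[n+1]$: given two such shells $d$ and $e$, it uses the shell version of the prism lemma (Lemma~\ref{shellprism}) to normalize both so that they share a common $n$-fan $c$, and then, by three applications of Lemma~\ref{fans and shells}, manufactures boundaries $d_0,d_1,d_2$ and a chain $d_3$ with $d_0+d_1+d_2+d_3=d+e$, so that $[d+e]=[d_3]$ with $d_3$ again a shell of support $[n+1]$. You instead work with pockets: after converting the generators to pockets with support $[n]$ via Lemma~\ref{shells and pockets}, you prove closure of the pocket classes under addition by a double application of Lemma~\ref{prism} (out to a disjoint support and back, using the ``choose $g'$ first'' clause to prescribe the second simplex), which lets the sum $[f_0-g]+[g-h]$ telescope to $[f_0-h]$; the shell form of the conclusion is then recovered from the converse clause of Lemma~\ref{shells and pockets}. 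In effect you establish Corollary~\ref{Hn_pockets} first and deduce Theorem~\ref{Hn_shells} from it, whereas the paper proves the theorem directly and obtains Corollary~\ref{Hn_pockets} afterwards. Your version buys a cleaner additivity argument -- no sign-laden fan cancellation, only the telescoping identity $\partial f_0=\partial g=\partial h$ guaranteeing that $f_0-h$ is again a pocket -- at the cost of two extra passages between shells and pockets; the paper's version keeps a shell representative in hand at every stage. Your back-and-forth use of the prism lemma to land on $[n]$ with a prescribed simplex is legitimate as the lemma is stated (and is exactly the ``one or two applications'' move the paper itself makes in proving Corollary~\ref{Hn_generators}); the only pedantic caveat is the degenerate bookkeeping around the zero class (whether $f-f$ counts as a pocket), which is harmless since $0$ is in any case represented by $\partial\widehat{f}$ for an $(n+1)$-simplex $\widehat{f}$ with support $[n+1]$.
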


\begin{proof} By Corollary~\ref{Hn_generators}, it suffices to show the following: if $d$ and $e$ are any two $n$-shells in $\cA_B$ with support $[n+1]$, then $[d + e] = [d']$ for some $n$-shell $d'$ in $\cA_B$ with the same support.

First, pick any $n$-fan in $\cA_B$ $$c = f_{\widehat{0}} - f_{\widehat{1}} + \ldots + (-1)^n f_{\widehat{n}}$$ such that the domain of $f_{\widehat{i}}$ is $\P(\{0, \ldots, \widehat{i}, \ldots, n+1\})$.  Applying the
shell prism lemma to $d$ and to $e$ separately, we see that we can assume (up to equivalence modulo $B_n(\cA; B)$) that $d = c + (-1)^{n+1} g$ and $e = c + (-1)^{n+1} h$ for some $n$-simplices $g$ and $h$.  By Lemma~\ref{fans and shells}, we can pick another $n$-simplex $f_{\widehat{n+1}}$ such that $d_0 := c + (-1)^{n+1} f_{\widehat{n+1}}$ is in $B_n(\cA; B)$.

Next, use Lemma~\ref{fans and shells} two more times to pick an $n$-simplices $k_0$ and $k_1$ such that $$d_1 := k_0 - f_{\widehat{1}} + f_{\widehat{2}} - \ldots + (-1)^{n+1} g$$ and $$d_2 := f_{\widehat{0}} - k_1 + f_{\widehat{2}} - \ldots + (-1)^{n+1} h$$ are both in $B_n(\cA; B)$, where the ``$\ldots$'' is filled in with the appropriate $f_{\widehat{i}}$'s.  Finally, let $$d_3 := -k_0 + k_1 - f_{\widehat{2}} + \ldots + (-1)^n f_{\widehat{n+1}}.$$

Then since $d_0, d_1,$ and $d_2$ are in $B_n(\cA; B)$, $$[d_3] = [d_0 + d_1 + d_2 + d_3].$$ On the other hand, simply by canceling terms, we compute:

$$ d_0 + d_1 + d_2 + d_3 = 2 f_{\widehat{0}} - 2 f_{\widehat{1}} + 2 f_{\widehat{2}} - \ldots + 2(-1)^n f_{\widehat{n}} + (-1)^{n+1} g  + (-1)^{n+1} h$$ $$= d + e.$$

\end{proof}

Now using Theorem~\ref{Hn_shells} and Lemma~\ref{shells and pockets}, we get the following.

\begin{corollary}\label{Hn_pockets}
If $\cA$ is a non-trivial amenable family with $(n+1)$-CA (for some $n \geq 1$), then
$$
H_n(\cA;B ) = \left\{ [c] : c \textup{ is an } n\textup{-pocket in } \cA \textup{ over $B$ with support } [n] \right\}.
$$
\end{corollary}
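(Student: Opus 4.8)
The plan is to read this off directly from the two standard-form results just proved, Theorem~\ref{Hn_shells} and Lemma~\ref{shells and pockets}, by proving the two inclusions between the groups (or rather, the underlying sets) separately. The hypotheses are exactly what those results need: $(n+1)$-CA gives Theorem~\ref{Hn_shells}, and non-triviality together with $(n+1)$-amalgamation (a consequence of $(n+1)$-CA) gives Lemma~\ref{shells and pockets}.

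For the inclusion $\supseteq$, I would simply note that an $n$-pocket is by definition an $n$-cycle, so if $c$ is an $n$-pocket over $B$ with support $[n]$, then its constituent simplices lie in $S_n(\cA;B)$ and $\partial c = 0$, whence $c \in Z_n(\cA;B)$ and $[c] \in H_n(\cA;B)$. For the inclusion $\subseteq$, I would start from an arbitrary class in $H_n(\cA;B)$; by Theorem~\ref{Hn_shells} it equals $[d]$ for some $n$-shell $d$ over $B$ with support $[n+1]$. Then the first clause of Lemma~\ref{shells and pockets} says $d$ is equivalent, modulo $B_n(\cA;B)$, to an $n$-pocket $c$ over $B$ with support $[n]$, so the class is $[c]$. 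Combining the two inclusions yields the displayed equality.

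There is essentially no obstacle here; the work has already been done in Theorem~\ref{Hn_shells} and Lemma~\ref{shells and pockets}, and this corollary is pure bookkeeping. The only point worth a sentence is that the support of the pocket is exactly $[n]$, not merely some $(n+1)$-element set: this is delivered verbatim by the statement of Lemma~\ref{shells and pockets} (whose construction, via the prism lemma~\ref{prism}, already normalizes the support), so no additional appeal to the prism lemma is needed to move the support into standard position.
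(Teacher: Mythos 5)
Your proposal is correct and takes the same route as the paper, which obtains this corollary directly from Theorem~\ref{Hn_shells} together with Lemma~\ref{shells and pockets}, exactly the two inclusions you spell out. (One incidental slip: the support-$[n]$ normalization inside Lemma~\ref{shells and pockets} comes from Lemma~\ref{fans and shells} and Claim~\ref{adding_vertex} rather than the prism lemma, but this does not affect your argument.)
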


\section{Homology groups in model theory}

In this section, we define some amenable classes of functors that arise in model theory. The properties of the classes of functors
were the motivation for Definition~\ref{amenable}. Given either a complete rosy theory $T$ or a complete type $p \in S(A)$ in a rosy theory, we will define both ``type homology groups'' $H^t_n(T)$ (or $H^t_n(p)$) or the ``set homology groups'' $H^{set}_n(T)$ (or $H^{set}_n(p)$).  As we show below, these definitions will lead to isomorphic homology groups.

For the remainder of this paper, we assume that $T$ is a rosy theory having elimination of hyperimaginaries.   The reason for this  is so that we have a nice independence notion.  Throughout, ``independent'' or ``nonforking'' will mean independence with respect to thorn nonforking. But the assumption is for convenience not for full generality.
For example if $T$ is   simple, then one may assume the independence is usual nonforking in
$\C^{heq}$ while replacing $\acl$ by $\bdd$ and so on.  But due to elimination of hyperimaginaries
 thorn forking is equivalent to usual forking in simple $T$ \cite{EO}. Moreover there are non-rosy examples having suitable independence notions that fit in our amenable category  context
\cite{KK}.

\subsection{Type homology}

We will work with $*$-types -- that is, types with possibly infinite sets of variables -- and to avoid some technical issues, we will place an absolute bound on the cardinality of the variable sets of the types we consider.  Fix some infinite cardinal $\kappa_0 \geq |T|$. We will assume that every $*$-type has no more than $\kappa_0$ free variables.  We also fix a set $\cV$ of variables such that $|\cV| > \kappa_0$ and assume that all variables in $*$-types come from the set $\cV$ (which is a ``master set of variables.'')  We work in a monster model $\C=\C^{eq}$ which is saturated in some cardinality greater than $2^{|\cV|}$.  We let $\bar \kappa=|\C|$. As we will see in the next section, the precise values of $\kappa_0$ and $|\cV|$ will not affect the homology groups.

Given a set $A$, strictly speaking we should write ``\textbf{a} complete $*$-type of $A$'' instead of ``\textbf{the} complete $*$-type of $A$'' -- there are different such types corresponding to different choices of which set of variables to use, and this distinction is crucial for our purposes.

If $X$ is any subset of the variable set $\cV$, $\sigma: X \rightarrow \cV$ is any injective function, and $p(\overline{x})$ is any $*$-type such that $\overline{x}$ is contained in $X$, then we let $$\sigma_* p = \left\{\varphi(\sigma(\overline{x})) : \varphi(\overline{x}) \in p \right\}.$$

\begin{definition}
If $A$ is a small subset of the monster model, then $\cT_A$ is the category such that
\begin{enumerate}
\item The objects of $\cT_A$ are are all the complete $*$-types in $T$ over $A$, including (for convenience) a single ``empty type'' $p_\emptyset$ with no free variables;

\item $\mor_{\cT_A}(p(\overline{x}), q(\overline{y}))$ is the set of all injective maps $\sigma : \overline{x} \rightarrow \overline{y}$ such that $\sigma_*(p) \subseteq q$
\end{enumerate}


\end{definition}

Note that this definition gives a notion of two types $p(\overline{x})$ and $q(\overline{y})$ being ``isomorphic:'' namely, that $q$ can be obtained from $p$ by relabeling variables.

\begin{definition}
\label{type_simplex}
If $A$ is a small subset of the monster model, a \emph{closed independent type-functor based on $A$} is a functor $f:X \to \cT_A$ such that:
\begin{enumerate}
\item $X$ is a downward-closed subset of $\P(s)$ for some finite $s \subseteq \omega$.

\item
Suppose $w \in X$ and $u,v \subseteq w$.  Let $\overline{x}_t$ be the variable set of $f(t)$ and (whenever $r \subseteq t \subseteq s$) let $(f^r_t)_* : p_r(\overline{x}_r) \rightarrow p_t(\overline{x}_t)$ be the image of the inclusion map under the functor $f$.   Then whenever $\overline{a}$ realizes the type $f(w)$ and $\overline{a}_u$, $\overline{a}_v$, and $\overline{a}_{u \cap v}$ denote subtuples corresponding to the variable sets $f^u_w (\overline{x}_u)$, $f^v_w (\overline{x}_v)$, and $f^{u \cap v}_w (\overline{x}_{u \cap v})$, then

$$\overline{a}_u \nonfork _{A \cup \overline{a}_{u \cap v}} \overline{a}_v.$$

\item
For all non-empty $u\in X$ and any $\overline{a}$ realizing $f(u)$, we have (using the notation above) $\overline{a} = \acl\left(A \cup \bigcup_{i\in u} \overline{a}_{\{i\}}\right)$.
\end{enumerate}

(The adjective ``closed'' in the definition refers to the fact that, by (3), all the types $f(u)$ are $*$-types of algebraically closed tuples.)

Let $\cA^t(T; A)$ denote all closed independent type-functors based on $A$.

\end{definition}

\begin{remark}
\label{independence}
It follows from the definition above and the basic properties of nonforking that if $f$ is a closed independent type-functor based on $A$ and $u \in \dom(f)$ is a set of size $k$, then $f(u)$ is the type of the algebraic closure of an $B$-independent set $\{\overline{a}_1, \ldots, \overline{a}_k\}$, where $B$ is some realization of the type $f^{\emptyset}_u(f(\emptyset))$ -- namely, let $\overline{a}_i$ be the subtuple of some realization $\overline{a}$ of $f(u)$ corresponding to the variables in $f^{\{i\}}_u(\overline{x}_{\{i\}})$.
\end{remark}

\begin{definition}
If $A = \acl(A)$ is a small subset of the monster model and $p \in S(A)$, then a \emph{closed independent type-functor in $p$} is a closed independent type-functor $f: X \rightarrow \cT_A$ based on $A$ such that if $X \subseteq \P(s)$ and $i \in s$, then $f(\{i\})$ is the complete $*$-type of $\acl(C \cup \{b\})$ over $A$, where $C$ is some realization of $f(\emptyset)$ and $b$ is some realization of a nonforking extension of $p$ to $C$.

Let $\cA^t(p)$ denote all closed independent type-functors in $p$.

\end{definition}

\begin{proposition}
The sets $\cA^t(T;A)$ and $\cA^t(p)$ are non-trivial amenable families of functors.
\end{proposition}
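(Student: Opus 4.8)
The plan is to check, for each of $\cA^t(T;A)$ and $\cA^t(p)$, the four closure properties of Definition~\ref{amenable} and then the three conditions of non-triviality (the family is nonempty, has $1$-amalgamation, and has strong $2$-amalgamation). Every one of these reduces to standard thorn-forking calculus over algebraically closed sets: existence of nonforking extensions over an arbitrary base, monotonicity, transitivity, and the equivalence between ``$\{\overline{a}_i : i\in I\}$ is an independent family over a base $D$'' and the corresponding pairwise/stepwise nonforking statements. I would carry out the verification first for $\cA^t(T;A)$ and then observe that the extra requirement defining $\cA^t(p)$ --- that each vertex $f(\{i\})$ is the type over $A$ of $\acl(C\cup\{b\})$ for $C\models f(\emptyset)$ and $b$ a realization of a nonforking extension of $p$ to $C$ --- is preserved by each construction; the only point to note is that localizing or de-localizing can only enlarge the base, and a realization of a nonforking extension of $p$ over a larger base still realizes a nonforking extension over a smaller one (by monotonicity).

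Clauses (1)--(3) of Definition~\ref{amenable}, nonemptiness, and $1$-amalgamation are essentially formal. Invariance under isomorphism holds because conditions (2) and (3) of Definition~\ref{type_simplex} refer to realizations only through their types over $A$ and through algebraic closure, and these are unchanged by relabeling the index set $s$ and the variables. Closure under restrictions and unions holds because those conditions are local: $f\colon X\to\cT_A$ lies in $\cA^t(T;A)$ if and only if $f\restriction\P(w)$ does for every $w\in X$, since (2) and (3) only quantify over subsets of a single $\P(w)$. For closure under localizations, $f|_t(u)=f(t\cup u)$, so condition (3) for $f|_t$ is condition (3) for $f$ at $t\cup u$ (when $t\neq\emptyset$; the case $t=\emptyset$ is trivial), and the independence demanded of $f|_t$ at a face $w$ with $u,v\subseteq w$ is exactly that demanded of $f$ at $t\cup w$ with $t\cup u$ and $t\cup v$, because $(t\cup u)\cap(t\cup v)=t\cup(u\cap v)$; for $\cA^t(p)$ one checks in addition, using condition (2) of Definition~\ref{type_simplex} for $f$ at $\{i\}$ and $t$, that $f(t\cup\{i\})$ is the algebraic closure of $f(t)$ together with a realization of $p$ independent from $f(t)$ over $A$. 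Nonemptiness is witnessed by the single-object functor $\emptyset\mapsto$ empty type, and $1$-amalgamation by taking a realization of a nonforking extension of $p$ over a realization of $f(\emptyset)$.

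Strong $2$-amalgamation (and $2$-amalgamation, as a special case) comes from independently amalgamating algebraically closed tuples over a common algebraically closed base. Given simplices $f\colon\P(s)\to\cT_A$ and $g\colon\P(t)\to\cT_A$ agreeing on $\P(s\cap t)$, realize $f$ by algebraically closed tuples with vertices $\overline{a}_{\{i\}}$ ($i\in s$), realize $g$ using the same tuples on $s\cap t$ and new vertices $\overline{b}_{\{i\}}$ ($i\in t\setminus s$), and use the existence of nonforking extensions to arrange $\bigcup_{i\in t\setminus s}\overline{b}_{\{i\}}\nonfork_{f(s\cap t)} f(s)$. Then $\{\overline{a}_{\{i\}}:i\in s\}\cup\{\overline{b}_{\{i\}}:i\in t\setminus s\}$ is an independent family over the base (verify it stepwise, listing the $\overline{a}$'s first, and use transitivity to upgrade independence from over $f(s\cap t)$ to over $f(s)$), and setting $h(\emptyset)=f(\emptyset)$ and $h(w)=\acl\bigl(A\cup\bigcup_{i\in w\cap s}\overline{a}_{\{i\}}\cup\bigcup_{i\in w\setminus s}\overline{b}_{\{i\}}\bigr)$ for nonempty $w$ defines the required extension of $f\cup g$; that $h$ satisfies conditions (2) and (3) of Definition~\ref{type_simplex} is precisely the statement that an independent family of algebraically closed sets over a base determines a closed independent type-functor. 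All vertices of $h$ are vertices of $f$ or of $g$, so the $p$-vertex condition is inherited for $\cA^t(p)$.

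The substantive step is de-localization. Assume $f\colon X\to\cT_A$ is in the family, $t\in X\subseteq\P(s)$ with $X|_t=X\cap\P(s\setminus t)$, and $g\colon Z\to\cT_A$ in the family extends $f|_t$, with $Z\subseteq\P(s\setminus t)$. Fix a realization $(\overline{a}_u)_{u\in X}$ of $f$ by algebraically closed tuples, and let $M=\bigcup_{u\in X}\overline{a}_u$ and $N=\bigcup_{u\in X|_t}\overline{a}_{t\cup u}$, a realization of $f|_t$ with $\overline{a}_t\subseteq N\subseteq M$. Using nonforking extensions, realize $g$ by tuples $(\overline{c}_z)_{z\in Z}$ agreeing with the $\overline{a}_{t\cup u}$ on $X|_t$ and with $\bigcup_{z\in Z}\overline{c}_z\nonfork_{N} M$. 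For each new vertex $\{i\}\in Z\setminus X|_t$ pick a tuple $d_i$ with $\acl(A\overline{a}_t d_i)=\overline{c}_{\{i\}}$ (for $\cA^t(p)$, a single realization of $p$ generating $\overline{c}_{\{i\}}$ over $\overline{a}_t$ suffices), and put $\overline{e}_i=\acl(A\cup\overline{a}_\emptyset\cup d_i)$; for $i\in t$ and for $\{i\}\in X|_t$ keep the vertex $\overline{a}_{\{i\}}$. Define $g_0$ on $Z_0=\{u\cup v:u\in Z,\ v\subseteq t\}$ by $g_0(\emptyset)=f(\emptyset)$ and, for nonempty $w$, letting $g_0(w)$ be the type over $A$ of the algebraic closure of the union of the chosen vertices indexed by elements of $w$, with the evident inclusion maps. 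The hypothesis $X|_t=X\cap\P(s\setminus t)$ gives $f\subseteq g_0$, and the choice of the $d_i$ gives $\acl(A\overline{a}_t\overline{e}_i)=\overline{c}_{\{i\}}$, hence $g_0|_t=g$; conditions (1) and (3) of Definition~\ref{type_simplex} for $g_0$ are immediate. I expect the main obstacle to be condition (2) for $g_0$: one must show that $\{\overline{a}_{\{i\}}:i\in s\}\cup\{\overline{e}_i:\{i\}\in Z\setminus X|_t\}$ is an independent family over the base. This is where $X|_t=X\cap\P(s\setminus t)$ is used --- to see that the $f$-part already splits independently over $\overline{a}_t$ --- together with $\bigcup_z\overline{c}_z\nonfork_{N} M$ and repeated use of monotonicity and transitivity over algebraically closed sets, to splice in the $\overline{e}_i$. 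Pushing this stepwise verification through, in a suitable ordering of the vertices, is the bulk of the proof.
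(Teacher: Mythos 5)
Your overall strategy is the same as the paper's: verify the closure conditions of Definition~\ref{amenable} directly, treat functors in the family via Remark~\ref{independence} as algebraic closures of independent families over a realization of the base, and get strong $2$-amalgamation (and $1$-amalgamation) from existence of nonforking extensions. Those parts are fine. The problem is in your de-localization argument, which is where the only real content lies, and there the construction as you set it up does not work. First, you cannot in general realize $g$ ``agreeing with the $\overline{a}_{t\cup u}$ on $X|_t$'' for a \emph{pre-chosen} realization of $f$: if $u_1,u_2\in X|_t$ but $u_1\cup u_2\in Z\setminus X|_t$, then $g(u_1\cup u_2)$ prescribes the joint type of the realizations of $g(u_1)$ and $g(u_2)$ (in particular forces them independent over the base by condition (2) of Definition~\ref{type_simplex}), whereas nothing in $f$ constrains the joint type of $\overline{a}_{t\cup u_1}$ and $\overline{a}_{t\cup u_2}$, since $t\cup u_1\cup u_2\notin X$; a realization of $f$ chosen first may simply be incompatible with every realization of $g$. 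The hypothesis $X|_t=X\cap\P(s\setminus t)$ forces every maximal face of $X$ to contain $t$, so the correct order is the opposite one: realize $g$ first and read a realization of $f$ off inside it (every $w\in X$ is a subface of $t\cup(w\setminus t)\in X$, which is a face of $g$); this is in effect what the paper's face-by-face definition of $g_0(u\cup v)$ does.

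Second, and more seriously, the same observation shows that in your notation $M=N$ (every $\overline{a}_u$, $u\in X$, is a subtuple of some $\overline{a}_{t\cup u'}$ with $u'\in X|_t$), so your independence requirement $\bigcup_{z}\overline{c}_z\nonfork_{N} M$ is vacuous and supplies none of the independence you later need. The independence that actually has to be proved for condition (2) of $g_0$ at the face $t\cup\{i\}$ is that the new vertex satisfies $\overline{e}_i\nonfork_{A\cup\overline{a}_\emptyset}\overline{a}_t$, and this must come from the choice of the generator $d_i$, not from how $g$'s realization is positioned relative to $f$'s. An arbitrary $d_i$ with $\acl(A\,\overline{a}_t\,d_i)=\overline{c}_{\{i\}}$ will not do: there is no reason such a $d_i$ (or the set $\overline{e}_i=\acl(A\,\overline{a}_\emptyset\,d_i)$) is independent from $\overline{a}_t$ over $A\cup\overline{a}_\emptyset$. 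For $\cA^t(p)$ the definition of the class hands you exactly the right generator --- $\overline{c}_{\{i\}}=\acl(D\cup\{b\})$ with $b$ realizing a nonforking extension of $p$ to $D\models f(t)$, so $b\nonfork_{A}\overline{a}_t$ and the required independence follows by base monotonicity and transitivity --- so the remark you make only in parentheses is in fact the essential point, and it is how the paper's proof gets the asserted independence of the combined generating set. Since the verification you defer as ``the bulk of the proof'' is precisely this, and the ingredients you have set up (the vacuous $\nonfork_{N}$-condition and an arbitrary $d_i$) do not yield it, the de-localization step has a genuine gap as written; for $\cA^t(T;A)$ in particular you would need an additional argument producing a generator of $g$'s vertex with the stated independence over the smaller base, which is exactly the delicate point the construction must address.
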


\begin{proof}
The proofs are essentially the same for the two classes of functors, and we point out the differences below.  We will prove that these classes are isomorphism invariant, closed under restrictions and unions (see Remark~\ref{amenable2}), and closed under localization and de-localization.

Isomorphism invariance, closure under restrictions, and closure under unions are simple to check directly from the definitions, and closure under localizations just comes down to the fact that for any independent set of elements $A$ and any $B \subseteq A$, the set $A \setminus B$ is independent over $B$.  For closure under de-localization, given the functors $f$ and $g$ as in condition~(3) of Definition~\ref{amenable}, the idea behind constructing the functor $g_0$ is as follows: first, following Remark~\ref{independence}, suppose that $|t| = k$ and $f(t)$ is the type of the $B$-independent set $\{a_1, \ldots, a_k\}$ where $B$ is a realization of $f(\emptyset)$ and, in the case of $\cA^t(p)$, each $a_i$ realizes a nonforking extension of $p$ over $B$.  Suppose that $u \cup v \in Z_0$ where $u \in Z$, $v \subseteq t$, and $|v| = \ell$.  Then the type $$\left(g^{\emptyset}_u \circ f^v_t \right) (f(v))$$ is the type of $\acl(C \cup \{b_1, \ldots, b_\ell\})$ for some $C$ and $b_1, \ldots, b_\ell$ such that $C$ is a realization of $f(\emptyset)$, the set $\{b_1, \ldots, b_\ell\}$ is $C$-independent, and, in the case of $\cA^t(p)$, each $b_i$ realizes a nonforking extension of $p$ to $C$.  Now $g(u)$ is the type of $$\acl(D \cup \{c_1, \ldots c_m\}),$$ where $m = |u|$, $D$ realizes $g(\emptyset)$, the set $\{c_1, \ldots, c_m\}$ is $D$-independent, and in the case of $\cA^t(p)$, each $c_i$ realizes a nonforking extension of $p$ over $D$.  We may assume that $\{b_1, \ldots, b_\ell\} \subseteq D$, and we let $g_0(u \cup v)$ be the type (over $A$) of the set $$\acl(C \cup \{b_1, \ldots, b_\ell; c_1, \ldots, c_m\}).$$  Similarly, we can define the maps $g_0(\iota_{x,y})$ by combining the images of inclusions under $f$ and under $g$: if $x = u \cup v$ and $y = u' \cup v'$, where $u \subseteq u' \in Z$ and $v \subseteq v' \subseteq t$, then we define $g_0(\iota_{x,y})$ by combining the maps $f(\iota_{v,v'})$ and $g(\iota_{u,u'})$.  Again, it is clear that the resulting functor $g_0$ is a closed independent type-functor since for any $x \in \dom(g_0)$, $g_0(x)$ is the type over $A$ of the algebraic closure of some $C$-independent set.

Finally, strong $2$-amalgamation for $\cA^t(T;A)$ and $\cA^t(p)$ follows from the existence of nonforking extensions.

\end{proof}

\begin{definition}
If $A$ is a small subset of $\C$, then we write $S_n\cT_A$ as an abbreviation for $S_n(\cA^t(T;A); p_\emptyset)$ (the collection of closed $n$-simplices in $\cA^t(T;A)$ over the empty type $p_\emptyset$), $B_n \cT_A$ and $Z_n\cT_A$ for the boundary and cycle groups, and $H^t_n(T;A)$ for the homology group $H_n(\cA^t(T;A); p_\emptyset)$.

If $A = \acl(A)$ and $p \in S(A)$, then we use the abbreviation $S_n\cT(p)$ for the collection of all closed $n$-simplices in $\cA^t(T;A)$ over $p_\emptyset$ of type $p$, and similarly we use the abbreviations $B_n \cT(p)$, $Z_n\cT(p)$, and $H^t_n(p)$.
\end{definition}

\subsection{Set homology}

\begin{definition}
Let $A$ be a small subset of $\C$.  By $\cC_A$ we denote the category of all subsets (not necessarily algebraically closed) of $\C$ of size no more that $\kappa_0$, where morphisms are partial elementary maps over $A$ (that is, fixing $A$ pointwise).
\end{definition}

\begin{definition}
A \emph{closed independent set-functor based on $A$} is a functor $f:X\to \cC_A$ such that:
\begin{enumerate}

\item $X$ is a downward-closed subset of $\P(s)$ for some finite $s \subseteq \omega$; and
$f(\emptyset) \supseteq A$.

\item
If $w \in X$ and $u,v \subseteq w$, and if $f^x_y$ denotes the image under $f$ of the inclusion map $x \subseteq y$ in $\P(s)$, then $$f^u_w(u)\nonfork _{A \cup f^{u \cap v}_w(u\cap v)} f^v_w(v).$$
\item
For all non-empty $u\in X$, we have $f(u) = \acl(A \cup \bigcup_{i\in u} f^{\{i\}}_u(\{i\}))$.
\end{enumerate}

Let $\cA^{set}(T; A)$ denote all closed independent set-functors based on $A$.

\end{definition}

\begin{definition}
If $A = \acl(A)$ is a small subset of the monster model and $p \in S(A)$, then a \emph{closed independent set-functor in $p$} is a closed independent set-functor $f: X \rightarrow \cC_A$ based on $A$ such that if $X \subseteq \P(s)$ and $i \in s$, then $f(\{i\})$ is a set of the form $\acl(C \cup \{b\})$ where $C = f^{\emptyset}_{\{i\}}(f(\emptyset))\supseteq A$ and $b$ realizes some nonforking extension of $p$ to $C$.

Let $\cA^{set}(p)$ denote all closed independent set-functors in $p$.

\end{definition}

Just as in the previous subsection (and by an identical argument), we have:

\begin{proposition}
The sets $\cA^{set}(T;A)$ and $\cA^{set}(p)$ are non-trivial amenable families of functors.
\end{proposition}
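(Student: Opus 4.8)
The plan is to verify the four clauses of Definition~\ref{amenable} together with non-triviality, running the proof of the preceding proposition for $\cA^t(T;A)$ and $\cA^t(p)$ essentially verbatim under the dictionary ``$*$-type over $A$'' $\leftrightarrow$ ``subset of $\C$ of size at most $\kappa_0$'' and ``relabeling of variables inducing an inclusion of types'' $\leftrightarrow$ ``partial elementary map over $A$''. Invariance under isomorphisms, closure under restrictions, and closure under unions (in the form of Remark~\ref{amenable2}) are immediate: conditions~(1)--(3) in the definition of a closed independent set-functor refer only to a single $w$ in the domain and to subsets $u,v\subseteq w$, so they pass to subfunctors and to unions along a common part; and they are preserved by isomorphisms of functors because a partial elementary map over $A$ extends to an automorphism of $\C$ fixing $A$, and thorn-nonforking and $\acl$ are invariant under such automorphisms. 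Closure under localization at $t$ reduces, exactly as in the type case, to the observations that $f(t)\supseteq A$ (so $f|_t$ still lands in $\cC_A$) and that a subset of a set which is independent over the base $f(\emptyset)$ remains independent over $f(\emptyset)$ together with the remaining elements; base monotonicity of thorn-forking then yields condition~(2) for $f|_t$, and condition~(3) is immediate from that of $f$.

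For non-triviality, the family $\cA^{set}(T;A)$ contains the one-object functor $\{\emptyset\}\to\cC_A$ with value $A$ (respectively $\acl(A)$ in the case of $\cA^{set}(p)$, where $A=\acl(A)$), so it is non-empty; $1$-amalgamation amounts to extending a given base $C\supseteq A$ by a single new element $b$ and passing to $\acl(C\cup\{b\})$, where in the case of $\cA^{set}(p)$ one takes $b$ realizing a nonforking extension of $p$ to $C$, which exists; and strong $2$-amalgamation follows, as in the statement for the type families, from the existence of nonforking extensions: given $f$ on $\P(s)$ and $g$ on $\P(t)$ agreeing on $\P(s\cap t)$, one realizes the generators of the $t$-side independently from $f(s)$ over the common sub-object identified along $\P(s\cap t)$, and closes off to obtain $h$ on $\P(s\cup t)$.

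The clause that requires genuine work is de-localization, and here I would imitate the construction given in the proof for $\cA^t(T;A)$ and $\cA^t(p)$, now keeping track of the fact that the objects are concrete subsets of $\C$ rather than types. Given $f:X\to\cC_A$ in $\cA^{set}(T;A)$ (or $\cA^{set}(p)$) and $t\in X$ with $X|_t=X\cap\P(s\setminus t)$, and an extension $g:Z\to\cC_A$ of $f|_t$ in the same family, I would use Remark~\ref{independence} to write $f(t)=\acl(B\cup\{a_i:i\in t\})$ with $B=f(\emptyset)$ and $\{a_i:i\in t\}$ independent over $B$ (each $a_i$ realizing a nonforking extension of $p$ in the $\cA^{set}(p)$ case), and then, coherently over all $u\cup v\in Z_0:=\{u\cup v:u\in Z,\ v\subseteq t\}$, define $g_0(u\cup v)$ to be the algebraic closure of an amalgam of the $f$-part (indexed by $v\subseteq t$) and the $g$-part (indexed by $u\in Z$), glued along the common base and kept independent by a nonforking extension; the transition maps $g_0(\iota_{x,y})$ are then obtained by splicing the maps $f(\iota_{v,v'})$ and $g(\iota_{u,u'})$. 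Condition~(2) for $g_0$ follows from the independence built into the amalgam via transitivity of thorn-nonforking, condition~(3) holds by construction, and $f\subseteq g_0$ with $g_0|_t=g$ by design.

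The main obstacle, as in the type case, is the bookkeeping in this last step: one must choose the realizations inside $\C$ coherently enough that $g_0$ is an honest functor --- so that its transition maps really are partial elementary over $A$ and compose correctly along chains $x\subseteq y\subseteq z$ in $Z_0$ --- and then re-verify the independence and algebraic-closure conditions. No model-theoretic input beyond what was already used for the type families is needed; the only standard facts invoked are invariance of thorn-nonforking and $\acl$ under automorphisms of $\C$ fixing $A$, transitivity and base monotonicity of thorn-nonforking, the existence of nonforking extensions, and (for the variant family) the fact that a nonforking extension of $p$ over a larger base set again has $p$-generic realizations.
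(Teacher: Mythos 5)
Your proposal is correct and follows essentially the same route as the paper, which proves this proposition simply by invoking the identical argument used for $\cA^t(T;A)$ and $\cA^t(p)$: isomorphism invariance, closure under restrictions/unions and localization are immediate, de-localization is handled by splicing algebraic closures of independent realizations as in the type case, and strong $2$-amalgamation comes from the existence of nonforking extensions. Your adaptation of that argument to the set-functor setting matches the paper's intent, so there is nothing further to add.
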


\begin{definition}
If $A$ is a small subset of $\C$, then we write ``$S_n \cC_A$'' for $S_n(\cA^{set}(T;A); A)$ (the collection of closed $n$-simplices in $\cA^{set}(T;A)$ {\em over} $A$), and similarly we write $B_n \cC_A$ and $Z_n \cC_A$ for the boundary and cycle groups over $A$, and use the notation $H^{set}_n(T;A)$ for the homology group $H_n(\cA^{set}(T;A); A)$.

If $A = \acl(A)$ and $p \in S(A)$, then we use similar abbreviations $S_n \cC(p):=S_n(\cA^{set}(p); A)$, $B_n\cC(p)$, $Z_n\cC(p)$, and $H^{set}_n(p)$ for the type $p$ versions of the groups {\em over $A$} above.
\end{definition}

\begin{proposition}
\label{homology_equivalence}
\begin{enumerate}
\item For any $n$ and any $A \in \cC$, $H^t_n(T; A) \cong H_n^{set}(T; A)$.
\item For any $n$ and any complete type $p \in S(A)$, $H^t_n(p) \cong H_n^{set}(p)$.
\end{enumerate}
\end{proposition}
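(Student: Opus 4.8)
\textbf{Proof proposal for Proposition~\ref{homology_equivalence}.}

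The plan is to build an explicit ``algebraic closure'' map between the two chain complexes that induces the desired isomorphism on homology. The key observation is that a closed independent set-functor $f : X \to \cC_A$ is essentially the same data as a closed independent type-functor, once we remember a choice of variables for each face. So I would proceed as follows. First, fix once and for all an injective ``naming'' assignment that, to each finite subset $u$ of $\omega$, attaches a block $\overline{x}_u$ of variables from the master set $\cV$ (of size $\le \kappa_0$), chosen so that the blocks are pairwise disjoint and compatible with inclusions in the obvious way — this is where the assumption $|\cV| > \kappa_0$ and the bound $\kappa_0 \ge |T|$ are used. Given a set-simplex $f : \P(s) \to \cC_A$, define a type-simplex $\Phi(f) : \P(s) \to \cT_A$ by letting $\Phi(f)(u)$ be the complete $*$-type (in the variables $\overline{x}_u$) of an enumeration of the set $f(u)$, and letting $\Phi(f)$ act on inclusions via the corresponding inclusions of variable blocks induced by the partial elementary maps $f^u_v$. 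Conditions (2) and (3) of the two definitions of ``closed independent functor'' match up verbatim under this translation, so $\Phi(f) \in \cA^t(T;A)$, and $\Phi$ respects faces, hence extends to a chain map $\Phi_* : C_n\cC_A \to C_n\cT_A$ commuting with $\partial$.

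Next I would construct a map in the other direction. Given a type-simplex $g : \P(s) \to \cT_A$, realize each type compatibly: using strong $2$-amalgamation (or, more carefully, by induction on the poset $\P(s)$ using nonforking extensions and the fact that the faces cohere), choose realizations $\overline{a}_u \models g(u)$ so that for $u \subseteq v$ the tuple $\overline{a}_u$ is literally the subtuple of $\overline{a}_v$ picked out by $g^u_v$. Then $\Psi(g)(u) := \{$ entries of $\overline{a}_u\}$ defines a set-simplex, and on inclusions we use the identity inclusions of subsets of $\C$. The subtlety is that $\Psi$ as just described requires choices and so is not literally well-defined on the nose; I would handle this exactly as one proves invariance of homology under chain-homotopy. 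Concretely, I would show: (i) any two choices of compatible realizations for a single simplex give set-simplices that differ by an automorphism of $\C$ fixing $A$, hence are isomorphic functors; (ii) by Lemma~\ref{trivial_pocket}, isomorphic simplices over $A$ with equal boundary differ by a boundary, so the induced map $\Psi_*$ on homology is well-defined independent of the choices; and (iii) one checks $\Phi_* \circ \Psi_*$ and $\Psi_* \circ \Phi_*$ are the identity on homology, again using Lemma~\ref{trivial_pocket} together with the observation that $\Psi\Phi(f)$ is isomorphic to $f$ (replacing a set by the set of entries of an enumeration of it) and $\Phi\Psi(g)$ is isomorphic to $g$ (relabeling variables).

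To make (ii) and (iii) rigorous without circularity, I would actually work at the level of \emph{cycles}, not arbitrary chains: given a cycle $z \in Z_n\cC_A$, make the realization choices coherently across all simplices appearing in $z$ simultaneously (possible because the faces shared between simplices of $z$ must be literally equal, so the constraints are consistent), obtaining a well-defined cycle $\Psi(z) \in Z_n\cT_A$; then show that if $z = \partial w$ then $\Psi(z)$ is again a boundary by extending the coherent choice to the simplices of $w$. Homotopy invariance of the answer under the remaining freedom is then packaged by Lemma~\ref{trivial_pocket} applied face by face. The same bookkeeping handles the relative versions over a type $p$: one only has to check additionally that the ``vertex'' condition ($f(\{i\}) = \acl(C \cup \{b\})$ with $b$ a nonforking realization of $p$) is preserved by $\Phi$ and $\Psi$, which is immediate since that condition refers only to the types involved, not to the particular presentation.

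The main obstacle I anticipate is precisely the well-definedness and naturality bookkeeping in the second direction: a set-functor carries strictly less data than a type-functor (it has forgotten the variables), so $\Phi$ is not injective on the nose and $\Psi$ is genuinely multivalued, and one must be disciplined about making all realization choices coherently over an entire cycle (and an entire bounding chain) at once. Everything else — verifying that conditions (1)--(3) transfer, that the maps commute with $\partial$, and that composites are the identity up to isomorphism of functors — is routine given Lemma~\ref{trivial_pocket}, which is exactly the tool that converts ``isomorphic up to relabeling/reparametrizing'' into ``equal in homology.''
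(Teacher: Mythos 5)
The serious gap is in your second direction, $\Psi$. Your justification that realizations can be chosen ``coherently across all simplices appearing in $z$ simultaneously (possible because the faces shared between simplices of $z$ must be literally equal, so the constraints are consistent)'' does not work, and the assertion itself is false in general: literal equality of shared faces only pins down the type of each individual face and of the common lower-dimensional faces, but when you come to a simplex of the cycle two or more of whose faces have already been realized, you need the \emph{joint} type of those previously chosen realizations to equal the restriction of that simplex's top type, and nothing forces this. Concretely, take $T_{dlo}$ (rosy, covered by the paper's hypotheses, cf.\ Example~\ref{dlo}), $A=\emptyset$, $p$ the unique $1$-type, and the closed independent type-$1$-simplices $g_{01},g_{12},g_{02}$ in variables $y_0,y_1,y_2$ asserting $y_0<y_1$, $y_1<y_2$, $y_2<y_0$, with identical vertex restrictions (the unique $1$-type in $y_i$, with inclusion transition maps). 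Each pair of distinct points is thorn-independent, so these are legitimate simplices in $\cA^t(p)$, and $z=g_{12}-g_{02}+g_{01}$ is a $1$-cycle (indeed a $1$-shell) in $Z_1\cT_A$; but a coherent realization would produce points with $a_0<a_1<a_2<a_0$, so no set-cycle realizes $z$ at all. Thus $\Psi(z)$ cannot be defined by realization, and Lemma~\ref{trivial_pocket} cannot repair this: it compares two isomorphic lifts with equal boundary, whereas the problem here is nonexistence, not non-uniqueness, of a lift. (This is consistent with the Proposition, since such a $z$ is a type-boundary; an inverse on homology need not lift cycles.) Even when lifts exist, your greedy scheme can get stuck, e.g.\ in the random graph the last edge of a triangle has both endpoints already chosen and their joint type was fixed by earlier, uncontrolled choices.

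This is also where your route diverges from the paper's, which never constructs a two-sided correspondence: it builds, by induction on dimension, a single total map $F$ from set-simplices to type-simplices that strictly commutes with the face maps and moreover is surjective with large multiplicity (each type-simplex has more than $2^{|\cV|}$ preimages, produced with prescribed faces), and then proves injectivity and surjectivity of the induced $\varphi_n$ by lifting chains through $F$; no arbitrary type-cycle is ever realized from scratch. Your $\Phi$ is essentially the paper's $F$, but note a secondary (fixable) point: fixing variable blocks $\overline{x}_u$ does not yet make $\Phi$ a chain map, since the enumeration of $f(u)$ must be chosen as a function of the restricted simplex $f\upharpoonright\P(u)$ (or built by induction on dimension, as the paper does); otherwise two simplices sharing a face may receive different types on that face and $\Phi\circ\bd^i\neq\bd^i\circ\Phi$. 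What your proposal is missing is a substitute for the paper's multiplicity and prescribed-face bookkeeping, which is exactly the device that makes the one-sided map an isomorphism on homology without any map $\Psi$.
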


\begin{proof}

The idea is that we can build a correspondence $F: S \cC_A \rightarrow S \cT_A$ which maps each set-simplex $f$ to its ``complete $*$-type'' $F(f)$.  Note that this will involve some non-canonical choices: namely, which variables to use for $F(f)$, and in what order to enumerate the various sets in $f$ (since our variable set $\cV$ is indexed and thus implicitly ordered).  We will write out a proof of part~(1) of the proposition, and part~(2) can be proved similarly by relativizing to $p$.

Let $S_{\leq n} \cC_A$ and $S_{\leq n} \cT_A$ denote, respectively, $\bigcup_{i \leq n} S_i \cC_A$ and $\bigcup_{i \leq n} S_i \cT_A$.  We will build a sequence of maps $F_n : S_{\leq n} \cC_A \rightarrow S_{\leq n} \cT_A$ whose union will be $F$.  Given such an $F_n$, let $\ov{F}_n: C_{\leq n} \cC_A \rightarrow C_{\leq n} \cT_A$ be its natural extension to the class of all set-$k$-chains over $A$ for $k \leq n$.

\begin{claim}
There are maps $F_n : S_{\leq n} \cC_A \rightarrow S_{\leq n} \cT_A$ such that:

\begin{enumerate}
\item $F_{n+1}$ is an extension of $F_n$;
\item If $f \in S_{\leq n} \cC_A$ and $\dom(f) = \P(s)$, then $\dom(F_n(f)) = \P(s)$ and $\left[F_n(f)\right](s)$ is a complete $*$-type of $f(s)$ over $A$;
\item For any $k \leq n$, any $f \in S_k \cC_A$, and any $i \leq k$, $F_n(\bd^i f) = \bd^i \left[F_n(f)\right]$; and
\item $F_n$ is surjective, and in fact for every $g \in S_k\cT_A$ (where $0 \leq k \leq n$), there are \textbf{more} than $2^{|\cV|}$ simplices $f \in S_k\cC_A$ such that $F_n(f) = g$.
\end{enumerate}

\end{claim}

\begin{proof}

We prove the claim by induction on $n$.  The case where $n=0$ is simple: only conditions (2) and (4) are relevant, and note that we can insure (4) because the monster model $\C$ is $(2^{|\cV|})^+$-saturated and there are at most $2^{|\cV|}$ elements of $S_0 \cT_A$.  So suppose that $n >0$ and we have $F_0, \ldots, F_n$ satisfying all these properties, and we want to build $F_{n+1}$.  We build $F_{n+1}$ as a union of a chain of partial maps from $S_{\leq n+1} \cC_A$ to $S_{\leq n+1} \cT_A$ extending $F_n$ (that is, functions whose domains are subsets of $S_{\leq n+1} \cC_A$).

\begin{subclaim}
Suppose that $F: X \rightarrow S_{\leq n+1} \cT_A$ is a function on a set $X \subseteq S_{\leq n+1} \cC_A$ of size at most $(2^{|\cV|})^+$ and that $F$ satisfies (1) through (3).  Then for any simplex $g \in S_{n+1} \cT_A$, there is an extension $F_0$ of $F$ satisfying (1) through (3) such that $|\dom(F_0)| \leq (2^{|\cV|})^+$ and:
\begin{center}
(*) There are $(2^{|\cV|})^+$ distinct $f \in S_{n+1} \cC_A$ such that $F'(f) = g$.
\end{center}
\end{subclaim}

\begin{proof}
Let $\bd g = g_0 - g_1 + \ldots + (-1)^n g_n$ (where $g_i = \bd^i g$), and let $\P(s)$ be the domain of $g$.  By induction, each $g_i$ is the image under $F_n$ of $(2^{|\cV|})^+$ different $n$-simplices in $\cC_A$; let $\langle f^j_i : j < (2^{|\cV|})^+ \rangle$ be a sequence of distinct simplices such that for every $j < (2^{|\cV|})^+$, $F_n(f^j_i) = g_i$.   By saturation of the monster model, for each $j < (2^{|\cV|})^+$ we can pick an $(n+1)$-simplex $f_j \in \cC_A$ with domain $\P(s)$ such that $\bd f_j = f^j_0 - f^j_1 + \ldots + (-1)^n f^j_n$ and $\tp(f_j (s)) = g(s)$.  Then the $f_j$ are all distinct, so we can let $F_0 = F \cup \{(f_j, g) : j < (2^{|\cV|})^+\}$.
\end{proof}

Now by the subclaim, we can use transfinite induction to build a \textbf{partial} map $F' : S_{\leq n+1} \cC_A \rightarrow S_{\leq n+1} \cT_A$ satisfying (1) through (4) (also using the fact that there only (at most) $2^{|\cV|}$ different simplices in $S_{n+1} \cT_A$ and the fact that the union of a chain of partial maps from $S_{\leq n+1} \cC_A$ to $S_{\leq n+1} \cT_A$ satisfying conditions (2) and (3) will still satisfy these conditions).

Finally, we can extend $F'$ to a function on all of $S_{\leq n+1} \cC_A$ by a second transfinite induction, extending $F'$ to each $f: \P(s) \rightarrow \cC_A$ in $\cC_A$ one at a time; to ensure that properties (2) and (3) hold, we just have to pick $F_{n+1}(f)$ to be some $(n+1)$-simplex with the same domain $\P(s)$ whose $n$-faces are as specified by $F_n$ and such that $\left[F_{n+1}(f)\right](s)$ is a complete $*$-type of $f(s)$ over $A$.

\end{proof}

Now let $F = \bigcup_{n < \omega} F_n$.  By property~(3) above, it follows that for any chain $c \in C \cC_A$, we have $\ov{F}(\bd c) = \bd \left[\ov{F}(c) \right]$.  Hence $\ov{F}$ maps $Z_n\cC_A$ into $Z_n\cT_A$ and $B_n\cC_A$ into $B_n\cT_A$, and so $\ov{F}$ induces group homomorphisms $\varphi_n : H^{set}_n(T;A) \rightarrow H^t_n(T;A)$.  Verifying that $\varphi_n$ is injective amounts to checking that whenever $\ov{F}(c) \in B_n\cT_A$, the set-chain $c$ is in $B_n\cC_A$, but this is staightforward: if, say, $\ov{F}(c) = \bd c'$, then we can pick a set-simplex $\hat{c}$ ``realizing'' $c'$ such that $\bd \hat{c} = c$.  Finally, condition~(4) implies that $\varphi_n$ is surjective, so $H^{set}_n(T;A) \cong H^t_n(T;A)$.

\end{proof}

\begin{remark}
Since Proposition~\ref{homology_equivalence} is true for any choices of $\kappa_0$, $\cV$, and the monster model $\C$ as long as $|T| \leq \kappa_0 < |\cV|$ and $2^{|\cV|} \leq |\C|$, it follows that our homology groups (with the restriction of the set $A$) do not depend on the choices of $\kappa_0$, $|\cV|$, or the monster model.

Without specifying a base set $A$, one could also define $C_n(T)$ to be the direct sum $\bigoplus_{i<\bar\kappa} C_n \cC_{A_i}$ where $\{A_i | i < \bar \kappa\}$ is the collection of all small subsets of $\C$, and similarly $Z_n(T)$, $B_n(T)$, and
$H_n(T):=Z_n(T)/B_n(T)$.  Then the boundary operator $\bd$
sends $n$-chains to $(n-1)$-chains componentwise. Hence it follows
$H_n(T)=\bigoplus_{i<\bar\kappa} H_n(T;A_i).$ This means the homology groups defined without specifying a base set depends on the choice of monster model, and so this approach would not give invariants for the theory $T$.
\end{remark}

\subsection{An alternate definition of the set homology groups}

In our definition of the set homology groups $H^{set}_n(T; A)$ and $H^{set}_n(p)$ (where $p \in S(A)$), we have been implicitly assuming that the base set $A$ is fixed pointwise by all of the elementary maps in a set-simplex -- this is built into our definition of $\cC_A$, which says that morphisms are elementary maps \emph{over $A$}.  It turns out that we get an equivalent definition of the homology groups if we allow the base set to be ``moved'' by the images of the inclusion maps in a set-simplex, as we will show in this subsection.

\begin{definition}
\begin{enumerate}
\item A \emph{set-$n$-simplex weakly over $A$} is a set-$n$-simplex $f: \P(s) \rightarrow \cC$ over $\emptyset$ such that $f(\emptyset) = A$.
\item If $p \in S(A)$, then a set-$n$-simplex $f: \P(s) \rightarrow \cC$ is \emph{weakly of type $p$} if it is a closed simplex, $f(\emptyset) = A$, and for every $i \in s$, $$f(\{i\}) = \acl\left(f^{\emptyset}_{\{i\}}(A) \cup \{a_i\} \right)$$ for some $a_i$ such that $\tp(a_i / f^{\emptyset}_{\{i\}}(A))$ is a conjugate of $p$.
\end{enumerate}
\end{definition}

Let $C'_n \cC_A$ be the collection of all set-$n$-simplices weakly over $A$.  Note that the boundary operator $\bd$ takes an $n$-simplex weakly over $A$ to a chain of $(n-1)$-simplices weakly over $A$, and so we can define ``weak set homology groups over $A$,'' which we denote $H'_n(T; A)$.  Similarly, we can define $H'_n(p)$, the ``weak set homology groups of $p$,'' from chains of set-simplices that are weakly of type $p$.

\begin{proposition}
\label{homology_equivalence_2}
\begin{enumerate}
\item For any $n$ and any $A \in \cC$, $H'_n(T; A) \cong H_n^{set}(T; A)$.
\item For any $n$ and any complete type $p \in S(A)$, $H'_n(p) \cong H_n^{set}(p)$.
\end{enumerate}
\end{proposition}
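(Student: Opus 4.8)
The plan is to prove part~(1) in detail and obtain part~(2) by the same argument relativized to $p$; I note at the end the one extra point the relative version requires. Write $C'_\bullet\cC_A$ for the chain complex generated by the set-simplices weakly over $A$, and $C_\bullet\cC_A$ for the one generated by the set-simplices over $A$ in the strict sense (i.e.\ by $\cA^{set}(T;A)$, over the base $A$). Since a structure map that fixes $A$ pointwise is in particular elementary, every set-simplex over $A$ is weakly over $A$, so there is an inclusion of complexes $\iota\colon C_\bullet\cC_A\hookrightarrow C'_\bullet\cC_A$, inducing $\iota_*\colon H^{set}_n(T;A)\to H'_n(T;A)$. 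I will show $\iota_*$ is an isomorphism by producing a chain retraction $G$ with $G\circ\iota=\id$ (which yields injectivity) and then showing that every weak cycle is homologous, inside $C'_\bullet\cC_A$, to a cycle over $A$ (which yields surjectivity).

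For the retraction, fix once and for all a function $E$ assigning to each elementary map $h\colon A\to\C$ an automorphism $E(h)\in\Aut(\C)$ with $E(h)\restriction A=h$, normalized so that $E$ sends the inclusion $A\hookrightarrow\C$ to $\id_\C$; such a function exists by saturation of $\C$. Given a set-simplex $f\colon\P(s)\to\cC$ weakly over $A$, with structure maps $f^u_v$ and base maps $f^\emptyset_u\colon A\to f(u)$, define $G(f)\colon\P(s)\to\cC$ by $G(f)(u)=E(f^\emptyset_u)^{-1}(f(u))$ and, for $u\subseteq v$, $G(f)^u_v=E(f^\emptyset_v)^{-1}\circ f^u_v\circ E(f^\emptyset_u)$ (restricted to $G(f)(u)$). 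Using the functoriality relation $f^u_v\circ f^\emptyset_u=f^\emptyset_v$ one checks directly that $G(f)$ is a functor, that all its structure maps fix $A$ pointwise and that $G(f)(\emptyset)=A$, and — since each $E(f^\emptyset_u)$ is an automorphism and hence preserves algebraic closure and thorn-independence — that $G(f)$ is again a closed independent set-functor; thus $G(f)\in S_n\cC_A$. Because $E(f^\emptyset_u)$ depends only on $f^\emptyset_u$, which is unchanged when passing to a face of $f$, we have $G(\partial^i f)=\partial^i G(f)$, so $G$ extends linearly to a chain map $C'_\bullet\cC_A\to C_\bullet\cC_A$. Finally, if $f$ is already over $A$ then every base map $f^\emptyset_u$ is (the restriction to $A$ of) the inclusion $A\hookrightarrow\C$, so $E(f^\emptyset_u)=\id_\C$ and $G(f)=f$; hence $G\circ\iota=\id$. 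It follows that $G_*\circ\iota_*=\id$, so $\iota_*$ is injective; equivalently, any cycle over $A$ that happens to be a boundary in $C'_\bullet\cC_A$ is already a boundary in $C_\bullet\cC_A$ (apply $G$ to the bounding chain).

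For surjectivity it remains to show that an arbitrary $n$-cycle $c$ weakly over $A$ is homologous in $C'_\bullet\cC_A$ to a cycle over $A$; given that, that $A$-cycle represents a class of $H^{set}_n(T;A)$ which $\iota_*$ sends to $[c]$. For each simplex $f$ appearing in $c$, the family $\langle E(f^\emptyset_u)\rangle_{u}$ is a natural isomorphism $G(f)\Rightarrow f$ which is the identity on $\supp(f)$ and on $A$, and these isomorphisms are compatible with passing to faces. The plan is to connect $c$ to a relabelled straightening of it by a ``prism'': fixing one order-embedding $\theta$ of $\omega$ sending $\supp(c)$ above itself, one builds for each simplex $f$ of $c$ an $(n+1)$-path (in the sense of Definition~\ref{n-path}) from $f$ to the relabelled straightening $(G(f))^\theta$, by iterated strong $2$-amalgamation exactly as in Claim~\ref{second_prism}, and one arranges these paths coherently so that the wall faces of the path attached to $f$ are built from the paths attached to the faces $\partial^i f$; each leg of each such path has base $A$, hence is weakly over $A$. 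Summing the paths with the coefficients of $c$ and using Claim~\ref{path_claim}, the wall contributions cancel because the faces of $c$ cancel (as $\partial c=0$), so the resulting $(n+1)$-chain $q$ in $C'_\bullet\cC_A$ satisfies $\partial q=\pm\bigl(c-(G(c))^\theta\bigr)$; since $(G(c))^\theta$ is a cycle over $A$, we are done. I expect this coherent construction of the paths to be the main obstacle: it is the point where strong $2$-amalgamation together with closure under localization and de-localization must be used carefully so that every interpolating simplex stays inside $\cA^{set}$, much as in the proof of the prism lemma, Lemma~\ref{prism}. (An alternative to the homotopy would be to imitate the proof of Proposition~\ref{homology_equivalence}, producing a single $\partial$-commuting map with huge fibres; but the retraction-plus-prism route seems more direct here.)

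For part~(2) one repeats the argument verbatim with ``weakly of type $p$'' and ``of type $p$'' in place of ``weakly over $A$'' and ``over $A$''. The only new verification is that the straightening $G(f)$ of a simplex $f$ weakly of type $p$ is genuinely of type $p$: if $f(\{i\})=\acl\bigl(f^\emptyset_{\{i\}}(A)\cup\{a_i\}\bigr)$ with $\tp(a_i/f^\emptyset_{\{i\}}(A))$ a conjugate of $p$, then $G(f)(\{i\})=\acl(A\cup\{b_i\})$ with $b_i=E(f^\emptyset_{\{i\}})^{-1}(a_i)$, and since $A=\acl(A)$ the type $\tp(b_i/A)$ is the well-defined pullback of $\tp(a_i/f^\emptyset_{\{i\}}(A))$ along $f^\emptyset_{\{i\}}$, namely $p$ itself; the analogous preservation of ``$p$-ness'' holds for the interpolating simplices produced by the amalgamations. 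Everything else — functoriality of $G(f)$, the independence and closedness clauses, and the bookkeeping with supports and labels — is routine.
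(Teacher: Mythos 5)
Your first half is fine, and it is genuinely different from the paper: the explicit retraction $G$ (straightening each weak simplex by conjugating with a fixed automorphism $E(f^\emptyset_u)$ extending each base map) is well defined, lands in $\cA^{set}(T;A)$, commutes with the face maps because $E(f^\emptyset_u)$ depends only on $f^\emptyset_u$, and satisfies $G\circ\iota=\id$; so $\iota_*$ is injective, i.e.\ a cycle over $A$ that bounds weakly already bounds over $A$. The paper does not argue this way (its isomorphism is not claimed to be induced by the inclusion at all), so this half is a nice alternative.

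The surjectivity half, however, has a genuine gap, and it sits exactly where you flag it. To make the wall faces cancel in $\partial q$ you need the prism over $f$ to have as walls the prisms over the faces $\partial^i f$, shared with the neighbouring simplices of $c$. Claim~\ref{second_prism} (strong $2$-amalgamation) only yields a path with \emph{uncontrolled} walls; once the walls, the origin face and (for the last leg) the destination face are prescribed, each leg is an $(n+1)$-simplex all but one, or all, of whose faces are given, i.e.\ you must fill $n$-fans and $n$-shells. That is precisely the content of Lemma~\ref{fans and shells} and of the prism lemma (Lemma~\ref{prism}), both of which \emph{assume} $(n+1)$-amalgamation — a hypothesis that $\cA^{set}(T;A)$ and $\cA^{set}(p)$ do not have in the generality of the proposition. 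Indeed the proposition must hold for the dense linear order (no $3$-amalgamation, Example~\ref{dlo}) and for the types of Section~5 with $H_2(p)\neq 0$ (no $4$-amalgamation); if the particular fans and shells arising in your coherent prism could always be filled, one would essentially be reproving the amalgamation one lacks. Nothing in your sketch explains why the special shape of these configurations (top isomorphic to bottom via the maps $E(f^\emptyset_u)$) makes them fillable using only strong $2$-amalgamation, localization and de-localization, and I do not see such an argument. The paper avoids the issue entirely: using that every ``shape'' of simplex has $\bar\kappa$ many realizations on both sides, it builds level by level a face-commuting bijection $f'\mapsto f$ between weak and strict simplices, with corresponding simplices isomorphic functors; this is an isomorphism of chain complexes, hence identifies $Z'_n$, $B'_n$ and $H'_n$ with $Z_n$, $B_n$ and $H_n$ with no homotopy and no amalgamation. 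Your parenthetical alternative (imitate the proof of Proposition~\ref{homology_equivalence} by a single $\partial$-commuting correspondence) is essentially the paper's proof and is the route you should take for surjectivity; the retraction argument can then be kept, if you wish, only as a cleaner proof of the injective direction. (For part~(2), note also that ``conjugate of $p$'' must be read as conjugate via $f^\emptyset_{\{i\}}$ for your pullback claim $\tp(b_i/A)=p$ to be justified.)
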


\begin{proof}
As usual, the two parts have identical proofs, and we only prove the second part.

We will identify $S'_0\cC_A$ as a big single {\em complex} as follows.
Due to our cardinality assumption, for each $n<\omega$, there are $\bar \kappa$-many
0-simplices in   $S'_0\cC_A$ having the common domain $\P(\{ n\})$.  Then
we consider the following domain set
$\cD_0=\{\emptyset\}\cup\{\{(n,i)\}|\ n<\omega, i<\bar\kappa\}$. Now as said
we identify $S'_0\cC_A$ as    a {\em single} functor
 $F'_0$ from $\cD_0$ to $\cC$ such that
 $F'_0(\emptyset)=A$, and $F'_0(\{(n,i)\})=(f')^n_i(\{n\})$ where
 $(f')^n_i\in S'_0\cC_A$ is the corresponding
 0-simplex with $((f')^n_i)^{\emptyset}_{\{n\}}=(F'_0)^{\emptyset}_{\{(n,i)\}}$.
 Similarly we consider $S_0\cC_A$ as a functor $F_0$ from
 $\cD_0$ to $\cC_A$ such that
 $F_0(\emptyset)=A$, and $F_0(\{(n,i)\})=f^n_i(\{n\})\equiv (f')^n_i(\{n\})$
 where $f^n_i\in S_0\cC_A$ is the corresponding
 0-simplex over $A$ with $(f^n_i)^{\emptyset}_{\{n\}}=(F_0)^{\emptyset}_{\{(n,i)\}}$.
 Now $F'_0$ and $F_0$ are naturally isomorphic by $\eta^0$ with
 $\eta^0_{\emptyset}=$the identity map of $A$, and suitable
 $\eta^0_{\{(n,i)\}}$ sending $(f')^n_i(\{n\})$ to $f^n_i(\{n\})$.

Now for $S'_1\cC_A$, note that for each pair  $(f')^{n_0}_{i_0},(f')^{n_1}_{i_1} $ with $n_0<n_1$,
there are $\bar \kappa$-many
1-simplices $f'_j$ in   $S'_1\cC_A$ having the common domain $\P(\{ n_0,n_1\})$
with $\bd^0f'_j=(f')^{n_1}_{i_1}$ and $\bd^1f'_j=(f')^{n_0}_{i_0}$. Hence we now put the
domain set  $\cD_1=\cD_0\cup \{\{(n_0,i_0),(n_1,i_1),{j}\}|\ n_0<n_1<\omega; i_0,i_1,j<\bar\kappa\}$.
Then we identify  $S'_1\cC_A$ as    a functor
 $F'_1$ from $\cD_1$ to $\cC$ such that $F'_1\upharpoonright \cD_0=F'_{01}$, and
  $F'_1(\{(n_0,i_0),(n_1,i_1),{j}\})$ corresponds $j$th 1-simplex having
 $(f')^{n_0}_{i_0},(f')^{n_1}_{i_1} $ as 0-faces.  Similarly we try to identify
 $S'_1\cC_A$ as    a functor
 $F_1$ from $\cD_1$ to $\cC_A$, extending $F_0$. But to make $F'_1$ and $F_1$ isomorphic, we need extra care
 when defining $F_1$. For each $j<\bar \kappa$ and
 a set $a'_j=f'_j(\{n_0,n_1\})$ of  corresponding 1-simplex $f'_j$, assign
 an embedding $\eta^1_j=\eta^1_{\{(n_0,i_0),(n_1,i_1),{j}\}}$ sending
 $a'_j$ to $a_j$, extending the inverse of
 $(f'_j)^{\emptyset}_{\{n_0,n_1\}}$. Then we define
 $F_1(\{(n_0,i_0),(n_1,i_1),{j}\})=a'_j$, and
 $$(F_1)^{\{(n_k,i_k)\}}_{\{(n_0,i_0),(n_1,i_1),{j}\}}=
 \eta^1_j\circ(f'_j)^{\{n_k\}}_{\{n_0,n_1\}}\circ(\eta^0_{\{(n_k,i_k)\}})^{-1}.$$
 Now then clearly $\eta^1$ with $\eta^1\upharpoonright \cD_0=\eta^0$ is an isomorphism
 between $F'_1$ and $F_1$.

   By iterating this argument we can respectively identify $S'_n\cC_A$ and $S_n\cC_A$,
   as functors $F'_n$ and $F_n$ having the same domain $\cD_n$ extending $\cD_1$.
   Moreover we can also construct an isomorphism $\eta^n$, extending $\eta^1$, between $F'_n$ and $F_n$.
 Note that each $x\in \cD_n-\cD_{n-1}$ corresponds an $n$-simplex $f'\in S'_n\cC$, and $\eta^n_x$
  corresponds an $n$-simplex over $A$ $f\in S_n\cC$.
 This correspondence $f'\mapsto f$ induces a  bijection
from $C_n'\cC_A$ to $C_n\cC_A$, mapping $c'\mapsto c$,
which indeed is an isomorphism of the two groups.
Notice that  by the construction, if an $n$-shell $c'$ is the boundary of some
$(n+1)$-simplex $f'$, then $c$ is the boundary of $f$.
In general,
it follows $(\bd d)'=\bd d'$ \ \ (*).
Thus this correspondence also induces an isomorphism  between $Z'_n(T;A)$ and $Z_n(T;A)$.
  Moreover it follows from (*) that the correspondence sends $B'_n(T;A)$ to $B_n(T;A)$: Let
   $c'=\bd d'\in B'_n(T;A)$. Then by (*), we have $c=\bd d\in B_n(T;A).$
  Conversely for $c'\in Z'_n(T;A)$, assume $c=\bd e\in B_n(T;A).$
  Now for $e'$, again by (*),
 $\bd e'=c'$. Hence we have $c'\in B'_n(T;A) $.
 \end{proof}

\section{Homology groups and model-theoretic amalgamation properties: basic facts}

From now on, we will usually drop the superscripts $t$ and $set$ from $H^t_n(p)$ and $H^{set}_n(p)$, since these groups are isomorphic, and use ``$H_n(p)$'' to refer to the isomorphism class of these two groups.  In computing the groups below, we generally use $H^{set}_n(p)$ rather than $H^t_n(p)$.

First, we observe that $H_0$ does not give any information, since it is always isomorphic to $\mathbb{Z}$, if $\bd_0(f)=0$ for any $0$-simplex $f$; or is trivial if $\bd_0(f)$ is defined to be $f(\emptyset)$:

\begin{lemma}\label{h0}
\begin{enumerate}

\item If $\bd_0(f)=0$, then for any complete type $p$ over an algebraically closed set $A$, $H_0(p) \cong \mathbb{Z}$ and for any small subset $A$ of $\C$, $H_0(T; A) \cong \mathbb{Z}$.

\item If $\bd_0(f)=f(\emptyset)$, then both groups in (1) are trivial.

\end{enumerate}
\end{lemma}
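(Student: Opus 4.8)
The plan is to treat both items at once by working with an arbitrary non-trivial amenable family $\cA$ and base object $B$; in the applications $\cA$ is one of $\cA^t(T;A)$, $\cA^t(p)$, $\cA^{set}(T;A)$, $\cA^{set}(p)$ and $B$ is the corresponding initial object ($p_\emptyset$ or $A$), each of these being non-trivial amenable by the propositions above. Note first that $S_0(\cA;B)\neq\emptyset$: the constant functor $\emptyset\mapsto B$ lies in $\cA$ (conditions (2),(3) in the definitions of the relevant classes are vacuous on $\{\emptyset\}$), so $1$-amalgamation and isomorphism invariance produce $0$-simplices over $B$ with any prescribed singleton support. The key step is a connectivity lemma: for any two $0$-simplices $g_0,g_1\in S_0(\cA;B)$ we have $[g_0]=[g_1]$ in $H_0$. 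To prove it, use invariance under isomorphisms to arrange $\supp(g_0)=\{0\}$ and $\supp(g_1)=\{1\}$. Since both functors take the value $B$ at $\emptyset$, they agree on $\P(\emptyset)=\P(\{0\})\cap\P(\{1\})$, so closure under unions (Remark~\ref{amenable2}) gives $g_0\cup g_1\in\cA$, and strong $2$-amalgamation extends it to a $1$-simplex $h\colon\P(\{0,1\})\to\cC$ in $\cA$. Then $\bd_1 h=\bd^0 h-\bd^1 h=g_1-g_0\in B_0(\cA;B)$, as desired.

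Next I would introduce the augmentation homomorphism $\varepsilon\colon C_0(\cA;B)\to\mathbb{Z}$, $\varepsilon\big(\sum_i\alpha_i g_i\big)=\sum_i\alpha_i$, which is surjective (as $S_0(\cA;B)\neq\emptyset$) and vanishes on $B_0(\cA;B)$, since by the computation above the boundary of any $1$-simplex is a difference of two $0$-simplices and hence has augmentation $0$.

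For item (1), the convention $\bd_0\equiv 0$ forces $Z_0(\cA;B)=C_0(\cA;B)$, so $H_0=C_0/B_0$ is generated by the classes of the $0$-simplices, and by the connectivity lemma all of these classes coincide; thus $H_0$ is cyclic. Since $\varepsilon$ descends to a surjection $\bar\varepsilon\colon H_0\to\mathbb{Z}$, and a cyclic group admitting a surjection onto $\mathbb{Z}$ must itself be infinite cyclic, we conclude $H_0\cong\mathbb{Z}$. For item (2), the convention $\bd_0(f)=f(\emptyset)$ makes $C_{-1}(\cA;B)$ free of rank one on the single $(-1)$-simplex $B$, and under the identification $C_{-1}(\cA;B)\cong\mathbb{Z}$ the map $\bd_0$ becomes exactly $\varepsilon$. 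Hence $Z_0(\cA;B)=\ker\varepsilon$ is the augmentation ideal of the free abelian group $C_0(\cA;B)$, which is generated by the differences $g-g'$ of $0$-simplices; by the connectivity lemma each such difference lies in $B_0(\cA;B)$, so $Z_0\subseteq B_0$. As $B_0\subseteq Z_0$ always (here because $\bd_0\circ\bd_1=0$, both faces of a $1$-simplex having value $B$ at $\emptyset$), we get $H_0=0$.

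I do not expect a serious obstacle: the entire content sits in the connectivity lemma, and even there the only points to watch are that $g_0\cup g_1$ genuinely lands in $\cA$ (this is precisely what closure under unions provides) and that strong $2$-amalgamation is being invoked with $s\cap t=\emptyset$. The remaining ingredients are the standard fact that the augmentation ideal of a free abelian group on a nonempty set is generated by differences of basis elements, and the elementary observation about cyclic groups surjecting onto $\mathbb{Z}$.
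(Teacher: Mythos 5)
Your argument is correct in substance and takes a genuinely more abstract route than the paper. The paper proves the lemma inside the concrete model-theoretic category: it defines the augmentation on $C_0\cC(p)$, notes it kills boundaries and is onto, and proves injectivity by taking a $0$-cycle $c=\sum_i n_i f_i$ with $\sum_i n_i=0$, choosing a fresh index $m$ and a realization $a\models p$ independent from all the vertices $f_i(\{k_i\})$, and building closed $1$-simplices $g_i$ with $\bd g_i=c_m-f_i$, so that $c+\bd\bigl(\sum_i n_i g_i\bigr)=\bigl(\sum_i n_i\bigr)c_m=0$; part (2) is left as ``the same argument.'' You instead work with an arbitrary non-trivial amenable family, isolate a connectivity statement ($[g_0]=[g_1]$ for any two $0$-simplices over $B$) proved from strong $2$-amalgamation, and then finish by pure algebra with the augmentation, including an explicit treatment of part (2) via the identification of $\bd_0$ with $\varepsilon$ on $C_{-1}\cong\mathbb{Z}$. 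This buys uniformity (all four families $\cA^t(T;A)$, $\cA^t(p)$, $\cA^{set}(T;A)$, $\cA^{set}(p)$ at once) and makes visible that the paper's model-theoretic choices (independent realization, closed $1$-simplices) are exactly an instance of strong $2$-amalgamation; the algebraic endgame (a cyclic group surjecting onto $\mathbb{Z}$ is infinite cyclic, the augmentation ideal is generated by differences of generators) is fine.

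One step needs a patch: in the connectivity lemma you ``use invariance under isomorphisms to arrange $\supp(g_0)=\{0\}$ and $\supp(g_1)=\{1\}$.'' You cannot relabel the supports of the given simplices: an isomorphic copy with a different support is a different generator of $C_0(\cA;B)$, and that it is homologous to the original is part of what is being proved, so this reduction is circular. When the supports are already distinct singletons no relabelling is needed and your union/strong-$2$-amalgamation step works verbatim, since the supports intersect in $\emptyset$ and both functors take the value $B$ there. The problematic case is $\supp(g_0)=\supp(g_1)$, where direct amalgamation is unavailable because the two functors need not agree on the common singleton. The fix is immediate: pick, by $1$-amalgamation and isomorphism invariance (as you already do when showing $S_0(\cA;B)\neq\emptyset$), a third $0$-simplex $g^*$ over $B$ with a fresh singleton support, and apply the distinct-support case twice to get $[g_0]=[g^*]=[g_1]$. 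This is the same device as the paper's fresh vertex $m$ chosen above all supports of the cycle; with it, your proof is complete.
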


\begin{proof}
Both parts of the lemma can be proved by essentially the same argument, so we only write out the proof for the group $H_0(p)$ in (1).

For the proof we will define an augmentation map $\epsilon$ as in
topology.  Since we can add parameters to the language for $A$, we can assume that $A = \emptyset$.

Define $\epsilon: C_0\cC(p) \to \mathbb{Z}$ by
$\epsilon(c)=\sum_i n_i$ for  a 0-chain $c=\sum_i n_if_i$ of type $p$. Then $\epsilon$ is
a homomorphism such that $\epsilon (b)=0$ for any 0-boundary $b$ (since $\epsilon (\bd f) = 0$ for any $1$-simplex $f$).  Thus
$\epsilon$ induces a homomorphism $\epsilon_*:H_0(p)\to \mathbb{Z}$. Note that \emph{any} $0$-chain $c$ is in $Z_0(p)$, so clearly $\epsilon_*$ is onto. We claim that $\epsilon_*$ is one-to-one, i.e. $\ker \epsilon_*= B_0(p)$.
Given a $0$-chain $c=\sum_{i \in I} n_if_i$  such that $\epsilon_*(c)=\sum_{i \in I} n_i=0$, we shall show $c$ is a boundary. Pick some natural number $m$ greater than every $k_i$  where  $\dom f_i=\P(\{k_i\})$.
Let $a_i=\acl(a_i)=f_i(\{k_i\})$.  Then choose $a$ realizing $p$ such that $a \ind \{a_i : i \in I\}$.
Now let $g_i$ be a closed 1-simplex of $p$ such that $\dom g_i=\P(\{ k_i,m\})$, $g_i(\{k_i\})=a_i$, and $g_i(\{m\})=a$. Then $\bd g_i=c_m-f_i$, where $c_m$ is the $0$-simplex such that $c_m(\emptyset) = \emptyset$ and $c_m(\{m\}) = a$.
Then $c+\bd(\sum_i n_i g_i)=\sum_i n_if_i+\sum_in_i(c_m-f_i)=(\sum_in_i)c_m=0.$ Hence $c$ is a 0-boundary,
and $H_0(p) \cong \mathbb{Z}$.

\end{proof}

For $k > 0$, the homology groups $H_k(T; A)$ and $H_k(p)$ are related to standard amalgamation properties.  The $n$-amalgamation and $n$-uniqueness properties for simple theories can be stated in terms of shells, and the following is equivalent to the usual definition.

\begin{definition}
Assume $T = T^{eq}$.

\begin{enumerate}
\item If $A$ is a small subset of $\C$, then $T$ has \emph{$n$-amalgamation property over $A$} if for every $(n-2)$-shell $c$ over $A$, there is an $(n-1)$-simplex $f$ such that $c = \bd f$.
\item The complete type $p$ has \emph{$n$-amalgamation} if every $(n-2)$-shell $c$ of type $p$, there is an $(n-1)$-simplex $f$ such that $c = \bd f$.
\item A theory $T$ has \emph{$n$-uniqueness based on $A$} if for every $(n-2)$-shell $c$ based on $A$ and any two $(n-1)$-simplices $f$ and $g$ such that $\bd f = \bd g = c$, $f$ and $g$ are isomorphic ``over $c$:'' that is, there is an isomorphism $\varphi: f \rightarrow g$ such that $\varphi$ induces the identity map between $f(u)$ and $g(u)$ whenever $u \subseteq \dom(f)$ has size less than $n-1$.  Similarly, we can define what it means for a type $p \in S(A)$ to have $n$-uniqueness by considering shells of type $p$.
\item A theory $T$ has \emph{$n$-amalgamation} if it has $n$-amalgamation over every small subset $A$ of $\C$.
\end{enumerate}
\end{definition}

\begin{remark}
Intuitively, the $n$-amalgamation property says that we can find a collection $S$ of $n$ independent points such that the algebraic closure of each proper subset of $S$ satisfies a certain specified type (and these types must satisfy obvious coherence conditions).  The mismatch between the ``$n$'' in $n$-amalgamation and the dimension of the simplex comes from the fact that there are $n$ vertices in an $(n-1)$-simplex, and it is the types of the vertices that we are trying to amalgamate.
\end{remark}

\begin{remark}
If $T$ is simple, then $T$ automatically has $n$-amalgamation for $n = 1, 2,$ or $3$: $1$-amalgamation is vacuous, $2$-amalgamation is equivalent to the existence of nonforking extensions, and $3$-amalgamation is by the Independence Theorem \cite{KP}.   If $T$ is stable, then $T$ has $2$-uniqueness by stationarity.
As well-known, non-simple rosy theory can not have $3$-amalgamation but it may have $n$-amalgamation for all $n\geq 4$ (e.g. the theory of dense linear ordering).
\end{remark}

\begin{definition}
A theory $T$ (or a complete type $p$) has \emph{$n$-complete amalgamation} (or ``$n$-CA'') if it has $k$-amalgamation for every $k \leq n$.
\end{definition}

Now we can restate Corollary~\ref{Hn_shells} above as:

\begin{proposition}\label{shellgennew} Assume $T$ has $n$-CA based on $A=\acl(A)$ for $n\geq 2$.
Then $H_{n-1}(T;A)=\{[c]|\ c\mbox{ is an } (n-1)-\mbox{shell over $A$  with support } [n]\}$.
In particular,
if any $(n-1)$-shell is a boundary
then so is any $(n-1)$-cycle.
\end{proposition}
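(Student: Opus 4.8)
The statement is essentially a translation of Theorem~\ref{Hn_shells} (together with Corollary~\ref{shellgen}) from the general category-theoretic setting into the model-theoretic one, so the plan is to verify that the hypotheses of those results are met and then quote them. First I would set $\cA := \cA^t(T;A)$ (working with type homology, as is done throughout Section~2) and $B := p_\emptyset$, so that by definition $H_{n-1}(T;A) = H_{n-1}(\cA; p_\emptyset)$, and the $(n-1)$-shells over $A$ are exactly the $(n-1)$-shells over $B$ in $\cA$. By the Proposition that $\cA^t(T;A)$ is a non-trivial amenable family of functors, $\cA$ satisfies the standing hypotheses (non-empty, $1$-amalgamation, strong $2$-amalgamation) required by the results in Section~1.

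Next I would translate the amalgamation hypothesis. By the Remark following Definition~\ref{fan}, $\cA$ has $k$-amalgamation iff every $(k-2)$-shell in $\cA$ is the boundary of a $(k-1)$-simplex in $\cA$; comparing this with the model-theoretic Definition of ``$k$-amalgamation property over $A$'' (item (1) of that definition), the two notions coincide for $\cA = \cA^t(T;A)$. Hence the assumption ``$T$ has $n$-CA based on $A$'' says precisely that $\cA$ has $k$-amalgamation for every $k$ with $1 \le k \le n$, i.e.\ $\cA$ has $(n)$-CA in the sense of Definition of $n$-CA. Since $n \ge 2$, writing $m = n-1 \ge 1$ we have that $\cA$ has $(m+1)$-CA.

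Now I would simply invoke the already-proved results. Theorem~\ref{Hn_shells}, applied with this $m = n-1$ and $B = p_\emptyset$, gives
$$
H_{n-1}(\cA; p_\emptyset) = \left\{ [c] : c \text{ is an } (n-1)\text{-shell over } p_\emptyset \text{ with support } [n] \right\},
$$
which is exactly the claimed description of $H_{n-1}(T;A)$ once we recall the identifications above and that $[n] = \{0,\dots,n\}$ is an $n$-element\dots in fact $(n)$-element, so it has the right cardinality to support an $(n-1)$-simplex. The ``in particular'' clause is immediate from Corollary~\ref{shellgen}: under $n$-CA (in particular $(n-1)$-CA), that corollary says that if every $(n-1)$-shell over $B$ is a boundary then so is every $(n-1)$-cycle. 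Alternatively it follows directly from the displayed equality, since if every shell with support $[n]$ is a boundary then $H_{n-1}(T;A) = 0$ and every cycle is a boundary.

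\textbf{Main obstacle.} There is no real obstacle, since all the work has been done in Section~1; the only thing to be careful about is bookkeeping — matching the index conventions ($n$-amalgamation vs.\ $(n-1)$-dimensional shells/simplices, and the model-theoretic definition of $n$-amalgamation ``over $A$'' against the categorical one in terms of shells), and checking that type homology over $A$ is literally $H_{n-1}(\cA^t(T;A); p_\emptyset)$ so that Theorem~\ref{Hn_shells} applies verbatim. One should also note that the statement is phrased for the isomorphism class $H_{n-1}(p)$-style group; since $H^t_n \cong H^{set}_n$ by Proposition~\ref{homology_equivalence}, it does not matter which model of the group we use, but it is cleanest to run the argument with the type-homology family $\cA^t(T;A)$.
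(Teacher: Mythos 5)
Your proposal is correct and takes essentially the same route as the paper, which presents this proposition precisely as a restatement of Theorem~\ref{Hn_shells} (applied with that theorem's parameter equal to $n-1$, so that $(n+1)$-CA there becomes $n$-CA here and the support $[m+1]$ becomes $[n]$) for the amenable family $\cA^t(T;A)$ (or $\cA^{set}(T;A)$) over $p_\emptyset$ (resp.\ $A$), with the ``in particular'' clause following as you say from Corollary~\ref{shellgen} or from the displayed equality. One cosmetic slip: $[n]=\{0,\dots,n\}$ has $n+1$ elements and is the support of the $(n-1)$-\emph{shell} (each constituent $(n-1)$-simplex has support of size $n$), but this does not affect the argument.
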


\begin{corollary}
\label{trivial_homology}
Suppose $n \geq 3$.
Assume $T$ has $n$-CA based on  $A=\acl(A)$. Then $H_{n-2}(p) = 0$ for $p\in S(A)$,
and $H_{n-2}(T;A)=0$.

\end{corollary}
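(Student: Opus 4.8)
The plan is to read this off from the general, category-theoretic results of Section~1 applied to the non-trivial amenable families $\cA^{set}(T;A)$ and $\cA^{set}(p)$. Recall from Section~2 that, up to isomorphism, $H_{n-2}(T;A)=H_{n-2}(\cA^{set}(T;A);A)$ and $H_{n-2}(p)=H_{n-2}(\cA^{set}(p);A)$. Moreover, by the remark following Definition~\ref{fan} together with the definitions of the model-theoretic amalgamation properties in this section, ``$T$ has $k$-amalgamation based on $A$'' is precisely the assertion that every $(k-2)$-shell over $A$ bounds a $(k-1)$-simplex in $\cA^{set}(T;A)$, i.e.\ that $\cA^{set}(T;A)$ has $k$-amalgamation. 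Thus the hypothesis ``$T$ has $n$-CA based on $A$'' says exactly that $\cA^{set}(T;A)$ has $n$-CA.

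First I would dispose of $H_{n-2}(T;A)$. Since $n\ge 3$, the family $\cA^{set}(T;A)$ has $(n-1)$-CA with $n-1\ge 2$, so by Proposition~\ref{shellgennew} (equivalently, the $n\mapsto n-1$ case of Corollary~\ref{shellgen} applied to this family), the vanishing of $H_{n-2}(T;A)$ reduces to showing that every $(n-2)$-shell over $A$ is a boundary. But that is exactly the $n$-amalgamation of $\cA^{set}(T;A)$, which we have. Hence every $(n-2)$-cycle over $A$ is a boundary and $H_{n-2}(T;A)=0$. (This is just the model-theoretic instance of the general statement at the end of Section~1.2.)

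For $H_{n-2}(p)$ it now suffices, by the same reasoning applied to $\cA^{set}(p)$, to check that $\cA^{set}(p)$ also has $n$-CA. So let $c$ be a $(k-2)$-shell of type $p$ for some $k\le n$; viewing $c$ as a shell over $A$, the $k$-amalgamation of $T$ over $A$ gives a $(k-1)$-simplex $f$ with $\bd f=c$. Since each vertex $f(\{j\})$ occurs as a vertex of one of the faces $\bd^i f$ of $c$, every vertex of $f$ is of type $p$, whence $f\in\cA^{set}(p)$. So $\cA^{set}(p)$ has $k$-amalgamation for every $k\le n$, and repeating the previous paragraph with $\cA^{set}(p)$ in place of $\cA^{set}(T;A)$ yields $H_{n-2}(p)=0$.

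I do not anticipate a genuinely hard step here: the argument is the dictionary between the model-theoretic amalgamation properties and amalgamation of the families $\cA^{set}(\cdot)$, on top of the already-proved Corollary~\ref{shellgen}. The only point that needs a moment's care is the descent of $n$-amalgamation from $T$ over $A$ to the type $p$ --- that a simplex filling a type-$p$ shell is automatically of type $p$, because its vertices are among the vertices of the shell.
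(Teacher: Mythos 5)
Your proof is correct and follows essentially the same route as the paper: the statement is just the specialization of the abstract Corollary on trivial homology (via Corollary~\ref{shellgen}/Proposition~\ref{shellgennew}) to the non-trivial amenable families $\cA^{set}(T;A)$ and $\cA^{set}(p)$, using the remark that $n$-amalgamation of such a family is exactly the shell-filling property. Your extra observation that a simplex filling a type-$p$ shell is automatically of type $p$ (so $\cA^{set}(p)$ inherits $n$-CA, with $1$- and $2$-amalgamation automatic from non-triviality) is precisely the small bridging point the paper leaves implicit, and it is argued correctly.
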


However, the converse of the above corollary is false in general: the theory of the random tetrahedron-free hypergraph does not have $4$-amalgamation, but all of its homology groups are trivial (Example~\ref{tet.free}).

\begin{corollary}\label{trivialH1}
If $T$ is simple, then $H_1(T; A)=0$ and $H_1(p) = 0$ for any complete type $p$ in $T$.
\end{corollary}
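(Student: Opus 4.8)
The plan is to derive this as a quick consequence of Corollary~\ref{trivial_homology}, once we verify that a simple theory has $3$-complete amalgamation. Concretely, I would show that the families $\cA^{set}(T;A)$ and $\cA^{set}(p)$ have $k$-amalgamation for $k=1,2,3$, hence $3$-CA, and then apply the vanishing criterion of Corollary~\ref{trivial_homology} with $n=3$ to conclude $H_1(T;A)=H_{3-2}(\cA^{set}(T;A);A)=0$ and likewise $H_1(p)=H_1(\cA^{set}(p);A)=0$.

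For the amalgamation check I would invoke the facts already recorded in the Remark following the definition of $n$-complete amalgamation: $1$-amalgamation holds in any non-trivial amenable family; strong $2$-amalgamation is just the existence of nonforking extensions, valid in any rosy (in particular simple) theory; and $3$-amalgamation over an algebraically closed set is exactly the Independence Theorem \cite{KP}. The one point deserving a sentence of justification is that, for \emph{closed} simplices, the algebraic closure of the base is absorbed into each vertex, so independence over the base and over its algebraic closure coincide; hence $3$-amalgamation over an arbitrary small base reduces to $3$-amalgamation over its algebraic closure, and $\cA^{set}(T;A)$ and $\cA^{set}(p)$ indeed have $3$-amalgamation for every small $A$ and every complete type $p$.

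I do not expect a genuine obstacle here: the result is essentially a reading-off of Corollary~\ref{trivial_homology} in the presence of the standard low-level amalgamation properties of simple theories, and the only mildly delicate step is the reduction of $3$-amalgamation over a possibly non-algebraically-closed base to the classical Independence Theorem.
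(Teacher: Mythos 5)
Your route is the one the paper intends: Corollary~\ref{trivialH1} is meant to be read off from the Remark that a simple theory has $1$-, $2$-, and $3$-amalgamation together with Corollary~\ref{trivial_homology} applied with $n=3$, and that core of your proposal is exactly the paper's argument. The problem is the step you yourself flag as the ``only mildly delicate'' one: the claimed reduction of $3$-amalgamation over an arbitrary small base $A$ to $3$-amalgamation over $\acl(A)$. The observation that nonforking is insensitive to replacing $A$ by $\acl(A)$ is true but irrelevant; what the Independence Theorem needs is that the two types being amalgamated have the same \emph{Lascar strong type} over the base, i.e.\ the same restriction to $\acl(A)$ (given elimination of hyperimaginaries). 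In a simplex over a non-algebraically-closed $A$ the transition maps are only required to fix $A$ pointwise, so they may permute $\acl(A)\setminus\dcl(A)$. A $1$-shell over $A$ can therefore present its vertex at $2$ to the two adjacent edges via maps that twist $\acl(A)$ incompatibly, and then no $2$-simplex can fill it: functoriality forces the two induced embeddings of $\acl(A)$ into the top face to coincide.

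This is not a repairable gap in the generality you claim. Let $T$ be the ($\omega$-stable) theory of an infinite set together with a predicate $P$ naming exactly two elements $c\neq c'$, and let $A=\emptyset$, so $c,c'\in\acl(\emptyset)\setminus\dcl(\emptyset)$. Take the $1$-shell over $\emptyset$ whose vertices are $\acl(a_i)$ for distinct points $a_0,a_1,a_2$ of the infinite set, with two edges given by inclusion maps and the third edge embedding $\acl(a_2)$ by the elementary map fixing $a_2$ and swapping $c$ and $c'$. Assigning to a $1$-simplex $f$ with support $\{i<j\}$ the element of $\mathbb{Z}/2$ recording whether $\bigl(f^{\{i\}}_{\{i,j\}}\bigr)^{-1}\circ f^{\{j\}}_{\{i,j\}}$ swaps $c,c'$ defines a homomorphism on $1$-chains that vanishes on $\bd g$ for every $2$-simplex $g$ (the three face values telescope) but equals $1$ on this shell; hence the shell is a $1$-cycle that is not even a boundary of a $2$-chain, so $3$-amalgamation over $\emptyset$ fails and $H_1(T;\emptyset)\neq 0$ for this stable theory. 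Consequently the statement must be read, as the hypothesis ``based on $A=\acl(A)$'' in Corollary~\ref{trivial_homology} indicates, over algebraically closed bases (and for $p\in S(A)$ with $A=\acl(A)$, which is the paper's standing convention for $H_1(p)$). With that reading your argument is correct and coincides with the paper's; the second paragraph of your proposal should simply be deleted rather than justified.
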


This result is extended to any 1-type in an $o$-minimal theory in Example~\ref{ominh1} below.

\section{Computing $H_2(p)$ (the ``Hurewicz theorem'')}

We assume throughout this section that $T$ is a stable theory and that $p$ is a strong type (without loss of generality, over the empty set).  We will prove that  the type homology group $H_2(p)$ is isomorphic to a certain automorphism group $\Gamma_2(p)$ defined below.  This can be thought of as an analogue of Hurewicz's theorem in algebraic topology, which says that for a path connected topological space $X$, the first homology group $H_1(X)$ is isomorphic to the abelianization of the  homotopy group $\pi_1(X)$.  Just as there is a higher-dimensional version of Hurewicz's theorem for $H_n(X)$ under the hypothesis that $X$ is $(n-1)$-connected, we hope that there is a higher-dimensional generalization of our result under the hypothesis that  the theory $T$ has $(n+1)$-complete amalgamation.  In other words, maybe $n$-CA is analogous to a topological connectedness property.

Throughout this section, ``$\ov{a}$'' denotes the algebraic closure of an element $a$, considered as a possibly infinite ordered \textbf{tuple}, but the choice of ordering will not play any important role in what follows.  Implicit in the argument below is that if $a \equiv a_0,$ then there are orderings $\ov{a}, \ov{a_0}$ of their algebraic closures such that $\ov{a} \equiv \ov{a_0}$.
Moreover, $\Aut(A/B)$ denotes the group of
elementary maps from $A$ {\em onto} $A$ fixing $B$ pointwise.

First, suppose that $C = \{a_i : i \in s\}$ is an independent set of realizations of the type $p$.  Pick some $a$ realizing $p$ such that $a \ind C$, and let $$\widetilde{a_s} := \overline{a_s} \cap \dcl\left(\bigcup_{i \in s} \overline{a, a_{s \setminus \{i\}}}\right).$$  Note that since $T$ is stable, by stationarity, the set $\widetilde{a}_s$ does not depend on the particular choice of $a$.

Fix some integer $n \geq 2$, and let $\{a_0, \ldots, a_{n-1}\}$ be an independent set of $n+1$ realizations of $p$.  Recall our notation that $[k] = \{0, \ldots, k\}$, so that $\widetilde{a}_{[n-1]} = \widetilde{a}_{\{0, \ldots, n-1\}}.$  Let $$B_n = \bigcup_{0 \leq i \leq n-1} \overline{a_{\{0, \ldots, \widehat{i}, \ldots, n-1\}}}.$$  Finally, we let $\Gamma_n(p) = \Aut(\widetilde{a}_{[n-1]} / B_n)$.

Note that $\widetilde{a}_{[n-1]}$ is a subset of $\acl(a_0, \ldots, a_{n-1})$, so $\Gamma_n(p)$ is a quotient of the full automorphism group $\Aut(\overline{a_{[n-1]}} / B_n)$ (namely, the quotient of the subgroup of all automorphisms fixing $\widetilde{a}_{[n-1]}$ pointwise).

Now we can state the main result of this section:

\begin{theorem}
\label{hurewicz}
If $T$ is stable, $p$ is stationary, and $(a,b) \models p^{(2)}$, then $H_2(p) \cong \Gamma_2(p)$.
\end{theorem}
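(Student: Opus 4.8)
The strategy is to construct a homomorphism $\Phi : H_2(p) \to \Gamma_2(p)$, produce a set-theoretic section of it (hence surjectivity), and then show $\ker\Phi = 0$. Throughout I work with the set-homology model $H^{set}_2(p)$, which is legitimate by Proposition~\ref{homology_equivalence}. Since $T$ is stable it has $3$-complete amalgamation ($1$-amalgamation is vacuous, $2$-amalgamation is the existence of nonforking extensions, $3$-amalgamation is the Independence Theorem), so Corollary~\ref{Hn_pockets} applies and every class in $H_2(p)$ is the class of a $2$-pocket; by the prism lemma (Lemma~\ref{prism}, used in the form where the second simplex is prescribed first) the pocket may be taken of the shape $g - g_*$, where $g_*$ is a fixed ``standard'' $2$-simplex of type $p$ with vertices $\ov{a_0},\ov{a_1},\ov{a_2}$ (an independent triple of realizations of $p$), edges the genuine algebraic closures $\ov{a_i a_j}$ with genuine inclusions, and top face $M := \ov{a_0 a_1 a_2}$, again with genuine inclusions. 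A pocket of this shape is determined by the single $2$-simplex $g$, which shares the $1$-skeleton of $g_*$ and has some top $M_g$ glued along the edges by maps $\iota^g_{ij} : \ov{a_i a_j} \to M_g$.

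I would then define $\Phi([g - g_*]) = \tau_g$ as follows. Because $g$ is a closed independent simplex, $M_g$ is the algebraic closure of $\im\iota^g_{02}\cup\im\iota^g_{12}$ and these two sets are nonforking-independent over $\iota^g_{02}(\ov{a_2})$; hence, by stationarity of types over the algebraically closed set $\ov{a_2}$, the configuration $\im\iota^g_{02}\cup\im\iota^g_{12}$ has the same type over the base as the corresponding configuration for $g_*$, and there is an elementary bijection $\theta : M_g \to M$ with $\theta\circ\iota^g_{02} = \iota^{g_*}_{02}$ and $\theta\circ\iota^g_{12} = \iota^{g_*}_{12}$. Such a $\theta$ is not unique, but any two choices agree on $\dcl(\im\iota^g_{02}\cup\im\iota^g_{12})$, hence on $\widetilde M_g := \im\iota^g_{01}\cap\dcl(\im\iota^g_{02}\cup\im\iota^g_{12})$. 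Pulling back along $\iota^g_{01}$ and $\iota^{g_*}_{01}$, and using that by stationarity $(\iota^g_{01})^{-1}(\widetilde M_g)$ equals the canonical set $\widetilde{a}_{\{0,1\}}$ (computed with $a_2$ as the auxiliary realization, legitimate since $a_2 \ind a_0 a_1$), one obtains a well-defined
\[
\tau_g := (\iota^{g_*}_{01})^{-1}\circ\theta\circ\iota^g_{01}\upharpoonright\widetilde{a}_{\{0,1\}} \in \Aut\bigl(\widetilde{a}_{\{0,1\}}/\ov{a_0}\cup\ov{a_1}\bigr) = \Gamma_2(p).
\]
A diagram chase (the comparison maps $\theta$ compose) shows $[g - g_*]\mapsto\tau_g$ is additive along telescoping sums $g - g_* = (g - g') + (g' - g_*)$, and that $\tau_g$ is trivial whenever $g - g_*$ bounds --- a bounding $3$-simplex supplies isomorphism data between $g$ and $g_*$ forcing $\theta$ to be the identity on $\widetilde M_g$. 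Hence $\Phi$ is a well-defined group homomorphism, and in particular $\Gamma_2(p)$ will be abelian once $\Phi$ is shown onto.

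For surjectivity I would exhibit a section. Given $\tau\in\Gamma_2(p)$, extend it to an automorphism $\bar\tau$ of $\ov{a_0 a_1}$ fixing $\ov{a_0}\cup\ov{a_1}$ pointwise and preserving $\widetilde{a}_{\{0,1\}}$, and let $g_\tau$ be the $2$-simplex obtained from $g_*$ by keeping every vertex, every edge, and the top $M$ unchanged, and replacing only the edge inclusion $\iota^{g_*}_{01}$ by $\iota^{g_*}_{01}\circ\bar\tau$. Since the images of the edges inside $M$ are unchanged, all the independence and closure conditions are inherited from $g_*$, so $g_\tau$ is a $2$-simplex and $g_\tau - g_*$ is a $2$-pocket; taking $\theta = \id_M$ in the definition of $\Phi$ gives $\Phi([g_\tau - g_*]) = \bar\tau\upharpoonright\widetilde{a}_{\{0,1\}} = \tau$. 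Thus $\Psi(\tau) := [g_\tau - g_*]$ is a section of $\Phi$.

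The remaining step, which I expect to be the main obstacle, is $\ker\Phi = 0$: if $g - g_*$ is a pocket of the standard shape with $\tau_g = \id$, then $g - g_* \in B_2\cC(p)$. Composing with a choice of $\theta$ (which by a routine application of the fan/prism machinery does not change the homology class) one may assume $M_g = M$, $\iota^g_{02} = \iota^{g_*}_{02}$, $\iota^g_{12} = \iota^{g_*}_{12}$, so that the only discrepancy between $g$ and $g_*$ is an automorphism $\sigma = (\iota^{g_*}_{01})^{-1}\iota^g_{01}$ of $\ov{a_0 a_1}$ fixing $\ov{a_0}\cup\ov{a_1}$ whose restriction to $\widetilde{a}_{\{0,1\}}$ is the identity. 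The crucial model-theoretic content is that $\widetilde{a}_{\{0,1\}}$ is precisely the ``rigid part'' of the edge $\ov{a_0 a_1}$ inside a $2$-simplex: over $\ov{a_0 a_2}\cup\ov{a_1 a_2}$ the complementary part of $\ov{a_0 a_1}$ can be re-amalgamated freely, so $\sigma$ contributes nothing to the amalgamation problem. Concretely, I would introduce a new vertex $\{3\}$ and use $3$-amalgamation repeatedly to build a $3$-simplex $F$ on support $\{0,1,2,3\}$ whose boundary, once the pocket $g - g_*$ is rewritten as a $2$-shell via Lemma~\ref{shells and pockets}, is $\pm(g - g_*)$ modulo boundaries of the kind handled by Lemma~\ref{trivial_pocket}; the hypothesis $\sigma\upharpoonright\widetilde{a}_{\{0,1\}} = \id$ together with stationarity over $\ov{a_2}$ is exactly what lets the faces of $F$ through the new vertex be chosen compatibly. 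Granting this, $\Phi$ is an isomorphism, and $H_2(p) \cong \Gamma_2(p)$.
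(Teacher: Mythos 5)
Your reduction to pockets against a fixed standard simplex (via Corollary~\ref{Hn_pockets} and the prism lemma), the stationarity argument producing the comparison map $\theta$, the identification of $(\iota^g_{01})^{-1}(\widetilde M_g)$ with $\widetilde{a}_{\{0,1\}}$, and the explicit section giving surjectivity are all sound, and in spirit they track what the paper does. But the two places where you are brief are exactly where the theorem's substance lies, and both are genuine gaps. First, well-definedness of $\Phi$: you must show the invariant is unchanged when the pocket is altered by an element of $B_2\cC(p)$, and an element of $B_2\cC(p)$ is the boundary of an \emph{arbitrary} integer $3$-chain, not of a single $3$-simplex; so ``a bounding $3$-simplex supplies isomorphism data'' does not cover the general case. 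What is needed is an invariant defined on \emph{all} $2$-simplices of type $p$ (not only those sharing the skeleton of $g_*$), extended linearly, and shown to vanish on $\bd h$ for every $3$-simplex $h$. That is precisely what the paper's maps $\epsilon_i$ achieve in Lemma~\ref{epsilon_boundaries}, and making them coherent requires the directed system of full symmetric witnesses, the groupoids $\cG_i$ with the adjusted formulas $\theta_i$, and the edge-selection functions $\alpha_i$; nothing in your sketch produces such a globally defined, boundary-killing invariant, and without it neither additivity nor descent to $H_2(p)$ is established.

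Second, injectivity, which you explicitly defer (``Granting this''). The crux there is not the combinatorics of attaching a new vertex by $3$-amalgamation, but the model-theoretic fact that agreement of the gluing data on $\widetilde{a}_{\{0,1\}}$ controls the \emph{full} type of the edge $\ov{a_0a_1}$ over the other two faces, so that the last face of the filling $3$-simplex can be chosen compatibly. In the paper this is the isolation statement inside Lemma~\ref{injectivity} (Claim~\ref{isolation2} and its subclaim, that $\tp(f_{xy,\widehat{e}}/f_{yz,\widehat{e}}f_{xz,\widehat{e}})$ is isolated by $\tp(f_{xy,\widehat{e}}/\widetilde{f}_{yz,\widehat{e}}\widetilde{f}_{xz,\widehat{e}})$), and its proof uses the maximality of the directed family of symmetric witnesses (Proposition~\ref{full symm}, Claim~\ref{witness system}) together with groupoid composition --- stationarity over $\ov{a_2}$ and the independence theorem alone do not give it. The same missing ingredient resurfaces if you instead try to verify that your section $\Psi$ does not depend on the choice of extension $\bar\tau$ of $\tau$ to $\ov{a_0a_1}$. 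So the proposal correctly identifies the shape of the isomorphism but leaves unproved the two lemmas that constitute essentially all of the paper's proof.
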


An immediate consequence of this theorem plus Corollary~\ref{trivial_homology} above is:

\begin{corollary}
If $p$ is a strong type in a stable theory, then $p$ has $3$-uniqueness (or equivalently, $p$ has $4$-amalgamation) if and only if $H_2(p) \cong 0$.
\end{corollary}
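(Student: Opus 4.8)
The plan is to prove that $H_2(p) = 0$ if and only if $p$ has $4$-amalgamation; since $p$ is a strong type in a stable theory and such types automatically have $3$-amalgamation (the Independence Theorem), Hrushovski's theorem \cite{Hr} that $3$-uniqueness is equivalent to $4$-amalgamation then yields the corollary exactly as stated. One direction of the equivalence I would handle directly: if $p$ has $4$-amalgamation then, as $T$ is stable, $p$ also has $1$-amalgamation (vacuously), $2$-amalgamation (nonforking extensions exist), and $3$-amalgamation; hence the amenable family $\cA^t(p)$ has $4$-CA, and Corollary~\ref{trivial_homology}, applied with $n = 4$, gives $H_2(p) = 0$.

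For the converse I would start from $H_2(p) = 0$ and invoke the Hurewicz-type Theorem~\ref{hurewicz}: $H_2(p) \cong \Gamma_2(p) = \Aut(\widetilde{a}_{\{0,1\}} / \overline{a_0} \cup \overline{a_1})$, so $\Gamma_2(p)$ is trivial, and it remains to deduce that $p$ has $3$-uniqueness. Using the shell reformulation of the amalgamation properties (the Remark following Definition~\ref{fan}), $3$-uniqueness for $p$ says precisely that every $1$-shell of type $p$ with support $\{0,1,2\}$ is the boundary of \emph{at most} one $2$-simplex, up to isomorphism over the shell. By $3$-amalgamation such a $2$-simplex $g$ exists, with $g(\{i\}) = \overline{a_i}$ for an independent triple $a_0, a_1, a_2 \models p$ and $g(\{0,1,2\}) = \acl(\overline{a_0} \cup \overline{a_1} \cup \overline{a_2})$; any other filling of the same shell agrees with $g$ on every proper face and is therefore obtained from $g$ by composing with an automorphism of $g(\{0,1,2\})$ fixing $\overline{a_0} \cup \overline{a_1} \cup \overline{a_2}$ pointwise.

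The crucial step is then to identify the set of such fillings, taken up to isomorphism over the shell, with a torsor under $\Gamma_2(p)$ --- this is essentially the analysis carried out in \cite{GK, gkk}, and is also implicit in the construction underlying Theorem~\ref{hurewicz}. Granting it, $\Gamma_2(p) = 0$ forces this set to be a singleton, i.e. $p$ has $3$-uniqueness, hence (by \cite{Hr}) $4$-amalgamation, and the proof is complete. I expect this torsor identification to be the main obstacle: it is where stationarity, canonical bases, and the structure of amalgams of algebraically closed sets are genuinely needed, in order to show that an automorphism of $g(\{0,1,2\})$ over the edges is both controlled by and realizable on the ``central'' piece $\widetilde{a}_{\{0,1\}}$. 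If one wishes to avoid citing \cite{gkk}, one must reprove enough of that analysis to establish directly that triviality of $\Gamma_2(p)$ implies $3$-uniqueness of $p$.
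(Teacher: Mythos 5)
Your overall skeleton is the paper's: the forward direction (4-amalgamation, plus the automatic 1-, 2-, 3-amalgamation for a strong type in a stable theory, gives 4-CA for $\cA(p)$, hence $H_2(p)=0$ by Corollary~\ref{trivial_homology}) is exactly what the paper intends, and for the converse the paper likewise passes through Theorem~\ref{hurewicz} to reduce everything to the triviality of $\Gamma_2(p)=\Aut(\widetilde{a_0a_1}/\ov{a_0},\ov{a_1})$. Where you diverge is in how you close the loop from $\Gamma_2(p)=0$ to $3$-uniqueness: you propose identifying the set of fillings of a $1$-shell, up to isomorphism over the shell, with a torsor under $\Gamma_2(p)$, and you correctly flag this as the step you have not proved. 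That torsor statement is stronger than what is needed (it is a uniqueness/counting statement, essentially the content of Lemmas~\ref{injectivity} and \ref{surjectivity}), and your preliminary description of the discrepancy between two fillings as ``an automorphism of $g(\{0,1,2\})$ fixing $\ov{a_0}\cup\ov{a_1}\cup\ov{a_2}$ pointwise'' glosses over the normalization of the top sets and edge maps that the torsor analysis would have to handle. The paper's (implicit) route is the cheaper contrapositive: if $3$-uniqueness fails for $p$, then by Proposition~\ref{full symm} (quoted from \cite{gkk}) there is a full symmetric witness whose element $f_{01}$ lies in $\ov{a_{01}}\cap\dcl(\ov{a_{02}},\ov{a_{12}})=\widetilde{a_0a_1}$ but not in $\dcl(\ov{a_0},\ov{a_1})$; since $f_{01}\in\acl(\ov{a_0},\ov{a_1})$ and $\widetilde{a_0a_1}$ is invariant under automorphisms over $\ov{a_0}\cup\ov{a_1}$, any automorphism moving $f_{01}$ restricts to a nontrivial element of $\Gamma_2(p)$, so $H_2(p)\neq 0$ by Theorem~\ref{hurewicz}. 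This only requires the existence of a single such element rather than the full torsor structure, so the ``main obstacle'' you identify can be bypassed; with that substitution (or with a citation of the $3$-uniqueness characterization from \cite{GK,gkk}, which is also what the paper relies on), your argument is correct.
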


\begin{question}
\label{hurewicz2}
If $T$ is stable with $(n+1)$-complete amalgamation, then is $H_n(p)$ isomorphic to $\Gamma_n(p)$?
\end{question}

\subsection{Preliminaries on definable groupoids}

Here we review some material from \cite{GK} and \cite{gkk} on definable groupoids that we need for the proof of Theorem~\ref{hurewicz}.  We also make a minor correction to a lemma from \cite{GK} and set some notation that will be used later.  Recall that we assume $T$ is stable.

We know from \cite{GK} that in a stable theory, failures of $3$-uniqueness (or equivalently, of $4$-amalgamation) are linked with type-definable connected groupoids which are not retractable.  (See that paper for definitions of these terms.)  It turns out that the groupoid $\cG$ associated to such a failure of $3$-uniqueness can even be assumed to have abelian ``vertex groups'' $\mor_{\cG}(a,a)$ (this is proved in Section 2 of \cite{gkk}).

Given an $\acl(\emptyset)$-definable connected groupoid $\cG'$ such that the groups $\cG'_a := \mor_{\cG'}(a,a)$ are all finite and \emph{abelian}, we can define canonical isomorphisms between any two groups $\cG'_a$ and $\cG'_b$ via conjugation by some (any) $h \in \mor_{\cG'}(a,b)$.   Therefore we can quotient $\bigcup_{a \in \ob(\cG')} \cG'_a$ by this system of commuting automorphisms to get a {\em binding} group $G'$, and note that $G'$ can be thought of as a subset of $\acl^{eq}(\emptyset)$.  In fact, even if the mentioned  groupoid $\cG$ is only type-definable
(more precisely,  relatively definable due to the explanation after Claim~\ref{compost}), we can still associate
the binding group $G$ with a subset of $\acl(\emptyset)$: first find a definable connected extension $\cG'$ of $\cG$ in which $\cG$ is a full faithful subcategory, then apply this argument to $\cG'$. If $h \in \cG_a$, let $\left[h\right]_{G'}$ be the corresponding element of $G$ (so identify $G$ and  $G'$).

Next we recall from \cite{gkk} the definition of a ``full symmetric witness to the failure of $3$-uniqueness.''  For the present paper, we modify the definition slightly so that a full symmetric witness is a tuple $W$ containing a formula $\theta$ witnessing the key property.  (Later we will need to keep track of this formula).

\begin{definition}\label{full_symm_witness}
A \emph{full symmetric witness to non-$3$-uniqueness} (over the set $A$) is a tuple $(a_0, a_1, a_2, f_{01}, f_{12}, f_{02}, \theta(x,y,z))$ such that $a_0, a_1, a_2$ and $f_{01},f_{12}, f_{02}$ are finite tuples, $\theta(x,y,z)$ is a formula over $A$, and:
\begin{enumerate}
\item
$f_{ij} \in \overline{a}_{ij}$;
\item
$f_{01} \notin \dcl(\overline{a}_0 \overline{a}_1)$;
\item
$a_0a_1f_{01} \equiv_A a_1a_2f_{12} \equiv_A a_0a_2f_{02}$;
\item
$f_{01}$ is the unique realization of
$\theta(x, f_{12}, f_{02})$, the element $f_{12}$ is the unique realization of $\theta(f_{01}, y, f_{02})$, and
$f_{02}$ is the unique realization of $\theta(f_{01}, f_{12}, z)$; and
\item
$\tp(f_{01}/\ov{a}_0\ov{a}_1)$ is isolated by $\tp(f_{01}/a_0a_1)$.
\end{enumerate}
\end{definition}

The following (proved in \cite{gkk}) is the key technical point saying that we have ``enough'' symmetric witnesses:

\begin{proposition}\label{full symm}
If $T$ does not have $3$-uniqueness, then there is a set $A$ and a full symmetric witness to non-$3$-uniqueness over $A$.

In fact, if $(a_0, a_1, a_2)$ is the beginning of a Morley sequence and $f$ is any element of $\ov{a_{01}} \cap \dcl(\ov{a_{02}}, \ov{a_{12}})$ which is not in $\dcl(\ov{a_0}, \ov{a_1})$, then there is some full symmetric witness $(a'_0, a'_1, a'_2, f', g, h, \theta)$ such that $f \in \dcl(f')$ and $a_i \in \dcl(a'_i) \subseteq \ov{a_i}$ for $i = 0, 1, 2$.
\end{proposition}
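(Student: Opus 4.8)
The plan is to reduce the first assertion to the second and then to symmetrize a single witnessing element. For the reduction: by the standard characterization of failures of $3$-uniqueness in stable theories (see \cite{Hr,GK} and Section~3), if $T$ fails $3$-uniqueness then there are a set $A$, a Morley sequence $(a_i)_{i<\omega}$ over $A$, and an element $f\in\ov{a_{01}}\cap\dcl(\ov{a_{02}},\ov{a_{12}})$ with $f\notin\dcl(\ov{a_0},\ov{a_1})$; feeding $(a_0,a_1,a_2)$ and this $f$ into the ``in fact'' clause then produces a full symmetric witness. So it suffices to prove the ``in fact'' clause.

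Fix $(a_0,a_1,a_2)$ at the start of a Morley sequence and $f$ as above. The first step is symmetrization. Since $T$ is stable, a Morley sequence over $\acl(A)$ is a totally indiscernible set; hence there is an automorphism $\rho\in\Aut(\C/\acl(A))$ with $\rho(a_0,a_1,a_2)=(a_1,a_2,a_0)$. Putting $f_{01}:=f$, $f_{12}:=\rho(f)\in\ov{a_{12}}$, and $f_{02}:=\rho^{2}(f)\in\ov{a_{02}}$ gives $f_{ij}\in\ov{a_{ij}}$, the type equalities $a_0a_1f_{01}\equiv_A a_1a_2f_{12}\equiv_A a_2a_0f_{02}$ (total indiscernibility also lets us pass from $a_2a_0f_{02}$ to $a_0a_2f_{02}$ after a harmless correction of $f_{02}$), and $f_{01}\notin\dcl(\ov{a_0},\ov{a_1})$, so conditions (1)--(3) of Definition~\ref{full_symm_witness} hold.

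The heart of the argument is condition (4): we must upgrade ``$f_{01}\in\dcl(\ov{a_{02}},\ov{a_{12}})$'' to ``$f_{01}$ is the unique realization of $\theta(x,f_{12},f_{02})$'' for one $A$-formula $\theta$, and symmetrically in the other two variables. The plan is to replace $f$ by a finite tuple $f'$ with $f\in\dcl(f')$ that lies in $Y_{01}:=\ov{a_{01}}\cap\dcl(\ov{a_{02}},\ov{a_{12}})$ and is \emph{symmetrically closed}, meaning $f'\in\dcl_A(\rho(f'),\rho^{2}(f'))$. One builds $f'$ by a closure process starting from $f$: for each coordinate $e$ of the current tuple, fix a definition of $e$ as the unique realization of some $\psi(x,b,c)$ with $b\in\ov{a_{02}}$, $c\in\ov{a_{12}}$; using canonical bases of the relevant algebraic (hence finite-multiplicity) types, arrange that $b$ and $c$ can be taken inside $Y_{02}:=\ov{a_{02}}\cap\dcl(\ov{a_{01}},\ov{a_{12}})$ and $Y_{12}:=\ov{a_{12}}\cap\dcl(\ov{a_{01}},\ov{a_{02}})$; then adjoin to the tuple the elements $\rho(b)\in Y_{01}$ and $\rho^{2}(c)\in Y_{01}$. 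Once this stabilizes, the resulting tuple $f'$ is symmetrically closed, and, letting $\theta(x,y,z)$ express that $x$ is the value of the ($A$-definable, $\rho$-equivariant) function computing $f'$ from $(\rho(f'),\rho^{2}(f'))$ and setting $f_{12}:=\rho(f')$, $f_{02}:=\rho^{2}(f')$, we obtain (4) with multiplicity one. Condition (5) is secured last: by stability and compactness, $\tp(f_{01}/\ov{a_0}\ov{a_1})$ is isolated over a finite subtuple $c_0c_1\subseteq\ov{a_0}\ov{a_1}$ with $c_0\in\ov{a_0}$, $c_1\in\ov{a_1}$; absorbing $c_0,c_1$ and (by total indiscernibility) the matching algebraic elements of $\ov{a_2}$ into the respective $a_i$ yields tuples $a_i'$ with $a_i\in\dcl(a_i')\subseteq\ov{a_i}$ for which (5) holds, and this does not disturb (1)--(4).

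I expect the main obstacle to be the closure step for condition (4): one must show both that the witnessing parameters $b,c$ can indeed be replaced by elements of the symmetric sets $Y_{02},Y_{12}$ (this is where canonical bases and stationarity enter) and that the process of repeatedly adjoining their $\rho$-translates terminates after finitely many steps, i.e.\ that a finite symmetrically closed sub-configuration of $(Y_{01},Y_{12},Y_{02})$ containing $f$ exists; one must also be careful that the single formula $\theta$ produced at the end has multiplicity exactly one. This is the technical core of the construction carried out in \cite{gkk}.
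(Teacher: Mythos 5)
The paper itself does not prove Proposition~\ref{full symm}: it is imported wholesale from \cite{gkk} (``The following (proved in \cite{gkk})\dots''), so the only thing to compare your proposal against is the construction in that reference, which your sketch gestures at but does not actually carry out. The two steps you yourself flag as ``the main obstacle'' --- relocating the witnessing parameters $b,c$ into the symmetric sets $Y_{02},Y_{12}$, making the closure process terminate so that $f'$ stays a \emph{finite} tuple (as Definition~\ref{full_symm_witness} requires), and extracting a single formula $\theta$ with multiplicity one --- are not peripheral technicalities; they are essentially the whole content of the proposition (indeed, the assertion that every element of $\ov{a_{01}}\cap\dcl(\ov{a_{02}},\ov{a_{12}})$ sits in the $\dcl$ of symmetric-witness data is exactly what is being proved, so invoking ``canonical bases'' to relocate $b,c$ comes close to assuming the conclusion). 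Deferring these to \cite{gkk} leaves the proof incomplete.

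Beyond the admitted gaps, there is a concrete flaw in the plan for clause (4) of Definition~\ref{full_symm_witness}. That clause is three-way: $f_{01}$ must be the unique solution of $\theta(x,f_{12},f_{02})$, \emph{and} $f_{12}$ of $\theta(f_{01},y,f_{02})$, \emph{and} $f_{02}$ of $\theta(f_{01},f_{12},z)$. Your ``symmetrically closed'' condition $f'\in\dcl_A(\rho(f'),\rho^2(f'))$ yields only the first of these. Applying $\rho$ to it gives $\rho(f')\in\dcl_A\bigl(\rho^2(f'),\rho^3(f')\bigr)$, and $\rho^3(f')\neq f'$ in general: $\rho$ has order $3$ only on the triple $(a_0,a_1,a_2)$, not on elements of $\ov{a_{01}}$. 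So recoverability of $f_{12}$ from $(f_{01},f_{02})$ and of $f_{02}$ from $(f_{01},f_{12})$ does not follow, and repairing this forces a closure process in all three directions, which makes the finiteness/termination issue you already worried about strictly harder. Relatedly, the ``harmless correction'' of $f_{02}$ (replacing $\rho^2(f)$ by a conjugate to fix the order of $a_0,a_2$) is not free: condition (3) only pins down the type of $f_{02}$ over $a_0a_2$, and an arbitrary conjugate with that type need not satisfy the interdefinability in (4) with the already chosen $f_{01},f_{12}$; likewise the parameters you absorb into the $a_i'$ for the isolation clause (5) must be chosen coherently (e.g.\ $\rho$-equivariantly), or (3) is lost. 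So the overall strategy is reasonable and in the spirit of \cite{gkk}, but as written the proof has a genuine gap at the heart of conditions (4) and (5).
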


The next lemma states a crucial point in the construction of type-definable groupoids from witnesses to the failure of $3$-uniqueness.  This was not isolated as a lemma in \cite{GK}, though the idea was there.

\begin{lemma}
\label{full_symm2}
If $(a_0, a_1, a_2, f_{01}, f_{12}, f_{02}, \theta(x,y,z))$ is a full symmetric witness, and if $f \equiv_{a_0 a_1} f_{01}$ and $g \equiv_{a_1 a_2} f_{12}$, then $$(f, g, \ov{a_0}, \ov{a_1}, \ov{a_2}) \equiv (f_{01}, f_{12}, \ov{a_0}, \ov{a_1}, \ov{a_2}).$$
\end{lemma}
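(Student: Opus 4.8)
The plan is to first upgrade the two hypotheses from the finite bases $a_0a_1$ and $a_1a_2$ to the algebraically closed sets $\ov{a_0},\ov{a_1},\ov{a_2}$ using condition~(5) of a full symmetric witness, and then to glue the two resulting pieces of information together over the \emph{algebraically closed} set $\acl(a_1)$, where stationarity is available. I would begin by reducing to $A=\emptyset$ (name the parameters of $A$). By condition~(5), $\tp(f_{01}/a_0a_1)\vdash\tp(f_{01}/\ov{a_0}\ov{a_1})$, so the hypothesis $f\equiv_{a_0a_1}f_{01}$ strengthens to $f\equiv_{\ov{a_0}\ov{a_1}}f_{01}$. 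An automorphism of $\C$ witnessing condition~(3) carries $(a_0,a_1,f_{01})$ to $(a_1,a_2,f_{12})$, hence $\acl(a_0)\cup\acl(a_1)$ to $\acl(a_1)\cup\acl(a_2)$; applying it to condition~(5) yields $\tp(f_{12}/a_1a_2)\vdash\tp(f_{12}/\ov{a_1}\ov{a_2})$, so $g\equiv_{a_1a_2}f_{12}$ strengthens to $g\equiv_{\ov{a_1}\ov{a_2}}f_{12}$. Since $f_{01}\in\acl(a_0a_1)=\acl(\ov{a_0}\cup\ov{a_1})$ by condition~(1), the first strengthening also gives $f\in\acl(a_0a_1)$, and similarly $g\in\acl(a_1a_2)$.

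Next I would record the only use of independence. Recall that in a full symmetric witness $a_0,a_1,a_2$ form the beginning of a Morley sequence (see Proposition~\ref{full symm}), so $\{a_0,a_1,a_2\}$ is independent, and a routine forking computation gives $\acl(a_0a_1)\ind_{\acl(a_1)}\acl(a_1a_2)$. Put $E:=\acl(a_1)$, which is algebraically closed. From $f\equiv_{\ov{a_0}\ov{a_1}}f_{01}$, an automorphism fixing $\acl(a_0)\cup\acl(a_1)$ pointwise sends $f_{01}$ to $f$ while fixing $\ov{a_0}$ and $E$ pointwise, so $\tp(f_{01}\,\ov{a_0}/E)=\tp(f\,\ov{a_0}/E)=:q_1$; symmetrically $\tp(f_{12}\,\ov{a_2}/E)=\tp(g\,\ov{a_2}/E)=:q_2$. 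Both $q_1$ and $q_2$ are stationary, since $E$ is algebraically closed and $T$ is stable; and by the displayed independence, both pairs $(f_{01}\ov{a_0},f_{12}\ov{a_2})$ and $(f\ov{a_0},g\ov{a_2})$ consist of a realization of $q_1$ and a realization of $q_2$ that are independent over $E$. Because the independent amalgam of two stationary types over $E$ is unique, we obtain
$$
\tp\bigl(f_{01}\,\ov{a_0}\,f_{12}\,\ov{a_2}\,/\,E\bigr)=\tp\bigl(f\,\ov{a_0}\,g\,\ov{a_2}\,/\,E\bigr).
$$
An automorphism of $\C$ realizing this equality fixes $E=\acl(a_1)$ pointwise and maps the tuples $\ov{a_0}$ and $\ov{a_2}$ to themselves, while sending $f_{01}\mapsto f$ and $f_{12}\mapsto g$; this is precisely the desired conclusion $(f,g,\ov{a_0},\ov{a_1},\ov{a_2})\equiv(f_{01},f_{12},\ov{a_0},\ov{a_1},\ov{a_2})$.

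The step I expect to be the main obstacle is the gluing. The naive attempt---to amalgamate directly over $\acl(a_0)\cup\acl(a_1)$ and $\acl(a_1)\cup\acl(a_2)$---fails, because condition~(5) only controls the type of $f_{01}$ over the set $\acl(a_0)\cup\acl(a_1)$, which is typically \emph{not} algebraically closed, so stationarity is unavailable there. The resolution is the observation above: once the hypotheses are strengthened, $f$ and $g$ already lie in $\acl(a_0a_1)$ and $\acl(a_1a_2)$ and are therefore independent over the algebraically closed set $\acl(a_1)$, so one amalgamates over $\acl(a_1)$ instead and stationarity applies. The remaining points---the forking identity $\acl(a_0a_1)\ind_{\acl(a_1)}\acl(a_1a_2)$ and the bookkeeping that each automorphism used fixes the prescribed tuples---are routine.
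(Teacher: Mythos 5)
Your proof is correct and follows essentially the same route as the paper's: clause (5) is used to upgrade both hypotheses to the algebraic closures, and the gluing is then done by stationarity over $\ov{a_1}$ (the paper does this in two successive one-element applications of stationarity, while you package it as uniqueness of the independent amalgam of the two stationary types $q_1,q_2$ over $\acl(a_1)$). The independence $\acl(a_0a_1)\ind_{\acl(a_1)}\acl(a_1a_2)$ that you spell out is exactly what the paper's appeal to stationarity uses implicitly, so there is no gap.
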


\begin{proof}
By clause (5) in the definition of a full symmetric witness, $(f,  \ov{a_0}, \ov{a_1}) \equiv (f_{01},  \ov{a_0}, \ov{a_1})$ and $(g,  \ov{a_1}, \ov{a_2}) \equiv (f_{12}, \ov{a_1}, \ov{a_2})$.  It follows (by the stationarity of types over $\ov{a_1}$) that $$(f, g, \ov{a_0}, \ov{a_1}, \ov{a_2}) \equiv (f_{01}, g, \ov{a_0}, \ov{a_1}, \ov{a_2})$$ and $$(f_{01}, g, \ov{a_0}, \ov{a_1}, \ov{a_2}) \equiv (f_{01}, f_{12}, \ov{a_0}, \ov{a_1}, \ov{a_2}),$$ and the lemma follows.
\end{proof}

Given any full symmetric witness to the failure of $3$-uniqueness, we can construct from it a connected, type-definable groupoid:

\begin{proposition}
\label{groupoid_construction}
Let $W = (a_0, a_1, a_2, f, g, h, \theta(x,y,z))$ be a full symmetric witness (over $\emptyset$).  Then from $W$ we can construct a connected groupoid $\cG_W$ which is type definable over $\acl(\emptyset)$ and has the following properties:

\begin{enumerate}
\item The objects of $\cG_W$ are the realizations of the type $p = \stp(a_1)$.
\item Let $$SW_{a_0, a_1} := \{f' : f' \equiv_{a_0, a_1} f \},$$  
There is a bijection $f \mapsto [f]^{a_0, a_1}_{\cG_W}$ from $SW_{a_0, a_1}$ onto $\mor_{\cG_W}(a_0, a_1)$ which is definable over $(a_0, a_1)$.
\item If $f_0,f_1 \in \mor_{\cG}(a_0, a_1)$, then $f_0 \equiv_{a_0, a_1} f_1$.
\item The ``vertex groups'' $\mor_{\cG_W}(a,a)$ are finite and abelian.
\end{enumerate}

\end{proposition}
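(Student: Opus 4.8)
The plan is to rerun the groupoid construction of \cite{GK}, with the only change that we carry the formula $\theta$ from the witness along as part of the data — so that the bijection in clause~(2) can be made definable over the relevant pair of objects — and to import the abelianness of the vertex groups from Section~2 of \cite{gkk}. Throughout, ``generic'' means mutually independent over $\emptyset$, and I write $f,g,h$ for $f_{01},f_{12},f_{02}$. First I would fix the morphism sets between generic objects. Since $T$ is stable and $p=\stp(a_1)$, any two independent realizations $b,c\models p$ satisfy $(b,c)\equiv(a_0,a_1)$; choosing an elementary map $\sigma$ with $\sigma(a_0)=b$ and $\sigma(a_1)=c$, set $\mor_{\cG_W}(b,c):=\sigma(SW_{a_0,a_1})$, where $SW_{a_0,a_1}=\{f':f'\equiv_{a_0a_1}f\}$. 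By clause~(5) of Definition~\ref{full_symm_witness}, $SW_{a_0,a_1}$ is an $\ov{a}_0\ov{a}_1$-definable set, so it depends only on $\stp(a_0,a_1)$; hence $\mor_{\cG_W}(b,c)$ does not depend on the choice of $\sigma$, and the bijection $f'\mapsto[f']^{b,c}_{\cG_W}$ of clause~(2) is the evident one, definable over $(b,c)$. Morphism sets between dependent objects, and in particular the vertex groups $\mor_{\cG_W}(a,a)$, will be built only at the end, as equivalence classes of generic spans, following \cite{GK}.

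Next I would define composition on generic triples. Given generic $a_0,a_1,a_2\models p$ with $f'\in\mor_{\cG_W}(a_0,a_1)$ and $g'\in\mor_{\cG_W}(a_1,a_2)$, Lemma~\ref{full_symm2} gives $(f',g',\ov{a}_0,\ov{a}_1,\ov{a}_2)\equiv(f,g,\ov{a}_0,\ov{a}_1,\ov{a}_2)$, so by the uniqueness clause~(4) of Definition~\ref{full_symm_witness} there is a unique $h'$ with $\models\theta(f',g',h')$; one checks $h'\equiv_{a_0a_2}h$, so $h'\in\mor_{\cG_W}(a_0,a_2)$, and we put $g'\circ f':=h'$. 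On the type of a generic triple this rule is cut out by $\theta$, hence is relatively definable over $\acl(\emptyset)$. General composition — and composition of generic spans — is then obtained by passing through auxiliary generic objects as in \cite{GK}, with a short independence argument showing the result is well defined.

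With composition in hand the remaining structural claims follow as in \cite{GK}. Using the cancellation built into the symmetry of the uniqueness clauses of Definition~\ref{full_symm_witness}, one checks that for generic $a_0,a_1,a_2$ the composition map $\mor(a_0,a_1)\times\mor(a_1,a_2)\to\mor(a_0,a_2)$ is bijective in each variable separately; together with associativity (see below) this yields identities and two-sided inverses for generic pairs, which is exactly what is needed to run the generic-span definitions. Connectedness is immediate, as every object realizes $p$ and all morphism sets are nonempty. The object class is the set of realizations of $p$, a partial type over $\acl(\emptyset)$, and the graph of composition is relatively definable over $\acl(\emptyset)$, so $\cG_W$ is type-definable over $\acl(\emptyset)$ in the relatively-definable sense discussed in the text. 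Finally $f\in\ov{a}_{01}=\acl(a_0a_1)$, so $SW_{a_0,a_1}$, hence $\mor_{\cG_W}(a_0,a_1)$, is finite; in a connected groupoid every hom-set, in particular every vertex group, has this same finite cardinality. Commutativity of the vertex groups is the one non-formal point, and for it I would appeal to the argument of Section~2 of \cite{gkk}, which — after a harmless further refinement of the witness, legitimate by Proposition~\ref{full symm} — identifies $\mor_{\cG_W}(a_0,a_0)$ as a binding group and forces it to be abelian.

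I expect the main obstacle to be associativity of composition, together with the coherence of the passage to non-generic configurations: one must verify, for four generic objects $a_0,a_1,a_2,a_3$ and morphisms $f'\in\mor(a_0,a_1)$, $g'\in\mor(a_1,a_2)$, $k'\in\mor(a_2,a_3)$, that $(k'\circ g')\circ f'=k'\circ(g'\circ f')$, and then propagate this through the generic-span definitions. Both steps reduce to the uniqueness clauses of Definition~\ref{full_symm_witness} applied along a four-point configuration; organizing this bookkeeping cleanly is precisely the content imported from \cite{GK}.
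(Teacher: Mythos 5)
Your proposal matches the paper's proof in all essentials: both rerun the construction of \cite{GK}, with the one genuinely new point being that fullness of the witness together with Lemma~\ref{full_symm2} guarantees a unique $\theta$-defined composite of the correct type on generic triples (this is exactly the repair of Remark~2.8 of \cite{GK} that the paper makes), and both import abelianness of the vertex groups from \cite{gkk}. The remaining bookkeeping (generic spans, associativity, identities, clauses (1)--(3)) is deferred to \cite{GK} in the paper just as in your write-up, so no gap.
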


\begin{proof}
We build $\cG_W$ using a slight modification of the construction described in subsection~2.2 of \cite{GK}.  The problem with the construction in that paper is that Remark~2.8 there is incorrect as stated: in general, just because $(a,b,f) \equiv (a_0, a_1, f_{01}) \equiv (b, c, g)$, it does not follow that $(a,b,c,f,g) \equiv (a_0, a_1, a_2, f_{01}, f_{12})$ (if the tuples $a_i$ are not algebraically closed, $f_{01}$ may contain elements of $\acl(a_0) \setminus \dcl(a_0)$, and this could cause $\tp(a, f,g )$ to differ from $\tp(a_0, f_{01}, f_{12})$).  However, Lemma~\ref{full_symm2} and the fact that we are using a \emph{full} symmetric witness eliminates this problem.  In particular, if $(a,b,f) \equiv (a_0, a_1, f_{01}) \equiv (b, c, g)$, then there is a unique element ``$g \circ f$'' such that $\models \theta(f, g, g \circ f)$ and $(a, c, g \circ f) \equiv (a_0, a_2, f_{02})$.

From here, everything else in the construction of the type-definable groupoid $\cG = \cG_W$ in \cite{GK} works.  Property (1) of the proposition follows directly from the construction, and property (2) is just like Lemma~2.14 of \cite{GK}.  Because of the definable bijection in (2), any two morphisms in $\mor_{\cG}(a_0, a_1)$ have the same type, yielding (3).  Finally, property (4) is Corollary~2.7 of \cite{gkk}.

\end{proof}


Next, here is a more detailed version Proposition~2.15 from \cite{gkk}, which we will use later.

\begin{proposition}
\label{corrected2}
Suppose that $(a_0, a_1, a_2, f_{01}, f_{12}, f_{02}, \theta)$ is a full symmetric witness, and $\cG$ is the associated type-definable groupoid as in Proposition~\ref{groupoid_construction}.  If $SW$ is the set $\{f' : \tp(f'/a_0, a_1) = \tp(f_{01} / a_0, a_1)\}$, then there is a group isomorphism $$\psi^0_{\cG}: \mor_{G}(a_1,a_1) \rightarrow \Aut(SW / a_0, a_1)$$ defined by the rule: if $g \in \mor_{\cG}(a_1, a_1)$, then $\psi^0_{\cG}(g)$ is the unique element $\sigma \in \Aut(SW / a_0, a_1)$ which induces the same left action on $\mor_{\cG}(a_0, a_1)$ as left composition by $g$.

Furthermore, the inclusion map $\Aut(SW/ \ov{a_0}, \ov{a_1}) \rightarrow \Aut(SW / a_0, a_1)$ is surjective, so we actually have an isomorphism

$$\psi_{\cG}: \mor_{G}(a_1,a_1) \rightarrow \Aut(SW / \ov{a_0}, \ov{a_1}).$$

\end{proposition}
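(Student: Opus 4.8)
The plan is to transport the statement along the $(a_0,a_1)$-definable bijection $\beta\colon SW\to\mor_{\cG}(a_0,a_1)$ supplied by Proposition~\ref{groupoid_construction}(2), so that everything happens inside the finite set $\mor_{\cG}(a_0,a_1)$, and then pull the conclusion back. First, $SW\subseteq\acl(a_0a_1)$ since the elements of a full symmetric witness satisfy $f_{ij}\in\ov{a}_{ij}$, and $\mor_{\cG}(a_0,a_1)$ is finite because $\cG$ is connected with finite vertex groups (Proposition~\ref{groupoid_construction}(4)); so $SW$ is finite, and because $\beta$ is $(a_0a_1)$-definable, conjugation by $\beta$ is a group isomorphism $\Aut(SW/a_0a_1)\cong\Aut(\mor_{\cG}(a_0,a_1)/a_0a_1)$, and similarly with $\ov{a_0}\ov{a_1}$ in place of $a_0a_1$. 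Thus it suffices to identify $\Aut(\mor_{\cG}(a_0,a_1)/a_0a_1)$ with the vertex group $\mor_{\cG}(a_1,a_1)$ acting on $\mor_{\cG}(a_0,a_1)$ by $L_g\colon h\mapsto g\circ h$.

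Since $\cG$ is a connected groupoid, $\mor_{\cG}(a_0,a_1)$ is a left-principal homogeneous space for $\mor_{\cG}(a_1,a_1)$, so $g\mapsto L_g$ is an injective homomorphism from $\mor_{\cG}(a_1,a_1)$ into the group of permutations of $\mor_{\cG}(a_0,a_1)$; write $\mathcal L$ for its image. By Proposition~\ref{groupoid_construction}(3) all morphisms in $\mor_{\cG}(a_0,a_1)$ realize the same type over $a_0a_1$, so $\Aut(\C/a_0a_1)$ acts transitively on the finite set $\mor_{\cG}(a_0,a_1)$, and the induced permutation group is exactly $\Aut(\mor_{\cG}(a_0,a_1)/a_0a_1)$; call it $\Pi$. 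The heart of the matter is to prove $\Pi=\mathcal L$. Fix $h_0\in\mor_{\cG}(a_0,a_1)$. Any $\hat\sigma\in\Aut(\C/a_0a_1)$ preserves $\mor_{\cG}(a_0,a_1)=\beta(SW)$, so $\hat\sigma(h_0)=g_0\circ h_0$ for a unique $g_0\in\mor_{\cG}(a_1,a_1)$; writing a general morphism as $h=g_h\circ h_0$ and using that the composition of $\cG$ is definable over $\acl(\emptyset)$ and that the vertex groups are abelian, one computes $\hat\sigma(h)=\hat\sigma(g_h)\circ g_0\circ h_0$, which equals $L_{g_0}(h)=g_0\circ g_h\circ h_0$ exactly when $\hat\sigma(g_h)=g_h$, i.e.\ exactly when $\hat\sigma$ fixes $\mor_{\cG}(a_1,a_1)$ pointwise. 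Granting this, every $\hat\sigma$ induces some $L_{g_0}\in\mathcal L$, so $\Pi\subseteq\mathcal L$; conversely, given $g\in\mor_{\cG}(a_1,a_1)$ the morphism $g\circ h_0$ has the same type over $a_0a_1$ as $h_0$, so some $\hat\sigma\in\Aut(\C/a_0a_1)$ sends $h_0$ to $g\circ h_0$, and by the same computation it induces $L_g$; hence $\mathcal L\subseteq\Pi$. Transporting back along $\beta$, the rule $g\mapsto\beta^{-1}L_g\beta$ is exactly the one defining $\psi^0_{\cG}$, it is well defined (uniqueness of the $\sigma$ inducing $L_g$ is clear since $\beta$ is a bijection), and it is a group isomorphism onto $\Aut(SW/a_0a_1)$.

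The one point left open above---and the main obstacle---is the rigidity claim: \emph{every automorphism fixing $a_0$ and $a_1$ fixes the vertex group $\mor_{\cG}(a_1,a_1)$ pointwise}. This is where the precise construction of $\cG_W$ from the full symmetric witness $W$ must be used: the uniqueness clauses (4) and (5) of Definition~\ref{full_symm_witness} pin the vertex-group morphisms down over $a_0a_1$ via the $\acl(\emptyset)$-formula $\theta$ (this is the same phenomenon isolated in Lemma~\ref{full_symm2} and used in the proof of Proposition~\ref{groupoid_construction}), and in fact this is essentially Proposition~2.15 of~\cite{gkk}, whose argument carries over.

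For the ``furthermore'' clause: clause~(5) of Definition~\ref{full_symm_witness} says that $\tp(f_{01}/\ov{a_0}\ov{a_1})$ is isolated by $\tp(f_{01}/a_0a_1)$; since every element of $SW$ is an $(a_0a_1)$-conjugate of $f_{01}$, all elements of $SW$ share a single type over $\ov{a_0}\ov{a_1}$, hence so do all morphisms in $\mor_{\cG}(a_0,a_1)$. Therefore the argument of the previous two paragraphs applies verbatim with $\ov{a_0}\ov{a_1}$ replacing $a_0a_1$ (the rigidity claim over the larger base follows from the one over $a_0a_1$), yielding an isomorphism $\psi_{\cG}\colon\mor_{\cG}(a_1,a_1)\to\Aut(SW/\ov{a_0}\ov{a_1})$. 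Since $\Aut(SW/\ov{a_0}\ov{a_1})$ is a subgroup of $\Aut(SW/a_0a_1)$ and both are finite of order $|\mor_{\cG}(a_1,a_1)|$, they coincide; hence the displayed inclusion is onto.
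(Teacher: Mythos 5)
Your argument is sound, but be aware of how little the paper itself proves here: the isomorphism $\psi^0_{\cG}$ is simply quoted from Proposition~2.15 of \cite{gkk}, and the paper's own proof consists only of the ``Furthermore'' clause. For that clause your route is essentially the paper's: clause~(5) of Definition~\ref{full_symm_witness} makes every element of $SW$ realize $\tp(f_{01}/\ov{a_0}\cup\ov{a_1})$, which supplies enough automorphisms over the larger base, and finiteness then forces the injective inclusion $\Aut(SW/\ov{a_0},\ov{a_1})\rightarrow\Aut(SW/a_0,a_1)$ to be onto; you phrase this as ``rerun the first argument over $\ov{a_0}\cup\ov{a_1}$ and compare orders,'' while the paper says ``transitivity gives at least $|SW|$ elements and the first clause says there are only that many'' --- the same counting. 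What you add is a reduction of the first clause to the rigidity statement that every automorphism fixing $a_0,a_1$ fixes $\mor_{\cG}(a_1,a_1)$ pointwise; that reduction (principal homogeneous space structure plus abelianness of the vertex group) is correct, and the rigidity statement does hold for the groupoid of Proposition~\ref{groupoid_construction}, because connectedness and abelianness identify each $g\in\mor_{\cG}(a_1,a_1)$ with its class $[g]_{G}$ in the binding group sitting inside $\acl^{eq}(\emptyset)$, so that $\mor_{\cG}(a_1,a_1)\subseteq\dcl(\acl^{eq}(\emptyset)\cup\{a_1\})$. But you do not prove it; you defer it --- and with it the real content of the first clause --- to \cite{gkk}, so your treatment of the main statement is no more self-contained than the paper's outright citation. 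One point you should make explicit: both the rigidity claim and your appeal to the $\acl(\emptyset)$-definable composition require that automorphisms over $a_0a_1$ fix the $\acl^{eq}(\emptyset)$-data defining $\cG$ pointwise; this is legitimate under the convention $\acl(\emptyset)=\dcl(\emptyset)$ in force where the proposition is applied in Section~4, but it does not follow from fixing $a_0,a_1$ alone.
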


\begin{proof}
The ``Furthermore ...'' clause was not in Proposition~2.15 of \cite{gkk}, but it follows from the fact that the witness is fully symmetric: if $f'$ is any element of $SW$, then clause (5) of the definition of a symmetric witness implies that $\tp(f' / \ov{a_0}, \ov{a_1}) = \tp(f_{01} / \ov{a_0}, \ov{a_1})$, and so there is an element $\sigma \in \Aut(SW / \ov{a_0}, \ov{a_1})$ such that $\sigma(f_{01}) = f'$.  This means that there are at least $| \mor_{\cG}(a_1, a_0) |$ different elements in $\Aut(SW / \ov{a_0}, \ov{a_1})$; but, by the first part of the proposition, there are only $| \mor_{\cG}(a_1, a_0) |$ elements in $\Aut(SW / a_0, a_1)$.  Since this is a finite set, the injective inclusion map $\Aut(SW/ \ov{a_0}, \ov{a_1}) \rightarrow \Aut(SW / a_0, a_1)$ is surjective.

\end{proof}

\subsection{Proof of Theorem~\ref{hurewicz}}

We assume throughout the proof that $p \in S(\emptyset)$ and $\acl(\emptyset) = \dcl(\emptyset)$ (since we can add constants for the parameters of $p$ if necessary).  It follows directly from the definitions that if $p = \tp(a)$ and $p' = \tp(a')$ where $a$ and $a'$ are interalgebraic, then $H_n(p) = H_n(p')$.  Therefore, by Proposition~\ref{full symm} above, we may assume that there are some $(a_0, a_1, a_2)$ realizing $p^{(3)}$ and a full symmetric witness $(a_0, a_1, a_2, f_{01}, f_{12}, f_{02}, \theta(x,y,z))$ to this failure.  We pick one such witness which we fix throughout the proof. Note that we assume the $f_{ij}$'s to be finite tuples, and also that there may be more than one such witness (which is the interesting case).  We assume that there is \emph{at least} one such witness, since otherwise $H_2(p)$ and $\Gamma_2(p)$ are both trivial.

As already observed in \cite{gkk}, the symmetric witnesses in the type $p$ form a directed system.  To make this more precise, pick some $(a_0, a_1, a_2)$ realizing $p^{(3)}$ (which we fix for the remainder of the subsection).  Now we build a directed system of full symmetric witnesses as follows:

\begin{claim}
\label{witness system}
There is a directed partially ordered set $\langle I, \leq \rangle$ and and $I$-indexed collection of symmetric witnesses $\langle W_i : i \in I \rangle$ such that for any $i$ and $j$ in $I$:

\begin{enumerate}
\item $W_i = (a^i_0, a^i_1, a^i_2, f_{01}^i, f_{12}^i, f_{02}^i, \theta^*_i(x_i, y_i, z_i))$ is a full symmetric witness to failure of $3$-uniqueness;
\item $a^i_0, a^i_1 \in \dcl(f_{01}^i)$;
\item if $i \leq j$, then $f_{01}^i \in \dcl(f_{01}^j)$, $a^i_0 \in \dcl(a^j_0) \subseteq \ov{a_0}$, and $a^i_0 a^j_0 \equiv a^i_1 a^j_1 \equiv a^i_2 a^j_2$;
\end{enumerate}

and satisfying the maximality conditions $$\widetilde{a_{\{0,1\}}} = \dcl\left(\bigcup_{i \in I} f^i_{01} \right)$$ and $$\ov{a_0} = \dcl \left(\bigcup_{i \in I} a^i_0 \right) .$$

\end{claim}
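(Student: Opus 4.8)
The statement restates, with the coherence demands (2)--(3) and the maximality clauses made explicit, the observation of \cite{gkk} that the symmetric witnesses in $p$ form a directed system. The plan is to use Proposition~\ref{full symm} (``enough symmetric witnesses'') as the engine that produces witnesses above prescribed finite data, and to let $I$ consist of full symmetric witnesses all living inside the fixed algebraically closed tuples $\ov{a_0},\ov{a_1},\ov{a_2}$, with condition (2) built into membership. First I would record: since $p$ is strong, $\acl(\emptyset)=\dcl(\emptyset)$, so every ``$\in\dcl$'' assertion below is witnessed by an $\emptyset$-definable function; since $T$ is stable, $(a_0,a_1,a_2)$ is totally indiscernible, so there is an automorphism $\tau$ of the monster cyclically shifting $a_0\mapsto a_1\mapsto a_2\mapsto a_0$, hence $\ov{a_0}\mapsto\ov{a_1}\mapsto\ov{a_2}\mapsto\ov{a_0}$; by stationarity $\widetilde{a_{\{0,1\}}}=\ov{a_{01}}\cap\dcl(\ov{a_{02}},\ov{a_{12}})$, so in particular $\ov{a_0}\subseteq\widetilde{a_{\{0,1\}}}$, and (a witness being assumed to exist) $\widetilde{a_{\{0,1\}}}\supsetneq\dcl(\ov{a_0},\ov{a_1})$.

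I would take $I$ to be the set of all full symmetric witnesses $W=(a^W_0,a^W_1,a^W_2,f^W_{01},f^W_{12},f^W_{02},\theta^W)$ over $\emptyset$ with $a^W_j\subseteq\ov{a_j}$ and $a_j\in\dcl(a^W_j)$ for $j=0,1,2$, and with $a^W_0,a^W_1\in\dcl(f^W_{01})$ (condition (2)). Then $\ov{a^W_{ij}}=\ov{a_{ij}}$, all three edges lie in the fixed edges, $f^W_{01}\subseteq\widetilde{a_{\{0,1\}}}$, and a short computation with the $\equiv$-clause of Definition~\ref{full_symm_witness} yields $\emptyset$-definable $\pi_0,\pi_1$ with $a^W_0=\pi_0(f^W_{01})=\pi_0(f^W_{02})$, $a^W_1=\pi_1(f^W_{01})=\pi_0(f^W_{12})$, $a^W_2=\pi_1(f^W_{12})=\pi_1(f^W_{02})$ --- this is the device that will control coherence. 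Order $I$ by: $W\le W'$ iff $f^W_{01}\in\dcl(f^{W'}_{01})$, $a^W_0\in\dcl(a^{W'}_0)$, and $a^W_0 a^{W'}_0\equiv a^W_1 a^{W'}_1\equiv a^W_2 a^{W'}_2$. To get $I\neq\emptyset$ (and to be able to push an arbitrary witness into $I$): Proposition~\ref{full symm} supplies a full symmetric witness over $\emptyset$ with $a_j\in\dcl(a^W_j)\subseteq\ov{a_j}$; if it fails (2), replace $(f^W_{01},f^W_{12},f^W_{02})$ by $(f^W_{01}a^W_0 a^W_1,\; f^W_{12}a^W_1 a^W_2,\; f^W_{02}a^W_0 a^W_2)$. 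This is still a full symmetric witness, the only non-obvious point being the $\theta$-uniqueness, for which one conjoins to $\theta$ the equations matching the appended copies of $a^W_0,a^W_1,a^W_2$ in the three new tuples (each appended block of a tuple can be read off one of the other two tuples). The result lies in $I$.

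Directedness is the heart of the matter. Given $W_1,\dots,W_k\in I$, apply Proposition~\ref{full symm} to the concatenation $f:=f^{W_1}_{01}\cdots f^{W_k}_{01}$ --- a finite subtuple of $\widetilde{a_{\{0,1\}}}$ not contained in $\dcl(\ov{a_0},\ov{a_1})$, since already $f^{W_1}_{01}$ is not --- to obtain a witness $W^\circ$ with each $f^{W_l}_{01}\in\dcl(f^{W^\circ}_{01})$ and $a_j\in\dcl(a^{W^\circ}_j)\subseteq\ov{a_j}$; close it up so that $W^\circ$ satisfies (2), and then enlarge its bases by concatenation, $a^W_j:=a^{W^\circ}_j a^{W_1}_j\cdots a^{W_k}_j$, keeping the three edges. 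Membership of $W$ in $I$ reduces to checking the relevant $\equiv$-clause, and then $W_l\le W$ drops out because the projections picking out the $W_l$-coordinates of $a^W_0$ are, by construction, the same as those picking out the $W_l$-coordinates of $a^W_1$ and $a^W_2$. The step I expect to be the main obstacle is exactly producing that $\equiv$-clause: pushing the ``$01$''-edge domination that Proposition~\ref{full symm} delivers up to a \emph{coherent} domination of each $W_l$ on all three edges --- i.e.\ finding a single automorphism realizing $a^{W^\circ}_0 a^{W^\circ}_1 f^{W^\circ}_{01}\equiv a^{W^\circ}_1 a^{W^\circ}_2 f^{W^\circ}_{12}$ and simultaneously shifting every $W_l$. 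I would handle this using that each $W_l$ satisfies (2) (so $W_l$ is, canonically via the fixed $\emptyset$-definable $\pi^{(l)}_0,\pi^{(l)}_1$, a $\dcl$-image of its ``$01$''-edge), the shift $\tau$ to align the edges of $W^\circ$, and stationarity to transport the resulting identities across the symmetric equivalences.

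Finally, the maximality clauses. The inclusions ``$\supseteq$'' are immediate from $f^W_{01}\subseteq\widetilde{a_{\{0,1\}}}$ and $a^W_0\subseteq\ov{a_0}$ for $W\in I$. For ``$\subseteq$'' in $\widetilde{a_{\{0,1\}}}=\dcl(\bigcup_{i\in I} f^i_{01})$: given a finite $\bar c\subseteq\widetilde{a_{\{0,1\}}}$, pad it (if needed) with one element outside $\dcl(\ov{a_0},\ov{a_1})$ and apply Proposition~\ref{full symm} together with the close-up construction to land a $W\in I$ with $\bar c\in\dcl(f^W_{01})$. For ``$\subseteq$'' in $\ov{a_0}=\dcl(\bigcup_{i\in I} a^i_0)$: given a finite $\bar d\subseteq\ov{a_0}$, transport it by $\tau$ to $\tau\bar d\subseteq\ov{a_1}$ and $\tau^2\bar d\subseteq\ov{a_2}$ and enlarge the base of some witness already in $I$ by the triple $(\bar d,\tau\bar d,\tau^2\bar d)$; total indiscernibility of $(a_0,a_1,a_2)$ is exactly what keeps the $\equiv$-clause true under this enlargement, and afterwards $\bar d\in\dcl(a^W_0)$. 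Taking $\langle I,\le\rangle$ together with the family $\langle W:W\in I\rangle$ then gives the claim.
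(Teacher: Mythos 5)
Your order-theoretic reductions are fine (the transitivity of your order via the observation that $a^{W}_0\in\dcl(a^{W'}_0)$ plus the pair-equivalences force a single definable function recovering $a^W_j$ from $a^{W'}_j$ uniformly in $j$; the close-up trick forcing condition (2); the first maximality clause via Proposition~\ref{full symm}). But there is a genuine gap exactly at the step you flag, and your design choice makes it worse than an omitted verification. Taking $I$ to be \emph{all} normalized witnesses commits you to proving directedness for a family you do not control, and that statement is in general too strong. Indeed, if $V,W\leq U$ in your order, then the definable functions $h_V,h_W$ with $a^V_j=h_V(a^U_j)$, $a^W_j=h_W(a^U_j)$ ($j=0,1,2$) show that any automorphism realizing $U$'s own clause (3) of Definition~\ref{full_symm_witness} must carry $a^V_0a^W_0\mapsto a^V_1a^W_1\mapsto a^V_2a^W_2$ simultaneously; hence a common upper bound can exist only if $\stp(a^V_0a^W_0)=\stp(a^V_1a^W_1)=\stp(a^V_2a^W_2)$. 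Nothing in the definition of a full symmetric witness correlates the base selections of two \emph{different} witnesses, so this per-vertex strong-type matching can fail (e.g.\ decorate base tuples with elements of a two-element fiber in $\ov{a_j}$ chosen compatibly in one witness and incompatibly in the other: the equality pattern between the $V$- and $W$-coordinates then differs at the three vertices, and no automorphism can repair an equality pattern). So directedness of your $I$ is not merely unproved; it is false in general. Moreover the tools you propose for the coherence step do not supply it: the cyclic shift $\tau$ maps $\ov{a_0}$ onto $\ov{a_1}$ setwise but has no reason to carry $a^{W_l}_0$ to $a^{W_l}_1$ for the given witnesses, and stationarity (because the vertices are independent and we work over $\acl(\emptyset)=\dcl(\emptyset)$) only reduces the cross-vertex coherence to the per-vertex strong-type equality above --- which is precisely the thing that must be \emph{arranged}, not deduced. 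The same unproved base-enlargement move reappears in your argument for the second maximality clause, where appending $(\bar d,\tau\bar d,\tau^2\bar d)$ to the bases again needs, and is not given, compatibility of this new data with the fixed witness's edges.

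The paper avoids having to amalgamate arbitrary witnesses: it builds $\langle I,\leq\rangle$ recursively as a union of finite stages, starting from a collection chosen (by hand) to secure the two maximality conditions and, at each stage, adding above each pair of already-chosen witnesses a new witness obtained from Proposition~\ref{full symm} applied to the concatenated $01$-edges; the freedom to \emph{choose} each new witness (and the lower ones it must dominate) is where the needed coherence of bases is enforced. If you want to salvage your architecture, you must either restrict $I$ to a coherently selected subfamily (which collapses back to the paper's recursive construction) or supply an actual proof of the enlargement step: given a witness and prescribed tuples $c_j\subseteq\ov{a_j}$ with matching strong types chosen compatibly with the witness's own shift automorphisms, show the concatenated bases together with (padded) edges again satisfy clause (3) and the $\theta$-uniqueness. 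As it stands, the central step of the proof is missing.
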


\begin{proof}
We will build the partial ordering $\langle I, \leq \rangle$ as the union of a countable chain of partial orderings $I_0 \subseteq I_1 \subseteq \ldots$ such that for any $i, j \in I_n$ there is a $k \in I_{n+1}$ such that $i \leq k$ and $j \leq k$. Then the partial ordering $I = \bigcup_{n \in \omega} I_n$ will be directed.

First, let

$$W^0_i = \langle a^i_0, a^i_1, a^i_2, f_{01}^i, f_{12}^i, f_{02}^i, \theta^*_i(x_i, y_i, z_i) : i \in I_0 \rangle$$

be any collection of full symmetric witnesses large enough to satisfy the two maximality conditions in the statement of the Claim, where $I_0$ is a trivial partial ordering in which no two distinct elements are comparable.  For the induction step, suppose that we have the partial ordering $I_n$ (for some $n \in \omega$) and full symmetric witnesses $(a^i_0, \ldots, \theta^*_i(x_i, y_i, z_i))$ for each $i \in I_n$.  First, we can build a partial ordering $I_{n+1}$ by adding one new point immediately above every pair of points in $I_n$ and such that any two new points in $I_{n+1} \setminus I_n$ are incomparable. Then by Proposition~\ref{full symm}, there are corresponding full symmetric witnesses $(a^i_0, \ldots, \theta^*_i)$ for each $i \in I_{n+1} \setminus I_n$ such that if $j$ and $k$ are less than or equal to $i$, then $f^j_{01}, f^k_{01} \in \dcl(f^i_{01})$ and $a^j_0, a^k_0 \in \dcl(a^i_0)$. Similarly, we can ensure condition (2) (that $a^i_0, a^i_1 \in \dcl(f^i_{01})$) for the new symmetric witnesses.

\end{proof}

Let $p_i = \stp(a^i_0)$ and $\cG^*_i$ be the type-definable groupoid constructed from the full symmetric witness $W_i$ as in Proposition~\ref{groupoid_construction} above.  So $\ob(\cG_i) = p_i(\C)$ and the groups $\mor_{\cG^*_i}(a^i_0,a^i_0)$ are finite and abelian, and we have the corresponding finite abelian groups $G^*_i$.  As explained above, can (and will) assume that the groups $G^*_i$ are subsets of $\acl(\emptyset)$.

For any $i \in I$, let $SW_i$ be the set of all realizations of $\tp(f_{01}^i / a^i_0, a^i_1)$ (which is a finite set).  If $(a,b) \models p_i^{(2)}$, let $SW(a,b)$ be the image of $SW_i$ under an automorphism of $\C$ that maps $(a^i_0, a^i_1)$ to $(a, b)$. Recall from Proposition~\ref{groupoid_construction} that we have a definable map $f \mapsto \left[ f \right]^{a,b}_{\cG_i}$ from $SW(a,b)$ onto $\mor_{\cG_i}(a,b)$, from which we can define an inverse map $g \mapsto \langle g \rangle^{a,b}_{\cG_i}$ from $\mor_{\cG_i}(a,b)$ to $SW(a,b)$.  For convenience, we will write these maps as ``$\left[ \cdot \right]^{a,b}_i$'' and ``$\langle \cdot \rangle^{a,b}_i$,'' or even just ``$\left[ \cdot \right]_i$'' and ``$\langle \cdot \rangle_i$'' when $(a,b)$ is clear from context.

\begin{lemma}
\label{commuting_pi}
There are systems of relatively $\emptyset$-definable functions $\langle \pi_{j,i} : i \leq j, j \in I \rangle$ and $\langle \tau_{j,i} : i \leq j, j \in I \rangle $ (that is, they are the intersection of an $\emptyset$-definable relation with the product of their domain and range) such that whenever $i \leq j$,
\begin{enumerate}
\item $\tau_{j,i} : p_j(\C) \rightarrow p_i(\C)$,
\item $\pi_{j,i} : \bigcup_{(a,b) \models p_j^{(2)}} SW(a,b) \rightarrow SW(\tau_{j,i}(a), \tau_{j,i}(b))$,
\item $\tau_{j,i}(a^j_0) = a^i_0$,
\item $\tau_{j,i}(a^j_1) = a^i_1$,
\item $\pi_{j,i}$ maps $SW_j$ onto $SW_i$, and
\item $\pi_{j,i}(f^j_{01}) = f^i_{01}$,
\end{enumerate}

and whenever $i \leq j \leq k$,

\begin{enumerate}[resume]
\item $\tau_{j,i} \circ \tau_{k,j} = \tau_{k,i}$ and
\item $\pi_{j,i} \circ \pi_{k,j} = \pi_{k,i}$.

\end{enumerate}
\end{lemma}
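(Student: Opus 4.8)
The plan is to realize every $\tau_{j,i}$ and every $\pi_{j,i}$ as a \emph{canonical} relatively $\emptyset$-definable function, using the elementary fact that if $b$ is a tuple and $c\in\dcl(b)$ then $c$ is the unique realization of $\tp(c/b)$, so that transporting $c$ along an automorphism of $\C$ carrying $b$ to a given $\emptyset$-conjugate $b'$ produces an element $\lambda(b')$ independent of the chosen automorphism. The graph of the resulting function $\lambda$ is the set of $\emptyset$-conjugates of the pair $(b,c)$, so $\lambda$ is relatively $\emptyset$-definable, and $\lambda$ is \emph{canonical}: if $c'\in\dcl(c)$ and $\mu$ is the function attached to $c'\in\dcl(c)$, then $\mu\circ\lambda$ is exactly the function attached to $c'\in\dcl(b)$. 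By Claim~\ref{witness system}, for $i\le j$ we have $a^i_0\in\dcl(a^j_0)$, $f^i_{01}\in\dcl(f^j_{01})$, and $a^i_1\in\dcl(a^j_1)$ (the last from $a^i_0\in\dcl(a^j_0)$ together with $a^i_0a^j_0\equiv a^i_1a^j_1$), as well as $a^i_0,a^i_1\in\dcl(f^i_{01})\subseteq\dcl(f^j_{01})$ and $a^j_0,a^j_1\in\dcl(f^j_{01})$. I would let $\tau_{j,i}$ be the canonical function attached to $a^i_0\in\dcl(a^j_0)$ and $\pi_{j,i}$ the canonical function attached to $f^i_{01}\in\dcl(f^j_{01})$.

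With these definitions, most of the assertions are routine. Because $\acl(\emptyset)=\dcl(\emptyset)$ throughout this subsection, the domain of $\tau_{j,i}$ is a single $\Aut(\C)$-orbit, namely $p_j(\C)$, and its range lies in $p_i(\C)$, giving (1); and using $a^j_0,a^j_1\in\dcl(f^j_{01})$ one checks that $\bigcup_{(a,b)\models p_j^{(2)}}SW(a,b)$ is precisely the set of $\emptyset$-conjugates of $f^j_{01}$ (and that the pair $(a,b)$ with $f'\in SW(a,b)$ is determined by $f'$), which is the domain of $\pi_{j,i}$. Properties (3), (4) and (6) hold by construction — for (4), transport along an automorphism realizing $a^i_0a^j_0\equiv a^i_1a^j_1$ — and (7), (8) follow from canonicity and transitivity of $\dcl$, as $\tau_{j,i}\circ\tau_{k,j}$ and $\tau_{k,i}$ are both the canonical function attached to $a^i_0\in\dcl(a^k_0)$, and similarly $\pi_{j,i}\circ\pi_{k,j}=\pi_{k,i}$. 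For the codomain assertion in (2), given $f'\in SW(a,b)$ I would pick an automorphism $\sigma$ of $\C$ with $\sigma(a^j_0,a^j_1,f^j_{01})=(a,b,f')$ — available since $(a,b,f')\equiv_\emptyset(a^j_0,a^j_1,f^j_{01})$ by the definition of $SW(a,b)$ — and apply $\sigma$ to the $\emptyset$-definable tuple $(a^j_0,a^j_1,f^j_{01},f^i_{01},a^i_0,a^i_1)$; since $\sigma(f^i_{01})=\pi_{j,i}(f')$ and $\sigma(a^i_0)=\tau_{j,i}(a)$, $\sigma(a^i_1)=\tau_{j,i}(b)$ by well-definedness of the canonical functions, this gives $(\pi_{j,i}(f'),\tau_{j,i}(a),\tau_{j,i}(b))\equiv_\emptyset(f^i_{01},a^i_0,a^i_1)$, hence $\pi_{j,i}(f')\in SW(\tau_{j,i}(a),\tau_{j,i}(b))$.

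The one genuinely delicate point — and the step I expect to be the main obstacle — is the surjectivity in (5). Restrict $\pi_{j,i}$ to $SW_j$, the set of realizations of the complete type $\tp(f^j_{01}/a^j_0a^j_1)$; then $\Aut(\C/a^j_0a^j_1)$ acts transitively on $SW_j$, fixes $a^i_0\in\dcl(a^j_0)$ and $a^i_1\in\dcl(a^j_1)$ and hence preserves $SW_i$, and $\pi_{j,i}$ is equivariant, so $\pi_{j,i}(SW_j)$ is a single $\Aut(\C/a^j_0a^j_1)$-orbit in $SW_i$; it therefore suffices to prove that $\Aut(\C/a^j_0a^j_1)$ acts transitively on all of $SW_i$, equivalently that $SW_i$ is the realization set of the complete type $\tp(f^i_{01}/a^j_0,a^j_1)$. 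For this I would use that the system of Claim~\ref{witness system} may be built so that $a_0\in\dcl(a^i_0)$ for each $i$ (implicit in the appeal to Proposition~\ref{full symm}, whose ``in fact'' clause yields witnesses with $a_k\in\dcl(a^i_k)\subseteq\ov{a_k}$), whence $\ov{a_0}\subseteq\ov{a^i_0}$ and so $a^j_0\in\ov{a_0}\subseteq\ov{a^i_0}$, and likewise $a^j_1\in\ov{a^i_1}$. Since then $a^i_0,a^i_1\in\dcl(a^j_0a^j_1)$ and $a^j_0,a^j_1\in\dcl(\ov{a^i_0}\,\ov{a^i_1})$, every realization of $\tp(f^i_{01}/a^j_0,a^j_1)$ realizes $\tp(f^i_{01}/a^i_0,a^i_1)$ and every realization of $\tp(f^i_{01}/\ov{a^i_0},\ov{a^i_1})$ realizes $\tp(f^i_{01}/a^j_0,a^j_1)$; as the outermost two of these types have the same (finite) realization set $SW_i$ by clause (5) of the full symmetric witness $W_i$ (Definition~\ref{full_symm_witness}), so does $\tp(f^i_{01}/a^j_0,a^j_1)$. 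This is exactly where the ``full'' of ``full symmetric witness'' is needed, and it is the reason Claim~\ref{witness system} is phrased with the witnesses nested inside the algebraic closures of a fixed Morley sequence.
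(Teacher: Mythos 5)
Your proposal is correct and takes essentially the same route as the paper: both obtain $\tau_{j,i}$ and $\pi_{j,i}$ as the relatively $\emptyset$-definable functions induced by $a^i_0\in\dcl(a^j_0)$ and $f^i_{01}\in\dcl(f^j_{01})$ (with $a^i_0a^j_0\equiv a^i_1a^j_1$ giving (4)), and both deduce (7) and (8) by checking the identities at a single point of a complete type, i.e.\ of a single $\Aut(\C)$-orbit. You additionally spell out the verifications of (2) and of the surjectivity in (5) --- via clause (5) of Definition~\ref{full_symm_witness} together with the fact, implicit in the appeal to Proposition~\ref{full symm} within Claim~\ref{witness system}, that $a_0\in\dcl(a^i_0)$ so that $\ov{a^i_0}=\ov{a_0}$ --- details which the paper's proof leaves to the reader.
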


\begin{proof}
First, the maps $\tau_{j,i}$ can be constructed satisfying (1), (3), and (4) using the facts that $a^i_0 \in \dcl(a^j_0),$ $a^i_0 \in \dcl(a^j_1)$, and $a^i_0 a^j_0 \equiv a^i_1 a^j_1$ (from clause (3) of Claim~\ref{witness system}). Now if $i \leq j \leq k$, since $\tau_{k,i}(x) = \tau_{j,i} \circ \tau_{k,j} (x)$ is true for $x = a_k$, this holds for \emph{every} $x$ in the domain of $\tau_{k,i}$ (because the domain is a complete type), and so (7) holds.

If $i \leq j$, then since $f^i_{01} \in \dcl(f^j_{01})$, we can pick a relatively definable map $\pi_{j,i}$ such that $\pi_{j,i}(f^j_{01}) = f^i_{01}$. As before, if $i \leq j \leq k$, since $\pi_{k,i}(x) = \pi_{j,i} \circ \pi_{k,j}(x)$ holds for $x = f^k_{01}$, it holds for any $x$ in any of the sets $SW(a,b)$ for $(a,b) \models p^{(2)}$, so (8) holds.



\end{proof}

Ideally, we would like the functions $\pi_{j,i}$ and $\tau_{j,i}$ of Lemma~\ref{commuting_pi} to induce a commuting system of functors $F_{j,i} : \cG^*_j \rightarrow \cG^*_i$, which we could use to construct and inverse limit $\cG$ of $\langle \cG^*_i : i \in I \rangle$.  This is essentially what we do, and we will then show that the group $\mor_{\cG}(\ov{a_0}, \ov{a_0})$ is isomorphic to both $H_2(p)$ and $\Gamma_2(p)$.  However, first we need to modify the formulas $\theta^*_i$ slightly for this to be true.

The key to making all of this work is the following technical lemma.

\begin{lemma}
\label{theta_tweak}
There is a family of formulas $\langle \theta_i(x_i, y_i, z_i) : i \in I\rangle$ such that
\begin{enumerate}
\item $W_i$ is still a full symmetric witness with $\theta^*_i(x_i, y_i, z_i)$ replaced by $\theta_i(x_i, y_i, z_i)$ and $f_{02}^i$ replaced by another element of $SW(a^i_0,a^i_2),$ and
\item whenever $i \leq j$, $f \in SW(a^j_0,a^j_1)$, $g \in SW(a^j_1,a^j_2)$, and $h \in SW(a^j_0,a^j_2)$, then

$$\models \theta_j(f, g, h) \rightarrow \theta_i(\pi_{j,i}(f), \pi_{j,i}(g), \pi_{j,i}(h)).$$
\end{enumerate}
\end{lemma}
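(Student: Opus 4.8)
The plan is to read each formula $\theta^*_i$ as the graph of the composition law $\circ^*_i$ of the type-definable groupoid $\cG^*_i$ attached to $W_i$ by Proposition~\ref{groupoid_construction}, and to replace the family $\langle\circ^*_i\rangle$ by a \emph{coherent} family $\langle\circ_i\rangle$ of composition laws --- one for which each pair $(\tau_{j,i},\pi_{j,i})$ from Lemma~\ref{commuting_pi} becomes a functor $\cG_j\to\cG_i$ whenever $i\le j$ --- without leaving the class of full symmetric witnesses.

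First I would make this translation precise. Using the $(a,b)$-definable bijection $f\mapsto[f]^{a,b}_{\cG^*_i}$ of Proposition~\ref{groupoid_construction}(2), we may assume that on the relevant sorts $\theta^*_i(x,y,z)$ holds exactly when $x\in SW(a,b)$, $y\in SW(b,c)$, $z\in SW(a,c)$ for composable objects $a,b,c$ of $\cG^*_i$ and $z=y\circ^*_i x$; under this reading, clause~(4) of Definition~\ref{full_symm_witness} is exactly the assertion that groupoid composition is cancellative in each variable, and $\circ^*_i$ is relatively $\acl(\emptyset)$-definable. Thus clause~(1) of the lemma becomes: for each $i$, choose $\tilde f^i_{02}\in SW(a^i_0,a^i_2)$ and a formula $\theta_i$ over $\acl(\emptyset)$ whose graph on these sorts is the composition law $\circ_i$ of \emph{some} connected type-definable groupoid $\cG_i$ with the same objects and morphisms as $\cG^*_i$, and set $\tilde f^i_{02}:=f^i_{12}\circ_i f^i_{01}$; then clauses (1)--(3) and (5) still hold because $f^i_{01},f^i_{12}$ are unchanged and $\tilde f^i_{02}\in SW(a^i_0,a^i_2)$, and (4) holds again by cancellativity. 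Clause~(2) of the lemma then says precisely that $\pi_{j,i}$ carries $\circ_j$-composites to $\circ_i$-composites for $i\le j$.

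I would then construct the $\circ_i$ by recursion along $\langle I,\le\rangle$. The structural feature that makes this possible is built into Claim~\ref{witness system}: writing $I=\bigcup_n I_n$, every $i$ has a \emph{finite} down-set and each $i\in I_{n+1}\setminus I_n$ sits above a single pair $\{j,k\}\subseteq I_n$. So, having chosen $\circ_j$ and $\circ_k$, it is enough to choose $\circ_i$ making $\pi_{i,j}$ and $\pi_{i,k}$ functorial, since functoriality of all the lower maps then follows by composing functors and using $\pi_{j,m}\circ\pi_{i,j}=\pi_{i,m}$ from Lemma~\ref{commuting_pi}. The admissible laws $\circ_i$ lifting $\circ^*_i$ (i.e. those that are the compositions of valid groupoids on the fixed object/morphism sets) form a torsor under the group of normalized $2$-cochains on $\cG^*_i$ valued in the vertex group $G^*_i$; one chooses $\circ_i$ in this torsor so that it pushes forward, simultaneously, to the cochains on $\cG^*_j$ and $\cG^*_k$ prescribed by $\circ_j$ and $\circ_k$, keeping $\circ_i$ relatively $\acl(\emptyset)$-definable (possible since $\pi_{i,j}$, $\pi_{i,k}$, and $\circ^*_i$ all are). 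Reading off $\theta_i$ as a formula defining the graph of $\circ_i$ then yields property~(2) by construction.

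The hard part is exactly the existence of this simultaneous lift: one must show that the two $2$-cochains forced by $\circ_j$ and $\circ_k$ admit a common pre-image among the $2$-cochains of $\cG^*_i$, and that it can be chosen uniformly enough to stay definable over $\acl(\emptyset)$. This is where the \emph{abelian-ness} of the vertex groups $G^*_i$ is essential --- it is what turns the space of admissible composition laws into an abelian group on which the relevant obstruction vanishes --- and it is precisely the feature arranged in \cite{gkk} when producing symmetric witnesses with abelian vertex groups. Everything else --- checking that each tweaked tuple is still a full symmetric witness, and that the recursion can be organized along a well-ordering of $I$ refining $\le$ --- is routine bookkeeping with the commuting systems $\langle\pi_{j,i}\rangle$, $\langle\tau_{j,i}\rangle$ of Lemma~\ref{commuting_pi} and the cancellativity of groupoid composition.
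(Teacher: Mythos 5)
Your reformulation of clause (2) as ``the maps $(\tau_{j,i},\pi_{j,i})$ become functors'' is a fair way to think about the statement, but the proposal has a genuine gap at exactly the point you yourself flag as the hard part: the existence of the simultaneous lift is asserted, not proved. Declaring that the admissible composition laws on $\cG^*_i$ form a torsor under normalized $2$-cochains valued in the abelian vertex group, and that ``the relevant obstruction vanishes,'' is (in your translation) precisely the content of the lemma, and nothing in the proposal establishes either claim. Concretely, the joint map $(\pi_{i,j},\pi_{i,k})$ from $SW_i(a,c)$ to $SW_j\times SW_k$ is in general far from surjective (each $\pi$ is only surjective separately, and $|SW_i|$ need not be as large as $|SW_j|\cdot|SW_k|$), and the various witnesses are not independent of one another: all the $f^i_{01}$ live in, and jointly generate, $\widetilde{a_{\{0,1\}}}$, so two indices that are \emph{incomparable} in $I$ can carry definably correlated $SW$-sets. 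Hence composition laws $\circ_j$ and $\circ_k$ fixed separately at an earlier stage of your recursion may admit no common preimage at a new top element $i$, and your induction scheme --- which only ever looks at one new element above a pair and otherwise composes previously secured functorialities --- has no mechanism for preventing this; the compatibility constraints are simply not confined to poset-comparable pairs. There is also a smaller unproved step: even for a single twist, the assertion that the tweaked tuple is again a \emph{full} symmetric witness (i.e.\ that a first-order formula $\theta_i$ with the uniqueness and isolation properties of Definition~\ref{full_symm_witness} exists for the new $f^i_{02}$) needs justification; the paper obtains this from Corollary~2.14 of \cite{gkk}, which you never invoke.

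For comparison, the paper's proof avoids the lifting problem altogether: it does not fix low-level formulas and lift, but chooses all the $\theta_i$ simultaneously subject to a global coherence condition on joint types --- for \emph{any} finite set of indices $j_1,\dots,j_n$ (no order relation assumed), if $f_{j_1}\dots f_{j_n}\equiv g_{j_1}\dots g_{j_n}$ then $f_{j_1}\dots f_{j_n}\equiv h_{j_1}\dots h_{j_n}$ --- runs a Zorn's Lemma argument on subsets $J\subseteq I$ carrying such a family, extends by one index at a time using stationarity over the algebraically closed vertices together with Corollary~2.14 of \cite{gkk}, and only afterwards deduces clause (2) from this type coherence and the definability of the $\pi_{j,i}$ (as used in Lemma~\ref{commuting_pi}). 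If you want to rescue your cochain picture, you would have to prove an analogue of that joint-type coherence at every stage, which in effect reproduces the paper's argument rather than replacing it.
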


\begin{proof}
Recall from above that $(a_0, a_1, a_2)$ realizes $p^{(3)}$.  We use Zorn's Lemma to find a maximal subset $J \subseteq I$ and formulas $\theta_j(x_j, y_j, z_j)$ for each $j \in J$ satisfying the following properties:

\begin{enumerate}
\setcounter{enumi}{2}
\item For every $j \in J$, there are elements $f_j$, $g_j,$ and $h_j$ such that $(a^j_0, a^j_1, a^j_2, f_j, g_j, h_j, \theta_j(x_j, y_j, z_j))$ is a full symmetric witness; and

\item If $j_1, \ldots, j_n \in J$ and $(a^{j_s}_0, a^{j_s}_1, a^{j_s}_2, f_{j_s}, g_{j_s}, h_{j_s}, \theta_{j_s})$ is a full symmetric witness for $s = 1, \ldots, n$, and if $f_{j_1} \ldots f_{j_n} \equiv g_{j_1} \ldots g_{j_n}$, then $f_{j_1} \ldots f_{j_n} \equiv h_{j_1} \ldots h_{j_n}$.

\end{enumerate}

\begin{claim}
$J = I$.
\end{claim}

\begin{proof}
Suppose towards a contradiction that there is some $k \in I \setminus J$.  Let $F_J = \langle f^\alpha \rangle$ be a (possibly infinite) tuple listing every element of $\bigcup_{j \in J} SW(a^j_0, a^j_1)$, and let $a^J_i$ (for $i \in \{0,1,2\}$) be a tuple listing $\{a^j_i : j \in J\}$, ordered the same way as $F_J$.  Pick $f_k \in SW(a^k_0, a^k_1)$, and then pick $G_J = \langle g^\alpha \rangle$ and $g_k$ such that $F_J f_k a^J_0 a^J_1 \equiv G_J g_k a^J_1 a^J_2$.  Note that $g^\alpha \in SW(a^j_1, a^j_2)$.  Next pick a tuple $H_J = \langle h^\alpha \rangle$ such that if $f^\alpha \in SW(a^j_0, a^j_1)$, then $\models \theta_j(f^\alpha, g^\alpha, h^\alpha)$.  The element $h_j$ is well-defined because if it happens that $f^\alpha$ is also in $SW(a^{j'}_0, a^{j'}_1)$ for some $j' \neq j$, and if we let $h'$ be the unique element such that $\models \theta_{j'}(f^\alpha, g^\alpha, h')$, then by property~(4), the fact that $f^\alpha f^\alpha \equiv g^\alpha g^\alpha$ implies that $f^\alpha f^\alpha \equiv h^\alpha h'$, and so $h' = h^\alpha$.

By the assumption (4) on the set $J$, $F_J \equiv H_J$.  Finally, pick an element $h_k$ such that $F_J f_k \equiv H_J h_k$.  By Corollary~2.14 of \cite{gkk}, there is a formula $\theta_k$ such that $(a^k_0, a^k_1, a^k_2, f_k, g_k, h_k, \theta_k)$ is a full symmetric witness.

We claim that $J \cup \{k\}$ with $\theta_k$ satisfies condition (4) above, contradicting the maximality condition on the set $J$.  Indeed, suppose that $j_1, \ldots, j_n \in J$, and the tuples $$(a^{j_s}_0, a^{j_s}_1, a^{j_s}_2, f_{j_s}, g_{j_s}, h_{j_s}, \theta_{j_s})$$ (for $s = 1, \ldots, n$) and $$(a^k_0, a^k_1, a^k_2, f'_k, g'_k, h'_k, \theta_k)$$ are full symmetric witnesses, and that $f_{j_1} \ldots f_{j_n} f'_k \equiv g_{j_1} \ldots g_{j_n} g'_k.$  By the stationarity of $\tp(f'_k / \ov{a^1})$ and $\tp(g'_k / \ov{a^1})$, there is a $\sigma \in \Aut(\C / \ov{a^0}, \ov{a^1}, \ov{a^2})$ such that $\sigma(f'_k) = f_k$ and $\sigma(g'_k) = g_k$ for the $f_k$ and $g_k$ from the previous paragraph.  By the same argument and using the fact that $F_J \equiv G_J$, we can also assume that if $\sigma ((f_{j_1}, \ldots, f_{j_n})) = (f^{\alpha_1}, \ldots, f^{\alpha_n})$, then $\sigma((g_{j_1}, \ldots, g_{j_n}) = (g^{\alpha_1}, \ldots, g^{\alpha_n})$ (that is, the two tuples $(f_{j_1}, \ldots, f_{j_n})$ and $(g_{j_1}, \ldots, g_{j_n})$ map to corresponding subtuples of $F_J$ and $G_J$).  It follows that $\sigma(h'_k) = h_k$ and $\sigma(h_{j_s}) = h^{\alpha_s}$ for each $s$ between $1$ and $n$.  By our construction, $f^{\alpha_1} \ldots f^{\alpha_n} f_k \equiv h^{\alpha_1} \ldots h^{\alpha_n} h_k$, and so by taking preimages under $\sigma$, we get that $f_{j_1} \ldots f_{j_n} f'_k \equiv h_{j_1} \ldots h_{j_n} h'_k$.
\end{proof}

Finally, we check that condition (2) of the lemma holds for our new formulas $\theta_i$.  Suppose that $i \leq j$, $f \in SW(a^j_0, a^j_1)$, $g \in SW(a^j_1, a^j_2)$, $h \in SW(a^j_0, a^j_2)$, and $\models \theta_j(f, g, h)$.  Let $f_0 = \pi_{j,i}(f)$, and pick $g_0$ such that $f f_0 \equiv g g_0$. Then $g_0 = \pi_{j,i}(g)$.  Finally, let $h_0$ be the unique element such that $\models \theta_i(f_0, g_0, h_0)$.  By condition~(4) above, $h h_0 \equiv f f_0$, and so $h_0 = \pi_{j,i}(h)$.  Thus $\models \theta_i(\pi_{j,i}(f), \pi_{j,i}(g), \pi_{j,i}(h))$ as desired.

\end{proof}

For each $i \in I$, let $\cG_i$ be the type-definable groupoid obtained from the symmetric witness $W_i$ with the modified formula $\theta_i$ from Lemma~\ref{theta_tweak}.  Once again, the groups $\mor_{\cG_i}(a,a)$ are finite and abelian for any $a \in \ob(\cG_i)$, so we have the corresponding finite abelian groups $G_i$ which we consider as subsets of $\acl(\emptyset)$.

\begin{lemma}
\label{coherence}
If $i \leq j \in I$, $(a,b,c) \models p_j^{(3)}$, $f \in \mor_{\cG_j}(a,b)$, and $g \in \mor_{\cG_j}(b,c)$, then

$$\left[ \pi_{j,i}(\langle g \circ f \rangle_j ) \right]_i = \left[ \pi_{j,i}(\langle g \rangle_j) \right]_i \circ \left[ \pi_{j,i}(\langle f \rangle_j ) \right]_i $$

(where $\circ$ denotes composition in the groupoids $\cG_j$ and $\cG_i$).
\end{lemma}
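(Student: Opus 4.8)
The plan is to trace through how composition is computed in the two groupoids and to observe that Lemma~\ref{theta_tweak}(2) is exactly the compatibility that is needed; the identity then drops out of a routine unwinding. Fix notation: write $F = \langle f\rangle_j \in SW(a,b)$, $G = \langle g\rangle_j \in SW(b,c)$, and $H = \langle g\circ f\rangle_j \in SW(a,c)$. By the construction of $\cG_j$ in Proposition~\ref{groupoid_construction} (using the formula $\theta_j$ supplied by Lemma~\ref{theta_tweak}), the composition law says that $H$ is the unique element of $SW(a,c)$ with $\models\theta_j(F,G,H)$; equivalently, $\langle g\circ f\rangle_j$ is characterized inside $SW(a,c)$ by $\models\theta_j(\langle f\rangle_j,\langle g\rangle_j,\langle g\circ f\rangle_j)$.

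Next I would record the analogous statement for $\cG_i$. First note that $(\tau_{j,i}(a),\tau_{j,i}(b),\tau_{j,i}(c))\models p_i^{(3)}$: each $\tau_{j,i}$ is a relatively $\emptyset$-definable function with values in $p_i(\C)$, so $\tau_{j,i}(a)\in\dcl(a)$, $\tau_{j,i}(b)\in\dcl(b)$, $\tau_{j,i}(c)\in\dcl(c)$, and the independence of $a,b,c$ passes to the images. Moreover $\pi_{j,i}(F)\in SW(\tau_{j,i}(a),\tau_{j,i}(b))$, $\pi_{j,i}(G)\in SW(\tau_{j,i}(b),\tau_{j,i}(c))$, and $\pi_{j,i}(H)\in SW(\tau_{j,i}(a),\tau_{j,i}(c))$ by Lemma~\ref{commuting_pi}(2). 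Hence $[\pi_{j,i}(F)]_i$ and $[\pi_{j,i}(G)]_i$ are composable morphisms of $\cG_i$, and by the composition law for $\cG_i$ we have $[\pi_{j,i}(G)]_i\circ[\pi_{j,i}(F)]_i=[H']_i$, where $H'$ is the unique element of $SW(\tau_{j,i}(a),\tau_{j,i}(c))$ with $\models\theta_i(\pi_{j,i}(F),\pi_{j,i}(G),H')$.

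The heart of the argument is then to check that $\models\theta_i(\pi_{j,i}(F),\pi_{j,i}(G),\pi_{j,i}(H))$; combined with the uniqueness just stated, this forces $H'=\pi_{j,i}(H)$, and applying $[\cdot]_i$ and substituting back $H=\langle g\circ f\rangle_j$, $F=\langle f\rangle_j$, $G=\langle g\rangle_j$ gives precisely the asserted identity. For this implication I would invoke Lemma~\ref{theta_tweak}(2). That lemma is phrased with the distinguished endpoints $a^j_0,a^j_1,a^j_2$, so I would transfer it along an automorphism $\sigma$ of $\C$ carrying $(a^j_0,a^j_1,a^j_2)$ to $(a,b,c)$, which exists because both triples realize $p_j^{(3)}$ over $\acl(\emptyset)=\dcl(\emptyset)$. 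Since $\theta_i$ and $\theta_j$ are formulas over $\emptyset$, the relations $\pi_{j,i}$ are $\emptyset$-definable, and each set $SW(x,y)$ is the unambiguous image of $SW_j$ under any automorphism sending $(a^j_0,a^j_1)$ to $(x,y)$, all of this data is $\sigma$-invariant; so the instance of Lemma~\ref{theta_tweak}(2) at $(a^j_0,a^j_1,a^j_2)$ transports to the assertion that the implication $\theta_j(F,G,H)\rightarrow\theta_i(\pi_{j,i}(F),\pi_{j,i}(G),\pi_{j,i}(H))$ holds for our $F,G,H$, and since $\models\theta_j(F,G,H)$ by the first paragraph, the conclusion follows.

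The one step that requires genuine care is this transfer: one must verify that every object appearing — the formulas $\theta_i,\theta_j$, the functions $\pi_{j,i}$ and $\tau_{j,i}$, and the finite sets $SW(\cdot,\cdot)$ regarded as definable functions of their endpoints — is invariant over $\emptyset$, so that applying $\sigma$ is legitimate, and one must keep the source and target objects of every morphism straight throughout (so that both composites in the asserted equation genuinely lie in $\mor_{\cG_i}(\tau_{j,i}(a),\tau_{j,i}(c))$). Beyond that bookkeeping there is no real obstacle: the lemma is a direct unwinding of the two composition laws, the essential content having been front-loaded into Lemma~\ref{theta_tweak}.
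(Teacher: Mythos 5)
Your argument is correct and follows essentially the same route as the paper's proof: the paper likewise notes that $\models\theta_j(\langle f\rangle_j,\langle g\rangle_j,\langle g\circ f\rangle_j)$ because $\theta_j$ defines composition between generic (independent) triples of objects, applies Lemma~\ref{theta_tweak}(2) to push this down to $\theta_i$, and then reads off the composition in $\cG_i$. The only differences are cosmetic: the paper attributes the fact that $\theta$ encodes composition on generic triples to Proposition~2.12 of \cite{gkk} rather than to the construction itself, and it leaves implicit the $\emptyset$-automorphism transfer from $(a^j_0,a^j_1,a^j_2)$ to an arbitrary $(a,b,c)\models p_j^{(3)}$, which you spell out.
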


\begin{proof}
By Proposition~2.12 of \cite{gkk}, $\theta_j$ defines groupoid composition between generic triples of objects in $\cG_j$, so $$\models \theta_j (\langle f \rangle_j, \langle g \rangle_j, \langle g \circ f \rangle_j).$$  So by Lemma~\ref{theta_tweak}, $$\models \theta_i ( \pi_{j,i}(\langle f \rangle_j), \pi_{j,i} (\langle g \rangle_j), \pi_{j,i}(\langle g \circ f \rangle_j) ).$$  By Proposition~2.12 again, the Lemma follows.
\end{proof}

If $i \leq j \in I$ and $(a,b) \models p_j^{(2)}$, then because $SW(\tau_{j,i}(a), \tau_{j,i}(b)) \subseteq \dcl(SW(a,b))$, we have a canonical surjective group map $$\rho^{a,b}_{j,i} : \Aut(SW(a,b) / \ov{a}, \ov{b}) \rightarrow \Aut(SW(\tau_{j,i}(a), \tau_{j,i}(b)) / \ov{a}, \ov{b}),$$ and these maps satisfy the coherence condition that $\rho^{a,b}_{k,i} = \rho^{a,b}_{j,i} \circ \rho^{a,b}_{k,j}$ whenever $i \leq j \leq k$.  We will write ``$\rho_{j,i}$'' for $\rho^{a,b}_{j,i}$ if $(a,b)$ is clear from context.

For every $i \in I$, we also have a group isomorphism $\psi_i : \mor_{\cG_i}(a^i_1, a^i_1) \rightarrow \Aut(SW_i / \ov{a_0}, \ov{a_1})$ as in Proposition~\ref{corrected2} above.

The following is similar to Claim~2.17 of \cite{gkk}, except that here we have expanded this to a system of \emph{groupoid} maps.

\begin{lemma}
\label{chi}
For every $i \leq j \in I$, we define a map $\chi_{j,i} : \cG_j \rightarrow \cG_i$ by the rules:

\begin{enumerate}
\item if $a \in \ob(\cG_j)$, then $\chi_{j,i}(a) = \tau_{j,i}(a)$; and
\item if $f \in \mor_{\cG_j}(a,b)$, $c \models p_j | (a,b)$, and $f = g \circ h$ for some $g \in \mor_{\cG_j}(c,b)$ and $h \in \mor_{\cG_j}(a,c)$, then $$\chi_{j,i}(f) = \left[ \pi_{j,i}(\langle h \rangle_j) \right]_i \circ \left[ \pi_{j,i}(\langle g \rangle_j) \right]_i.$$
\end{enumerate}

Then the maps $\chi_{j,i}$ satisfy:

\begin{enumerate}
\setcounter{enumi}{2}
\item $\chi_{j,i}$ is a well-defined functor;
\item $\chi_{j,i}$ is \emph{full}: every morphism in $\mor(\cG_i)$ is in the image of $\chi_{j,i}$;
\item $\chi_{j,i}$ is type-definable over $\acl(\emptyset)$;
\item if $(a,b) \models p_j^{(2)}$ and $f \in \mor_{\cG_j}(a,b)$, then the formula for $\chi_{j,i}(f)$ simplifies to $$\chi_{j,i}(f) = \left[ \pi_{j,i}(\langle f \rangle_j)\right]_i ;$$
\item $\chi_{k,i} = \chi_{j,i} \circ \chi_{k,j}$ whenever $i \leq j \leq k$; and
\item for any $i \leq j$, the following diagram commutes: \bigskip

$\begin{CD}
\mor_{\cG_j}(a^j_1,a^j_1) @>\chi_{j,i}>> \mor_{\cG_i}(a^i_1,a^i_1)\\
@VV\psi_jV @VV\psi_iV\\
\Aut(SW_j / \ov{a_0}, \ov{a_1}) @>\rho_{j,i}>> \Aut(SW_i / \ov{a_0}, \ov{a_1})
\end{CD}$

\end{enumerate}
\end{lemma}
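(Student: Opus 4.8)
The plan is to follow, and extend to a coherent system, the proof of Claim~2.17 of \cite{gkk}: the engine of the whole argument is Lemma~\ref{coherence} together with connectedness of the type-definable groupoids $\cG_j$. Write $\tau=\tau_{j,i}$ and $\pi=\pi_{j,i}$, and recall that $\tau$ and $\pi$ carry independent tuples to independent tuples, so a ``generic'' configuration in $\cG_j$ is sent to a generic one in $\cG_i$. The first and central task is clause~(3): that the right-hand side of clause~(2) is independent of the choice of the generic $c \models p_j \mid (a,b)$ and of the factorization $f = g \circ h$. Given $f \in \mor_{\cG_j}(a,b)$ and two such data $(c,g,h)$ and $(c',g',h')$, choose $c'' \models p_j \mid (a,b,c,c')$; by connectedness there are $k \in \mor_{\cG_j}(c,c'')$ and $k' \in \mor_{\cG_j}(c',c'')$, so that
\[
f = (g\circ k^{-1})\circ(k\circ h) = (g'\circ (k')^{-1})\circ(k'\circ h')
\]
are two factorizations of $f$ through the \emph{same} generic object $c''$. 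A short forking computation shows that the triples arising here (such as $(a,c,c'')$ and $(c'',c,b)$, and the analogues with $c'$) are all realizations of $p_j^{(3)}$, so Lemma~\ref{coherence} applies to each $\circ$ above; together with the fact that $[\pi(\langle\cdot\rangle_j)]_i$ sends identities to identities, all three computations of the right-hand side of~(2) coincide. The same ``route-through-a-common-generic-object'' device, again via Lemma~\ref{coherence}, shows that $\chi_{j,i}$ preserves composition and identities, hence is a functor.

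Clause~(6) is then immediate: if $(a,b)\models p_j^{(2)}$, pick $c\models p_j\mid(a,b)$ and a factorization $f=g\circ h$; now $\{a,b,c\}$ is independent so $(a,c,b)\models p_j^{(3)}$, and Lemma~\ref{coherence} gives $\chi_{j,i}(f) = [\pi(\langle g\circ h\rangle_j)]_i = [\pi(\langle f\rangle_j)]_i$. For fullness~(4): $\tau$ maps $p_j(\C)$ onto $p_i(\C)$ (its range is $\Aut(\C)$-invariant and meets the single orbit $p_i(\C)$) and, by Lemma~\ref{commuting_pi}(5), $\pi$ maps $SW_j$ onto $SW_i$; so given $f'\in\mor_{\cG_i}(a',b')$, use connectedness of $\cG_i$ to reduce to $(a',b')\models p_i^{(2)}$, lift $(a',b')$ through $\tau$ and $\langle f'\rangle_i$ through $\pi$, and apply~(6). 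Type-definability over $\acl(\emptyset)$ in~(5) is verified as in \cite{gkk}: on objects $\chi_{j,i}$ is $\tau_{j,i}$, which is relatively $\emptyset$-definable, and the relation described in~(2) is relatively $\acl(\emptyset)$-definable because $\pi$, $\langle\cdot\rangle_j$, $[\cdot]_i$ and groupoid composition all are, while ``$c$ is generic over $(a,b)$ and $f=g\circ h$'' is expressed by a type over $\acl(\emptyset)$.

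For the cocycle identity~(7), since all maps involved are functors and every morphism factors through generic ones, it suffices to check equality on morphisms $f\in\mor_{\cG_k}(a,b)$ with $(a,b)\models p_k^{(2)}$; there, applying~(6) three times, Lemma~\ref{commuting_pi}(8), and the fact that $\langle\cdot\rangle_j$ and $[\cdot]_j$ are mutually inverse, one gets $\chi_{j,i}(\chi_{k,j}(f)) = [\pi_{j,i}(\pi_{k,j}(\langle f\rangle_k))]_i = [\pi_{k,i}(\langle f\rangle_k)]_i = \chi_{k,i}(f)$, while on objects $\tau_{j,i}\circ\tau_{k,j}=\tau_{k,i}$ is Lemma~\ref{commuting_pi}(7). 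Finally, the square in~(8) is obtained by unwinding the definitions of $\psi_j$, $\psi_i$ and $\rho_{j,i}$: for $g\in\mor_{\cG_j}(a^j_1,a^j_1)$, functoriality gives $\chi_{j,i}(g\circ f)=\chi_{j,i}(g)\circ\chi_{j,i}(f)$ for every $f\in\mor_{\cG_j}(a^j_0,a^j_1)$; translating the left action of $g$ on $\mor_{\cG_j}(a^j_0,a^j_1)$ into the action of $\psi_j(g)$ on $SW_j$, pushing forward along $\pi_{j,i}$ (which is $\rho_{j,i}$ by definition) and translating back through $[\cdot]_i$ to the left action of $\chi_{j,i}(g)$ (which is $\psi_i(\chi_{j,i}(g))$) yields $\psi_i\circ\chi_{j,i}=\rho_{j,i}\circ\psi_j$. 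The step I expect to be the real obstacle is the well-definedness in~(3): one has to be sure that different generic intermediate objects and different groupoid factorizations of a single morphism produce the \emph{same} morphism of $\cG_i$, and this is exactly where connectedness of the type-definable groupoids and the coherence supplied by Lemma~\ref{coherence} — which itself rests on the tweaked formulas $\theta_i$ from Lemma~\ref{theta_tweak} — are indispensable.
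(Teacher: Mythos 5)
Your handling of clauses (4)--(8) is essentially the paper's own route: fullness by factoring an arbitrary morphism of $\cG_i$ through a generic object and lifting the two factors through $\tau_{j,i}$ and $\pi_{j,i}$, type-definability from definability of types together with the definable ingredients $\pi,\langle\cdot\rangle,[\cdot]$, clause (6) directly from Lemma~\ref{coherence}, the cocycle identity (7) from Lemma~\ref{commuting_pi}(7),(8) after reducing to morphisms between independent pairs, and (8) by unwinding $\psi_j,\psi_i,\rho_{j,i}$. The divergence, and the genuine gap, is in the step you yourself flag as the crux: well-definedness in (3). Rerouting the two data $(c,g,h)$ and $(c',g',h')$ through a common generic $c''$ and expanding each $\circ$ with Lemma~\ref{coherence} only shows that each original value equals its own rerouted value; it does not compare the two. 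What remains is exactly the hard case: two \emph{distinct} factorizations of $f$ through the \emph{same} object $c''$, differing by an element $e\in\mor_{\cG_j}(c'',c'')$, and Lemma~\ref{coherence} says nothing about that configuration, since it requires a triple of independent (in particular distinct) objects. Worse, your auxiliary appeal to ``$[\pi_{j,i}(\langle\cdot\rangle_j)]_i$ sends identities to identities'' is not meaningful as stated: $\langle\cdot\rangle_j$ is only defined on $\mor_{\cG_j}(a,b)$ for $(a,b)\models p_j^{(2)}$, so identities (and more generally vertex-group elements) are outside its domain.

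The gap is fillable within your framework, but it takes real extra work: for instance, with a further generic $d$ and $n\in\mor_{\cG_j}(d,c'')$ one first derives $[\pi_{j,i}(\langle k^{-1}\rangle_j)]_i=\bigl([\pi_{j,i}(\langle k\rangle_j)]_i\bigr)^{-1}$ by applying Lemma~\ref{coherence} to $k^{-1}\circ(k\circ m)$, and then writes $e=(e\circ n)\circ n^{-1}$ to compare the two factorizations through $c''$ by several more applications of the same lemma. The paper avoids all of this: for a fixed $c$, the morphism $h$ is determined by $f$ and $g$, and any two admissible choices satisfy $\tp(f,g/a,b,c)=\tp(f,g'/a,b,c)$ by Lemma~\ref{full_symm2}, so the value, being computed by $\acl(\emptyset)$-invariant operations, is unchanged; independence of the choice of $c$ is then just stationarity of $p_j$. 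As written, your argument stops short precisely at this point, so the well-definedness (and hence functoriality, which you build on it) is not yet established.
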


\begin{proof}
Suppose that $f \in \mor_{\cG_j}(a,b)$.  To check that $\chi_{j,i}(f)$ is well-defined (and does not depend on the choices of $c, g,$ and $h$), first note that given $c \models p_j | (a,b)$ and morphisms $g, h$ as in (2), the morphism $h$ is uniquely determined from $f$ and $g$, and for any other $g' \in \mor_{\cG_j}(c,b)$, $\tp(f,g/a,b,c) = \tp(f,g'/a,b,c)$ (by Lemma~\ref{full_symm2}).  So the choices of $f$ and $g$ do not matter once we have picked $c$, and the choice of $c$ does not matter by the stationarity of $p_j$.

To show that $\chi_{j,i}$ is a functor, suppose that $a, b,$ and $c$ realize $p_j$, $f \in \mor_{\cG_j}(a,b)$, and $g \in \mor_{\cG_j}(b,c)$.  To compute the images of $f$ and $g$, we pick $(d,e) \models p_{j}^{(2)} | (a,b,c)$ and $f_0 \in \mor_{\cG_j}(a,d), f_1 \in \mor_{\cG_j}(d,b)$, $g_0 \in \mor_{\cG_j}(b,e),$ and $g_1 \in \mor_{\cG_j}(e,c)$ such that $f = f_1 \circ f_0$ and $g = g_1 \circ g_0$.  Then by the definition given in (2) of the Lemma,

$$\chi_{j,i}(f) = \left[ \pi_{j,i}(\langle g_1 \circ g_0 \circ f_1 \rangle_j) \right]_i \circ \left[ \pi_{j,i}(\langle f_0 \rangle_j) \right]_i.$$

By Lemma~\ref{coherence} twice, this equals

$$\left[ \pi_{j,i}(\langle g_1 \rangle_j ) \right]_i \circ \left[ \pi_{j,i}(\langle g_0 \rangle_j)\right]_i \circ \left[ \pi_{j,i}(\langle f_1 \rangle_j)\right]_i \circ \left[ \pi_{j,i}(\langle f_0 \rangle_j)\right]_i.$$

But the composition of the first two terms above equals $\chi_{j,i}(g)$ and the composition of the third and fourth terms equals $\chi_{j,i}(f)$, so $\chi_{j,i}(g \circ f) = \chi_{j,i}(g) \circ \chi_{j,i}(f)$.

Suppose that $a, b \in \ob(\cG_i)$ and $f \in \mor_{\cG_i}(a,b)$.  Pick some $c \models p_i | (a,b)$, and pick $g \in \mor_{\cG_i}(c,b)$ and $h \in \mor_{\cG_i}(a,c)$ such that $f = g \circ h$.  Since $$(\langle g \rangle_i, c, b) \equiv (f^i_{01}, a^i_0, a^i_1) \equiv (\langle h \rangle_i, a, c),$$ we can find elements $g'$ and $h'$ such that $\pi_{j,i}(g') = \langle g \rangle_i$ and $\pi_{j,i}(h') = \langle h \rangle_i$.  Let $f^* = \left[ g' \right]_j \circ \left[ h' \right]_j$.  Unwinding the definitions, we see that $$\chi_{j,i}(f^*) = \left[ \pi_{j,i}(g')\right]_i \circ \left[ \pi_{j,i}(h')\right]_i = \left[\langle g \rangle_i \right]_i \circ \left[ \langle h \rangle_i \right]_i = g \circ h = f.$$  This establishes that the functor $\chi_{j,i}$ is full.

The fact that $\chi_{j,i}$ is type-definable is simply by the definability of types in stable theories, and in fact the action of $\chi_{j,i}$ on the objects and morphisms of $\cG_j$ is given by the intersection of a \emph{definable} set with the type-definable sets $\ob(\cG_j)$ and $\mor(\cG_j)$.

The formula (6) follows directly from the definition of $\chi_{j,i}(f)$ in (2) and Lemma~\ref{coherence}.

Next we prove (7).  Suppose that $i \leq j \leq k$.  If $a \in \ob(\cG_k)$, then $\chi_{j,i} \circ \chi_{k,j}(a) = \tau_{j,i}(\tau_{k,j}(a)) = \tau_{k,i}(a) = \chi_{k,i}(a)$.  If $a, b, c \in \ob(\cG_k)$ and $f = g \circ h$ are in (2) of the Lemma (with $j$ replaced by $k$), then by the definition of the $\chi$ maps,

$$\chi_{j,i} \circ \chi_{k,j} (f) = \chi_{j,i} \left(\left[ \pi_{k,j}(\langle h \rangle_k)\right]_j \circ \left[ \pi_{k,j}(\langle g \rangle_k)\right]_j \right)$$ $$= \left[\pi_{j,i}\left(\langle \left[ \pi_{k,j}(\langle h \rangle_k)\right]_j \rangle_j\right) \right]_i \circ \left[ \pi_{j,i}\left(\langle\left[ \pi_{k,j}(\langle g \rangle_k)\right]_j \rangle_j\right)\right]_i$$ $$=\left[ \pi_{j,i} \left(\pi_{k,j}(\langle h \rangle_k) \right) \right]_i \circ  \left[ \pi_{j,i} \left(\pi_{k,j}(\langle g \rangle_k) \right) \right]_i $$ $$= \left[ \pi_{k,i}(\langle h \rangle_k) \right]_i \circ \left[ \pi_{k,i}(\langle g \rangle_k) \right]_i = \chi_{k,i}(f).$$

Finally, we check (8).  Suppose $i \leq j$ and $f \in \mor_{\cG_j}(a^j_1, a^j_1)$.  To show that $\psi_i (\chi_{j,i}(f)) = \rho_{j,i} ( \psi_j(f))$, we pick some arbitrary $k_0 \in \mor_{\cG_i}(a^i_0, a^i_1)$ and show that

\setcounter{equation}{8}

\begin{equation}\label{8} \left[\psi_i (\chi_{j,i}(f))\right](k_0) = \left[\rho_{j,i} ( \psi_j(f))\right](k_0) . \end{equation}

On the one hand, by definition of $\psi_i$,

$$\left[\psi_i (\chi_{j,i}(f))\right](k_0) = \chi_{j,i}(f) \circ k_0.$$

To compute the right-hand side of equation~\ref{8}, pick some $k \in \mor_{\cG_j}(a^j_0, a^j_1)$ such that $\left[ \pi_{j,i}(\langle k \rangle_j) \right]_i = k_0$.  Then $$\left[\psi_j(f)\right] (k) = f \circ k,$$ and $\rho_{j,i}(\psi_j(f))$ must move $k_0 = \left[ \pi_{j,i}(\langle k \rangle_j) \right]_i$ to the element which is defined from $\left[\psi_j(f)\right](k)$ in the same way that $k_0$ is defined from $k$, so $$\left[\rho_{j,i} ( \psi_j(f))\right](k_0) = \left[\pi_{j,i}(\langle f \circ k \rangle_j) \right]_i.$$  By (6) and the functoriality of $\chi_{j,i}$, $$\left[\rho_{j,i} ( \psi_j(f))\right](k_0) = \chi_{j,i}(f \circ k) = \chi_{j,i}(f) \circ \chi_{j,i}(k) = \chi_{j,i}(f) \circ \left[ \pi_{j,i}(\langle k \rangle_j)\right]_i$$ $$= \chi_{j,i}(f) \circ k_0.$$  So both sides of equation~\ref{8} equal $\chi_{j,i}(f) \circ k_0$, and we are done.
\end{proof}

Finally, we define maps on the $p$-simplices and homology groups.  Throughout, we will work with the \emph{set} homology group (and set-simplices, et cetera) for convenience.

First, for every $i \in I$, we pick an arbitrary ``selection function'' $\alpha^0_i: S_0 \cC(p) \rightarrow p_i(\C)$ such that $\alpha^0_i(a) \in \dcl(a)$.  (This is a technical point, but the $0$-simplices in $S_0 \cC(p)$ are \emph{algebraic closures} of realizations of $p_i$, and there might be no canonical way to get a realization of $p_i$ from a $0$-simplex.  Thus we need the choice functions $\alpha^0_i$.)

Next, we pick selection functions $\alpha_i : S_1 \cC(p) \rightarrow \mor(\cG_i)$ (for every $i \in I$) as follows.  Suppose that $\dom(f) = \P(\{n_0, n_1\})$ for $n_0 < n_1$, and for $x \in \{n_0, n_1\}$, let ``$f_x$'' stand for $f^{\{x\}}_{\{n_0, n_1\}} (\alpha^0_i(f \upharpoonright \P(\{x\})))$ (remembering that things in the image of $\alpha^0_i$ are realizations of $p_i$, which are also objects in $\ob(\cG_i)$).  Then we pick $\alpha_i(f)$ such that $\alpha_i(f) \in \mor_{\cG_i}(f_{n_0}, f_{n_1}).$  Just as in the proof of Lemma~\ref{commuting_pi}, we can use an inductive argument to ensure that if $i \leq j$ then $\chi_{j,i}(\alpha_j(f)) = \alpha_i(f)$.

Finally, want to extend $\alpha_i$ to a selection function $\epsilon_i : S_2 \cC(p) \rightarrow G_i$.  To ease notation here and in what follows, we set the following notation:

\begin{notation}
\label{faces}
Whenever $f \in S_n \cC(p)$, $\dom(f) = \P(s)$, and $k \in s$, let $$f^i_{k,s} := f^{\{k\}}_s\left(\alpha_i^0(f \upharpoonright \P(\{k\})) \right),$$ and note that $f^i_{k,s}$ is a realization of $p_i$, that is, an object in $\cG_i$.  Similarly, if $\{k,\ell\} \subseteq s$ and $k < \ell$, let $$f^i_{\{k, \ell\},s} :=  f^{\{k, \ell\}}_{s}\left(\alpha_i(f \upharpoonright \P(\{k, \ell \})) \right),$$ which is a morphism in $\mor_{\cG_i}(f^i_{k,s}, f^i_{\ell,s})$.
\end{notation}

\begin{definition}
We define $\epsilon_i : S_2 \cC(p) \rightarrow G_i$ by the rule: if $\dom(f) = \P(s)$, where $s = \{n_0, n_1, n_2\}$ and $n_0 < n_1 < n_2$, then we define $\epsilon_i(f)$ as $$\epsilon_i(f) := \left[ \left( f^i_{\{n_0, n_2\}, s} \right)^{-1} \circ f^i_{\{n_1, n_2\},s} \circ f^i_{\{n_0, n_1\},s} \right]_{G_i}.$$  (Recall that if $f \in \mor_{\cG_i}(a,a)$, then ``$[f]_{G_i}$'' denotes the corresponding element of the group $G_i$.)

These functions $\epsilon_i$ can be extended linearly from $S_2 \cC(p)$ to the collection of all $2$-chains $C_2(p)$, and by abuse of notation we also call this new function $\epsilon_i$.
\end{definition}

The next lemma is a technical point that will be useful for later computations.

\begin{lemma}
\label{epsilon_coherence}
If $i \in I$ and $f \in S_n(p)$ for any $n \geq 3$, $\dom(g) = \P(t)$, and $\{a,b,c\} \subseteq s \subseteq t$ with $a < b < c$, then  $$\epsilon_i(f \upharpoonright \P(\{a,b,c\})) = \left[(f^i_{\{a,c\}, s})^{-1} \circ f^i_{\{b,c\}, s} \circ f^i_{\{a,b\}, s}\right]_{G_i}.$$

\end{lemma}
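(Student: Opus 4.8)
The plan is to reduce the statement to the definition of $\epsilon_i$ on $2$-simplices by transporting the relevant vertex morphism along the image under $f$ of the inclusion $\{a,b,c\}\subseteq s$. Write $g:=f\upharpoonright\P(\{a,b,c\})$, which is a $2$-simplex in $S_2\cC(p)$ by closure under restrictions, with $\dom(g)=\P(\{a,b,c\})$ and $a<b<c$. By the definition of $\epsilon_i$ on $2$-simplices,
$$
\epsilon_i(g)=\left[\,(g^i_{\{a,c\},\{a,b,c\}})^{-1}\circ g^i_{\{b,c\},\{a,b,c\}}\circ g^i_{\{a,b\},\{a,b,c\}}\,\right]_{G_i}.
$$
Since $g\upharpoonright\P(\{k\})=f\upharpoonright\P(\{k\})$ and $g\upharpoonright\P(\{k,\ell\})=f\upharpoonright\P(\{k,\ell\})$ for $k,\ell\in\{a,b,c\}$, the selection functions $\alpha^0_i$ and $\alpha_i$ return the same values on these faces whether they are computed inside $g$ or inside $f$; so the only difference between the quantities $g^i_{-,\{a,b,c\}}$ above and the quantities $f^i_{-,s}$ of Notation~\ref{faces} is that the former are pushed forward into $f(\{a,b,c\})$ and the latter into $f(s)$.

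Next I would exploit that the connecting map $f(\{a,b,c\})\to f(s)$ is a partial elementary map. As usual we may assume $A=\emptyset$ after adding constants for the base, so that $G_i\subseteq\acl(\emptyset)=\dcl(\emptyset)$. Let $\sigma:=f^{\{a,b,c\}}_s$, the image under the functor $f$ of the inclusion $\{a,b,c\}\subseteq s$; this is a morphism of $\cC_A$, i.e.\ a partial elementary map over $\emptyset$, and we extend it to some $\hat\sigma\in\Aut(\C)$. Because the inclusion $\{k\}\subseteq s$ (resp.\ $\{k,\ell\}\subseteq s$) factors through $\{a,b,c\}$, functoriality of $f$ gives
$$
\hat\sigma\bigl(g^i_{k,\{a,b,c\}}\bigr)=f^i_{k,s},\qquad \hat\sigma\bigl(g^i_{\{k,\ell\},\{a,b,c\}}\bigr)=f^i_{\{k,\ell\},s}
$$
for all $k,\ell\in\{a,b,c\}$.

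Finally I would invoke that $\cG_i$ is type-definable over $\acl(\emptyset)$ with binding group $G_i$ coded in $\dcl(\emptyset)$: hence $\hat\sigma$ restricts to a (full) functor of $\cG_i$ commuting with composition and inversion, it fixes $G_i$ pointwise, and it commutes with the $\emptyset$-definable assignment $h\mapsto[h]_{G_i}$. Applying $\hat\sigma$ to the vertex morphism $h:=(g^i_{\{a,c\},\{a,b,c\}})^{-1}\circ g^i_{\{b,c\},\{a,b,c\}}\circ g^i_{\{a,b\},\{a,b,c\}}$, which lies in $\mor_{\cG_i}(g^i_{a,\{a,b,c\}},g^i_{a,\{a,b,c\}})$, yields
$$
[h]_{G_i}=[\hat\sigma(h)]_{G_i}=\left[\,(f^i_{\{a,c\},s})^{-1}\circ f^i_{\{b,c\},s}\circ f^i_{\{a,b\},s}\,\right]_{G_i},
$$
and the left-hand side is $\epsilon_i(f\upharpoonright\P(\{a,b,c\}))$ by the first paragraph. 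The argument is essentially bookkeeping; the one point to watch is the simultaneous compatibility of $\hat\sigma$ with all the structure — objects, morphisms, composition, inversion, and $[\cdot]_{G_i}$ — which is exactly the standard fact that an automorphism fixing the base of an $\acl(\emptyset)$-type-definable groupoid with $\dcl(\emptyset)$-coded binding group acts functorially and respects the map to the binding group.
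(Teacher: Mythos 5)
Your proof is correct and follows essentially the same route as the paper: both arguments apply the transition map $f^{\{a,b,c\}}_s$ (you extend it to a global automorphism, the paper applies the partial elementary map directly), observe that it fixes $G_i\subseteq\acl(\emptyset)=\dcl(\emptyset)$ pointwise and commutes with the groupoid operations and $[\cdot]_{G_i}$, and then use functoriality $f^{\{a,b,c\}}_s\circ f^{\{k,\ell\}}_{\{a,b,c\}}=f^{\{k,\ell\}}_s$ to identify the pushed-forward edges with the $f^i_{\{k,\ell\},s}$.
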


\begin{proof}
Remember that we identify the elements of $G_i$ with elements of $\acl(\emptyset)$.  Because the transition map $f^{\{a,b,c\}}_s$ fixes $\acl(\emptyset)$ pointwise, $$f^{\{a,b,c\}}_s(\epsilon_i(f \upharpoonright \{a,b,c\})) = \epsilon_i(f \upharpoonright \{a,b,c\}).$$  Therefore the left-hand side of the equation above equals $f^{\{a,b,c\}}_s(\epsilon_i(f \upharpoonright \{a,b,c\}))$, which is the equivalence class (in $G_i$) of

$$ \left[f^{\{a,b,c\}}_s \circ f^{\{a, c\}}_{\{a,b,c\}} (\alpha_i(f \upharpoonright \P(\{a, c\}))) \right]^{-1} \circ$$ $$\left[f^{\{a,b,c\}}_s \circ f^{\{b,c\}}_{\{a,b,c\}} (\alpha_i(f \upharpoonright \P(\{b, c\})))\right]  \circ \left[f^{\{a,b,c\}}_s \circ f^{\{a, b\}}_{\{a,b,c\}} (\alpha_i(f \upharpoonright \P(\{a, b\})))\right]$$

$$= \left[ f^{\{a, c\}}_{s} (\alpha_i(f \upharpoonright \P(\{a, c\}))) \right]^{-1} \circ$$ $$\left[ f^{\{b,c\}}_{s} (\alpha_i(f \upharpoonright \P(\{b, c\})))\right]  \circ \left[ f^{\{a, b\}}_{s} (\alpha_i(f \upharpoonright \P(\{a, b\})))\right],$$

as desired.

\end{proof}

\begin{lemma}
\label{epsilon_boundaries}
If $c \in B^{set}_2(p)$, then for any $i \in I$, $\epsilon_i(c) = 0$.
\end{lemma}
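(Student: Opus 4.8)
The statement asserts that the homomorphism $\epsilon_i$ kills boundaries, i.e. factors through $H_2^{set}(p)$. Since $B_2^{set}(p)$ is generated by $\bd g$ for $3$-simplices $g \in S_3\cC(p)$, and $\epsilon_i$ is linear, it suffices to show $\epsilon_i(\bd g) = 0$ for every such $g$. Writing $\dom(g) = \P(s)$ with $s = \{n_0, n_1, n_2, n_3\}$, $n_0 < n_1 < n_2 < n_3$, the boundary is $\bd g = \sum_{j=0}^{3} (-1)^j \bd^j g$, where each $\bd^j g$ is a $2$-simplex whose domain is $\P(s \setminus \{n_j\})$. So the plan is to expand $\epsilon_i(\bd g)$ as the alternating sum $\sum_j (-1)^j \epsilon_i(\bd^j g)$, rewrite each term using Lemma~\ref{epsilon_coherence} (which lets us express $\epsilon_i$ of each triangular face in terms of the morphisms $g^i_{\{k,\ell\}, s}$ attached to the \emph{full} simplex $g$, all with the common base $s$), and then verify that in the group $G_i$ everything cancels.

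First I would set, for brevity, $g^i_{k\ell} := g^i_{\{n_k, n_\ell\}, s}$ for $0 \le k < \ell \le 3$ (a morphism in $\cG_i$ between the appropriate vertices $g^i_{k} := g^i_{n_k, s}$), so that Lemma~\ref{epsilon_coherence} gives, for each omitted index $j$, an expression of the form $\epsilon_i(\bd^j g) = \left[ (g^i_{k\ell})^{-1} \circ g^i_{\ell m} \circ g^i_{km} \right]_{G_i}$ where $\{k, \ell, m\}$ is $\{0,1,2,3\} \setminus \{j\}$ in increasing order. Concretely the four faces give the four ``cocycle'' expressions built from the triples $\{1,2,3\}$, $\{0,2,3\}$, $\{0,1,3\}$, $\{0,1,2\}$. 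The key point is that the groupoid composition relation in $\cG_i$, combined with the fact that the $g^i_{k\ell}$ come from a single functor on $\P(s)$ (so they are genuinely composable morphisms between the $g^i_k$), forces these four group elements to satisfy the $2$-cocycle identity $c_{123} \cdot c_{013}^{-1} \cdot c_{023} \cdot c_{012}^{-1} = 1$ in $G_i$ after transporting each $c_{klm}$ to the common vertex via the appropriate morphisms — this is exactly the associativity of composition, phrased via the abelian binding group $G_i$ as in the groupoid-construction background (Propositions~\ref{groupoid_construction} and \ref{corrected2}). Since $G_i$ is abelian and the conjugating morphisms between its ``copies'' at different objects are the canonical ones used to define $G_i$, the transport is harmless and the alternating product is $0$ additively.

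The main obstacle I anticipate is bookkeeping: one must be careful that the morphisms appearing in the four faces really do compose correctly in $\cG_i$ (i.e. that $g^i_{\ell m} \circ g^i_{k\ell}$ and $g^i_{km}$ have the same source and target, which follows from functoriality of $g$ restricted to $\P(\{n_k, n_\ell, n_m\})$), and that when we pass from ``$[\cdot]$ in $\mor_{\cG_i}(a,a)$ for varying $a$'' to ``$[\cdot]_{G_i}$ in the single abelian group $G_i$'' the identification is consistent with conjugation by any morphism — which is precisely how $G_i$ was constructed. Once those compatibilities are in place, the cancellation is a direct computation. A clean way to organize it is: localize the whole picture at $\P(\{n_0\})$ (or work over the object $g^i_0$), transport every $g^i_{k\ell}$ with $k,\ell \ge 1$ into $\mor_{\cG_i}(g^i_0, g^i_0)$ by pre/post-composing with $g^i_{0k}$ and $g^i_{0\ell}$, observe that each $\epsilon_i(\bd^j g)$ becomes a product of these transported loops (with the $g^i_{0k}$'s cancelling against their inverses), and then the associativity of the triple product $g^i_{23} \circ g^i_{12} \circ g^i_{01}$ computed in two ways yields the identity $\epsilon_i(\bd^0 g) - \epsilon_i(\bd^1 g) + \epsilon_i(\bd^2 g) - \epsilon_i(\bd^3 g) = 0$ in $G_i$, as needed.
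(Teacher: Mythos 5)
Your proposal is correct and follows essentially the same route as the paper: reduce by linearity to $\epsilon_i(\bd g)=0$ for a $3$-simplex $g$, use Lemma~\ref{epsilon_coherence} to write each face's value via the edge morphisms of the full simplex, and then cancel by transporting everything to a common vertex, using that $[\cdot]_{G_i}$ is conjugation-invariant and $G_i$ is abelian. The paper's proof is exactly this computation carried out explicitly.
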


\begin{proof}
By linearity, it suffices to check that $\epsilon_i(\bd(g)) = 0$ for any $g \in S^{set}_3 (p)$.  For simplicity of notation, we assume that $\dom(g) = \P(s)$ where $s = \{0, 1, 2, 3\}$.  To further simplify, we write ``$g_{i,j}$'' for $g^i_{\{i,j\}, s}$.

If $0 \leq j < k < \ell \leq 3$, by Lemma~\ref{epsilon_coherence}, $$\epsilon_i(g \upharpoonright \{j,k,\ell\}) = \left[g_{j,\ell}^{-1} \circ g_{k,\ell} \circ g_{j,k}\right]_{G_i}.$$

Therefore $\epsilon_i(\bd(g))$ equals $$\left[g_{1,3}^{-1} \circ g_{2,3} \circ g_{1,2}\right]_{G_i} - \left[g_{0,3}^{-1} \circ g_{2,3} \circ g_{0,2} \right]_{G_i} $$ $$+ \left[g_{0,3}^{-1} \circ g_{1,3} \circ g_{0,1} \right]_{G_i} - \left[g_{0,2}^{-1} \circ g_{1,2} \circ g_{0,1} \right]_{G_i}$$ $$= \left[g_{0,1}^{-1} \circ g_{1,3}^{-1} \circ g_{2,3} \circ g_{1,2} \circ g_{0,1}\right]_{G_i} - \left[g_{0,3}^{-1} \circ g_{2,3} \circ g_{0,2} \right]_{G_i} $$ $$+ \left[g_{0,3}^{-1} \circ g_{1,3} \circ g_{0,1} \right]_{G_i} - \left[g_{0,2}^{-1} \circ g_{1,2} \circ g_{0,1} \right]_{G_i}$$ $$ = - \left[g_{0,2}^{-1} \circ g_{1,2} \circ g_{0,1} \right]_{G_i} - \left[g_{0,3}^{-1} \circ g_{2,3} \circ g_{0,2} \right]_{G_i} + \left[g_{0,3}^{-1} \circ g_{1,3} \circ g_{0,1} \right]_{G_i} $$ $$+ \left[g_{0,1}^{-1} \circ g_{1,3}^{-1} \circ g_{2,3} \circ g_{1,2} \circ g_{0,1}\right]_{G_i}$$ $$= \left[ \left(g_{0,1}^{-1} \, g_{1,2}^{-1} \, g_{0,2}\right) \circ \left( g_{0,2}^{-1} \, g_{2,3}^{-1} \, g_{0,3}\right) \circ \left( g_{0,3}^{-1} \, g_{1,3} \, g_{0,1} \right) \circ \left( g_{0,1}^{-1} \, g_{1,3}^{-1} \, g_{2,3} \, g_{1,2} \, g_{0,1} \right)\right]_{G_i},$$ but everything in the last expression cancels out.

\end{proof}

By the last lemma, each $\epsilon_i$ induces a well-defined function $\widetilde{\epsilon_i} : H_2(p) \rightarrow G_i$.

Now we relate the $\epsilon_i$ maps to the groupoid maps $\chi_{j,i} : \cG_j \rightarrow \cG_i$.  For $i \in I$, let $\overline{\psi_i} : G_i \rightarrow  \Aut(SW_i / \ov{a_0}, \ov{a_1})$ be the map induced by $\psi_i : \mor_{G_i}(a^1_i,a^1_i) \rightarrow \Aut(SW_i / \ov{a_0}, \ov{a_1})$, and let $\overline{\chi_{j,i}} : G_j \rightarrow G_i$ be the surjective group homomorphism induced by the functor $\chi_{j,i}$ from Lemma~\ref{chi}.

Everything coheres:

\begin{lemma}
\label{epsilon_chi}
If $i \leq j \in I$ and $f \in S_2(p)$, then $\overline{\chi_{j,i}} (\epsilon_j(f)) = \epsilon_i(f)$.
\end{lemma}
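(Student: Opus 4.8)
The plan is to unwind both sides in the binding groups and reduce everything to the coherence properties of the groupoid maps $\chi_{j,i}$ and of the selection functions $\alpha^0_r,\alpha_r$ established above. Fix $f \in S_2(p)$ with $\dom(f) = \P(s)$, where $s = \{n_0, n_1, n_2\}$ and $n_0 < n_1 < n_2$, and for each $r \in I$ write $\phi^{01}_r := f^r_{\{n_0,n_1\},s}$, $\phi^{12}_r := f^r_{\{n_1,n_2\},s}$, $\phi^{02}_r := f^r_{\{n_0,n_2\},s}$, so that $\phi^{01}_r \in \mor_{\cG_r}(f^r_{n_0,s}, f^r_{n_1,s})$ and similarly for the others, and $m_r := (\phi^{02}_r)^{-1}\circ\phi^{12}_r\circ\phi^{01}_r$ is an automorphism of the object $f^r_{n_0,s}$ in $\cG_r$ with $\epsilon_r(f) = [m_r]_{G_r}$ by Notation~\ref{faces} and the definition of $\epsilon_r$. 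By the definition of the homomorphism $\overline{\chi_{j,i}}$ induced on binding groups by the functor $\chi_{j,i}$, we have $\overline{\chi_{j,i}}(\epsilon_j(f)) = \overline{\chi_{j,i}}([m_j]_{G_j}) = [\chi_{j,i}(m_j)]_{G_i}$, and since $\chi_{j,i}$ is a functor of groupoids,
$$\chi_{j,i}(m_j) = \bigl(\chi_{j,i}(\phi^{02}_j)\bigr)^{-1}\circ\chi_{j,i}(\phi^{12}_j)\circ\chi_{j,i}(\phi^{01}_j).$$
So the lemma reduces to the single identity $\chi_{j,i}(f^j_{\{k,\ell\},s}) = f^i_{\{k,\ell\},s}$ for every two-element subset $\{k,\ell\} \subseteq s$.

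To establish this identity I would argue as follows. Recall $f^r_{\{k,\ell\},s} = f^{\{k,\ell\}}_s\bigl(\alpha_r(f\upharpoonright\P(\{k,\ell\}))\bigr)$, where $f^{\{k,\ell\}}_s$ is the image under the set-simplex $f$ of the inclusion $\{k,\ell\}\subseteq s$. Since we have reduced to the case $A = \emptyset$, the map $f^{\{k,\ell\}}_s$ is a partial elementary map over $\emptyset$; and, as the faces of a closed simplex are algebraically closed, every morphism of $\cG_r$ between objects lying in $f(\{k,\ell\})$ is itself an element of $f(\{k,\ell\})$, hence lies in the domain of $f^{\{k,\ell\}}_s$. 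On the other hand, $\chi_{j,i}$ is type-definable over $\acl(\emptyset)$ by Lemma~\ref{chi}, and since $\acl(\emptyset) = \dcl(\emptyset)$ we have $\Aut(\C) = \Aut(\C/\acl(\emptyset))$, so $\chi_{j,i}$ commutes with every automorphism of $\C$, and in particular with any extension to $\C$ of the partial elementary map $f^{\{k,\ell\}}_s$. Therefore
$$\chi_{j,i}\Bigl(f^{\{k,\ell\}}_s\bigl(\alpha_j(f\upharpoonright\P(\{k,\ell\}))\bigr)\Bigr) = f^{\{k,\ell\}}_s\Bigl(\chi_{j,i}\bigl(\alpha_j(f\upharpoonright\P(\{k,\ell\}))\bigr)\Bigr).$$
By the coherence of the selection functions arranged in the construction following Lemma~\ref{commuting_pi} — namely $\chi_{j,i}\circ\alpha_j = \alpha_i$ on $1$-simplices, which, by comparing sources and targets, also forces $\tau_{j,i}\circ\alpha^0_j = \alpha^0_i$ so that the objects match up — the right-hand side equals $f^{\{k,\ell\}}_s\bigl(\alpha_i(f\upharpoonright\P(\{k,\ell\}))\bigr) = f^i_{\{k,\ell\},s}$. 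This proves the identity, whence $\chi_{j,i}(m_j) = m_i$ and $\overline{\chi_{j,i}}(\epsilon_j(f)) = [m_i]_{G_i} = \epsilon_i(f)$.

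The only genuinely delicate point is the one treated in the second paragraph: that the groupoid map $\chi_{j,i}$ commutes with the transition maps of an arbitrary set-simplex. This is immediate once one notes that $\chi_{j,i}$ is invariant over $\acl(\emptyset)$ and that, after the reduction to $A = \emptyset$, every such transition map is elementary over $\emptyset$ and hence fixes $\acl(\emptyset) = \dcl(\emptyset)$ pointwise; the remainder is a routine computation in the binding groups using the functoriality of $\chi_{j,i}$ together with the already-established coherence of $\pi_{j,i}$, $\tau_{j,i}$, $\alpha^0_r$, and $\alpha_r$.
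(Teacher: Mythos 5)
Your proof is correct and follows essentially the same route as the paper: you reduce, via functoriality and the definition of $\overline{\chi_{j,i}}$ as the map induced on binding groups, to the key identity $\chi_{j,i}(f^j_{\{k,\ell\},s}) = f^i_{\{k,\ell\},s}$, and you prove that identity by commuting $\chi_{j,i}$ past the elementary transition map $f^{\{k,\ell\}}_s$ and invoking the arranged coherence $\chi_{j,i}\circ\alpha_j=\alpha_i$. The only cosmetic differences are that the paper carries out the commutation through formula (6) of Lemma~\ref{chi} together with the definability of $\pi_{j,i}$, $\langle\cdot\rangle_j$ and $\left[\cdot\right]_i$, rather than directly through the $\Aut(\C)$-invariance of $\chi_{j,i}$, and that it conjugates the representative of $\epsilon_j(f)$ to the basepoint $a^j_1$ by a chosen $g\in\mor_{\cG_j}(a^j_1,f_0)$ instead of working in the vertex group at $f^j_{n_0,s}$ as you do.
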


\begin{proof}
For convenience, we assume that $\dom(f) = [2] = \{0,1,2\}$.  Also, in the proof of this lemma, we write ``$f^i_{k,\ell}$'' for ``$f^i_{\{k,\ell\}, [2]}$'' (as in Notation~\ref{faces}).

\begin{claim}
If $i \leq j$, then $\chi_{j,i}(f^j_{k,\ell}) = f^i_{k,\ell}$.
\end{claim}

\begin{proof}
The left-hand side is, by definition, equal to

$$\chi_{j,i} \left( f^{\{k,\ell\}}_{[2]} ( \alpha_j(f \upharpoonright \{k, \ell\}))  \right) = \left[ \pi_{j,i}\left( \langle f^{\{k,\ell\}}_{[2]} (\alpha_j(f \upharpoonright \{k, \ell\})) \rangle_j \right) \right]_i$$

(using (6) of Lemma~\ref{chi}).  But the map $f^{\{k,\ell\}}_{[2]}$ is elementary and the functions $\pi_{j,i}, \langle \cdot \rangle_j$, and $\left[ \cdot \right]_i$ are all definable, so this expression equals

$$f^{\{k,\ell\}}_{[2]} \left(\left[\pi_{j,i}(\langle \alpha_j(f \upharpoonright \{k,\ell\}) \rangle_j) \right]_i \right) = f^{\{k,\ell\}}_{[2]} \left(\chi_{j,i}(\alpha_j(f \upharpoonright \{k,\ell\})) \right)$$ $$=f^{\{k,\ell\}}_{[2]}\left(\alpha_i(f \upharpoonright \{k,\ell\}) \right),$$

by our choice of the $\alpha_i$ functions such that $\chi_{j,i} \circ \alpha_j = \alpha_i$.  But this last expression equals the right-hand side in the Claim.
\end{proof}

To prove the lemma, first pick some (any) morphism $g \in \mor_{\cG_j}(a^j_1, f_0)$, and note that $\epsilon_j(f)$ is an element of the group $G_j$ which is represented by the following morphism in $\mor_{\cG_j}(a^j_1, a^j_1)$:

$$g^{-1} \circ (f^j_{0,2})^{-1} \circ f^j_{1,2} \circ f^j_{0,1} \circ g .$$

So $\ov{\chi_{j,i}}(\epsilon_j(f))$ is represented by the morphism

$$\chi_{j,i} \left(g^{-1} \circ (f^j_{0,2})^{-1} \circ f^j_{1,2} \circ f^j_{0,1} \circ g \right)$$ $$= \chi_{j,i}(g)^{-1} \circ \chi_{j,i}(f^j_{0,2})^{-1} \circ \chi_{j,i}(f^j_{1,2}) \circ \chi_{j,i}(f^j_{0,1}) \circ \chi_{j,i}(g),$$ which, by the Claim above, equals $$\chi_{j,i}(g)^{-1} \circ (f^i_{0,2})^{-1} \circ f^i_{1,2} \circ f^i_{0,1} \circ \chi_{j,i}(g),$$ which, by definition, is a representative of $\epsilon_i(f)$.
\end{proof}

Let $G$ be the limit of the inverse system of groups $\langle G_i : i \in I \rangle$ with transition maps given by the $\ov{\chi_{j,i}} : G_j \rightarrow G_i$.  By Lemma~\ref{epsilon_chi}, the maps $\widetilde{\epsilon}_i$ induce a group homomorphism $\epsilon: H_2(p) \rightarrow G$.

\begin{lemma}
\label{injectivity}
The map $\epsilon: H_2(p) \rightarrow G$ is injective.  In other words, if $c \in Z_2(p)$ and $\epsilon_i(c) = 0$ for every $i \in I$, then $c \in B_2(p)$.
\end{lemma}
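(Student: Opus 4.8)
The plan is to reduce to a pocket and then construct an explicit isomorphism of simplices. Since $T$ is stable it has $3$-CA, so by Corollary~\ref{Hn_pockets} we may write $[c]=[f-g]$ for some $2$-pocket $f-g$ of type $p$ with support $[2]$. Then $c-(f-g)\in B_2(p)$, and since $\epsilon_i$ vanishes on boundaries (Lemma~\ref{epsilon_boundaries}) we get $\epsilon_i(f)=\epsilon_i(g)$ for every $i\in I$. As $c=(f-g)+(c-(f-g))$, it suffices to prove $f-g\in B_2(p)$. Because $f-g$ is a pocket we have $\partial f=\partial g$, so by Lemma~\ref{trivial_pocket} it is enough to show that $f$ and $g$ are isomorphic functors; and since the two simplices of a pocket already agree on $\P([2])\setminus\{[2]\}$, this comes down to producing an elementary bijection $h:f([2])\to g([2])$ that fixes $\acl(\emptyset)$ pointwise and satisfies $h\circ f^u_{[2]}=g^u_{[2]}$ for every edge $u\subsetneq[2]$ with $|u|=2$ (the conditions for $|u|\le 1$ then follow by composition).

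Write $\ov{a_k}=f(\{k\})=g(\{k\})$ for $k\in[2]$ and $E_{k\ell}=f(\{k,\ell\})=g(\{k,\ell\})$; thus $\{a_0,a_1,a_2\}$ is an independent set of realizations of $p$, which after a harmless normalization we take to be the Morley sequence underlying the system $\langle W_i:i\in I\rangle$, and $f([2])=\acl\!\left(\bigcup_{k<\ell}f^{\{k,\ell\}}_{[2]}(E_{k\ell})\right)$, similarly for $g$. Because $f$ and $g$ agree on all faces of dimension $\le 1$, the intrinsic selection data of Notation~\ref{faces} coincide: for each $i$, $v^i_k:=\alpha^0_i(f\upharpoonright\P(\{k\}))=\alpha^0_i(g\upharpoonright\P(\{k\}))$ and $m^i_{k\ell}:=\alpha_i(f\upharpoonright\P(\{k,\ell\}))=\alpha_i(g\upharpoonright\P(\{k,\ell\}))\in\mor_{\cG_i}(v^i_k,v^i_\ell)$. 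By definition $\epsilon_i(f)$ is the class in $G_i$ of the triangle composite $\left(f^{\{0,2\}}_{[2]}(m^i_{02})\right)^{-1}\circ f^{\{1,2\}}_{[2]}(m^i_{12})\circ f^{\{0,1\}}_{[2]}(m^i_{01})$, a morphism in a vertex group of $\cG_i$ sitting inside $f([2])$, and $\epsilon_i(g)$ is the same composite with each $f^{\{k,\ell\}}_{[2]}$ replaced by $g^{\{k,\ell\}}_{[2]}$. One direction is immediate: if such an $h$ existed it would carry the $f$-composite to the $g$-composite while fixing $G_i\subseteq\acl(\emptyset)$ pointwise, whence $\epsilon_i(f)=\epsilon_i(g)$; so the $\epsilon_i$ are exactly the right invariants, and we must prove the converse.

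The idea is to build $h$ as a limit of partial elementary maps indexed by the directed poset $\langle I,\le\rangle$. For each $i$ the groupoid $\cG_i$ is type-definable over $\acl(\emptyset)$, and the part of $f([2])$ that it ``sees'' — the images under the $f^{\{k,\ell\}}_{[2]}$ of the $\cG_i$-witness elements lying in the edges $E_{k\ell}$, together with the images of $v^i_0,v^i_1,v^i_2$ — is, by the construction of $\cG_i$ and Proposition~\ref{corrected2}, determined over the common $1$-skeleton up to the action of the binding group $G_i$. The hypothesis $\epsilon_i(f)=\epsilon_i(g)$ says that $f$ and $g$ place this data in the same $G_i$-coset, so there is a partial elementary map $h_i$ fixing $\acl(\emptyset)$, satisfying $h_i\circ f^u_{[2]}=g^u_{[2]}$ on its domain, and matching the $\cG_i$-part of $f([2])$ with that of $g([2])$. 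The coherence relation $\overline{\chi_{j,i}}\circ\epsilon_j=\epsilon_i$ of Lemma~\ref{epsilon_chi}, together with the compatibility of the $\chi_{j,i}$, $\pi_{j,i}$, $\tau_{j,i}$ from Lemmas~\ref{chi} and~\ref{commuting_pi}, allows the $h_i$ to be chosen increasingly, $h_i\subseteq h_j$ for $i\le j$. Letting $h$ be the algebraic-closure completion of $\bigcup_i h_i$, the maximality conditions of Claim~\ref{witness system} — $\widetilde{a_{\{0,1\}}}=\dcl(\bigcup_i f^i_{01})$ and $\ov{a_0}=\dcl(\bigcup_i a^i_0)$, together with their images under the symmetry of the triangle — guarantee that the $\cG_i$-data, as $i$ ranges over $I$, recovers all of the $1$-skeleton and all of $f([2])$ over it, so $h$ is a total elementary bijection $f([2])\to g([2])$ of the required form. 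Then $f\cong g$, hence $f-g\in B_2(p)$ and $c\in B_2(p)$.

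The main obstacle is this last paragraph: identifying precisely the portion of the top face $f([2])$ that is coordinatized by $\cG_i$, showing via $\psi_i$ and Proposition~\ref{corrected2} that $\epsilon_i(f)=\epsilon_i(g)$ is exactly the obstruction to matching it with the corresponding portion of $g([2])$, and — most delicately — using the maximality conditions to see that these portions exhaust all of $f([2])$ relative to the shared $1$-skeleton, so that the limit map $h$ is total rather than defined only on a ``groupoid part'' of the top face.
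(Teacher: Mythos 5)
Your reduction is sound and in fact runs parallel to the paper's own argument: the paper reduces (via $3$-CA) to showing every $2$-shell with trivial $\epsilon_i$-invariants bounds a $3$-simplex, while you reduce via Corollary~\ref{Hn_pockets} to a $2$-pocket $f-g$ and aim to apply Lemma~\ref{trivial_pocket}; these two reductions are interchangeable by Lemma~\ref{shells and pockets}, and your insistence that the isomorphism $h$ be the identity on the common proper faces (i.e.\ $h\circ f^u_{[2]}=g^u_{[2]}$) is the correct form in which to invoke Lemma~\ref{trivial_pocket}. The problem is that your proof stops exactly where the mathematical content begins. After the reduction, what must be shown is that the map \emph{forced} by the boundary conditions -- the canonical identification of the three edge images $f^{\{k,\ell\}}_{[2]}(E_{k\ell})$ with $g^{\{k,\ell\}}_{[2]}(E_{k\ell})$ -- is an elementary map fixing $\acl(\emptyset)$; extending it to $f([2])=\acl(\cdot)$ is then routine. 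You explicitly defer this as ``the main obstacle'' and only describe a plan. In the paper this step is the heart of the proof: Claim~\ref{isolation1} matches the groupoid data of two of the edges by stationarity and uses the hypothesis $\epsilon_i(c)=0$ to compute that the third groupoid element matches as well, and then Claim~\ref{isolation2}, via its Subclaim, shows that $\tp(f_{xy,\widehat{e}}/f_{yz,\widehat{e}}f_{xz,\widehat{e}})$ is isolated by $\tp(f_{xy,\widehat{e}}/\widetilde{f}_{yz,\widehat{e}}\widetilde{f}_{xz,\widehat{e}})$, a finite-orbit/name argument that uses the maximality of the witness system together with the fact that $f^i_{xy,\widehat{e}}$ is interdefinable with a morphism set of $\cG_i$ obtainable by composition from data definable over the other two edges. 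None of this is carried out in your proposal, so the proof is incomplete.

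Moreover, the route you indicate for closing the gap is not correct as stated. The maximality conditions of Claim~\ref{witness system} give only $\widetilde{a_{\{0,1\}}}=\dcl\bigl(\bigcup_i f^i_{01}\bigr)$ and $\ov{a_0}=\dcl\bigl(\bigcup_i a^i_0\bigr)$: the groupoid data coordinatizes $\widetilde{a_{01}}$, which is in general a \emph{proper} subset of the edge $\ov{a_{01}}$, and it certainly does not generate ``all of $f([2])$'' over the $1$-skeleton -- $f([2])$ is the algebraic closure of the edge images and contains much that lies in no definable closure of groupoid data. Consequently your limit map $\bigcup_i h_i$ is, as constructed, only known to be elementary on the groupoid parts, and nothing in the proposal shows that the forced identification of the full edge images is elementary. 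The correct mechanism (and the one the paper uses) is not ``the groupoid data exhausts the face'' but the isolation statement above: relative to the other two edge images, the type of an edge image is controlled by its groupoid part. Without proving that (or an equivalent), the appeal to Lemma~\ref{trivial_pocket} cannot be completed, so there is a genuine gap at precisely the step that constitutes the paper's proof.
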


\begin{proof}
Since $Z_2(p)$ is generated over $B_2(p)$ by all the $2$-shells, it is enough to prove this in the case where $c$ is a $2$-shell of the form $f_{\widehat{0}} - f_{\widehat{1}} + f_{\widehat{2}} - f_{\widehat{3}}$, where $f_{\widehat{a}}$ is a $2$-simplex with domain $\P([3] \setminus \{a\})$.  We will construct a $3$-simplex $g: \P([3]) \rightarrow \C$ such that $\bd (g) = c$.

Pick some $a_3 \models p | (a_0, a_1, a_2)$, so that $(a_0, a_1, a_2, a_3) \models p^{(4)}$.  We will construct $g$ so that $g([3]) = \ov{a_{[3]}}$.  If $(b,c,d,e)$ is some permutation of $(0,1,2,3)$, then $f_{b,c,d}(\{b,c\}) = f_{b,c,e}(\{b,c\})$ (since $\bd c = 0$), and we can assume that $f_{b,c,d}(\{b,c\}) = \ov{a_{b,c}} = f_{b,c,e}(\{b,c\})$.

As a first step in defining the simplex $g$, for any $\{b,c\} \subseteq \{0,1,2,3\}$, we let $g \upharpoonright \{b,c\} = f \upharpoonright \{b,c,d\}$ (where $d$ is any other element of $[3]$), and we let the maps $g^{b}_{[3]} : \ov{a_{b}} \rightarrow \ov{a_{[3]}}$ be the inclusion maps.  We take the transition map $g^{b}_{[3]}$ (for $b \in [3]$) to be the identity map from $\ov{a_b}$ to itself.

Next we will define the transition maps $g^{b,c}_{[3]} : \ov{a_{b,c}} \rightarrow \ov{a_{[3]}}$ in such a way as to ensure compatibility with the faces $f_{\widehat{b}}$.  Before doing this, we set some notation.  First, we write ``$f^i_{xy, \widehat{z}}$'' for the set $(f_{\widehat{z}})^i_{\{x,y\}, [3] \setminus \{z\}}$ as in Notation~\ref{faces}.  Similarly, we write $$f_{bc, \widehat{d}} := (f_{\widehat{d}})^{b,c}_{[3] \setminus \{d\}} (\ov{a_{b,c}}).$$  We consider the sets $\ov{a_{b,c}}$ to be $1$-simplices in which all of the transition maps are inclusions and the ``vertices'' are $\ov{a_b}$ and $\ov{a_c}$.  This allows us to write``$\alpha_i(\ov{a_{b,c}})$.''  For $i \in I$ and $\{b,c\} \subseteq [3]$, let $e^i_{bc}$ be the ``edge'' $\alpha_i(\ov{a_{b,c}})$.

We define the maps $g^{03}_{[3]}$, $g^{13}_{[3]}$, and $g^{23}_{[3]}$ to be the identity maps.  Then we define the other three edge transition maps $g^{01}_{[3]}$, $g^{12}_{[3]}$, and $g^{02}_{[3]}$ so that for every $i \in I$,

\begin{equation}\label{123} g^{13}_{[3]}(e^i_{13}) \, g^{23}_{[3]}(e^i_{23}) \, g^{12}_{[3]}(e^i_{12}) \equiv_{\acl(\emptyset)} f^i_{13, \widehat{0}} \, f^i_{23, \widehat{0}} \, f^i_{12, \widehat{0}},\end{equation}

\begin{equation}\label{023}g^{03}_{[3]}(e^i_{03}) \, g^{23}_{[3]}(e^i_{23}) \, g^{02}_{[3]}(e^i_{02}) \equiv_{\acl(\emptyset)} f^i_{03, \widehat{1}} \, f^i_{23, \widehat{1}} \, f^i_{02, \widehat{1}},\end{equation}

and

\begin{equation}\label{013} g^{03}_{[3]}(e^i_{03}) \, g^{13}_{[3]}(e^i_{13}) \, g^{01}_{[3]}(e^i_{01}) \equiv_{\acl(\emptyset)} f^i_{03, \widehat{2}} \, f^i_{13, \widehat{2}} \, f^i_{01, \widehat{2}}. \end{equation}

Having specified values according to the three equations above, we let $g^{01}_{[3]}$, $g^{12}_{[3]}$, and $g^{02}_{[3]}$ be \emph{any} elementary extensions to the respective domains $\ov{a_{03}}$, $\ov{a_{13}}$, and $\ov{a_{23}}$.

\begin{claim}
\label{isolation1}
For any $i \in I$,
\begin{equation}\label{012}g^{02}_{[3]}(e^i_{02}) \, g^{12}_{[3]}(e^i_{12}) \, g^{01}_{[3]}(e^i_{01}) \equiv f^i_{02, \widehat{3}} \, f^i_{12, \widehat{3}} \, f^i_{01, \widehat{3}}.\end{equation}
\end{claim}

\begin{proof}
Note that by stationarity, $$g^{02}_{[3]}(e^i_{02}) \, g^{12}_{[3]}(e^i_{12}) \equiv f^i_{02, \widehat{3}} \, f^i_{12, \widehat{3}} ,$$ and to check the Claim, it suffices to show that

$$\left[ g^{02}_{[3]}(e^i_{02})^{-1} \circ g^{12}_{[3]}(e^i_{12}) \circ g^{01}_{[3]}(e^i_{01}) \right]_{G_i} = \left[(f^i_{02, \widehat{3}})^{-1} \circ f^i_{12, \widehat{3}} \circ f^i_{01, \widehat{3}}\right]_{G_i}.$$

The right-hand side equals $\epsilon_i(f_{\widehat{3}})$.  Since $\epsilon_i(c) = 0$, $$\epsilon_i(f_{\widehat{3}}) = \epsilon_i(f_{\widehat{0}}) - \epsilon_i(f_{\widehat{1}}) + \epsilon_i(f_{\widehat{2}}).$$  Let ``$g_{bc}$'' be an abbreviation for $g^{bc}_{[3]}(e^i_{bc})$.  By applying equations~\ref{123}, \ref{023}, and \ref{013} above (and performing a very similar calculation as in the proof of Lemma~\ref{epsilon_boundaries}), we get:

$$\epsilon_i(f_{\widehat{3}}) = \left[g_{13}^{-1} \circ g_{23} \circ g_{12} \right]_{G_i} - \left[g^{-1}_{03} \circ g_{23} \circ g_{02} \right]_{G_i} + \left[g^{-1}_{03} \circ g_{13} \circ g_{01}\right]_{G_i}$$

$$= \left[g^{-1}_{01} \circ g^{-1}_{13} \circ g_{23} \circ g_{12} \circ g_{01}\right]_{G_i} - \left[g^{-1}_{03} \circ g_{23} \circ g_{02} \right]_{G_i} + \left[g^{-1}_{03} \circ g_{13} \circ g_{01}\right]_{G_i}$$

$$= -\left[g^{-1}_{03} \circ g_{23} \circ g_{02} \right]_{G_i} + \left[g^{-1}_{03} \circ g_{13} \circ g_{01}\right]_{G_i} +  \left[g^{-1}_{01} \circ g^{-1}_{13} \circ g_{23} \circ g_{12} \circ g_{01}\right]_{G_i}$$

$$= \left[g^{-1}_{02} \circ g^{-1}_{23} \circ g_{03} \circ g^{-1}_{03} \circ g_{13} \circ g_{01} \circ g^{-1}_{01} \circ g^{-1}_{13} \circ g_{23} \circ g_{12} \circ g_{01} \right]_{G_i}$$

$$= \left[g^{-1}_{02} \circ g_{12} \circ g_{01}\right]_{G_i},$$

as desired.
\end{proof}

Now we must check that this coheres with the types of the given simplices $f_{\widehat{b}}$:

\begin{claim}
\label{isolation2}
If $(b,c,d,e)$ is a permutation of $[3]$ with $0 \leq b < c < d \leq 3$, then $$g^{bd}_{[3]}(\ov{a_{bd}}) \, g^{cd}_{[3]}(\ov{a_{cd}}) \, g^{bc}_{[3]}(\ov{a_{bc}}) \equiv f_{bd, \widehat{e}} \, f_{cd, \widehat{e}} \, f_{bd, \widehat{e}}.$$
\end{claim}

\begin{proof}
Let $$\widetilde{f}_{xy, \widehat{e}} := \bigcup_{i \in I} f^i_{xy, \widehat{e}}.$$  Then Claim~\ref{isolation2} follows from Claim~\ref{isolation1} above together with:

\begin{subclaim}
If $(x,y,z)$ is any permutation of $(b,c,d)$ with $x < y$, then $\tp(f_{xy, \widehat{e}} / f_{yz,\widehat{e}} f_{xz, \widehat{e}})$ is isolated by $\tp(f_{xy, \widehat{e}} / \widetilde{f}_{yz,\widehat{e}} \widetilde{f}_{xz,\widehat{e}})$.
\end{subclaim}

\begin{proof}
Note that $f_{xy,\widehat{e}} \subseteq \acl(f_{yz,\widehat{e}}, f_{xz,\widehat{e}})$ (in fact, it is in the algebraic closure of the ``vertices'' $f_{x,\widehat{e}} \subseteq f_{xz,\widehat{e}}$ and $f_{y,\widehat{e}} \subseteq f_{yz,\widehat{e}}$).  Suppose towards a contradiction that $h \in f_{xy,\widehat{e}}$ but $$\tp(h / \widetilde{f}_{yz,\widehat{e}} \widetilde{f}_{xz,\widehat{e}}) \nvdash \tp(h / f_{yz,\widehat{e}} f_{xz,\widehat{e}}).$$  This means that the orbit of $h$ under $\Aut(\C / f_{yz,\widehat{e}} f_{xz,\widehat{e}})$ is smaller than the orbit of $h$ under $\Aut(\C / \widetilde{f}_{yz,\widehat{e}} \widetilde{f}_{xz,\widehat{e}})$.  Let $\widehat{h}$ be a name for the orbit of $h$ under $\Aut(\C / f_{yz,\widehat{e}} f_{xz,\widehat{e}})$ as a set.  Then $$\widehat{h} \in \dcl(f_{yz,\widehat{e}}, f_{xz,\widehat{e}}) \setminus \dcl(\widetilde{f}_{yz,\widehat{e}}, \widetilde{f}_{xz,\widehat{e}}).$$  Since $\widehat{h} \in \dcl(f_{yz,\widehat{e}} f_{xz,\widehat{e}})$, it lies in $f^i_{xy, \widehat{e}}$ for some $i \in I$ (this is by the maximality condition on our symmetric witnesses $\langle W_i : i \in I \rangle$).  Also, $$f^i_{xy, \widehat{e}} \subseteq \dcl(f^i_{yz, \widehat{e}}, f^i_{xz, \widehat{e}})$$ due to the fact that $f^i_{xy, \widehat{e}}$ is interdefinable with the set of all morphisms in $\mor_{\cG_i}(f_{x,\widehat{e}}, f_{y, \widehat{e}})$, which can be obtained via composition in $\cG_i$ from the corresponding morphisms in $\dcl(f^i_{yz, \widehat{e}})$ and $\dcl(f^i_{xz, \widehat{e}})$.  But this contradicts the fact that $\widehat{h} \notin \dcl(\widetilde{f}_{yz,\widehat{e}} \widetilde{f}_{xz,\widehat{e}}).$
\end{proof}

\end{proof}

Claim~\ref{isolation2} implies that for each permutation $(b,c,d,e)$ of $[3]$, we can find an elementary map $g^{b,d,c}_{[3]}$ from the ``face'' $f_{\widehat{e}}([3] \setminus \{e\})$ onto $\ov{a_{b,c,d}}$ which is coherent with the maps $g^{b,c}_{[3]}$, $g^{c,d}_{[3]}$, and $g^{b,d}_{[3]}$ that we have already defined, and such that $\bd^i g = f_{\widehat{i}}$.  This completes the proof of Lemma~\ref{injectivity}.

\end{proof}

\begin{lemma}
\label{surjectivity}
The map $\epsilon: H_2(p) \rightarrow G$ is surjective.
\end{lemma}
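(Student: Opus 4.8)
The plan is to show that every $g = (g_i)_{i \in I} \in G = \varprojlim G_i$ lies in the image of $\epsilon$ by exhibiting a $2$-cycle $c \in Z_2(p)$ with $\epsilon_i(c) = g_i$ for all $i$; then $\epsilon([c]) = g$. The idea is to start from a $2$-shell that is already a boundary and to ``twist'' a single face of it by the prescribed data. Fix $(a_0, a_1, a_2, a_3) \models p^{(4)}$ and, using Claim~\ref{adding_vertex}, choose a $3$-simplex $w : \P([3]) \to \C$ with $w([3]) = \ov{a_{[3]}}$; write $f_{\widehat a} := \partial^a w$, so that $c_0 := \bd w = f_{\widehat 0} - f_{\widehat 1} + f_{\widehat 2} - f_{\widehat 3}$ is a $2$-shell and, by Lemma~\ref{epsilon_boundaries}, $\epsilon_i(c_0) = 0$ for every $i \in I$.

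The construction is then to replace the face $f_{\widehat 3}$ (whose domain is $\P(\{0,1,2\})$ and whose $1$-faces are $\ov{a_{01}}, \ov{a_{12}}, \ov{a_{02}}$) by a closed independent $2$-simplex $f'_{\widehat 3}$ having the \emph{same} three $1$-faces but with $\epsilon_i(f'_{\widehat 3}) = \epsilon_i(f_{\widehat 3}) - g_i$ for every $i$. Keeping all $1$-faces fixed guarantees that $c := f_{\widehat 0} - f_{\widehat 1} + f_{\widehat 2} - f'_{\widehat 3}$ is again a $2$-shell (the shell identities of $c_0$ involving faces of $f_{\widehat 3}$ are unaffected), hence a cycle; and by linearity of $\epsilon_i$ together with $\epsilon_i(c_0) = 0$ one gets $\epsilon_i(c) = \epsilon_i(f_{\widehat 3}) - \epsilon_i(f'_{\widehat 3}) = g_i$, which finishes the argument once $f'_{\widehat 3}$ is built.

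Constructing $f'_{\widehat 3}$ is the heart of the proof. Keeping the edge $\ov{a_{02}}$ fixed but changing its embedding into the top face, one wants, in the notation of Notation~\ref{faces} and in the language of the groupoid $\cG_i$, the morphism $(f'_{\widehat 3})^i_{\{0,2\},[2]}$ to equal $v^i \circ (f_{\widehat 3})^i_{\{0,2\},[2]}$, where $v^i$ is a morphism in $\mor_{\cG_i}((f_{\widehat 3})^i_{2,[2]}, (f_{\widehat 3})^i_{2,[2]})$ representing $g_i \in G_i$; since $G_i$ is abelian and composition by a morphism acts trivially on the binding group, a short computation with the formula defining $\epsilon_i$ (using Lemma~\ref{epsilon_coherence}) then yields exactly $\epsilon_i(f'_{\widehat 3}) = \epsilon_i(f_{\widehat 3}) - g_i$. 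For a fixed $i$ such a repositioning of $\ov{a_{02}}$ exists because $\Aut(SW(a^i_0, a^i_2)/\ov{a^i_0}, \ov{a^i_2}) \cong G_i$ by Proposition~\ref{corrected2}. The real work is to carry this out \emph{simultaneously} over all $i \in I$. Here one uses that $(g_i)_{i \in I}$ is coherent in $\varprojlim G_i$ and that the transition maps $\chi_{j,i}$ are functors compatible with $\pi_{j,i}$ and with the binding-group identifications (Lemma~\ref{chi}, and Lemma~\ref{epsilon_chi} recording the compatibility of the $\epsilon$'s with the $\chi$'s): this makes the prescribed family of twists consistent with every $\dcl$-relation among the sets $(f_{\widehat 3})^i_{\{0,2\},[2]}$, so that, by saturation and the isolation/stationarity facts (as in the Subclaim inside Claim~\ref{isolation2}, together with the maximality conditions of Claim~\ref{witness system}), a genuine closed independent $2$-simplex $f'_{\widehat 3}$ realizing all of them at once exists. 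I expect the main obstacle to be precisely this consistency check — that any finitely many of the required twists can be realized at once, which one checks by passing to a common upper bound $j \in I$ and twisting at level $j$ by $g_j$ — since it amounts to unwinding the definitions of $\chi_{j,i}$, $\pi_{j,i}$, and $\epsilon_i$ in the direction opposite to that of the injectivity argument (Lemma~\ref{injectivity}).
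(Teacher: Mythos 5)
Your proposal is correct and takes essentially the same route as the paper: the paper also realizes a given $g=(g_i)_{i\in I}$ by a cycle that differs from a trivial one only in the $\ov{a_{02}}\to\ov{a_{[2]}}$ transition map, twisted by a coherent family of binding-group elements, and its key Claim is precisely your simultaneous-realization step (coherence $\chi_{j,i}(h_j)=h_i$ together with a type equality over $\ov{a_0},\ov{a_2}$, proved by passing up the directed system and using definability of the $\pi_{j,i}$ and stationarity). The only cosmetic difference is packaging: the paper exhibits the cycle as a $2$-pocket $f-h$ with support $[2]$, whereas you obtain a $2$-shell from $\bd w$ by replacing one face, using Lemma~\ref{epsilon_boundaries} to kill the untouched part.
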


\begin{proof}
Suppose that $g$ is any element in $G$, and that $g$ is represented by a sequence $\langle g_i : i \in I \rangle$ such that $\ov{\chi_{j,i}}(g_j) = g_i$ whenever $i \leq j$.  We will construct a $2$-chain $c = f-h$ such that $\epsilon_i(f-h) = g_i$ for every $i \in I$, which will establish the Lemma.  Let $f : \P([2]) \rightarrow \C$ be the $2$-simplex such that $f(s) = \ov{a_s}$ for every $s \subseteq [2]$ and such that every transition map in $f$ is an inclusion map.  Let $k_i = \epsilon_i(f)$.

We want to construct $h: \P([2]) \rightarrow \C$ such that $h([2]) = \ov{a_{[2]}}$, $\bd(h) = \bd(f)$, and $h^s_{[2]}$ is the identity map whenever $s \subseteq \{0,1\}$ or $s \subseteq \{1,2\}$.  The only thing left is to specify an elementary map $h^{02}_{[2]}: \ov{a_{02}} \rightarrow \ov{a_{02}}$ fixing $\ov{a_0}$ and $\ov{a_2}$ pointwise.

\begin{claim}
Suppose that $h_i \in \mor_{\cG_i}(a^i_0, a^i_2)$ is the unique element such that $\left[ h_i^{-1} \circ h^i_{12} \circ h^i_{01} \right]_{G_i} = k_i - g_i$.  Then

\begin{enumerate}
\item whenever $i \leq j$,  $\chi_{j,i}(h_j) = h_i$, and
\item $\tp(h_0, \ldots, h_i / \ov{a_0}, \ov{a_2}) = \tp(\alpha_0(\ov{a_{02}}), \ldots, \alpha_i(\ov{a_{02}}) / \ov{a_0}, \ov{a_2}).$
\end{enumerate}
\end{claim}

\begin{proof}

First we show:

\begin{subclaim}
$\chi_{i+1,i}(h^{i+1}_{12}) = h^i_{12}$ and $\chi_{i+1,i}(h^{i+1}_{01}) = h^i_{01}$.
\end{subclaim}

\begin{proof}
We check only the first equation (and the second equation has an identical proof).  By (6) of Lemma~\ref{chi},

$$\chi_{i+1,i}(h^{i+1}_{12}) = \left[ \pi_{i+1,i}(\langle h^{i+1}_{12} \rangle_{i+1}) \right]_i = \left[ \pi_{i+1,i} (\langle h^{12}_{[2]}(\alpha_{i+1}(\ov{a_{12}})) \rangle_{i+1}) \right]_i$$

$$= h^{12}_{[2]} \left( \left[ \pi_{i+1,i}(\langle\alpha_{i+1}(\ov{a_{12}}) \rangle_{i+1}) \right]_i \right) =h^{12}_{[2]}(\chi_{i+1,i}(\alpha_{i+1}(\ov{a_{12}})))$$

$$= h^{12}_{[2]}(\alpha_i(\ov{a_{12}})) = h^{12}_i.$$
\end{proof}

Note that it is enough to prove (1) of the Claim for every pair $(i,j)$ where $j = i+1$.  We apply $\ov{\chi_{i+1,i}}$ to both sides of the equation $\left[ h_{i+1}^{-1} \circ h^{i+1}_{12} \circ h^{i+1}_{01} \right]_{G_{i+1}} = k_{i+1} - g_{i+1}$.  On the right-hand side, this yields

\begin{equation}\label{rhs} \ov{\chi_{i+1,i}}(k_{i+1} - g_{i+1}) = k_i - g_i. \end{equation}

On the left-hand side, using the Subclaim, we get

\begin{equation}\label{lhs}\left[ \chi_{i+1,i} \left(h^{-1}_{i+1} \circ h^{i+1}_{12} \circ h^{i+1}_{01} \right)\right]_{\cG_i} = \left[\chi_{i+1,i}(h_{i+1})^{-1} \circ h^i_{12} \circ h^i_{01}\right]_{\cG_i}. \end{equation}

So putting together Equations~\ref{rhs} and \ref{lhs}, we get that $$\left[\chi_{i+1,i}(h_{i+1})^{-1} \circ h^i_{12} \circ h^i_{01}\right]_{\cG_i} = k_i - g_i.$$  But $h_i$ is the unique morphism in $\cG_i$ such that $\left[ h_i^{-1} \circ h^i_{12} \circ h^i_{01} \right]_{\cG_i} = k_i - g_i$, so part (1) of the Claim follows.

We prove part (2) by induction on $i \in I$.  The base case follows from $$\tp(\langle h_0 \rangle_0 / \ov{a_0}, \ov{a_2}) = \tp(\langle \alpha_0(\ov{a_{02}}) \rangle_0 / \ov{a_0}, \ov{a_2})$$ (which is true simply because both elements belong to $SW(a^i_0, a^i_2)$).  If (2) is true for $i$, then to prove it for $i+1$, it is enough to check that \begin{equation}\label{induction2}\tp(\langle h_{i+1} \rangle_{i+1}, \langle h_i \rangle_i / \ov{a_0}, \ov{a_2}) = \tp(\langle \alpha_{i+1}(\ov{a_{02}})\rangle_{i+1}, \langle \alpha_i(\ov{a_{02}})\rangle_i / \ov{a_0}, \ov{a_2}),\end{equation} since all the other elements are in the definable closure of $h_i$ and $\alpha_i(\ov{a_{01}})$ via the maps $\chi_{k,\ell}$.  To see this, first note that $$\tp(\langle h_{i+1} \rangle_{i+1} / \ov{a_0}, \ov{a_2}) = \tp(\langle \alpha_{i+1}(\ov{a_{02}})\rangle_{i+1} / \ov{a_0}, \ov{a_2})$$ just because both elements belong to $SW(a^{i+1}_0, a^{i+1}_2)$.  By part~(1) of the Claim, $\pi_{i+1,i}(\langle h_{i+1} \rangle_{i+1}) = \langle h_i \rangle_i$, and by the way we chose the $\alpha$ functions, $\pi_{i+1,i}(\langle \alpha_{i+1}(\ov{a_{02}}) \rangle_{i+1}) = \langle \alpha_i(\ov{a_{02}}) \rangle_i$.  Since the function $\pi_{i+1,i}$ is definable, Equation~\ref{induction2} follows.
\end{proof}

Given the elements $h_i$ as in the Claim above, we let $h^{02}_{[2]}: \ov{a_{02}} \rightarrow \ov{a_{02}}$ be any elementary map that fixes $\ov{a_{0}} \cup \ov{a_{2}}$ pointwise and maps each element $\alpha_i(\ov{a_{02}})$ to $h_i$.  Then $\epsilon_i(f-h) = k_i - (k_i - g_i) = g_i$, as desired..

\end{proof}

By Lemmas~\ref{injectivity} and \ref{surjectivity}, $H_2(p) \cong G$.  To finish the proof of Theorem~\ref{hurewicz}, we just need to show:

\begin{lemma}
$G \cong \Aut(\widetilde{a_0 a_1} / \ov{a_0}, \ov{a_1})$
\end{lemma}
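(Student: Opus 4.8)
The plan is to pass everything through inverse limits. First I would note that, since each functor $\chi_{j,i}$ maps $a^j_1$ to $\tau_{j,i}(a^j_1)=a^i_1$, under the identification of $G_i$ with the vertex group $\mor_{\cG_i}(a^i_1,a^i_1)$ the transition map $\ov{\chi_{j,i}}\colon G_j\to G_i$ is just $\chi_{j,i}$ restricted to that vertex group. Hence the commuting squares of Lemma~\ref{chi}(8) together with the isomorphisms $\psi_i$ of Proposition~\ref{corrected2} say precisely that $\langle\psi_i:i\in I\rangle$ is an isomorphism of inverse systems from $\langle G_i,\ov{\chi_{j,i}}\rangle$ onto $\langle\Aut(SW_i/\ov{a_0},\ov{a_1}),\rho_{j,i}\rangle$. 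Taking limits, $G\cong\varprojlim_{i\in I}\Aut(SW_i/\ov{a_0},\ov{a_1})$, and it then remains to identify this limit with $\Aut(\widetilde{a_0a_1}/\ov{a_0},\ov{a_1})=\Gamma_2(p)$.

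The crucial preparatory observation concerns the finite sets $SW_i$. From the way the directed system was built (Claim~\ref{witness system}, via Proposition~\ref{full symm}), every $i\in I$ lies above some element of $I_0$, and Proposition~\ref{full symm} gives $a_0\in\dcl(a^i_0)$ and $a_1\in\dcl(a^i_1)$; since also $a^i_0\in\ov{a_0}$ and $a^i_1\in\ov{a_1}$, this forces $\acl(a^i_0)=\ov{a_0}$ and $\acl(a^i_1)=\ov{a_1}$. Clause~(5) of the definition of a full symmetric witness then yields $\tp(f^i_{01}/a^i_0a^i_1)\vdash\tp(f^i_{01}/\ov{a_0},\ov{a_1})$, so $SW_i=\{f':f'\equiv_{\ov{a_0},\ov{a_1}}f^i_{01}\}$; in particular each $SW_i$ is setwise invariant under $\Aut(\C/\ov{a_0},\ov{a_1})$. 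A short argument using stationarity (the set does not depend on the auxiliary realization $a$ of $p$) shows $\widetilde{a_0a_1}$ is also setwise invariant under $\Aut(\C/\ov{a_0},\ov{a_1})$; combined with $f^i_{01}\in\widetilde{a_0a_1}$ this gives $SW_i\subseteq\widetilde{a_0a_1}$, and then the maximality condition $\widetilde{a_0a_1}=\dcl\big(\bigcup_i f^i_{01}\big)$, together with $\dcl$-closedness of $\widetilde{a_0a_1}$, upgrades to $\dcl\big(\bigcup_i SW_i\big)=\widetilde{a_0a_1}$. Recall also that $SW_i\subseteq\dcl(SW_j)$ for $i\le j$, so the $\rho_{j,i}$ are the obvious restriction maps.

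With this set-up I would define $\Phi\colon\Aut(\widetilde{a_0a_1}/\ov{a_0},\ov{a_1})\to\varprojlim_i\Aut(SW_i/\ov{a_0},\ov{a_1})$ by $\Phi(\sigma)=\langle\sigma\upharpoonright SW_i:i\in I\rangle$; this is a well-defined homomorphism (each $SW_i\subseteq\widetilde{a_0a_1}$ is invariant, and the $\rho_{j,i}$ are restrictions), and it is injective because $\sigma$ fixing $\bigcup_i SW_i$ pointwise forces it to fix $\dcl\big(\bigcup_i SW_i\big)=\widetilde{a_0a_1}$. The real work is surjectivity: given a coherent system $\langle\sigma_i:i\in I\rangle$, I would show the map $\mu$ on $\bigcup_i SW_i$ sending $x\in SW_i$ to $\sigma_i(x)$ is well defined and elementary over $\ov{a_0}\ov{a_1}$ — for any finite tuple of arguments pick $k\in I$ above all the relevant indices and an automorphism $\hat\sigma_k\in\Aut(\C/\ov{a_0},\ov{a_1})$ extending $\sigma_k$; then $\hat\sigma_k\upharpoonright SW_i=\rho_{k,i}(\sigma_k)=\sigma_i$ for each $i\le k$, which simultaneously yields well-definedness and elementarity of $\mu$ — and then extend $\mu$ to an automorphism of $\C$ by homogeneity, restricting it to $\widetilde{a_0a_1}$ to obtain the preimage. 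Concatenating the two isomorphisms gives $G\cong\Aut(\widetilde{a_0a_1}/\ov{a_0},\ov{a_1})$, which with Lemmas~\ref{injectivity} and~\ref{surjectivity} completes the proof of Theorem~\ref{hurewicz}. The main obstacle, apart from the surjectivity patching, is the bookkeeping needed to see $\acl(a^i_0)=\ov{a_0}$: this is exactly what makes each $SW_i$ a full $\Aut(\C/\ov{a_0},\ov{a_1})$-orbit (rather than only an orbit over the possibly-smaller $a^i_0a^i_1$), and it is essential for relating the $SW_i$ to $\widetilde{a_0a_1}$.
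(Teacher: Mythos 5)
Your proof is correct and takes essentially the same route as the paper's: identify $\Aut(\widetilde{a_0a_1}/\ov{a_0},\ov{a_1})$ with the inverse limit of the groups $\Aut(SW_i/\ov{a_0},\ov{a_1})$ along the restriction maps $\rho_{j,i}$, and then transfer to $G$ via the commuting squares of Lemma~\ref{chi}(8) and the isomorphisms $\ov{\psi_i}$. The only difference is that the paper asserts the limit identification in one sentence (citing the maximality condition), whereas you spell out the supporting details (that $\acl(a^i_0)=\ov{a_0}$, that each $SW_i$ is a full orbit over $\ov{a_0}\ov{a_1}$ contained in $\widetilde{a_0a_1}$, and the injectivity/surjectivity patching for the restriction map), all of which are consistent with the paper's setup.
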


\begin{proof}
Note that $\Aut(\widetilde{a_0 a_1} / \ov{a_0}, \ov{a_1})$ is the limit of the groups $\Aut(SW_i / \ov{a_0}, \ov{a_1})$ via the transition maps $\rho_{j,i}: \Aut(SW_j / \ov{a_0}, \ov{a_1}) \rightarrow \Aut(SW_j / \ov{a_0}, \ov{a_1})$, due to the maximality condition that every element of $\widetilde{a_0 a_1}$ lies in one of the symmetric witnesses $SW_i$.  Also, part (8) of Lemma~\ref{chi} implies that we have a commuting system \bigskip

$\begin{CD}
G_j @>\ov{\chi_{j,i}}>> G_i\\
@VV\ov{\psi_j}V @VV\ov{\psi_i}V\\
\Aut(SW_j / \ov{a_0}, \ov{a_1}) @>\rho_{j,i}>> \Aut(SW_i / \ov{a_0}, \ov{a_1})
\end{CD}$
\bigskip

But the maps $\ov{\psi_i}$ are all isomorphisms, so taking limits we get an isomorphism from $G$ to $\Aut(\widetilde{a_0 a_1} / \ov{a_0}, \ov{a_1})$.
\end{proof}

\section{Any profinite abelian group can occur as $H_2(p)$}

In this section, we construct a family of examples which prove the following:

\begin{theorem}
For any profinite abelian group $G$, there is a type $p$ in a stable theory $T$ such that $H_2(p) \cong G$.  In fact, we can build the theory $T$ to be totally categorical.
\end{theorem}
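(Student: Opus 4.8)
The plan is to leverage the Hurewicz-type theorem (Theorem~\ref{hurewicz}): since $H_2(p)\cong\Gamma_2(p)=\Aut(\widetilde{a_0a_1}/\ov{a_0},\ov{a_1})$ for a stationary type $p$ in a stable theory, it suffices to manufacture, for each profinite abelian $G$, a totally categorical theory $T$ and a stationary type $p$ for which $\widetilde{a_0a_1}$, viewed over $\ov{a_0}$ and $\ov{a_1}$, is interdefinable with a ``$G$-torsor'' in the appropriate sense, so that its automorphism group is exactly $G$. I would first do this for a finite abelian $G_0$ and then obtain the general profinite $G=\varprojlim_{i\in I}G_i$ by stacking the finite constructions into an inverse system of the very kind already analyzed in Section~4.

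\emph{The finite case.} Fix a finite abelian group $G_0$, which I include as a (named) sort so that $G_0\subseteq\acl(\emptyset)$. Consider the structure $M_{G_0}$ whose universe consists of an infinite ``object sort'' $P$ together with, for every ordered pair of distinct $a,b\in P$, a sort $E_{ab}$ that is a free and transitive $G_0$-set, equipped with ``composition'' maps $E_{bc}\times E_{ab}\to E_{ac}$ and ``inversion'' maps $E_{ab}\to E_{ba}$ satisfying the groupoid axioms and $G_0$-equivariance, with $E_{aa}$ a copy of $G_0$ acting on itself. In other words, $M_{G_0}$ is a connected groupoid $\cG$ with $\ob(\cG)=P$ and all vertex groups $\mor_{\cG}(a,a)\cong G_0$, added generically over the pure infinite set $P$. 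Since a connected groupoid over a fixed finite object set with a fixed finite vertex group is determined up to isomorphism, this structure is $\aleph_0$-categorical; each $E_{ab}$ is a finite cover of $P^2$, so $T_0:=\Th(M_{G_0})$ is $\omega$-stable, and one checks (no Vaughtian pairs, an envelope argument for finite covers of a strongly minimal set) that $T_0$ is totally categorical. Because $G_0$ acts definably, freely and transitively on each $E_{ab}$, there is no $\emptyset$-definable section $a\mapsto$ (a morphism $a_0\to a$), so $\cG$ is a non-retractable connected definable groupoid, and by the material recalled in Section~4 the type $p_0:=\tp(a/\emptyset)$ of a generic $a\in P$ fails $3$-uniqueness with associated binding group $G_0$. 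Now take $a_0,a_1\models p_0^{(2)}$ and a fresh $a\models p_0\mid a_0a_1$: the torsor $\mor_{\cG}(a_0,a_1)=E_{a_0a_1}\subseteq\acl(a_0a_1)$, and since $E_{a_0a}\subseteq\dcl(\ov{a\,a_0})$ and $E_{a_1a}\subseteq\dcl(\ov{a\,a_1})$, composing through $a$ gives $E_{a_0a_1}\subseteq\dcl(\ov{a\,a_0},\ov{a\,a_1})$, hence $E_{a_0a_1}\subseteq\widetilde{a_0a_1}$. A routine quantifier-elimination and algebraic-closure computation in $M_{G_0}$ shows that, modulo $\ov{a_0}\cup\ov{a_1}$, $\widetilde{a_0a_1}$ is exactly (interdefinable with) $E_{a_0a_1}$; and by the torsor argument of Proposition~\ref{corrected2} the automorphisms of $E_{a_0a_1}$ over $\ov{a_0},\ov{a_1}$ are precisely the right regular action of the vertex group. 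Thus $\Gamma_2(p_0)\cong G_0$ and $H_2(p_0)\cong G_0$ by Theorem~\ref{hurewicz}.

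\emph{The profinite case.} Write $G=\varprojlim_{i\in I}G_i$ with $G_i$ finite abelian and surjective transition maps $\nu_{j,i}\colon G_j\twoheadrightarrow G_i$ for $i\le j$. Imitate the above with a whole tower: let $T$ be the theory of the generic structure carrying, on a common infinite object sort $P$, a coherent inverse system of finite connected groupoids $\cG_i$ with $\ob(\cG_i)=P$, $\mor_{\cG_i}(a,a)\cong G_i$, together with surjective functors $\chi_{j,i}\colon\cG_j\to\cG_i$ (the identity on objects), compatible with composition and with the $\nu_{j,i}$ — precisely the kind of inverse system of groupoids appearing in Lemma~\ref{chi}. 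Each level is a finite cover of $P^2$, so $T$ is $\omega$-stable; presenting the tower so that it uses only countably many sorts (or, better, a single ``cover'' sort) and verifying the finite-cover/envelope criterion level by level makes $T$ totally categorical. For $p=\tp(a/\emptyset)$ with generic $a\in P$, the $\acl$/$\dcl$ computation gives that $\widetilde{a_0a_1}$ is interdefinable over $\ov{a_0},\ov{a_1}$ with $\dcl\big(\bigcup_i\mor_{\cG_i}(a_0,a_1)\big)$. Passing to the limit of the finite-case isomorphisms — legitimate because the $\chi_{j,i}$ induce the correct transition maps on automorphism groups, exactly as in part~(8) of Lemma~\ref{chi} and the final lemma of Section~4 — yields $\Gamma_2(p)\cong\varprojlim_i\Aut(\mor_{\cG_i}(a_0,a_1)/\ov{a_0},\ov{a_1})\cong\varprojlim_iG_i=G$, whence $H_2(p)\cong G$ by Theorem~\ref{hurewicz}.

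\emph{Main obstacle.} The genuinely technical points are: (i) building the generic tower of groupoids as a \emph{totally categorical} theory — this needs a Fra\"iss\'e-style amalgamation for the class of finite approximations of the tower and a verification that the resulting theory has no Vaughtian pairs (equivalently, the envelope criterion for covers of a strongly minimal set), which is exactly where the ``totally categorical'' clause is earned; and (ii) the exact identification of $\widetilde{a_0a_1}$ with the pro-torsor $\dcl\big(\bigcup_i\mor_{\cG_i}(a_0,a_1)\big)$ over $\ov{a_0},\ov{a_1}$, i.e.\ showing the tower of morphism-sets captures all of the ``new'' algebraic information in $\acl(a_0a_1)$ and contributes nothing beyond a $G$-torsor to the relevant automorphism group. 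Both are of the ``careful bookkeeping in an explicitly presented structure'' type rather than requiring new ideas, but (i) in particular has to be carried out with some care.
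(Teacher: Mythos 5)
Your proposal is correct and follows essentially the same route as the paper: build the generic (tower of) connected definable groupoids over a common object type with finite abelian vertex groups $H_i$ and compatible full functors $\chi_{j,i}$, verify total categoricity, identify $\widetilde{a_0a_1}$ over $\ov{a_0},\ov{a_1}$ with the inverse system of morphism torsors $\mor_{\cG_i}(a_0,a_1)$, and conclude $H_2(p)\cong\Gamma_2(p)\cong\varprojlim_i H_i=G$ via Theorem~\ref{hurewicz}. The only differences are presentational (the paper goes directly to the inverse-system construction and proves total categoricity by an explicit quantifier-elimination and connectedness argument, together with weak elimination of imaginaries to pin down $\acl^{eq}(a_0,a_1)$, rather than your finite-case-first and envelope/finite-cover verification), not a different method.
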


Together with Theorem~\ref{hurewicz} from the previous section, this shows that the groups that can occur as $H_2(p)$ for a type $p$ in a stable theory are \emph{precisely} the profinite abelian groups.

For the remainder of this section, we fix a profinite abelian group $G$ which is the inverse limit of the system $\langle H_i : i \in I \rangle$, where each $H_i$ is finite and abelian, $(I, \leq)$ is a directed set, and $G$ is the limit along the surjective group homomorphisms $\phi_{j,i} : H_j \rightarrow H_i$ (for every pair $i \leq j$ in $I$).  The language $L$ of $T$ will be as follows: there will be a sort $\cG_i$ for each $i \in I$, and function symbols $\chi_{j,i} : \cG_j \rightarrow \cG_i$ for every pair $i \leq j$.  The theory $T$ will say, in the usual language of categories, that each $\cG_i$ is a connected groupoid with infinitely many objects, and there will be separate composition symbols for each sort $\cG_i$.  Also, $T$ says that $\cG_i$ is a groupoid such that each vertex group $\mor_{\cG_i}(a_i,a_i)$ is isomorphic to the group $H_i$.  For convenience, pick some arbitrary $a_i \in \ob(\cG_i)$ and some group isomorphism $\xi_i : G_i \rightarrow \mor_{\cG_i}(a_i,a_i)$ (but the $\xi_i$'s are \textbf{not} a part of any model of $T$).  Then the last requirement we make on $T$ is that the function symbols $\chi_{j,i}$ define full functors from $\cG_j$ onto $\cG_i$ which induce bijections between the corresponding collections of objects, and such that for every pair $i \leq j$, the following diagram commutes:

$$\begin{CD}
H_j @>\phi_{j,i}>> H_i\\
@VV\xi_jV @VV\xi_iV\\
\mor_{\cG_j}(a_j,a_j) @>\chi_{j,i}>> \mor_{\cG_i}(a_i,a_i)
\end{CD}$$

(In other words, the functors $\chi_{j,i}$ are just ``isomorphic copies'' the group homomorphisms $\phi_{j,i}$.)

\begin{lemma}
The theory $T$ described above is complete and admits elimination of quantifiers.  If we further assume that the language is multi-sorted and that every element of a model must belong to one of the sorts $\cG_i$, then $T$ is totally categorical.
\end{lemma}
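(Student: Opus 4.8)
The plan is to prove first a \emph{structure theorem} --- any two models of $T$ of the same cardinality are isomorphic --- from which completeness and categoricity follow immediately, and then to get quantifier elimination by a separate back-and-forth. For the structure theorem, given $M\models T$ form the pro-groupoid $\cG^M:=\varprojlim_i\cG_i^M$. Since each $\chi_{j,i}$ is a bijection on objects and $I$ is directed, all the sets $\ob(\cG_i^M)$ have the same cardinality and are canonically identified; call the resulting set $O^M$, so $\ob(\cG^M)=O^M$. For $a,b\in O^M$ the sets $\mor_{\cG_i^M}(a,b)$ form an inverse system of nonempty finite sets with surjective (by fullness) transition maps, so $\mor_{\cG^M}(a,b):=\varprojlim_i\mor_{\cG_i^M}(a,b)$ is nonempty and $\cG^M$ is a connected groupoid. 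Because the vertex groups $\mor_{\cG_i^M}(a,a)\cong H_i$ are abelian, conjugation by any connecting morphism canonically identifies vertex groups over different objects, compatibly with the $\chi_{j,i}$; in the limit every vertex group of $\cG^M$ is canonically $\cong\varprojlim_iH_i=G$. Conversely $M$ is recovered from $\cG^M$, since $\cG_i^M$ is the quotient groupoid $\cG^M/K_i$ where $K_i=\ker(G\to H_i)$ (for abelian $G$ the $K_i$-action on each $G$-torsor $\mor_{\cG^M}(a,b)$ is base-point independent, so this is well defined) and the $\chi_{j,i}$ are the quotient functors. Thus, up to isomorphism, $M\mapsto\cG^M$ is a bijection between models of $T$ and connected groupoids with infinitely many objects whose vertex group is $G$, and it intertwines isomorphism.

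Next, every connected groupoid with vertex group $G$ is isomorphic to the product of the ``indiscrete'' groupoid on its object set (one morphism between each ordered pair of objects) with $G$: fix a base object, choose connecting morphisms, and choose a group isomorphism of the vertex groups. Hence $\cG^M\cong\cG^N$ iff $|O^M|=|O^N|$, and so $M\cong N$ iff their object sets have the same cardinality. Since every vertex group is finite we have $|M|=|I|+|O^M|$, and $T$ forces each $\ob(\cG_i)$ to be infinite; so all models of cardinality $\kappa>|I|+\aleph_0$ have $|O^M|=\kappa$ and are isomorphic, and $T$ has no finite models. Therefore $T$ is complete and categorical in every cardinal $>|I|+\aleph_0$; when $|I|\le\aleph_0$ this is categoricity in every infinite cardinal, i.e.\ total categoricity.

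For quantifier elimination it suffices to show: for $\omega$-saturated $M,N\models T$, a finitely generated substructure $A\le M$, an isomorphism $g\colon A\to B\le N$, and $c\in M$, the map $g$ extends to an embedding of the substructure generated by $A\cup\{c\}$. A finitely generated substructure is the closure of finitely many elements under source, target, identity, composition, inverse, and the $\chi_{j,i}$, and it meets each sort $\cG_i$ in a finite set (the vertex groups are finite and $\chi_{j,i}$ only lowers $i$). One then argues by cases on $c$. If $c$ is an object of $\cG_j$ with some $\chi_{j,i}(c)\in A$, then $c$ is the unique such preimage (bijectivity on objects) and its image in $N$ is forced; otherwise one uses that $\cG_j^N$ has infinitely many objects, together with $\omega$-saturation, to find a fresh object of $\cG_j^N$ with the correct images under all $\chi_{j,i}$. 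If $c$ is a morphism of $\cG_j$, one first adjoins its source and target objects (reducing to the object case) and then realizes $c$ inside the finite torsor of morphisms between the images of those objects, so as to meet the finitely many constraints imposed by the $\chi$-images of $c$ and by composition with morphisms of $A$; this is possible because the axioms determine, up to isomorphism, each vertex group ($\cong H_i$) and each transition homomorphism ($\cong\phi_{j,i}$).

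The main obstacle is exactly this last case: one must verify that the finitely generated sub-configuration of the inverse system $\langle H_i,\phi_{j,i}\rangle$ realized by $A$ together with $c$ embeds, over $A$, into the copy of that system living in $N$ --- equivalently, that every isomorphism between finitely generated sub-configurations of $\langle H_i,\phi_{j,i}\rangle$ extends, modulo the ``gauge'' automorphisms supplied by the indiscrete factor, to an automorphism of the whole system. This is where it matters that the maps $\phi_{j,i}$ retain enough information to locate each element of each $H_i$ inside its group (otherwise a formula such as ``$x$ has a square root in its vertex group'' could fail to be quantifier-free definable). Taking the presentation of $G$ indexed by \emph{all} open subgroups, so that the maximal quotients separate elements by a Frattini-subgroup argument, makes this transparent, and nothing is lost by doing so, since $G$ --- and hence the group eventually realized as $H_2(p)$ --- does not depend on the choice of presentation.
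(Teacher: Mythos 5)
Your completeness/categoricity half is, in substance, an expanded version of the paper's one-line argument (the paper simply notes that the isomorphism class of a model is determined by the cardinality of its object set); the pro-groupoid trivialization of $\cG^M$ as an indiscrete groupoid times the vertex pro-group is fine in outline, though the step ``in the limit every vertex group is canonically $\cong\varprojlim_i H_i$'' is not canonical and needs an inverse-limit-of-finite-nonempty-sets argument to produce a coherent family of identifications, since the axioms of $T$ only provide one commuting square per pair $i\le j$; your caveat about cardinals $\le |I|$ is a reasonable refinement. So far this is a legitimate, if more elaborate, route to the first two claims.

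The genuine gap is in the quantifier-elimination half, and it sits exactly at the step you yourself flag as ``the main obstacle'': you never close it for the theory $T$ that is actually described, but instead replace the given presentation $\langle H_i,\phi_{j,i}\rangle$ by the presentation indexed by all open subgroups of $G$. That proves a statement about a different language and a different theory; the lemma is about the $T$ built from the given system, so the substitution is not available here (however harmless it is for the section's main theorem, which only needs \emph{some} $T$ with $H_2(p)\cong G$). Moreover, your worry is not hypothetical: for the given $T$ the QE claim can genuinely fail, by precisely the square-root phenomenon you mention. Take $I$ a singleton and $H_0=\mathbb{Z}/4\times\mathbb{Z}/2$, so that $T$ is the theory of a connected groupoid with infinitely many objects and vertex groups $\cong H_0$. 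In a model fix an object $o$ and let $u,u'\in\mor(o,o)$ correspond to $(0,1)$ and $(2,0)$. Each generates the substructure $\{o,\id_o,\,\cdot\,\}$, and the terms in one variable only record that the element is a self-inverse vertex morphism of order two, so $u$ and $u'$ have the same quantifier-free type; yet $\exists y\,(y\circ y=u')$ holds while $\exists y\,(y\circ y=u)$ fails, because the set of squares in $\mor(o,o)$ is the proper subgroup $2H_0$ and any $y$ with $y\circ y$ defined and equal to $u$ must lie in $\mor(o,o)$. Hence no argument can fill your gap without modifying $T$; this also shows where the paper's own proof is too quick, namely its case (2) (``$\varphi$ is satisfied by some morphism between two objects in $\cl(A)$''), whose transfer to $M_2$ is exactly what the example above refutes when $A$ is the substructure generated by $u$. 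In short: your categoricity argument is acceptable, your diagnosis of the QE obstruction is correct and important, but as written your proposal establishes a repaired variant of the lemma (with an enriched presentation), not the lemma as stated.
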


\begin{proof}
If the language is multi-sorted, then since the groupoids $\cG_i$ are all connected and there are bijections between the object sets of the various $\cG_i$, the isomorphism class of a model of $T$ is determined by the cardinality of the object set of some (any) $\cG_i$. This shows that $T$ is totally categorical, hence $T$ is complete.

For quantifier elimination, it suffices to show the following: for any two models $M_1$ and $M_2$ of $T$ with a common substructure $A$ and any sentence $\sigma$ with parameters from $A$ of the form $\sigma = \exists x \varphi(x; \ov{a})$ where $\varphi$ is quantifier-free, if $M_1 \models \sigma$, then $M_2 \models \sigma$. (See Theorem~8.5 of \cite{tentziegler}.) In this situation, let $\cl(A)$ denote the submodel of $M_1$ (and of $M_2$) generated by $A$, and in case $A = \emptyset$, let $\cl(A) = \emptyset$. Then if $M_1 \models \sigma$ as above, at least one of the following is true:

\begin{enumerate}
\item $\varphi(x; \ov{a})$ is satisfied by some $x$ in $\cl(A)$;
\item $\varphi(x; \ov{a})$ is satisfied by some morphism between two objects in $\cl(A)$;
\item For some $i \in I$, $\varphi(x; \ov{a})$ is satisfied by \emph{any} object in $\cG_i$ outside of $\cl(A)$;
\item For some $i \in I$, $\varphi(x; \ov{a})$ is satisfied by \emph{any} morphism in $\cG_i$ which goes from [or to] some particular $b \in \cl(A)$ and goes to [or from] \emph{any} object in $\cG_i$ outside of $\cl(A)$; or else
\item For some $i \in I$, $\varphi(x; \ov{a})$ is satisfied by \emph{any} morphism in $\cG_i$ whose source and target are both outside of $\cl(A)$.
\end{enumerate}

In each of the five cases above, it is straightforward to check that there is an $x$ realizing $\varphi(x; \ov{a})$ in $M_2$ as well (for the last three cases we use the fact that $\ob(\cG_i)$ is infinite).

\end{proof}

\begin{remark}
\label{acl}
If $A \subseteq \cG_i$, then we say that $b \in \ob(\cG_i)$ is \emph{connected to $A$} if either $b \in A$ or $b$ is the source or target of a morphism in $A$.  By elimination of quantifiers, it follows that for any $A \subseteq \cG_i$, $\acl(A) \cap \cG_i$ is the union of all objects $b$ that are connected to $A$ plus all morphisms $f \in \mor_{\cG_i}(b,c)$ such that $b$ and $c$ are connected to $A$.

Because of the functors $\chi_{j,i}$, it follows that for any $a$ in any $\cG_i$, $\acl(a)$ actually contains objects and morphisms from each of the groupoids $\cG_j$.  But for any $A \subseteq \C$, we can write $\acl(A)$ in the ``standard form'' $\acl(A) = \acl(A_0)$ for some $A_0 \subseteq \ob(\cG_0)$, and:

1. $\acl(A_0) \cap \ob(\cG_i) = \chi_{i,0}^{-1}(A_0)$, and

2. $\acl(A_0) \cap \mor(\cG_i)$ is the collection of all $f \in \mor_{\cG_i}(b,c)$ where $b,c \in \acl(A_0)$.
\end{remark}

\begin{lemma}
\label{weak_EI}
The theory $T$ has \emph{weak elimination of imaginaries} in the sense of \cite{Poizat}: for every formula $\varphi(\ov{x}, \ov{a})$ defined over a model $M$ of $T$, there is a smallest algebraically closed set $A \subseteq M$ such that $\varphi(\ov{x}, \ov{a})$ is equivalent to a formula with parameters in $A$.
\end{lemma}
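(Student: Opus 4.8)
The plan is to reduce weak elimination of imaginaries to a statement about algebraically closed sets, using the quantifier elimination of the previous lemma together with the explicit description of $\acl$ in Remark~\ref{acl}. The first observation is that, in any $M\models T$, the algebraically closed subsets of $M$ are in bijection with the subsets of $\ob(\cG_0)(M)$ via $A_0\mapsto\acl(A_0)$ (the inverse being $C\mapsto C\cap\ob(\cG_0)$, which works because each $\chi_{i,0}$ induces a bijection on objects), and this bijection preserves inclusion and finite intersections, i.e.\ $\acl(A_0)\cap\acl(B_0)=\acl(A_0\cap B_0)$, which is immediate from Remark~\ref{acl}. Since a formula has only finitely many parameters and the tuple $\bar a$ in $\varphi(\bar x,\bar a)$ mentions only finitely many objects and morphisms, there is a \emph{finite} set $A_0^\ast\subseteq\ob(\cG_0)(M)$ with $\acl(\bar a)=\acl(A_0^\ast)$, so $\varphi(\bar x,\bar a)$ is equivalent to a formula over $\acl(A_0^\ast)$. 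It therefore suffices to show that the family
$$\mathcal F=\{\,A_0\subseteq\ob(\cG_0)(M)\ \text{finite}\ :\ \varphi(\bar x,\bar a)\ \text{is equivalent to a formula over}\ \acl(A_0)\,\}$$
is closed under intersection; then $\mathcal F$ has a least element $A_0^{\min}\subseteq A_0^\ast$, and since any formula equivalent to $\varphi(\bar x,\bar a)$ over an algebraically closed $C$ has finitely many parameters from $C$, one checks that $\acl(A_0^{\min})$ is the smallest such $C$.

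Working in the monster model $\C$, definability of a set $D$ over a small set $C$ is equivalent to $\operatorname{Stab}(D)\supseteq\Aut(\C/C)$, so closure of $\mathcal F$ under intersection reduces to the group-theoretic claim: for finite $A_0,B_0\subseteq\ob(\cG_0)$, the group $\Aut(\C/\acl(A_0\cap B_0))$ is generated by $\Aut(\C/\acl(A_0))\cup\Aut(\C/\acl(B_0))$. I would prove this in two layers. On the level of the object set $\Omega=\ob(\cG_0)$, quantifier elimination shows that the image of $\Aut(\C/\acl(A_0))$ in $\mathrm{Sym}(\Omega)$ is exactly the pointwise stabilizer $\mathrm{Stab}_{\mathrm{ptw}}(A_0)$ (any permutation of $\Omega$ fixing $A_0$ extends, via the bijections $\chi_{i,0}$ and an arbitrary compatible choice of morphism bijections, to an automorphism of $\C$ fixing $\acl(A_0)$ pointwise); and for finite $A_0,B_0$ the identity $\langle\mathrm{Stab}_{\mathrm{ptw}}(A_0),\mathrm{Stab}_{\mathrm{ptw}}(B_0)\rangle=\mathrm{Stab}_{\mathrm{ptw}}(A_0\cap B_0)$ holds by a three-step factorization that exploits the fact that $\Omega\setminus(A_0\cup B_0)$ is infinite: first move the finitely many points of $B_0\setminus A_0$ that obstruct matching into $\Omega\setminus(A_0\cup B_0)$ by a permutation fixing $A_0$, then correct by a permutation fixing $B_0$, and what remains fixes $A_0$. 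This lets one write any $\sigma\in\Aut(\C/\acl(A_0\cap B_0))$ as a product of elements of the two subgroups times a residual automorphism $\kappa$ fixing every object of every $\cG_i$ and fixing $\acl(A_0\cap B_0)$.

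The second layer handles such object-fixing (``gauge'') automorphisms. Fixing a base object and a spanning tree in each connected groupoid $\cG_i$, an object-fixing automorphism is recorded by a pair $(\mu_i,\lambda_i)$ with $\mu_i\in\mathrm{Aut}(H_i)$ and $\lambda_i\colon\ob(\cG_i)\to H_i$ (modulo a diagonal $H_i$), compatibility with the $\chi_{j,i}$ forces $(\mu_i,\lambda_i)$ to be induced from $(\mu_j,\lambda_j)$ through $\phi_{j,i}\colon H_j\twoheadrightarrow H_i$, and $\kappa$ fixes the morphisms over $A_0$ exactly when $\mu_i=\mathrm{id}$ and $\lambda_i$ vanishes on $\chi_{i,0}^{-1}(A_0)$. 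Since the vertex groups $H_i$ are abelian, the group of gauge automorphisms is abelian, and the splitting $\lambda=\lambda'+\lambda''$ with $\lambda'$ vanishing on $A_0$ and $\lambda''$ vanishing on $B_0$ (take $\lambda'=\lambda$ off $A_0$ and $0$ on $A_0$) can be carried out coherently in all the $\cG_i$ at once because the truncation maps commute with the surjections $\phi_{j,i}$; this writes $\kappa$ as a product of a gauge automorphism fixing $\acl(A_0)$ and one fixing $\acl(B_0)$, completing the generation claim and hence the lemma. I expect the main obstacle to be this generation claim, and within it the bookkeeping required to assemble the object-permutation layer and the gauge layer simultaneously and coherently across the (possibly infinitely many) sorts $\cG_i$; an alternative route that sidesteps part of this is to extract directly, from the quantifier-free normal form of $\varphi(\bar x,\bar a)$, a finite tuple of objects and morphisms interdefinable with $D$ up to a finite ambiguity and then to read off the smallest algebraically closed set of definition from Remark~\ref{acl}.
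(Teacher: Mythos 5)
Your overall strategy is the same as the paper's: the paper reduces the lemma to Poizat's criterion (descending chain condition on algebraic closures of finite sets, plus the statement that $\Aut(\C/A\cap B)$ is generated by $\Aut(\C/A)\cup\Aut(\C/B)$ for $A,B$ algebraic closures of finite sets), and proves the generation statement in the same two layers you propose: first matching the induced permutation of $\ob(\cG_0)$, using that pointwise stabilizers of $A_0$ and $B_0$ in $\operatorname{Sym}(\ob(\cG_0))$ generate the stabilizer of $A_0\cap B_0$, and then correcting by object-fixing automorphisms (for which the paper just cites the recipe of subsection~4.2 of \cite{GK}, where you instead give explicit $(\mu_i,\lambda_i)$ data). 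Your by-hand reduction via the family $\mathcal{F}$ replaces the citation of Poizat's Lemma~16.17 but amounts to the same thing, and your first layer matches the paper's.

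The genuine gap is in your second layer. You introduce the component $\mu_i\in\Aut(H_i)$ of an object-fixing automorphism and correctly note that fixing $\acl(A_0)$ pointwise forces $\mu_i=\mathrm{id}$ (when $A_0\neq\emptyset$, since the vertex groups over $A_0$ lie in $\acl(A_0)$), but your splitting step decomposes only $\lambda$ and tacitly assumes the residual automorphism $\kappa$ has $\mu_i=\mathrm{id}$. That is justified only when $A_0\cap B_0\neq\emptyset$. In the case $A_0\cap B_0=\emptyset$, which your generation claim must cover (the formula can be definable over $\acl(A_0)$ and over $\acl(B_0)$ with $A_0,B_0$ disjoint and nonempty), $\kappa$ only fixes $\acl(\emptyset)$, which contains no objects or morphisms, and it may carry a nontrivial $\mu$: for instance the automorphism built from a compatible spanning tree that fixes every object and acts on every vertex group by inversion. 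Such a $\kappa$ is \emph{not} a product of automorphisms fixing $\acl(A_0)$ pointwise and automorphisms fixing $\acl(B_0)$ pointwise: each such factor fixes some vertex group pointwise, hence (via the canonical conjugation identifications between the abelian vertex groups, which every automorphism respects) induces the identity on each $H_i$; since ``induced automorphism of $H_i$'' is a group homomorphism on $\Aut(\C)$, any product of such factors also induces the identity, whereas inversion does not as soon as some $H_i$ has exponent greater than $2$. So the factorization you propose cannot be completed as stated in the disjoint case, and this is a missing idea rather than bookkeeping: you would have to isolate that case and treat it by a genuinely different argument (for example by working over, or naming, $\acl^{eq}(\emptyset)$). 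For what it is worth, the paper's own proof is thin at exactly the same spot: its asserted $\sigma^0_A\in\Aut(\C/A)$ agreeing with $\sigma$ on every morphism having an endpoint outside $A$ cannot exist when $\sigma$ induces inversion (for $a\in A_0$, $c\notin A_0$, $f\in\mor_{\cG_0}(a,c)$ and $g\in\mor_{\cG_0}(a,a)$, agreement on $f$ and on $fg$ forces $g=g^{-1}$). So you have surfaced, more explicitly than the paper, the one delicate point of this lemma, but your proposal does not resolve it.
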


\begin{proof}
By Lemma~16.17 of \cite{Poizat}, it suffices to prove the following two statements:

1. There is no strictly decreasing sequence $A_0 \supsetneq A_1 \supsetneq \ldots$, where every $A_i$ is the algebraic closure of a finite set of parameters; and

2. If $A$ and $B$ are algebraic closures of finite sets of parameters in the monster model $\C$, then $\Aut(\C / A \cap B)$ is generated by $\Aut(\C / A)$ and $\Aut(\C / B)$.

Statement 1 follows immediately from the characterization of algebraically closed sets in Remark~\ref{acl} above (that is, algebraic closures of finite sets are equivalent to algebraic closures of finite subsets of $\ob(\cG_0)$).

To check statement~2, suppose that $\sigma \in \Aut(\C / A \cap B)$, and assume that $A = \acl(A_0)$ and $B = \acl(B_0)$ where $A_0, B_0 \subseteq \ob(\cG_0)$.  Note that \emph{any} permutation of $\ob(\cG_0)(\C)$ which fixes $A_0$ can be extended to an automorphism of $\Aut(\C / A)$, and likewise for $B_0$ and $B$.  So as a first step, we can use the fact that $\textup{Sym}(\ob(\cG_0) / A_0 \cap B_0)$ is generated by $\textup{Sym}(\ob(\cG_0) / A_0)$ and $\textup{Sym}(\ob(\cG_0) /  B_0)$ to find an automorphism $\tau \in \Aut(\C)$ such that $\tau$ is in the subgroup generated by $\Aut(\C / A)$ and $\Aut(\C / B)$ and $\sigma \circ \tau^{-1}$ fixes $\ob(\cG_0)$ (and hence $\ob(\cG_i)$ for every $i$) pointwise.


Finally, we need to deal with the morphisms.  We claim that there is a map $\sigma^0_A \in \Aut(\C / A)$ which fixes $\ob(\cG_0)$ pointwise and such that for any $f \in \mor_{\cG_0}(b,c)$ such that at least one of $b$ and $c$ do not lie in $A$, $(\sigma^0 \circ \tau)(f) = \sigma(f)$.  (The idea is to use the recipe for constructing object-fixing automorphisms described in subsection~4.2 of \cite{GK}, using a basepoint $a_0 \in A$.)  In fact, by the same argument we can also assume that for \emph{every} $i \in I$ and for any $f \in \mor_{\cG_i}(b,c)$ such that at least one of $b$ and $c$ do not lie in $A$, $(\sigma^0 \circ \tau)(f) = \sigma(f)$.  Similarly, there is a map $\sigma^0_B \in \Aut(\C / B)$ which fixes $\ob(\cG_0)$ pointwise and for any $i \in I$, $\sigma^0_B$ only moves morphisms in $\mor_{\cG_i}(b,c)$ where $b$ and $c$ are both in $A \setminus (A \cap B)$, and such that $\sigma^0_B \circ \sigma^0_A \circ \tau = \sigma$.

\end{proof}

\begin{lemma}
\label{acl2}
If $a^0, a^1 \in \ob(\cG_i)$, then $$\acl^{eq}(a^0, a^1) = \dcl^{eq} \left(\bigcup_{i,j \in I; \, i \leq j} \mor_{\cG_j}(a^0_j, a^1_j) \right),$$ where $a^\ell_j = \chi^{-1}_{j,i}(a^\ell)$.
\end{lemma}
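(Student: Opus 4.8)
The plan is to prove the two inclusions of the displayed equation separately; write $D := \dcl^{eq}\big(\bigcup_{j} \mor_{\cG_j}(a^0_j, a^1_j)\big)$ for the right-hand side, where $a^\ell_j$ denotes the object of $\cG_j$ canonically associated to $a^\ell \in \ob(\cG_i)$ (for $j \ge i$ this is $\chi_{j,i}^{-1}(a^\ell)$, and for arbitrary $j$ one goes up to some $j' \ge i,j$ and then down via $\chi_{j',j}$; this is well-defined by directedness of $I$ and the coherence $\chi_{j',k}\circ\chi_{j,j'} = \chi_{j,k}$). The inclusion $D \subseteq \acl^{eq}(a^0,a^1)$ is the routine direction: since each $\cG_j$ is a \emph{connected} groupoid whose vertex groups are (isomorphic to) the finite group $H_j$, the set $\mor_{\cG_j}(a^0_j,a^1_j)$ is finite, and it is definable from the pair $(a^0_j, a^1_j)$, which lies in $\dcl^{eq}(a^0,a^1)$ because $\chi_{j,i}$ (or the relevant composite of $\chi$'s) induces a bijection on objects. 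A finite set definable over a parameter has all its elements algebraic over that parameter, so each $\mor_{\cG_j}(a^0_j,a^1_j) \subseteq \acl^{eq}(a^0,a^1)$, and since $\acl^{eq}$ is $\dcl^{eq}$-closed, $D \subseteq \acl^{eq}(a^0,a^1)$.

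For the reverse inclusion I would invoke weak elimination of imaginaries (Lemma~\ref{weak_EI}): any $e \in \acl^{eq}(a^0,a^1)$ is definable over a \emph{real} tuple $\bar b$ with $\bar b \in \acl^{eq}(e) \subseteq \acl^{eq}(a^0,a^1)$, so $\bar b$ consists of real elements lying in $\acl(a^0,a^1)$. Hence it suffices to prove that the real algebraic closure $\acl(a^0,a^1)$ is contained in $D$. For this I would use the explicit normal-form description of algebraic closure from Remark~\ref{acl}: writing $\acl(a^0,a^1) = \acl(A_0)$ with $A_0 = \{a^0_0, a^1_0\}$ a pair of objects in $\cG_0$, we have, for every $k \in I$, that $\acl(A_0) \cap \ob(\cG_k) = \{a^0_k, a^1_k\}$ and $\acl(A_0) \cap \mor(\cG_k) = \bigcup_{\ell,m \in \{0,1\}} \mor_{\cG_k}(a^\ell_k, a^m_k)$.

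The heart of the argument is then to check that each of these pieces lies in $D$. For $k \ge i$ this is immediate for $\mor_{\cG_k}(a^0_k,a^1_k)$ (it is one of the generators of $D$), and the remaining hom-sets are obtained from it by composition and inversion using only function symbols of the category language: $\mor_{\cG_k}(a^1_k,a^0_k) = \{\, g^{-1} : g \in \mor_{\cG_k}(a^0_k,a^1_k)\,\}$ and $\mor_{\cG_k}(a^0_k,a^0_k) = \{\, g^{-1}\circ g' : g, g' \in \mor_{\cG_k}(a^0_k,a^1_k)\,\}$ (surjectivity of the latter uses connectedness), and likewise for $\mor_{\cG_k}(a^1_k,a^1_k)$; the objects $a^0_k, a^1_k$ are the source and target of any $g \in \mor_{\cG_k}(a^0_k,a^1_k)$. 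For general $k$, pick $j \ge i,k$: then the objects and all the hom-sets of $\acl(A_0)$ inside $\cG_j$ lie in $D$ by the case just handled, and applying the definable full functor $\chi_{j,k}$ (which sends $a^\ell_j$ to $a^\ell_k$ and maps $\mor_{\cG_j}(a^0_j,a^1_j)$ \emph{onto} $\mor_{\cG_k}(a^0_k,a^1_k)$) shows the corresponding data in $\cG_k$ is $\dcl$-over elements of $D$, hence in $D$. Since $k$ was arbitrary, $\acl(A_0) \subseteq D$, and combined with weak elimination of imaginaries this gives $\acl^{eq}(a^0,a^1) \subseteq D$.

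The main obstacle I anticipate is not a conceptual difficulty but careful bookkeeping: because $I$ need not be linearly ordered, the objects $a^\ell_k$ for $k$ incomparable with $i$ are only accessible by passing up to some $j \ge i,k$ and back down, and one must check with the coherence relations on the $\chi$'s that this is independent of the choice of $j$ (and agrees with the normal form used in Remark~\ref{acl}); one must also keep the distinction between the finite morphism \emph{sets} — definable over the relevant pair of objects — and their individual elements, which are merely algebraic over $a^0,a^1$ but definable over the full union, so that the application of weak elimination of imaginaries lands exactly in $D$ rather than in something slightly larger.
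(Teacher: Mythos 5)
Your overall architecture (two inclusions, then reduction of $\acl(a^0,a^1)$ to the hom-sets) is reasonable, and the easy inclusion together with your final bookkeeping — Remark~\ref{acl}, connectedness, closure of the hom-sets under inversion/composition, and fullness of the definable functors $\chi_{j,k}$ for indices not above $i$ — is sound and elaborates what the paper leaves implicit. The gap is the single sentence in which you invoke Lemma~\ref{weak_EI} to produce a real tuple $\bar b$ with $e\in\dcl^{eq}(\bar b)$ and $\bar b\in\acl^{eq}(e)$. That is the \emph{tuple form} of weak elimination of imaginaries, which is in general strictly stronger than what Lemma~\ref{weak_EI} asserts (existence of a smallest algebraically closed set of definition, in Poizat's sense). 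From the lemma one only gets a finite real tuple $\ov{d}$ with $e\in\dcl^{eq}(\ov{d})$ such that $\acl(\ov{d})$ is the minimal algebraically closed set over which $e$ is definable; canonicity makes this \emph{set} have finitely many conjugates over $(a^0,a^1)$, but it does not make $\ov{d}$ (or any defining tuple) algebraic over $e$. For instance, in the theory of an equivalence relation with infinitely many classes, each carrying a bijection making it a copy of $(\mathbb{Z},\textup{succ})$, the two criteria used in the proof of Lemma~\ref{weak_EI} hold, so every definable set has a smallest algebraically closed set of definition, yet the imaginary coding a class is not definable over any real tuple algebraic over it. So the step ``$\bar b\in\acl^{eq}(e)\subseteq\acl^{eq}(a^0,a^1)$, hence $\bar b\subseteq\acl(a^0,a^1)$'' is unjustified as written, and it cannot be obtained by quoting weak elimination of imaginaries alone.

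This is precisely where the paper's proof does its theory-specific work: writing $e=b/E$ for a finite $(a^0,a^1)$-definable equivalence relation $E$, it takes the minimal set $\acl(\ov{d})$ supplied by Lemma~\ref{weak_EI} and argues, using quantifier elimination, that if $\acl(\ov{d})$ contained an object of $\cG_0$ other than the images of $a^0$ and $a^1$, then that set would have infinitely many conjugates under $\Aut(\C/a^0,a^1)$, forcing $E$ to have infinitely many classes — a contradiction; only after this orbit-counting step does Remark~\ref{acl} place $\ov{d}$, and hence $e$, inside the definable closure of the hom-sets. To repair your proposal you need a substitute for exactly this step, i.e.\ a proof that the canonical algebraically closed set of definition of $e$ contains no objects of $\cG_0$ beyond $\chi_{i,0}(a^0)$ and $\chi_{i,0}(a^1)$ (equivalently, that it is contained in $\acl(a^0,a^1)$), rather than an appeal to a form of weak elimination of imaginaries that has not been established for this theory.
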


\begin{proof}
Suppose $g \in \acl^{eq}(a^0, a^1)$.  Then $g = b / E$ for some $(a^0, a^1)$-definable finite equivalence relation $E$.  By Lemma~\ref{weak_EI}, there is a finite tuple $\ov{d} \in \C$ (in the home sort) such that $b / E$ is definable over $\ov{d}$ and $\ov{d}$ has a minimal algebraic closure.  If the set $\acl(\ov{d})$ contained an object $a$ of $\cG_0$ other than $\chi_{i,0}(a^0)$ and $\chi_{j,0}(a^1)$, then (by quantifier elimination) $\acl(\ov{d})$ would have an infinite orbit under $\Aut(\C / a^0, a^1)$, and so $E$ would have infinitely many classes, a contradiction.  So by Remark~\ref{acl}, the set $\ov{d}$, and hence $b / E$ is definable over the union of the morphism sets $\mor_{\cG_j}(a^0_j, a^1_j)$.
\end{proof}

\textbf{From now on, we assume that all algebraic and definable closures are computed in $T^{eq}$, not just in the home sort.}

\begin{lemma}
If $a^0, a^1 \in \ob(\cG_i)$, then for any two $f, g \in \mor_{\cG_i}(a^0, a^1)$, $$\tp(f / \acl(a^0), \acl(a^1)) = \tp(g / \acl(a^0), \acl(a^1)).$$

\end{lemma}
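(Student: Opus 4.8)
The plan is to show that for any two $f,g\in\mor_{\cG_i}(a^0,a^1)$ there is an automorphism $\sigma$ of the monster model fixing $\acl(a^0)\cup\acl(a^1)$ pointwise with $\sigma(f)=g$; this immediately gives $\tp(f/\acl(a^0),\acl(a^1))=\tp(g/\acl(a^0),\acl(a^1))$. (We may assume $a^0\neq a^1$, since otherwise the two morphisms lie in the vertex group at $a^0$, which is a finite subset of $\acl(a^0)$, and $\tp(f/\acl(a^0))=\tp(g/\acl(a^0))$ already forces $f=g$.) First I would write $g=f\circ h$ with $h:=f^{-1}\circ g\in\mor_{\cG_i}(a^0,a^0)$, so the whole problem reduces to realizing the single vertex-group element $h$ by an automorphism that is the identity on $\acl(a^0)$ and on $\acl(a^1)$.

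The automorphisms I would use are the ``object-fixing'' automorphisms of a connected groupoid from subsection~4.2 of \cite{GK}: one picks, for each object $b$ of $\cG_j$, an element $m_b$ of the vertex group $\mor_{\cG_j}(b,b)$, and lets $\sigma$ act on a morphism $\phi\colon b\to c$ by $\phi\mapsto\ov{m_c}\circ\phi\circ(\ov{m_b})^{-1}$, where $\ov{m_b}$ denotes the image of $m_b$ under a conjugation isomorphism onto $\mor_{\cG_j}(b,b)$. Because the vertex groups $H_j$ are abelian, any such $\sigma$ is the identity on every vertex group, and it is routine to check that it respects sources, targets, identities, and composition. Taking $a^0$ as basepoint and setting $m_{a^0}=e$, $m_{a^1}=h$, and $m_b=e$ for all other $b$, one computes $\sigma(f)=(f\circ h\circ f^{-1})\circ f=f\circ h=g$, as wanted.

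The only real obstacle is that $\sigma$ must be an automorphism of the whole multi-sorted structure, hence must commute with all the functors $\chi_{j,k}$; this forces the choices $m^j_b$ in the various sorts to be coherent, i.e.\ $\chi_{j,k}(m^j_b)=m^k_{\chi_{j,k}(b)}$. Since each $\chi_{j,k}$ is a bijection on objects and restricts to the surjective bonding map $H_j\to H_k$ of the directed inverse system $\langle H_j:j\in I\rangle$, and since the inverse limit of a directed system of finite groups with surjective bonding maps surjects onto each factor, I can extend the prescribed value $h$ at the index $i$ to a coherent thread $(n_j)_{j\in I}$ with $n_i=h$; then putting $m^j_{a^1_j}=n_j$ (where $a^1_j$ is the image of $a^1$ across the bijections on objects) and $m^j_b=e$ otherwise produces a genuine automorphism of the model of $T$, which extends canonically to $\C=\C^{eq}$. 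Verifying this coherence — essentially the standard fact about surjective inverse systems of finite groups, plus the observation that each $\chi_{j,k}$ intertwines the conjugation isomorphisms of the vertex groups — is the main technical step; the rest is bookkeeping.

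To finish, I would invoke Lemma~\ref{acl2} (applied with $a^0=a^1$, together with Remark~\ref{acl}) to identify $\acl(a^\ell)$ with the definable closure of the objects $a^\ell_j$ together with the vertex groups $\mor_{\cG_j}(a^\ell_j,a^\ell_j)$ for $j\ge i$, $\ell=0,1$. Since $\sigma$ fixes every object and, by abelianness of the $H_j$, fixes each vertex group pointwise, it fixes a generating set of $\acl(a^0)\cup\acl(a^1)$ pointwise and hence fixes $\acl(a^0)\cup\acl(a^1)$ pointwise. Combined with $\sigma(f)=g$, this completes the proof.
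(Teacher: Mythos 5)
Your proof is correct and takes essentially the same route as the paper's: an object-fixing groupoid automorphism in the style of subsection~4.2 of \cite{GK} with basepoint $a^0$, abelianness of the vertex groups to ensure they are fixed pointwise, a coherent choice of the twisting elements across all sorts $\cG_j$, and then Lemma~\ref{acl2} (in its single-object form) to conclude that $\acl(a^0)\cup\acl(a^1)$ is fixed pointwise while $f\mapsto g$. The only difference is one of detail: you make explicit the inverse-limit (surjective system of finite groups) argument for choosing the thread $(n_j)_{j\in I}$ through $h$, a step the paper compresses into ``it is easy to see that we can even ensure\dots''.
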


\begin{proof}
Using the same procedure as described in subsection~4.2 of \cite{GK}, we can construct an automorphism $\sigma$ of $\C$ fixing $\ob(\cG_i)$, $\mor_{\cG_i}(a^0, a^0)$, and $\mor_{\cG_i}(a^1, a^1)$ pointwise while mapping $f$ to $g$.  (In the construction of \cite{GK}, the ``basepoint'' $a_0$ there can be chosen to be $a^0$ here, and then condition (5) of the construction plus the fact that $\cG_i$ is abelian implies that $\mor_{\cG_i}(a^1, a^1)$ is fixed.)  In fact, it is easy to see that we can even ensure that $\sigma$ fixes $\mor_{\cG_j}(\chi_{j,i}^{-1}(a^0), \chi_{j,i}^{-1}(a^0))$ and $\mor_{\cG_j}( \chi_{j,i}^{-1}(a^1),  \chi_{j,i}^{-1}(a^1))$ pointwise, so by Lemma~\ref{acl2}, $\sigma$ fixes $\acl(a^0) \cup \acl(a^1)$ pointwise.
\end{proof}

Let $p = \stp(a_0)$ for some (any) $a_0 \in \ob(\cG_0)$.

\begin{proposition}
$H_2(p) \cong G$.
\end{proposition}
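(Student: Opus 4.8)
The plan is to apply the Hurewicz-type result Theorem~\ref{hurewicz}. First I would record that the standing hypotheses are met: $T$, being totally categorical, is $\omega$-stable, hence stable and rosy with elimination of hyperimaginaries, and $p=\stp(a_0)$ is a strong, hence stationary, type. Thus Theorem~\ref{hurewicz} gives $H_2(p)\cong\Gamma_2(p)=\Aut(\widetilde{a_0a_1}/\ov{a_0},\ov{a_1})$, where $(a_0,a_1)\models p^{(2)}$ and $\widetilde{a_0a_1}=\ov{a_0a_1}\cap\dcl(\ov{a,a_0}\cup\ov{a,a_1})$ for any $a\models p$ with $a\ind a_0a_1$. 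It then remains to identify $\Gamma_2(p)$ with $G$, which I would do by first computing $\widetilde{a_0a_1}$ and then computing the automorphism group directly from the explicit description of $\acl^{eq}$ in models of $T$.

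The first step is to show $\widetilde{a_0a_1}=\ov{a_0a_1}$. Fix $a\models p$ with $a\ind a_0a_1$; then $a,a_0,a_1$ are pairwise distinct objects of $\cG_0$ (non-algebraicity of $p$). Writing $x_j:=\chi_{j,0}^{-1}(x)$, Lemma~\ref{acl2} gives $\ov{aa_0}=\dcl^{eq}\bigl(\bigcup_{j\ge 0}\mor_{\cG_j}(a_j,(a_0)_j)\bigr)$ and likewise for $\ov{aa_1}$. For each $j$, fixing $f\in\mor_{\cG_j}(a_j,(a_0)_j)$ and letting $g$ range over the torsor $\mor_{\cG_j}(a_j,(a_1)_j)$, the composites $g\circ f^{-1}$ exhaust $\mor_{\cG_j}((a_0)_j,(a_1)_j)$ and each lies in $\dcl^{eq}(f,g)$; hence $\dcl^{eq}(\ov{aa_0}\cup\ov{aa_1})$ contains every $\mor_{\cG_j}((a_0)_j,(a_1)_j)$, and so contains $\ov{a_0a_1}$ by Lemma~\ref{acl2} again. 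Intersecting with $\ov{a_0a_1}$ yields the claim, so $\Gamma_2(p)=\Aut(\ov{a_0a_1}/\ov{a_0},\ov{a_1})$.

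The second step is to show $\Aut(\ov{a_0a_1}/\ov{a_0},\ov{a_1})\cong G$. Put $M_j=\mor_{\cG_j}((a_0)_j,(a_1)_j)$; by Lemma~\ref{acl2}, $\ov{a_0a_1}=\dcl^{eq}\bigl(\bigcup_{j\ge 0}M_j\bigr)$, while the vertex groups $\mor_{\cG_j}((a_0)_j,(a_0)_j)$ and $\mor_{\cG_j}((a_1)_j,(a_1)_j)$ lie in $\ov{a_0}$ and $\ov{a_1}$. Each $M_j$ is a free transitive torsor under these vertex groups, which are abelian and isomorphic to $H_j$; hence the self-bijections of $M_j$ commuting with both vertex-group actions form a group canonically isomorphic to $H_j$ (realized as left translations by the vertex group at $(a_1)_j$, which commute with the right action since that group is abelian). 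Any $\sigma\in\Aut(\ov{a_0a_1}/\ov{a_0},\ov{a_1})$ fixes the vertex groups, so $\sigma\upharpoonright M_j$ is such a bijection, and $\sigma$ is determined by the family $(\sigma\upharpoonright M_j)_j$ since $\ov{a_0a_1}$ is generated by the $M_j$. The functors $\chi_{j,i}$ are $\emptyset$-definable and restrict to surjections $M_j\to M_i$ intertwining the vertex-group actions via $\phi_{j,i}:H_j\to H_i$, so $\sigma$ commutes with them; thus $\sigma\mapsto(\sigma\upharpoonright M_j)_j$ is an injective homomorphism into $\varprojlim H_j=G$. Conversely, a coherent family $(v_j)_j\in\varprojlim H_j$ determines, via the corresponding left translations of the $M_j$ (and their inverses), a partial map on $\ov{a_0a_1}$ fixing $\ov{a_0}\cup\ov{a_1}$ and respecting every groupoid operation and every $\chi_{j,i}$; by quantifier elimination this partial map is elementary, so it extends to an automorphism of $\C$ realizing $(v_j)_j$. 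Hence the homomorphism is onto, and $H_2(p)\cong\Gamma_2(p)\cong G$.

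The main obstacle I anticipate is the second step: making precise and canonical the identification of the equivariant self-bijections of the torsor $M_j$ with $H_j$, and — crucially — verifying these identifications are compatible along the $\chi_{j,i}$, since this is exactly where the hypothesis ``$G=\varprojlim H_j$ along the $\phi_{j,i}$'' enters. The other delicate point is checking that a coherent family $(v_j)_j$ assembles into a single partial elementary map rather than just a disjoint family of bijections; this uses quantifier elimination together with the description of algebraically closed sets in Remark~\ref{acl}.
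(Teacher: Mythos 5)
Your proposal is correct and takes essentially the same route as the paper: reduce via Theorem~\ref{hurewicz} to computing $\Gamma_2(p)=\Aut(\widetilde{a_0a_1}/\ov{a_0},\ov{a_1})$, identify $\widetilde{a_0a_1}$ (up to interdefinability) with $\bigcup_j\mor_{\cG_j}((a_0)_j,(a_1)_j)$ using Lemma~\ref{acl2} and composition through an independent third realization, and then identify the automorphism group with the inverse limit $G$ of the $H_j$ via left translations by the abelian vertex groups, coherently along the $\chi_{j,i}$. The only cosmetic difference is that the paper organizes the final step as an inverse limit of the level-wise groups $\Aut(\mor_{\cG_i}(a^0_i,a^1_i)/\ov{a^0},\ov{a^1})$ with a chosen coherent system $\lambda_i:H_i\to\mor_{\cG_i}(a^1_i,a^1_i)$, whereas you glue a coherent family of translations directly by a quantifier-elimination argument.
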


\begin{proof}
Pick $(a^0, a^1, a^2) \models p^{(3)}$.  By Theorem~\ref{hurewicz} (the ``Hurewicz theorem''), it is enough to show that $\Aut(\widetilde{a^0 a^1} / \ov{a^0}, \ov{a^1}) \cong G$.  For ease of notation, let $a^k_i = \chi^{-1}_{i,0}(a^k)$ for $k = 0, 1,$ or $2$.  By Lemma~\ref{acl2} and the fact that any morphism in $\mor_{\cG_i}(a^0_i, a^1_i)$ is a composition of morphisms in $\mor_{\cG_i}(a^0_i, a^2_i)$ and $\mor_{\cG_i}(a^2_i, a^1_i)$, it follows that the set $\widetilde{a^0 a^1}$ is interdefinable with $\displaystyle\bigcup_{i  \in I} \mor_{\cG_i}(a^0_i, a^1_i) $.

So $\Aut(\widetilde{a^0 a^1} / \ov{a^0}, \ov{a^1})$ is the inverse limit of the groups $\Aut(\mor_{\cG_i}(a^0_i, a^1_i) / \ov{a^0}, \ov{a^1})$ under the natural homomorphisms $$\rho_{j,i} : \Aut(\mor_{\cG_j}(a^0_j, a^1_j) / \ov{a^0}, \ov{a^1}) \rightarrow \Aut(\mor_{\cG_i}(a^0_i, a^1_i) / \ov{a^0}, \ov{a^1})$$ induced by the fact that $\mor_{\cG_i}(a^0_i, a^1_i)$ is in the definable closure of $\mor_{\cG_i}(a^0_j, a^1_j)$ when $j \geq i$.

By the way we defined our theory $T$, we can select a system of group isomorphisms $\lambda_i : H_i \rightarrow \mor_{\cG_i}(a^1_i, a^1_i)$ for $i \in I$ such that the following diagram commutes:

$$\begin{CD}
H_j @>\phi_{j,i}>> H_i\\
@VV\lambda_jV @VV\lambda_iV\\
\mor_{\cG_j}(a_j,a_j) @>\chi_{j,i}>> \mor_{\cG_i}(a_i,a_i)
\end{CD}$$

To finish the proof of the Proposition, it is enough to find a system of group isomorphisms $$\sigma_i : H_i \rightarrow \Aut(\mor_{\cG_i}(a^0_i, a^1_i) / \ov{a^0}, \ov{a^1})$$ such that the following diagram commutes:

$$\begin{CD}
H_j @>\phi_{j,i}>> H_i\\
@VV\sigma_jV @VV\sigma_iV\\
\Aut(\mor_{\cG_j}(a^0_j, a^1_j) / \ov{a^0}, \ov{a^1}) @>\rho_{j,i}>> \Aut(\mor_{\cG_i}(a^0_i, a^1_i) / \ov{a^0}, \ov{a^1})
\end{CD}$$

(Then by the discussion above, $\Aut(\widetilde{a^0 a^1} / \ov{a^0}, \ov{a^1})$ will be isomorphic to the inverse limit of the groups $H_i$, which is $G$.)

We define the maps $\sigma_i$ so that for any $h \in H_i$ and any $g \in \mor_{\cG_i}(a^0_i, a^1_i)$, $$\left[\sigma_i(h)\right](g) = \lambda_i(h) \circ g.$$

(Note that this rule determines a unique elementary permutation of $\mor_{\cG_i}(a^0, a^1)$ fixing $\acl(a^0) \cup \acl(a^1)$ pointwise.)  This is a group homomorphism since $$\left[\sigma_i(h_1 h_2)\right] (g) = \lambda_i(h_1 h_2) \circ g = \lambda_i(h_1) \circ \lambda_i(h_2) \circ g = \left[\sigma_i(h_1) \circ \sigma_i(h_2) \right](g).$$  Clearly $\sigma_i$ is injective, and it is surjective because of the following:

\begin{claim}
For any $f$ and $g$ in $\mor_{\cG_i}(a_i^0, a_i^1)$, there is a \emph{unique} elementary permutation $\sigma$ of $\mor_{\cG_i}(a^0, a^1)$ sending $f$ to $g$ and fixing $\acl(a^0) \cup \acl(a^1)$ pointwise.
\end{claim}

\begin{proof}
If $f = h \circ g $ for $h \in \mor_{\cG_i}(a_i^1, a_i^1)$, then $\sigma(f')$ must equal $h \circ f' $ for any $f' \in \mor_{\cG_i}(a_i^0, a_i^1)$.

\end{proof}

Finally, we must check that the maps $\sigma_i$ commute with $\phi_{j,i}$ and $\rho_{j,i}$.  Pick any $j \geq i$, $h \in H_j$ and $f \in \mor_{\cG_i}(a^0_i, a^1_i)$.  On the one hand,

$$\rho_{j,i}\left(\sigma_j(h) \right) (f) = \chi_{j,i}\left(\sigma_j(h)(f') \right), \textup{  where } \chi_{j,i}(f') = f$$

$$= \chi_{j,i}(\lambda_j(h) \circ f') = \chi_{j,i}(\lambda_j(h)) \circ \chi_{j,i}(f') = \chi_{j,i}(\lambda_j(h)) \circ f.$$

On the other hand,

$$\left[\sigma_i(\phi_{j,i}(h))\right] (f) = \lambda_i(\phi_{j,i}(h)) \circ f = \chi_{j,i} (\lambda_j(h)) \circ f.$$

These last two equations show that $\rho_{j,i} \circ \sigma_j = \sigma_i \circ \varphi_{j,i}$, as desired.
\end{proof}

\begin{remark}
These examples also show that homology groups of types are not always preserved by nonforking extensions.  In the example above, if $A$ is some algebraically closed parameter set containing a point in $p(\C)$ and $q$ is the nonforking extension of $p$ over $A$, then $q$ has $4$-amalgamation, and so (by Corollary~\ref{trivial_homology}) $H_2(q) = 0$.
\end{remark}

\section{Unstable examples}

In this section, we compute some more homology groups for unstable rosy examples.

\begin{example}\label{tet.free}
In this first example, as promised, we argue that all the homology groups of the  tetrahedron-free random ternary hypergraph are trivial, even though it does not have 4-amalgamation.  Let $T_{tet.free}$ be the theory of  such a graph with the ternary relation $\{R\}$. It is well-known that  $T_{tet.free}$ is $\omega$-categorical, simple, has weak elimination of imaginaries, and has $n$-amalgamation for all $n \neq 4$.  Let $p$ be the unique 1-type over $\emptyset$. We first claim that even though $T_{tet.free}$ does not have 4-amalgamation,
Lemma~\ref{shellcycle} still holds.

\begin{claim}\label{thesame}
In  $T_{tet.free}$, for $n\geq 2$, every
$(n-1)$-cycle $c=\sum_i k_if_i$ of type $p$ over $\emptyset$ is a sum of $(n-1)$-shells.
\end{claim}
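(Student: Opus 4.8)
The plan is to re-run the proof of Lemma~\ref{shellcycle} essentially verbatim, and to check that in $T_{tet.free}$ the one place where $k$-amalgamation for $k\ge 4$ was used can be replaced by an argument that only invokes the amalgamation properties $T_{tet.free}$ actually has, together with a single careful choice. Recall that the proof of Lemma~\ref{shellcycle} reduces, via Claim~\ref{cancelling}, to the following: for each simplex $f_i$ appearing in $c$ and each $j$, writing $g_{ij}=\bd^j f_i$ and fixing a fresh vertex $m$, one builds an $(n-1)$-simplex $h_{ij}$ with support $\supp(g_{ij})\cup\{m\}$, bottom face $\bd^{n-1}h_{ij}=g_{ij}$, and satisfying the coherence conditions (b) and (c) of Claim~\ref{cancelling}. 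Such an $h_{ij}$ is assembled inductively, filling in the faces $h_{ij}(t)$ with $m\in t$ in increasing order of $|t|=2,3,\dots,n$, the step $|t|=k$ using $k$-amalgamation. Since $T_{tet.free}$ is $\omega$-categorical and simple, it has $k$-amalgamation for every $k\ne 4$; so the construction goes through as written except possibly at the steps with $|t|=4$.

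To handle those, I would strengthen the construction at the $|t|=3$ stage. When $3$-amalgamation is used to choose the $2$-simplex $h_{ij}(\{k,\ell,m\})$ (for $k<\ell$ in $\supp(g_{ij})$), additionally require that $R$ \emph{fails} on the triple $\{h_{ij}(\{k\}),h_{ij}(\{\ell\}),h_{ij}(\{m\})\}$, making this choice uniformly so that the coherence requirement ``$h_{ij}(\{k,\ell,m\})=h_{pq}(\{k,\ell,m\})$ whenever $\{k,\ell\}\subseteq \supp(g_{ij})\cap\supp(g_{pq})$'' is preserved exactly as in Claim~\ref{cancelling}. This is legitimate: $3$-amalgamation in $T_{tet.free}$ permits any consistent assignment of the triple type, and ``$\neg R$'' is consistent; moreover the resulting functor still lies in $\cA^{set}(p)$, because with this choice every $R$-triple of the vertex set of $h_{ij}$ lies inside $\supp(g_{ij})$, so the new vertex $h_{ij}(\{m\})$ attaches to no $R$-triple over the remaining vertices and hence realizes a nonforking extension of $p$ over them, as required by the definition of a closed independent set-functor.

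With this extra constraint the steps $|t|=k\ge 4$ all succeed. For $k\ge 5$ one simply invokes $k$-amalgamation of $T_{tet.free}$. For $k=4$, the $4$-element face $t$ to be filled contains $m$, and three of its four vertex-triples pass through $h_{ij}(\{m\})$, hence carry $\neg R$; so the four vertices of $h_{ij}(t)$ do not span a tetrahedron, and since tetrahedron-freeness is the only obstruction to filling a coherent $2$-shell in $T_{tet.free}$, the required $3$-simplex exists (its coherence with the already-built $2$-faces, one of which lies in $g_{ij}$ and is untouched, being automatic). No new tetrahedron can appear on a face of $h_{ij}$ \emph{not} containing $m$, since every such face already lies in $g_{ij}\subseteq f_i$, which is a genuine simplex in $\cA$. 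Thus all the $h_{ij}$ can be constructed, and the remainder of the proof of Lemma~\ref{shellcycle} applies without change: setting $c_i:=\bigl(\sum_{0\le j\le n-1}(-1)^j h_{ij}\bigr)+(-1)^n f_i$ and $d:=\sum_i \alpha_i c_i$, the $h_{ij}$-terms cancel because $\bd c=0$ and equal faces of the $f_i$ receive equal $h$'s, so $c=\pm\sum_i\alpha_i c_i$ is a sum of $(n-1)$-shells.

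I expect the main obstacle to be the verification claimed in the second paragraph: that attaching the new vertex with no $R$-triple genuinely produces a nonforking extension of $p$ and remains compatible with the algebraic-closure condition built into $\cA^{set}(p)$. This is precisely the point where the specific nature of $T_{tet.free}$ (free amalgamation, trivial geometry on the ternary relation, weak elimination of imaginaries) must be used; everything else is bookkeeping inherited directly from Lemma~\ref{shellcycle}.
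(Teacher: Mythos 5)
Your proposal is correct and follows essentially the same route as the paper's own sketch: re-run Lemma~\ref{shellcycle} via Claim~\ref{cancelling}, and at the triangle level choose the $2$-simplices $h_{ij}(\{k,\ell,m\})$ so that $R$ fails on every triple through the new vertex, which makes the level-$4$ filling possible (three of the four faces are $\neg R$, so no tetrahedron can arise) while higher levels use the $k$-amalgamation that $T_{tet.free}$ does have. The paper likewise invokes weak elimination of imaginaries to treat vertices as points, exactly the point you flag in your last paragraph.
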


We sketch the proof, which is almost the same as that of \ref{shellcycle} (but as here we do not
have 4-amalgamation, we need some  trick.) We even use the same notation, letting
 $g_{ij} = \bd^j f_i$ for $(i,j)\in I$ ($j<n$).  We shall  find $(n-1)$-simplices $h_{ij}$ satisfying the conditions
 described in Claim~\ref{cancelling}. Note that due to weak elimination of imaginaries we can assume each vertex of a simplex is just a point in the graph. Now the construction method will be the same: first, pick a point $a^*$ independent from all the points $g_{ij}(\{k\})$.  Then the edges  $h_{ij}(\{k,m\})$, where $m\notin \bigcup_{ij} s_{ij}$, are determined.  For the next level, we need a trick. Namely, given an edge of the from $\{b,c\}=g_{ij}(\{k,\ell\})$, we find
 a point $a=(h_{ij})^{\{m\}}_{\{k,\ell,m\}}(h_{ij}(\{m\}))$ (while we may take $(h_{ij})^{\{k,\ell \}}_{\{k,\ell,m\}}$ as an identity map of $\{b,c\}$) such that $a,b,c$ are distinct and $R(a,b,c)$ {\em does not} hold). Then we can proceed the next level of the construction for $h_{ij}$ as no matter what the triangle $g_{ij}(\{k,\ell,k'\})$ is (whether it satisfies $R$ or not), we can amalgamate the other three triangular faces which do not satisfy $R$. The rest of the proof is the same. We have proved Claim~\ref{thesame}.

 Now to show that $H_n(p)=0$ $(n\geq 1)$, it suffices to see that any $n$-shell is a boundary. Due to $(n+2)$-amalgamation, this is true for any $n\ne 2$. But any 2-shell is a boundary as well. The only case to check is that of a 2-shell $f=f_0-f_1+f_2-f_3$ with support $\{0,1,...,4\}$ such that $f_i(\{0, \ldots, \hat i, \ldots,4\})$ satisfies $R$. But by taking a suitable point with support $\{5\}$ distinct from all such faces, it easily follows $f$ is the boundary of a $3$-fan.

\end{example}

\begin{example}
\label{dlo}
Here we show the theory $T_{dlo}$ of dense linear ordering (without end points) is another example whose homology groups are all trivial even though it does not have 3-amalgamation.  Recall that it has elimination of imaginaries.
Let $p$ be the unique 1-type over $\emptyset$. It is not hard to see that $p$ has $n$-amalgamation for all $n\ne 3$. Now
we claim that, just like in Claim~\ref{thesame}, any $n$-cycle is a sum of $n$-shells.  The proof will be similar, and we use the same notation.  We want to construct the edges $h_{ij}$.
The trick this time is to take $a^*$ greater than all the points of the form   $a'=g_{ij}(\{k\})$.
Then given any edge  $\{b,c\}=g_{ij}(\{k,\ell\})$, where either $b<c$ or $c<b$, pick $a>b,c$.   Then since
$\tp(a'a^*)=\tp(ba)=\tp(ca)$, the construction of $h_{ij}$ on this level is compatible. For the rest of the construction, use $n$-amalgamation.

Due to the claim and $(n+2)$-amalgamation, all of the groups $H_n(p)$ are $0$ for $n\ne 1$.  Furthermore, $H_1(p)=0$ because any 1-shell is the boundary of a 2-fan (choose a point greater than all the vertices of all the terms in the 1-shell).

\end{example}

\begin{example}
\label{parity}
In \cite{KKT}, for each {\bf even} $n\geq 4$,   the theory $U_n$ of the Fraiss\'{e} limit of the following class
$K_n$ is introduced.

Let $R_n$ be an $n$-ary relation symbol.
We consider symmetric and irreflexive $R_n$-structures. For any
$R_n$-structure $A$ with a finite substructure $B$, let
${\mathrm {no}}_A(B)$ denote
the number (modulo $2$) of
$n$-element subsets of $B$ satisfying $R_n$.
If $A$ is clear from the context, we simply write ${\mathrm {no}}(B)$.

Let $(*)_n$ be the following condition on an $R_n$-structure $A$:
\begin{itemize}
\item[$(*)_n$] If $A_0$ is an $(n+1)$-element subset of $A$, then
${\mathrm {no}}(A_0)=0$.
\end{itemize}
Now  $K_n$ is  the class of all finite (symmetric and irreflexive)
$R$-structures satisfying $(*)_n$.

 It is shown in  \cite{KKT} that $U_n$ is $\omega$-categorical, supersimple of $SU$-rank 1, and has quantifier elimination, weak elimination of imaginaries,  and $n$-CA, but that $U_n$ does not have  $(n+1)$-amalgamation.

Now let $p_n$ be the unique  1-type of $U_n$.

\begin{claim}\label{H3}
$H_m(p_n)=0$ for $1\leq m<n-1$; $H_{n-1}(p_n)={\mathbb{Z}}_2.$
\end{claim}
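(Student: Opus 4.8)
The plan is to reduce to shells and then exhibit a ``parity'' homomorphism onto $\mathbb{Z}_2$. First, the vanishing for $1 \le m < n-1$ is automatic: since $U_n$ has $n$-CA it has $(m+2)$-CA for every $m \le n-2$, so Corollary~\ref{trivial_homology} gives $H_m(p_n) = 0$ for all such $m$. (As usual we add constants for $\acl(\emptyset)$, so $p_n$ becomes a strong type over an algebraically closed base.) For the remaining group, $n$-CA also lets us apply Theorem~\ref{Hn_shells} to the amenable family $\cA^{set}(p_n)$: the group $H_{n-1}(p_n)$ is the set of classes $[c]$ of $(n-1)$-shells of type $p_n$ with support $[n]$. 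The homomorphism I would use is built from the parity function of \cite{KKT}. For an $(n-1)$-simplex $f$ of type $p_n$ with support $s$ (so $|s| = n$), let $\mathrm{no}(f) \in \mathbb{Z}_2$ be $1$ exactly when $R_n$ holds of the $n$ images of the vertices of $f$ inside the top face $f(s)$, and extend $\mathrm{no}$ linearly mod $2$ to a homomorphism $\Phi$ on the group of $(n-1)$-chains of type $p_n$.

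The first thing to check is that $\Phi$ vanishes on boundaries. If $g$ is an $n$-simplex of type $p_n$ with support $[n]$, the $n+1$ vertex-images inside $g([n])$ form an $(n+1)$-element subset of $\C$, hence a finite structure in the age $K_n$ of $U_n$; since the transition maps of $g$ are elementary, $\mathrm{no}(\bd^i g)$ records whether $R_n$ holds of the $n$-element subset of this set obtained by omitting the $i$-th vertex. Summing over $i$ and using $(*)_n$, $\Phi(\bd g) = \sum_{i=0}^{n} \mathrm{no}(\bd^i g) = \mathrm{no}(g([n])) = 0$. Thus $\Phi$ descends to a homomorphism $\widetilde\Phi \colon H_{n-1}(p_n) \to \mathbb{Z}_2$.

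For surjectivity, take $n+1$ independent realizations $a_0, \dots, a_n$ of $p_n$ on which $R_n$ never holds, and let $g$ be the $n$-simplex on them with all transition maps inclusions; then $\bd g$ is a shell with $\Phi(\bd g) = 0$ and every face $\bd^i g$ has $\mathrm{no} = 0$. Because the faces of an $(n-1)$-simplex have only $n-1$ vertices and $R_n$ is $n$-ary, the proper faces of $\bd^0 g$ determine its $n$ vertices but impose no $R_n$-relation, so $\C$ contains an $(n-1)$-simplex $f_0'$ on the same proper faces with $\mathrm{no}(f_0') = 1$ (there is no parity constraint on a single $n$-element set). Then $c := \bd g + (f_0' - \bd^0 g)$ is again a cycle --- indeed $\bd(f_0' - \bd^0 g) = 0$ since the two simplices agree on all proper faces --- in fact an $(n-1)$-shell, and $\Phi(c) = 1$; hence $\widetilde\Phi$ is onto.

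Injectivity is the crux. Let $c$ be a cycle of type $p_n$ with $\Phi(c) = 0$; by Theorem~\ref{Hn_shells} we may replace $c$, modulo a boundary, by a shell $d$ of type $p_n$ with support $[n]$, and since $\Phi$ kills boundaries $\Phi(d) = 0$, i.e.\ the bits $b_i := \mathrm{no}(\bd^i d)$ satisfy $\sum_i b_i = 0$. It remains to show $d = \bd g$ for some $n$-simplex $g$. Viewing $d$ as a functor on $\P^-([n])$ --- a compatible system whose $(n-1)$-faces carry the bits $b_i$ and whose lower faces carry no $R_n$-data --- the equation $\sum_i b_i = 0$ says precisely that the $(n+1)$-element $R_n$-structure prescribed by the $b_i$ lies in $K_n$. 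Realizing it in $\C$ compatibly with the lower skeleton of $d$, which is possible using the amalgamations below dimension $n-1$ granted by $n$-CA together with the homogeneity of $\C$, produces an $n$-simplex $g$ with $\bd g = d$. This is exactly the ``positive'' half of the $(n+1)$-amalgamation analysis of $U_n$ in \cite{KKT}: the \emph{only} obstruction to amalgamating such a system is the parity $\Phi$. Hence $[d] = 0$, so $\widetilde\Phi$ is injective and $H_{n-1}(p_n) \cong \mathbb{Z}_2$. The hardest step is this last one --- confirming that a $\Phi$-trivial shell genuinely bounds, i.e.\ that $\Phi$ captures the full obstruction to $(n+1)$-amalgamation in $U_n$; the surjectivity ``twist'' is also a little delicate, since it relies on $p_n$ having two non-forking extensions over an $(n-1)$-element set, so that the bit of a top face is genuinely free once its skeleton is fixed.
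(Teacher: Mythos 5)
Your proof is correct and follows essentially the same route as the paper's: vanishing below degree $n-1$ via Corollary~\ref{trivial_homology} ($n$-CA), the parity augmentation homomorphism which kills boundaries because of $(*)_n$, and the identification of $H_{n-1}(p_n)$ using Theorem~\ref{Hn_shells} together with the key fact that an $(n-1)$-shell bounds an $n$-simplex if and only if its parity vanishes. The only cosmetic differences are that you spell out the construction of an odd-parity shell (the paper merely asserts its existence) and you organize the endgame as injectivity and surjectivity of the induced map, where the paper instead computes $[d]+[d]=[d']=0$ and shows every class of a shell equals $0$ or $[d]$.
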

\begin{proof}
Since $p_n$ has $n$-CA, due to \ref{trivial_homology}  we have $H_m(p_n)=0$ for $1\leq m<n-1$.

Now to compute $H_{n-1}$, we introduce  an augmentation map
$$\epsilon:S_{n-1}\cC(p_n) \to \mathbb{Z}_2$$
as follows: Let $f$ be an $(n-1)$-simplex of type $p$ with $\dom(f)=\P(s)$ with $|s|=n$.
Then we let $\epsilon(f)=1$ if and only if
$R_n(f(s))$
holds. The map $\epsilon$ obviously extends as a homomorphism $\epsilon: C_{n-1}\cC(p_n)\to \mathbb{Z}_2$.

 It follows from $(*)_n$ above that an $(n-1)$-shell $c$ is the boundary of $n$-simplex iff $\epsilon(c)=0$.
  Thus for any $(n-1)$-boundary $c$, we have $\epsilon(c)=0$.
 Hence $\epsilon$ induces a homomorphism  $\epsilon_*:H_{n-1}(p_n)\to \mathbb{Z}_2$.

 Note that  there is   an $(n-1)$-shell $d$ with support $\{0,...,n\}$ such that $\epsilon(d)=1$. Hence $\epsilon_*$ is onto.
By Theorem~\ref{Hn_shells} there is an $(n-1)$-shell $d'$ such that $[d]+[d]=[d']$.
But then $\epsilon(d')=0$ and $d'$ is an $(n-1)$-boundary, i.e. $[d]+[d]=0$.
Now let $c$ be an arbitrary  $(n-1)$-shell  with support $\{0,...,n\}$.
If $\epsilon(c)=0$, then $[c]=0$. If  $\epsilon(c)=1$, then by the same argument,
$[d]-[c]=0$, i.e. $[d]=[c]$.
We have verified Claim \ref{H3}.
\end{proof}

\end{example}

\begin{example}\label{ominh1}
Here we show that for any complete 1-type $p$ over $A=\acl(A)$  in an  o-minimal theory, $H_1(p)=0$. Basically we use a similar idea as in \ref{dlo}.

Let $T$ be any rosy theory, and let $p \in S(A)$ be any type in $T$ over $A$.

\begin{definition}
The type $p(x)$ has \emph{weak 3-amalgamation} if there is a type $q(x,y) \in S(A)$ such that:

\begin{enumerate}
\item Whenever $(a,b) \models q(x,y)$, then $a$ and $b$ are independent (over $A$) realizations of $p$; and
\item For any pair $(a,b)$ of independent realizations of $p$, there is a third realization $c$ of $p$ such that $c$ is independent from $ab$ and both $(a,c)$ and $(b,c)$ realize $q$.
\end{enumerate}
\end{definition}

So any Lascar strong type in a simple theory has weak $3$-amalgamation by the Independence Theorem.

\begin{lemma}
Any nonalgebraic $1$-type (of the home sort) in an o-minimal theory has weak $3$-amalgamation.
\end{lemma}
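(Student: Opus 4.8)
The plan is to build a type $q(x,y)\in S(A)$ that witnesses weak $3$-amalgamation by stipulating that $y$ is the realization of $p$ sitting ``as far to the right as possible, given $x$.'' Throughout we may assume $A=\acl(A)$. Recall that in a rosy o-minimal theory, dimension-independence and \th-independence coincide, so for a single home-sort element $a\notin\acl(A)$ we have $a\ind_A B$ iff $a\notin\acl(AB)$; in particular, for a nonalgebraic $1$-type ``independent over $A$'' just means non-algebraicity, and it is a symmetric relation.

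First I would record two facts about $P:=p(\C)$, the set of realizations of $p$ in the monster. (i) $P$ is convex: if $a,b\models p$, $a<e<b$, but $e\not\models p$, then some $\phi(x)\in p$ over $A$ fails at $e$; since $\phi(\C)$ is a finite union of intervals with endpoints in $\dcl(A)$ and $a,b\in\phi(\C)$, there is $d\in\dcl(A)$ with $a<d<b$, and then the $A$-formula $x<d$ distinguishes the types of $a$ and $b$, a contradiction. (ii) $P$ has no maximum: if $m=\max P$, then every $\sigma\in\Aut(\C/A)$ fixes $m$ (as $\sigma(m),\sigma^{-1}(m)\models p$ force $\sigma(m)\le m\le\sigma(m)$), so $m\in\dcl(A)$, contradicting that $p$ is nonalgebraic.

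Next, for any small $B\supseteq A$ I would define $p^{\uparrow}_B(x)$ to be the partial type asserting $x\models p$ together with $x>e$ for every $e\in\dcl(B)\cap P$ (with $\dcl(B)$ intersected with the home sort). Using o-minimality together with convexity of $P$, this is a \emph{complete} type over $B$: every $B$-formula has boundary points in $\dcl(B)$, and for each such point $d$ the partial type decides $x$ versus $d$ (if $d\in P$ it says $x>d$; if $d\notin P$, convexity of $P$ puts $d$ entirely below or entirely above $P$, hence below or above $x$). It is consistent by finite satisfiability, using that $P$ has no maximum; it extends $p$ by construction; and it is nonalgebraic, since a realization lying in $\acl(B)$ would have a $<$-least conjugate over $B$, which would then belong to $\dcl(B)\cap P$ and hence be strictly below itself.

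Finally I would put $q(x,y)\in S(A)$ to be the complete type asserting $x\models p$ and $y\models p^{\uparrow}_{Ax}$, and check the two clauses. Clause (1): if $(a,b)\models q$ then $a\models p$ and $b\models p^{\uparrow}_{Aa}\supseteq p$, so both realize $p$, and $b\notin\acl(Aa)$ gives independence. Clause (2): given independent $a,b\models p$, choose $c\models p^{\uparrow}_{Aab}$; since $\dcl(Aa)\cap P\subseteq\dcl(Aab)\cap P$ and likewise for $b$, monotonicity of the defining condition yields that $c$ satisfies the partial types defining $p^{\uparrow}_{Aa}$ and $p^{\uparrow}_{Ab}$, hence realizes them (they are complete over $Aa$, $Ab$), so $(a,c)\models q$ and $(b,c)\models q$; and $c\notin\acl(Aab)$ by non-algebraicity of $p^{\uparrow}_{Aab}$, giving $c\ind_A ab$. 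The only real content is the proof that $p^{\uparrow}_B$ is a complete type — this is exactly where o-minimality (finiteness of boundaries of definable sets) and the convexity of $P$ are used; everything else is bookkeeping. One could just as well push $y$ to the left end of $P$; the choice is immaterial.
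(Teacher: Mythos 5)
Your proof is correct and is essentially the paper's argument repackaged: the paper defines $q$ as the type of an \emph{extreme pair} — a pair $(a,b)$ with $b>f(a)$ for every $A$-definable function $f$ bounded within $p$ — and its key claim that any two extreme pairs have the same type over $A$ is exactly your completeness of $p^{\uparrow}_{Aa}$, proved by the same o-minimal boundary-point analysis (with convexity of $p(\C)$ and the absence of a maximal realization playing the role of the paper's discussion of eventually increasing/decreasing/constant functions and of maximal realizations). Your choice of $c\models p^{\uparrow}_{Aab}$ for clause (2) corresponds to the paper's compactness choice of $c>g(a,b)$ for all $A$-definable $g(y,z)$ bounded within $p$, and the independence checks via non-algebraicity and the coincidence of thorn-independence with acl-independence are the same in both; the differences are only bookkeeping.
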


\begin{proof}

Recall that since $T$ is o-minimal, any $A$-definable unary function $f(x)$ is either eventually increasing (that is, there is some point $c$ such that if $c < x < y$ then $f(x) < f(y)$, eventually decreasing, or eventually constant.  If $f$ is eventually constant with eventual value $d$, then $d \in \dcl(A)$.

We say an $A$-definable function $f(x_1,...,x_n)$ {\em bounded within} $p$ if for any  $c_1,...,c_n\models p$, there is  $d$ realizing $p$ such that $d > f(c_1,...,c_n).$ We call
a pair of realizations $(a,b)$ of $p$ an \emph{extreme pair} if whenever $f(x)$ is bounded within $p$, then $b > f(a)$.

First note that by the compactness theorem, for any $a$ realizing $p$, there is a $b$ realizing $p$ such that $(a,b)$ is an extreme pair.  Also, if $b \in \dcl(aA) = \acl(aA)$, then there is an $A$-definable function $f: p(\mathfrak{C}) \rightarrow p(\mathfrak{C})$ such that $b = f(a)$, so since there is no maximal realization $c$ of $p$ (because such a realization $c$ would be in $\dcl(A)$ and we are assuming that $p$ is nonalgebraic), it follows that $(a,b)$ is \textbf{not} an extreme pair.  So any extreme pair is algebraically independent over $A$ and hence thorn-independent (see \cite{On}).

\begin{claim}
Any two extreme pairs have the same type over $A$.
\end{claim}

\begin{proof}
It is enough to check that if $(a,b)$ and $(a,c)$ are two extreme pairs, then $\tp(b/Aa) = \tp(c/Aa)$.  By o-minimality, any $Aa$-definable set $X$ is a finite union of intervals, and the endpoints $\{d_1, \ldots, d_n\}$ of these intervals lie in $\dcl(Aa)$.  So $d_i = f(a)$ for some $A$-definable function $f$, and as we already observed $b,c\ne d_i$. Hence it suffices to see $b>d_i$ iff $c>d_i$. Now
by the definition of an extreme pair,

$$\forall x \models p \,\,\, \exists y \models p \left[ y > f(x)\right] \Rightarrow b > f(a) = d_i.$$

Also,

$$\exists x \models p \, \, \, \forall y \models p \left[ y \leq f(x) \right] \Rightarrow \forall x \models p \, \, \, \forall y \models p \left[y \leq f(x) \right]$$

because any two realizations of $p$ are conjugate under an automorphism in $\Aut(\mathfrak{C}/A)$ which permutes $p(\mathfrak{C})$, and so

$$\exists x \models p \, \, \forall y \models p \left[y \leq f(x) \right] \Rightarrow b \leq f(a) = d_i.$$

The same reasoning applies with $c$ in place of $b$, so

$$b > d_i = f(a) \Leftrightarrow \forall x \models p \,\,\, \exists y \models p \left[ y > f(x)\right]$$

$$\Leftrightarrow c > f(a) = d_i.$$

\end{proof}

Let $q(x,y) = \tp(a',b' / A)$ for some extreme pair.  Condition (2) of the definition of weak $3$-amalgamation can be ensured by picking $c\models p$ so that $c > g(a,b)$ for any $A$-definable function $g(y,z)$ bounded within $p$, which is possible by the compactness theorem.

\end{proof}

\begin{claim}
If $p$ has weak $3$-amalgamation, then $H_1(p) = 0$.
\end{claim}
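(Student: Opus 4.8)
The plan is to reduce to showing that every $1$-shell of type $p$ bounds, and then to cap such a shell off with a $2$-fan built from the type $q$.

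First, since $p\in S(A)$ always has $1$-amalgamation (extend the empty functor to a $0$-simplex) and $2$-amalgamation (existence of nonforking extensions of $p$), it has $2$-CA; so by Corollary~\ref{shellgen} (equivalently Lemma~\ref{shellcycle}), $H_1(p)$ is generated by the classes of $1$-shells, and it suffices to prove that every $1$-shell of type $p$ is a boundary.

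So fix a $1$-shell $c=f_0-f_1+f_2$ with support $\{0,1,2\}$, where $f_k$ is the edge opposite the vertex $k$; write $v_0,v_1,v_2$ for the three vertices, noting that the shell conditions identify the vertex-faces shared by the $f_k$. Choose a fresh index $m$. I would build a $2$-fan $d$ with support $\{0,1,2,m\}$, missing the face opposite $m$, so that $\partial d=\pm c$; concretely this amounts to extending the functor underlying $c$ to a functor on the downward closure of $\{\{0,1,m\},\{0,2,m\},\{1,2,m\}\}$, whose three maximal faces $g_0,g_1,g_2$ are the legs of the fan. The role of weak $3$-amalgamation is exactly to provide these legs: given the edge $f_k$, whose endpoints are an independent pair of realizations of $p$, clause~(2) of the definition yields a realization of $p$ to serve as $v_m$, two edges from $v_m$ to the endpoints of $f_k$ that are \emph{of the fixed type $q$}, and --- crucially --- mutual independence of the three resulting vertices over $A$; this is precisely the data of a $2$-simplex $g_k\in\cA^{set}(p)$ whose face opposite $m$ is $f_k$ and whose two remaining faces are $q$-edges. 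Thus, modulo $B_1(p)$, each $f_k$ is a combination of $q$-edges, and $c$ is homologous to a $1$-cycle assembled entirely from $q$-edges.

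The main obstacle is coherence of the fan: for $\partial d$ to telescope down to $\pm c$, the three legs must be glued along a common vertex $v_m$ and along shared $q$-edges, whereas the three applications of weak $3$-amalgamation above a priori produce three unrelated copies of $v_m$ and six unrelated $q$-edges living in different ambient structures. This is where the uniformity of the single complete type $q$ is essential: any two $q$-edges between a prescribed pair of $p$-vertices are isomorphic over those vertices, and any two choices of $v_m$ realize the same type over $A$. Combining these facts with Lemma~\ref{trivial_pocket} (isomorphic simplices with the same boundary differ by a boundary) and a back-and-forth over the vertices $v_0,v_1,v_2$, one can successively replace the $g_k$ by isomorphic copies that do agree on their $m$-faces, producing an honest functor on the claimed domain. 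Then $d:=\pm(g_0-g_1+g_2)$ is a $2$-fan with $\partial d=\pm c$, so $c\in B_1(p)$ and $H_1(p)=0$. I expect this gluing --- bookkeeping which copy of each vertex appears in each leg and checking that the intermediate corrections are themselves boundaries --- to be the technical heart of the proof; the rest is routine once $q$ is available.
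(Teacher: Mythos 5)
Your proposal follows essentially the same route as the paper's proof: use $2$-CA together with the shell-generation machinery of Section~1 (your Corollary~\ref{shellgen} citation does the same work as the paper's appeal to Theorem~\ref{Hn_shells}) to reduce to showing every $1$-shell of type $p$ bounds, and then cap the shell by a $2$-fan over a new apex whose three legs come from weak $3$-amalgamation. The only substantive divergence is the order of construction, and that is precisely where your write-up remains a sketch. The paper first fixes a single apex $a_3$ and the three type-$q$ edges $f_{i3}$ (each built from a pair realizing $q$, with the transition maps chosen once and for all), and only then applies clause~(2) --- once per triangle $\{i,j,3\}$ --- to fill in a $2$-simplex whose faces are the already chosen $f_{ij}$, $f_{i3}$, $f_{j3}$; with that ordering there is no after-the-fact gluing. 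You instead build the three legs independently and then propose to replace them by isomorphic copies whose $m$-faces literally agree. As stated this overreaches: keeping the face $f_k$ literally fixed while moving the $\{i,m\}$-face onto a prescribed $q$-edge calls for an elementary map controlled over the algebraically closed edge $f_k(\{i,j\})$ (indeed over the whole leg), and completeness of $q$ over $A$ alone does not furnish that. Two repairs are available. Either adopt the paper's ordering; or observe that literal agreement is not needed: after re-basing the three legs on one common apex $0$-simplex (compose the transition maps out of $\{m\}$ with an elementary bijection of the apex sets, which replaces each leg by an isomorphic functor and changes nothing else), the boundary of the resulting ``fan-like'' chain is $\pm c$ plus, in each slot $\{i,m\}$, a difference of two $q$-edges with the same vertex faces; each such difference is a $1$-pocket of isomorphic simplices, hence a boundary by Lemma~\ref{trivial_pocket}, so $c\in B_1(p)$ follows without ever producing an honest fan. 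With either repair your argument is correct and is, in substance, the paper's.
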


\begin{proof}

By Theorem~\ref{Hn_shells}
 it suffices to show that every (set) $1$-shell of type $p$ with support $\{0,1,2\}$ is a boundary of some $2$-fan of type $p$.  Let $c = f_{12} - f_{02} + f_{01}$ be such a shell, where $f_{ij}$ is a $1$-simplex with support $\{i,j\}$.  The condition that this is a shell implies that there are realizations $a_0, a_1,$ and $a_2$ of $p$ such that:

$$\bd_0 f_{01} = \bd_0 f_{02} = \acl_A(a_0),$$
$$\bd_0 f_{12} = \bd_1 f_{01} = \acl_A(a_1),$$
$$\bd_1 f_{12} = \bd_1 f_{02} = \acl_A(a_2).$$

(Note that actually the boundaries above technically should be $0$-shells, but $0$-shells are determined by their domain plus a realization of $p$.)

Pick any third realization $a_3$ of $p$ as a new vertex.  For $i = 0, 1,$ or $2$, we construct a $1$-simplex $f_{i3}$ based over $A$ with support $\{i,3\}$ by letting $f_{i3}(\{i\}) = \acl_A(a_i)$ and $f_{i3}(\{3\}) = \acl_A(a_3)$, and then letting $f_{i3}(\{i,3\}) = \acl_A(a'_i, a'_3)$ where $(a'_i, a'_3)$ realizes $q(x,y)$ as in the definition of weak $3$-amalgamation (with the obvious transition maps taking $a_i$ to $a'_i$ and $a_3$ to $a'_3$).

Finally, condition~(2) in the definition of weak $3$-amalgamation implies that there are $2$-simplices $f_{123}$, $f_{023}$, and $f_{013}$ whose boundaries are alternating sums of the corresponding $f_{ij}$'s: that is,

$$\bd f_{123} = f_{23} - f_{13} + f_{12},$$

$$\bd f_{023} = f_{23} - f_{03} + f_{02},$$

and

$$\bd f_{013} = f_{13} - f_{03} + f_{01}.$$

Now if $d$ is the $2$-chain $f_{013} + f_{123} - f_{023}$, then

$$\bd d = \left( f_{13} - f_{03} + f_{01} \right) + \left(f_{23} - f_{13} + f_{12} \right) - \left(f_{23} - f_{03} + f_{02} \right)$$

$$= f_{01} + f_{12} - f_{02} = c.$$

\end{proof}

\end{example}

\section{A non-commutative groupoid construction}


In singular homology theory, one of the differences between the fundamental group and $H_1$  is that the former is not necessarily commutative while  the latter is. In the authors' earlier papers \cite{GK}, \cite{gkk}, an analogue of  homotopy theory is developed but where the ``fundamental group'' in this context is always commutative. In this last section, by taking an approach closer to the original idea of homotopy theory, we suggest how to construct a different fundamental group in a non-commutative manner. More precisely, from a full symmetric witness to the failure of $3$-uniqueness in a stable theory,
 we construct a new groupoid $\CF$ whose ``vertex groups'' $\mor_{\CF}(a,a)$ need not be  abelian. In fact, we will show below that $\mor_{\cG}(a,a) \leq  \Z(\mor_{\CF}(a,a))$, where $\cG$ is the commutative groupoid constructed in \cite{GK} and \cite{gkk}.  We may call $\CF$ the {\em non-commutative groupoid } constructed from the full symmetric witness. But unlike the groupoid $\cG$, this new groupoid $\CF$ is definable only in certain cases (e.g. under $\omega$-categoricity); in general, it is merely invariant over some small set of parameters.

\medskip

Throughout this section, we take the notational convention described in section 4. We recall that
 $\Aut(A/B)$ is the group of
elementary maps from $A$ {\em onto} $A$ fixing $B$ pointwise.
In addition, $\Aut(\tp(f/B))$ means $\Aut(Y/B)$ where $Y$ is the solution set of
$\tp(f/B)$.
\medskip


\subsection{Finitary groupoid examples}

 Let $G$ be an arbitrary finite group.  Now let $T_G$ be the complete
stable  theory of the connected  finitary groupoid  $(O,M,.)$ with
the standard setting (so $.$ is the composition map between
morphisms)
 such that  $G_a:=\mor(a,a)$ is isomorphic to $G$  for any $a\in O$.
 Fix distinct $a,b\in O$ and a morphism $f_{0}\in \mor(a,b)$.

 Now by section 4  and weak elimination of imaginaries we know that
 $$H_2(O)=\Aut(\widetilde{ab}/\ov a,\ov
 b)=\Aut( \mor(a,b)/aG_abG_b).$$
Hence indeed (see section 4.2 in \cite{GK}, and note that
$\mor(a,b)\subseteq  \dcl(f_{0}G_a)$)
$$H_2(O) =\Aut(X/aG_abG_b).$$
where $X$ is the finite solution set
 of $\tp(f_{0}/aG_abG_b)$.

Now  for $f\in X$ there is unique $x\in G_a$ such that $f=f_0.x$,
and we claim that this $x$ must be in $\Z(G_a)$.

\begin{claim}
For $x\in G_a$, we have $g:=f_0.x\in X$ iff $x\in \Z(G_a)$.
 \end{claim}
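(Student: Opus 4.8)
The plan is to prove the two directions separately, using that in the monster model $g\in X$ iff $g$ lies in the $\Aut(\mathfrak{C}/aG_abG_b)$-orbit of $f_0$, so it suffices to understand which morphisms from $a$ to $b$ are conjugate to $f_0$ over $aG_abG_b$. Since any morphism $a\to b$ is uniquely of the form $f_0.x$ with $x\in G_a$, the question is exactly which $x$ occur.

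For the forward direction I would take $g=f_0.x\in X$ and fix $\sigma\in\Aut(\mathfrak{C}/aG_abG_b)$ with $\sigma(f_0)=g$. Given an arbitrary $z\in G_a$, set $w:=f_0.z.f_0^{-1}$, which lies in $G_b$ and satisfies $w.f_0=f_0.z$. Since $\sigma$ fixes $z$ and $w$ pointwise (both are in $G_a\cup G_b$) and respects composition, applying $\sigma$ to $w.f_0=f_0.z$ yields $w.(f_0.x)=(f_0.x).z$. Rewriting the two sides as $f_0.(zx)$ and $f_0.(xz)$ and cancelling the isomorphism $f_0$ gives $zx=xz$. As $z\in G_a$ was arbitrary, $x\in\Z(G_a)$.

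For the converse, given $x\in\Z(G_a)$ I would construct $\sigma\in\Aut(\mathfrak{C}/aG_abG_b)$ with $\sigma(f_0)=f_0.x$, which places $f_0.x$ in the orbit of $f_0$, hence in $X$. Following the recipe for object-fixing automorphisms from subsection~4.2 of \cite{GK} (applied in the same spirit as in Section~5 above), one builds $\sigma$ fixing every object and fixing $G_a$ pointwise while sending $f_0$ to $f_0.x$; centrality of $x$ is precisely the condition ensuring that this assignment extends to a well-defined functor and, via the analogue of condition~(5) of that construction, that $\mor(b,b)=G_b$ is fixed pointwise. Alternatively one can verify this by a direct back-and-forth argument using quantifier elimination for $T_G$.

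The main obstacle is the converse: one must check that the ``twist $f_0$ by $x$'' prescription really determines an automorphism of the whole monster model that fixes $G_b$ pointwise, and it is exactly here that $x\in\Z(G_a)$ is needed — without centrality the twist either fails functoriality or moves $G_b$, mirroring the computation in the forward direction.
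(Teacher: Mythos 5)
Your proof is correct and takes essentially the same route as the paper's: the forward direction is the paper's conjugation computation (apply an automorphism over $aG_abG_b$ to the relation $f_0.z.f_0^{-1}=w\in G_b$ and cancel $f_0$), and the converse invokes the object-fixing automorphism construction of subsection~4.2 of \cite{GK}, exactly as the paper does. The only cosmetic difference is that the paper first rewrites the right-twist by $x\in\Z(G_a)$ as the left-twist $z=f_0.g^{-1}$ and checks $z\in\Z(G_b)$ before citing that construction, whereas you argue directly that centrality of $x$ forces $G_b$ to be fixed pointwise; these are equivalent, since conjugation by $f_0$ carries $\Z(G_a)$ onto $\Z(G_b)$.
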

\begin{proof}
($\Rightarrow$) Since $g\in X$, $f_0\equiv_{G_aG_b} g$. Then for any
$y\in G_a$, we have $$f_0.y.f^{-1}_0(\in
G_b)=g.y.g^{-1}=f_0.x.y.x^{-1}.f^{-1}_0.$$
 Hence $x\in \Z(G_a)$.

($\Leftarrow$) There is $z\in G_b$ such that $f_0=z.g$. Now since
$x\in \Z(G_a)$, for any $y\in G_b$ we have
$$g^{-1}.y.g.x^{-1}=f_0^{-1}.z.y.z^{-1}.f_0.x^{-1}=x^{-1}.f_0^{-1}.z.y.z^{-1}.f_0=g^{-1}.z.y.z^{-1}.g.x^{-1}.$$
Hence $y=z.y.z^{-1}$, i.e. $z\in \Z(G_b)$. Now the  argument in
 \cite[4.2]{GK} says there is an automorphism fixing $aG_abG_b$
 pointwise while sending $g$ to $f_0$. Hence $g\in X$.
\end{proof}

 \begin{claim}
$H_2(O)=\Z(G)$.
 \end{claim}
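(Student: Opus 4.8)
The plan is to combine the two preceding claims with the Hurewicz-type computation recalled at the start of the section. By the displayed chain of equalities, $H_2(O) = \Aut(X / aG_a b G_b)$, where $X$ is the solution set of $\tp(f_0/aG_a bG_b)$. The first claim identifies $X$ explicitly: the map $x \mapsto f_0 . x$ is a bijection from $G_a$ onto $\mor(a,b)$, and by the first claim its restriction to $\Z(G_a)$ is exactly a bijection onto $X$. So $X = \{ f_0 . x : x \in \Z(G_a) \}$, and in particular $|X| = |\Z(G_a)| = |\Z(G)|$ (using $G_a \cong G$).

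Next I would write down the action of $\Aut(X / aG_a bG_b)$ concretely. Since every element of $X$ has the form $f_0.x$ with $x\in\Z(G_a)$ uniquely determined, an elementary permutation $\sigma$ of $X$ fixing $aG_a bG_b$ pointwise is determined by a function $\nu_\sigma : \Z(G_a) \to \Z(G_a)$ with $\sigma(f_0.x) = f_0.\nu_\sigma(x)$. The argument of \cite[4.2]{GK} (used already in the proof of the first claim) shows that for \emph{any} $x, x' \in \Z(G_a)$ there is an automorphism fixing $aG_a bG_b$ pointwise sending $f_0.x$ to $f_0.x'$; composing these shows $\Aut(X/aG_abG_b)$ acts transitively on $X$. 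To pin down the group structure, observe that for $z \in \Z(G_a)$ left composition by $z$, i.e. $f \mapsto z.f$, is an element of $\Aut(X/aG_a bG_b)$ (it fixes $G_a$ and $G_b$ pointwise because $z$ is central, and it preserves $X$ since $z.(f_0.x) = f_0.(x z)$ — here I would double-check that $z$, viewed through the identification $G_a \cong \mor(a,a)$, composes with $f_0.x$ to give $f_0.(zx)$, so that centrality of $z$ in $G_a$ makes $zx$ central). This exhibits a homomorphism $\Z(G_a) \to \Aut(X/aG_abG_b)$; it is injective since distinct central elements give distinct permutations of $X$ ($z \mapsto z . f_0 = f_0.z$, and $f_0 . z = f_0 . z'$ forces $z=z'$), and it is surjective because $|\Aut(X/aG_a bG_b)| \le |X| = |\Z(G_a)|$ — the group acts faithfully and transitively (indeed simply transitively, since an element fixing one point of $X$ must, by the transitivity-via-\cite{GK} argument together with uniqueness, fix all of $X$) on a set of size $|\Z(G_a)|$.

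Putting this together: $H_2(O) = \Aut(X/aG_abG_b) \cong \Z(G_a) \cong \Z(G)$, which is the claim. The main obstacle I expect is the bookkeeping in the surjectivity/simple-transitivity step: one must argue that an elementary permutation of $X$ fixing $aG_a bG_b$ pointwise that fixes even a single element of $X$ is the identity on $X$. This should follow from the fact that $\mor(a,b) \subseteq \dcl(f_0 G_a)$ together with the fact that such a permutation must commute with the (definable) left $G_b$-action and right $G_a$-action, so fixing $f_0.x$ determines the images of all $f_0.(z x z'^{-1})$, and transitivity of the $\Z(G_a)$-action then forces the permutation to be trivial. I would also take care that "solution set of a type over $aG_a bG_b$" is genuinely finite (it is, since $f_0 \in \acl(ab) \subseteq \acl(aG_a bG_b)$ by finiteness of $G$), so that all the counting arguments are legitimate.
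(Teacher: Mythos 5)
Your proposal is correct and is essentially the paper's argument: the paper likewise uses the preceding claim to see that $\Z(G_a)$ acts regularly on $X$, notes that $\Aut(X/aG_a bG_b)$ also acts regularly on $X$ and that the two actions commute (via the same \cite[4.2]{GK} automorphism argument you invoke), and concludes that the two groups coincide, so your explicit homomorphism $\Z(G_a)\to\Aut(X/aG_a bG_b)$ plus the simple-transitivity counting is just a concrete unwinding of that commuting-regular-actions argument. The only slip is notational: since $z\in G_a=\mor(a,a)$ and elements of $X$ lie in $\mor(a,b)$, the relevant action is precomposition $f\mapsto f.z$ (``left composition'' $z.f$ does not typecheck), but as you anticipated, centrality of $z$ makes the formula $f_0.x\mapsto f_0.(xz)$ correct and the argument goes through unchanged.
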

\begin{proof}
The proof will be similar to that of Proposition 2.15 in \cite{gkk}.
Note firstly that due to Claim 1, $\Z(G_a)$ acts  on $X$ as an
obvious manner. This action is clearly regular. Secondly
$\Aut(X/aG_abG_b)$ also regularly  acts on $X$. Also by the argument
in \cite[4.2]{GK} it easily follows  that two actions commute. Hence
they are the same group.
\end{proof}

Note that $f\equiv_{aG_a} f_{0}$ for any $f\in \mor(a,b)$ (see again
section 4.2 in \cite{GK}), i.e. $\mor(a,b)$ is the solution set of
$\tp(f_0/aG_a)$ or $\tp(f_0/\ov a)$. Moreover for $f\in \mor(a,b)$,
$f_0$ and $f$ are interdefinable over $\ov a$. We further claim the
following.

\begin{claim}\label{iso}
$G$ is isomorphic to $\Aut(\mor(a,b)/\ov{a})=\Aut(\mor(a,b)/aG_a)$.
Hence $H_2(O)=\Aut(\widetilde{ab}/\ov a,\ov
 b)=  \Aut( \mor(a,b)/aG_abG_b)=   \Z(\Aut(\mor(a,b)/aG_a)).$
\end{claim}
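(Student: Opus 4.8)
The plan is to identify the group $\Aut(\mor(a,b)/\ov a)$ with the vertex group $G_a$ and then read off its centre; I would do this in three steps.

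First I would determine $\ov a$. Since $G_a=\mor(a,a)$ is finite we have $G_a\subseteq\acl(a)$, while conversely, in a connected groupoid with infinitely many objects any object $c\ne a$ has an infinite orbit under $\Aut(\C/a)$, so $c\notin\acl(a)$; consequently a morphism whose source or target differs from $a$ is not algebraic over $a$ either, and a morphism with source and target $a$ lies in $G_a$. Weak elimination of imaginaries leaves nothing further to check in $\C^{eq}$. Hence $\acl(a)=aG_a$, and therefore $\Aut(\mor(a,b)/\ov a)=\Aut(\mor(a,b)/aG_a)$.

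Next I would construct an isomorphism $\Phi\colon\Aut(\mor(a,b)/aG_a)\to G_a$. As $\mor(a,b)$ is a right torsor under $G_a$, each $\sigma\in\Aut(\mor(a,b)/aG_a)$ determines a unique $z_\sigma\in G_a$ with $\sigma(f_0)=f_0\cdot z_\sigma$, and I set $\Phi(\sigma)=z_\sigma$. Since $\sigma$ is elementary and fixes $G_a$ pointwise, $\sigma(f_0\cdot y)=\sigma(f_0)\cdot y$ for all $y\in G_a$, so $\sigma$ is completely determined by $z_\sigma$ (hence $\Phi$ is injective), and the computation $(\sigma\tau)(f_0)=\sigma(f_0\cdot z_\tau)=f_0\cdot z_\sigma z_\tau$ shows $\Phi$ is a homomorphism. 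For surjectivity I use the fact noted above that $\mor(a,b)$ is precisely the solution set of $\tp(f_0/aG_a)$: given $z\in G_a$, the morphism $f_0\cdot z$ belongs to $\mor(a,b)$, so $f_0\cdot z\equiv_{aG_a}f_0$ and there is $\widehat\sigma\in\Aut(\C/aG_a)$ with $\widehat\sigma(f_0)=f_0\cdot z$; the source and target of a morphism lying in its definable closure, $\widehat\sigma$ fixes $a$ and $b$, hence restricts to an element of $\Aut(\mor(a,b)/aG_a)$ with $\Phi$-value $z$. This yields $\Aut(\mor(a,b)/\ov a)=\Aut(\mor(a,b)/aG_a)\cong G_a\cong G$.

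Finally, for the ``Hence'' clause I would combine this with the earlier computations. We already have $H_2(O)=\Aut(\widetilde{ab}/\ov a,\ov b)=\Aut(\mor(a,b)/aG_abG_b)$, and it remains to see that $\Phi$ maps $\Aut(\mor(a,b)/aG_abG_b)$ onto $\Z(G_a)$. Indeed, if $\sigma$ fixes $aG_abG_b$ pointwise then $\sigma(f_0)=f_0\cdot z_\sigma$ realizes $\tp(f_0/aG_abG_b)$, so $f_0\cdot z_\sigma\in X$ and, by the first Claim above (that $f_0\cdot x\in X$ iff $x\in\Z(G_a)$), $z_\sigma\in\Z(G_a)$; conversely, the proof of that Claim produces, for each $z\in\Z(G_a)$, an automorphism of $\C$ fixing $aG_abG_b$ pointwise and sending $f_0$ to $f_0\cdot z$, which restricts to a $\sigma$ with $\Phi(\sigma)=z$. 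Since $\Phi$ is an isomorphism, $\Z\bigl(\Aut(\mor(a,b)/aG_a)\bigr)=\Phi^{-1}(\Z(G_a))=\Aut(\mor(a,b)/aG_abG_b)$, and the stated chain of equalities follows. I expect the surjectivity of $\Phi$ to be the only real obstacle: it rests on $\mor(a,b)$ being the \emph{full} solution set of $\tp(f_0/\ov a)$, i.e.\ on there being enough $aG_a$-fixing automorphisms of $\C$ realizing every right translation, which is the same point that underlies the construction of object-fixing automorphisms in \cite[4.2]{GK}.
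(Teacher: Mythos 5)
Your proof is correct and takes essentially the same route as the paper: the paper identifies $\Aut(\mor(a,b)/\ov a)$ with a vertex group via the regular (left) action of $G_b$ on $\mor(a,b)$, which is the same torsor argument you carry out with the right $G_a$-action, resting on the fact that $\mor(a,b)$ is the full solution set of $\tp(f_0/\ov a)$ together with homogeneity. Your additional steps (checking $\ov a$ is interdefinable with $aG_a$, the surjectivity of $\Phi$, and the identification of $\Aut(\mor(a,b)/aG_abG_b)$ with $\Phi^{-1}(\Z(G_a))$ via the first Claim) merely make explicit what the paper's terse proof leaves to the reader.
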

\begin{proof}
We know $G$ and $G_b$ are isomorphic. Now clearly we can consider
$\sigma\in G_b$ as an automorphism in $\Aut(\mor(a,b)/\ov{a})$ via
the map $f(\in \mor(a,b))\mapsto \sigma.f$. Now this correspondence
is clearly 1-1 and onto (both groups are finite). It is obvious that
the correspondence is an isomorphism.
\end{proof}
In the following section we try to search this phenomenon
 in the general stable theory context.   Namely given the abelian groupoid
 built  from a  {\em symmetric witness}
  introduced in \cite{GK}, we construct an extended groupoid
  possibly non-abelian but the abelian groupoid places in the
  center of the new groupoid.  In the case of
above $T_G$, as we seen the morphism group of the abelian groupoid
is $\Z(G)$, but in the extended one the morphism group is equal to
$G$.

\subsection{The non-commutative  groupoid $\CF$}

For the rest of this section, we work in a complete \emph{stable} theory $T$ with
monster model $\C=\C^{eq}$.

We will work with full symmetric witnesses to the failure of $3$-uniqueness as defined in Definition~\ref{full_symm_witness} above.

First we fix some notation that we will refer to throughout the rest of the section.  Let $(b_0, b_1, b_2, f'_{01}, \ldots)$ be a full symmetric witness to the failure of $3$-uniqueness, for convenience over the base set $\emptyset = \acl(\emptyset)$.  Recall from the discussion in section~4 above that from this witness we can construct a definable connected abelian groupoid $\cG$ such that $\operatorname{Ob}(\G)=p(\C)$ where $p=\tp(b_i)$, and there is a
canonical bijection $\pi$ from the finite solution set of
$\tp(f'_{01}/\ov{b_0} \cup \ov{b_1})$ to $\mor_{\G}(b_0,b_1)$ in such a way that
$f_{01}:=\pi(f'_{01})$ and $f'_{01}$ are interdefinable over  $b_{01}$, so
$\mor_{\G}(b_0,b_1)$ also is the solution set of
$\tp(f_{01}/\ov{b_0} \cup \ov{b_1})$ (equivalently of $\tp(f_{01}/\ov{b_0} \cup \ov{b_1})$). Moreover
the abelian group $\mor_{\G}(b_i,b_i)$ is isomorphic to
$\Aut(\tp(f_{01}/b_{01}))=\Aut(\tp(f'_{01}/\ov{b_0} \cup \ov{b_1}))$.

Now as promised, by extending the construction method given in \cite{GK} we find
another groupoid $\CF$ (which will be definable {\em only in a certain context})
from the same full symmetric witness $(b_0, b_1, b_2, f'_{01}, \ldots)$.
$\ob(\CF)$ will be the same as $\ob(\G)$.  But  $\CF$ need not be
abelian as  $\mor_{\CF}(b_i,b_i)$ will be $\Aut(Y_{01}/\ov b_0)$, where $Y_{01}$ is the
{\em possibly infinite} set
$$Y_{b_{01}}=Y_{01}:=\{f\in\dcl(f_{01},\ov{b_0})|\ f\equiv_{\ov b_0}f_{01}
\text{ and } \dcl(f\ov{b_0})=\dcl(f_{01}\ov{b_0})\}.$$ Note that
$\dcl(f_{01},\ov{b_0})=\dcl(f_{01}b_1\ov{b_0})$ since $b_1\in\dcl(f_{01})$.
Moreover  $Y_{01}$ and  $Y'_{01}$, the set defined the same way as $Y_{01}$ but substituting $b_1f'_{0}$ for $f_{01}$, are interdefinable. Furthermore, we shall see that $\mor_{\G}(b_i,b_i)\leq
\Z(\mor_{\CF}(b_i,b_i))$ (Claim \ref{centergp}). We will call $\CF$
the {\em non-commutative  groupoid} constructed from the symmetric witness.

\begin{claim}
The set $Y_{01}$ defined in the previous paragraph depends only on $b_0$ and $b_1$ and not on the choice of $f_{01} \in \mor_{\cG}(b_0, b_1)$.
\end{claim}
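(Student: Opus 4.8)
The plan is to reduce the statement to the single assertion that $\dcl(f_{01},\ov{b_0})=\dcl(g_{01},\ov{b_0})$ for any two morphisms $f_{01},g_{01}\in\mor_{\cG}(b_0,b_1)$. Recall from the construction (Proposition~\ref{groupoid_construction}(3) together with the remarks just before the claim) that $\mor_{\cG}(b_0,b_1)$ is precisely the solution set of $\tp(f_{01}/\ov{b_0}\cup\ov{b_1})$; hence any two of its elements have the same type over $\ov{b_0}$, so the condition ``$f\equiv_{\ov{b_0}}f_{01}$'' in the definition of $Y_{01}$ is unchanged if $f_{01}$ is replaced by $g_{01}$. If moreover $\dcl(f_{01}\ov{b_0})=\dcl(g_{01}\ov{b_0})$, then the condition ``$\dcl(f\ov{b_0})=\dcl(f_{01}\ov{b_0})$'' is also unchanged (and it already subsumes the requirement $f\in\dcl(f_{01},\ov{b_0})$), so the two sets $Y_{01}$ coincide. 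Thus everything comes down to the displayed equality of definable closures.

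To prove that, I would establish the key point that $\mor_{\cG}(b_0,b_1)\subseteq\dcl(f_{01},\ov{b_0})$ for every $f_{01}\in\mor_{\cG}(b_0,b_1)$. The observation is that $\mor_{\cG}(b_0,b_1)$ is a free transitive right torsor over the vertex group $\mor_{\cG}(b_0,b_0)$: for any $g_{01}\in\mor_{\cG}(b_0,b_1)$, the morphism $k:=f_{01}^{-1}\circ g_{01}$ lies in $\mor_{\cG}(b_0,b_0)$ and $g_{01}=f_{01}\circ k$. By Proposition~\ref{groupoid_construction}(4) the group $\mor_{\cG}(b_0,b_0)$ is finite, and it is $b_0$-definable as a set, so $\mor_{\cG}(b_0,b_0)\subseteq\acl(b_0)=\ov{b_0}$; in particular $k\in\dcl(\ov{b_0})$. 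Since composition in $\cG$ is definable, $g_{01}=f_{01}\circ k\in\dcl(f_{01},\ov{b_0})$, as wanted. Applying this with the roles of $f_{01}$ and $g_{01}$ interchanged gives $f_{01}\in\dcl(g_{01},\ov{b_0})$ and $g_{01}\in\dcl(f_{01},\ov{b_0})$, hence $\dcl(f_{01},\ov{b_0})=\dcl(g_{01},\ov{b_0})$, completing the argument.

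The only thing that needs a little care is that we are working in $\C^{eq}$ with the conventions of Section~4, so all occurrences of $\acl$ and $\dcl$ (including inside $\ov{b_0}=\acl(b_0)$) are taken in $\C^{eq}$, and the morphisms of $\cG$ are imaginary elements; with this reading the inclusion $\mor_{\cG}(b_0,b_0)\subseteq\ov{b_0}$ is exactly the statement that a finite $b_0$-definable set of imaginaries lies in $\acl^{eq}(b_0)$. I do not expect any genuine obstacle here: conceptually the point is simply that the ambiguity in choosing $f_{01}$ is the right action of the vertex group at $b_0$, and that group has already been absorbed into the parameter set $\ov{b_0}$, so it does not affect $\dcl(f_{01},\ov{b_0})$ at all.
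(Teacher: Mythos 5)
Your argument is correct and is essentially the paper's own proof: the paper also notes that any other $g\in\mor_{\cG}(b_0,b_1)$ has the same type over $\ov{b_0}$ as $f_{01}$ and writes $g=f_{01}\circ h$ with $h\in\mor_{\cG}(b_0,b_0)\subseteq\acl(b_0)$, concluding interdefinability over $\ov{b_0}$. Your write-up merely spells out the details (finiteness of the vertex group, definability of composition, working in $\C^{eq}$) that the paper leaves implicit.
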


\begin{proof}
Given any other $g \in \mor_{\cG}(b_0, b_1)$, we have that $g \equiv_{\ov{b}_0} f_{01}$, as already discussed.  Also, $g = f_{01} \circ h$ for some $h \in \mor_{\cG}(b_0, b_0) \subseteq \acl(b_0)$, and so $g$ and $f_{01}$ are interdefinable over $\acl(b_0)$.  Now the result follows.
\end{proof}

For convenience,  fix independent $a,b\models p$ and $f_{ab}$ such
that $b_{01}f_{01}\equiv abf_{ab}$. We use $\Pi_{ab}$ to denote $\mor_{\G}(a,b)$, and use $\Pi_{a}$
for $\Pi_{aa}$. As mentioned above, $\Pi_{ab}$ is the
solution set of $\tp(f_{ab}/\ov a\ov b)$, on which
$G_{ab}:=\Aut(\tp(f_{ab}/\ov a\ov b))$ acts regularly. Hence
$G_{ab}$ and  $\Pi_a$ are canonically isomorphic \cite[2.15]{gkk}.

\begin{lemma}\label{uniformaction}
A set $C=\{c_i\}_i$ of realizations of $p$ with $b_0\Ind C$, and
$g_i\in\Pi_{b_0c_i}$ are given. Then for $\sigma\in \Pi_{b_0}$,
there is an automorphism $\mu=\mu_{\sigma}$ of $\C$ fixing each
$\ov{c_i}$ and $\ov{b_0}$ pointwise and $\mu(g_i)=g_i.\sigma$. Similarly, if  $D=\{d_i\}_i(\Ind b_0)$ is a set of realizations of
$p$ and $h_i\in\Pi_{d_ib_0}$, then there is an automorphism $\tau$
fixing $\ov{d_i}$ and $\ov{b_0}$ such that $\tau(h_i)=\sigma.h_i.$
\end{lemma}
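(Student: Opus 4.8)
The plan is to construct the automorphism $\mu_\sigma$ directly, using the technology for building object-fixing automorphisms of a connected groupoid from \cite[Section 4.2]{GK}, adapted to the fact that $\cG$ is abelian and the $c_i$ form an independent set over $b_0$. First I would record the key input: since $\cG$ is connected and abelian, for each pair of objects there is a canonical identification of the vertex groups, so $\sigma \in \Pi_{b_0} = \mor_{\cG}(b_0,b_0)$ determines, by conjugation through any morphism, the ``same'' element of $\mor_{\cG}(c_i,c_i)$ for every $i$; call these $\sigma^{c_i}$. Then right composition $g \mapsto g.\sigma$ on $\Pi_{b_0 c_i}$ is a bijection of $\Pi_{b_0 c_i}$ which, because $\Pi_{b_0 c_i}$ is exactly the solution set of $\tp(g_i / \ov{b_0}\,\ov{c_i})$ and $\Aut(\tp(g_i/\ov{b_0}\,\ov{c_i}))$ acts regularly on it (by \cite[2.15]{gkk}), is realized by \emph{some} element of $\Aut(\C / \ov{b_0}\,\ov{c_i})$ when restricted to a single pair.

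The main step is to do all the $c_i$ \emph{simultaneously}. Here I would invoke the independence of $\{c_i\}$ over $b_0$: by stationarity in the stable theory $T$, the type of the tuple $\langle g_i : i\rangle$ over $\ov{b_0} \cup \bigcup_i \ov{c_i}$ is the ``free amalgam'' of the individual types $\tp(g_i / \ov{b_0}\,\ov{c_i})$ — there are no extra relations among the $g_i$ beyond those forced over $\ov{b_0}$, because $\ov{c_i} \ind_{\ov{b_0}} \bigcup_{j\neq i}\ov{c_j}$ and each $g_i \in \dcl(\ov{c_i}, \ov{b_0}, \text{morphism data})$ lives ``over'' the $i$-th coordinate. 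Concretely, I would check that $\langle g_i.\sigma^{c_i} : i \rangle \equiv_{\ov{b_0},\, \bigcup_i\ov{c_i}} \langle g_i : i\rangle$: over each $\ov{b_0}\,\ov{c_i}$ this holds since $g_i.\sigma^{c_i}$ is another realization of $\tp(g_i/\ov{b_0}\,\ov{c_i})$, and the independence lets us glue these partial elementary maps into a single one. Extending to an automorphism $\mu$ of the monster and then, if desired, correcting it to fix each $\ov{c_i}$ and $\ov{b_0}$ pointwise (using that the relevant types are stationary, so the partial map $\bigcup_i \ov{c_i}\,\ov{b_0} \cup \{(g_i, g_i.\sigma^{c_i})\} \to \text{itself}$ is elementary over $\bigcup_i\ov{c_i}\cup\ov{b_0}$) gives $\mu = \mu_\sigma$ with $\mu(g_i) = g_i.\sigma^{c_i}$, and I would note that $g_i.\sigma^{c_i}$ is what is meant by ``$g_i.\sigma$'' under the canonical identification. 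The symmetric statement with $D = \{d_i\}$, $h_i \in \Pi_{d_i b_0}$ and $\tau(h_i) = \sigma.h_i$ is proved identically, using left composition in place of right composition.

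The hard part will be verifying that the naive gluing of the individual elementary maps is genuinely elementary — i.e.\ that there are no ``cross-term'' constraints linking $g_i$ to $g_j$ for $i \neq j$ that are not already visible over $\ov{b_0}$. This is where stability and the hypothesis $b_0 \ind \{c_i\}$ (hence $\ov{c_i} \ind_{\ov{b_0}} \bigcup_{j\neq i}\ov{c_j}$, using that algebraic closure preserves independence) do the real work: one argues that $\tp(\langle g_i\rangle_i / \ov{b_0} \cup \bigcup_i \ov{c_i})$ is the unique nonforking/stationary extension of $\bigcup_i \tp(g_i/\ov{b_0}\,\ov{c_i})$, so that any family of coordinatewise-compatible elementary maps assembles. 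I would isolate this as a small lemma (``independent gluing'') and then the rest is bookkeeping about the canonical isomorphisms $\mor_{\cG}(b_0,b_0) \cong \mor_{\cG}(c_i,c_i)$ in the abelian connected groupoid $\cG$. One subtlety to handle carefully is that the $g_i$ are morphisms (imaginary elements), so all independence and stationarity computations must be carried out in $\C^{eq}$, which is fine since we have assumed elimination of hyperimaginaries and are working with $\C = \C^{eq}$.
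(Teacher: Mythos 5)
Your reduction to single coordinates is fine (each map fixing $\ov{b_0}\cup\ov{c_i}$ and sending $g_i\mapsto g_i.\sigma$ is elementary, by the regular action of $\Aut(\tp(g_i/\ov{b_0}\,\ov{c_i}))$ on $\Pi_{b_0c_i}$), but the ``independent gluing'' step, which you yourself flag as the hard part, is where the proposal breaks. First, the hypothesis is only $b_0\ind C$; the $c_i$ may be arbitrarily dependent among themselves, so the independence $\ov{c_i}\ind_{\ov{b_0}}\bigcup_{j\neq i}\ov{c_j}$ you lean on is simply not available. Second, even granting it, the justification via ``unique nonforking extension / stationarity'' does not apply: each $\tp(g_i/\ov{b_0}\,\ov{c_i})$ is an \emph{algebraic} type whose finitely many realizations are separated by $\acl(b_0c_i)$, hence it is not stationary, and the assertion that $\tp(\langle g_i\rangle_i/\ov{b_0}\cup\bigcup_i\ov{c_i})$ carries no cross-constraints is exactly a generalized-uniqueness statement of the kind whose failure is the subject of this construction. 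Note that if your gluing argument were valid as stated, it would just as well produce an automorphism sending $g_i\mapsto g_i.\sigma_i$ for \emph{different} $\sigma_i$ at different $i$, which is false in general: for instance, if $c_1\in\acl(c_0)$ (allowed by the hypotheses), then $\mor_{\cG}(c_0,c_1)\subseteq\ov{c_0}$ is fixed pointwise by any admissible automorphism, and fixing the composite $g_1.g_0^{-1}$ forces any automorphism moving $g_0$ to $g_0.\sigma$ to move $g_1$ to $g_1.\sigma$ as well. So the uniformity (same $\sigma$ at every vertex) is essential and must be built into the proof, not obtained as a by-product of a freeness claim. (A minor point: $g_i.\sigma$ is literal right composition with $\sigma\in\mor_{\cG}(b_0,b_0)$, so no transported element $\sigma^{c_i}\in\mor_{\cG}(c_i,c_i)$ is needed, and ``$g_i.\sigma^{c_i}$'' does not typecheck as written.)

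The paper obtains the uniformity by a common-factor trick rather than by gluing: choose $d\models p$ with $d\ind b_0C$ and $h\in\Pi_{b_0d}$, and write $g_i=h_i.h$ with $h_i\in\Pi_{dc_i}\subseteq\ov{Cd}$. A single conjugation argument produces an automorphism $\mu$ fixing $\ov{b_0}\cup\ov{Cd}$ pointwise with $\mu(g_0)=g_0.\sigma$; here the stationarity that does the work is over the algebraically closed set $\ov{c_0}$ (one has $Cd\ind_{c_0}b_0$, so $\ov{Cd}$ can be kept fixed while conjugating inside $\acl(b_0c_0)$), not stationarity of the types of the $g_i$ themselves. Then, since every $h_i$ lies in $\ov{Cd}$ and is fixed, $\mu(g_i)=h_i.\mu(h)$, and writing $\mu(h)=h.\tau$ for the unique $\tau\in\Pi_{b_0}$ and comparing on $g_0$ forces $\tau=\sigma$; hence $\mu(g_i)=h_i.h.\sigma=g_i.\sigma$ for \emph{every} $i$, with no independence among the $c_i$ needed. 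If you want to salvage your outline, replace the gluing lemma by this factorization through a single auxiliary morphism $h$.
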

\begin{proof} Take $d\models p$ independent from $b_0C$; and take $h\in
\Pi_{b_0d}$. For each $i$, there is  $h_i\in \Pi_{dc_i}$ such that
$g_i=h_i.h$. Now by stationarity we have $g_{0}\equiv_{\ov{b_0},
\ov{Cd}} g_0.\sigma$ witnessed by an automorphism $\mu$ sending
$g_0$ to $g_0.\sigma$ and fixing $\ov{b_0}, \ov{Cd}$. Then
$\mu(g_i)=\mu(h_i.h)=h_i.\mu(h)$ since $h_i\in \ov{Cd}$. Now there
is unique $\tau\in \Pi_{b_0}$ such that $\mu(h)=h.\tau$. Thus
$\mu(g_0)=g_0.\sigma=h_0.h.\tau$. Hence $\sigma=\tau$. Similarly
there is $\tau_i\in\Pi_{b_0}$ such that $\mu(g_i)=g_i.\tau_i$, and
then $\mu(g_i)=g_i.\tau_i=h_i.h.\sigma$. Hence $\tau_i=\sigma$, so
$\mu(g_i)=g_i.\sigma$ as desired. The second clause can be proved similarly.
\end{proof}

Now consider $F_{b_{01}}=F_{01}:=\Aut(Y_{01}/\ov{b_0})$ where
$Y_{01}$ is defined above.

\begin{claim}\label{centergp}
\be
\item $\Pi_{b_{01}}\subseteq Y_{01}$.

\item The action of $F_{01}$ on $Y_{01} $ (obviously by $\sigma(g)$ for $\sigma\in
F_{01}$ and $g\in Y_{01}$) is regular (so $|F_{01}|=|Y_{01}|$ but
can be infinite). Hence given $\mu\in G_{01}:=G_{b_{01}}$, there is
its unique extension in $F_{01}$ (we may identify those two).  Thus
$Y_{01}$ is $b_{01}$-invariant set.

\item  $G_{01}\leq\Z(F_{01}).$

\item For $\tau\in F_{01}$ and $f\in \Pi_{b_{01}}$, $e\in
\Pi_{b_0}$, we have $\tau(f.e)=\tau(f).e.$ Moreover if
$\sigma(f)=f.e$ for some $\sigma\in G_{01}$, then
$\sigma(f,\tau(f))=(f.e,\tau(f).e)$.  \ee
\end{claim}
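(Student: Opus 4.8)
The plan is to prove the four parts in order, since each uses its predecessors. For part~(1), I would show $\Pi_{b_{01}} = \mor_{\cG}(b_0, b_1) \subseteq Y_{01}$ directly from the definitions: every $g \in \mor_{\cG}(b_0, b_1)$ satisfies $g \equiv_{\ov{b}_0} f_{01}$ (as recorded in the discussion preceding the claim, since $\mor_\cG(b_0,b_1)$ is the solution set of $\tp(f_{01}/\ov{b}_0)$), and $g = f_{01} \circ h$ for some $h \in \mor_\cG(b_0,b_0) \subseteq \acl(b_0) = \dcl(b_0)$; hence $g \in \dcl(f_{01}, \ov{b}_0)$ and $\dcl(g \ov{b}_0) = \dcl(f_{01} \ov{b}_0)$, which is exactly membership in $Y_{01}$. (This is essentially the content of the unnumbered Claim just above, re-used.)

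For part~(2), regularity of the $F_{01} = \Aut(Y_{01}/\ov{b}_0)$-action on $Y_{01}$: the action is free because if $\sigma \in F_{01}$ fixes some $g \in Y_{01}$, then since every element of $Y_{01}$ lies in $\dcl(g, \ov{b}_0)$ (by definition of $Y_{01}$: all elements are interdefinable with $f_{01}$ over $\ov{b}_0$, hence with each other), $\sigma$ fixes all of $Y_{01}$, so $\sigma = \id$. Transitivity is exactly the statement that all elements of $Y_{01}$ have the same type over $\ov{b}_0$, which holds by definition of $Y_{01}$ (each is $\equiv_{\ov{b}_0} f_{01}$) together with stationarity of $\tp(f_{01}/\ov{b}_0)$ in the stable theory $T$. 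Regularity then gives $|F_{01}| = |Y_{01}|$. The ``Hence'' clause: given $\mu \in G_{01} = \Aut(Y_{b_{01}}$ restricted appropriately$)$ — more precisely $\mu \in \Aut(\tp(f_{01}/b_{01}))$, which naturally acts on $\Pi_{b_{01}} \subseteq Y_{01}$ — I would argue $\mu$ extends uniquely to an element of $F_{01}$: existence by the homogeneity of the monster together with Lemma~\ref{uniformaction} (which produces an automorphism of $\C$ fixing $\ov{b}_0$ and realizing the required right-translation on all of $Y_{01}$ simultaneously, not just on $\Pi_{b_{01}}$), and uniqueness by freeness of the action. Invariance of $Y_{01}$ over $b_{01}$ then follows since $Y_{01}$ is the union of the $b_{01}$-conjugates of $f_{01}$ that stay inside $\dcl(f_{01}\ov{b}_0)$ with the interdefinability condition — a $b_{01}$-invariant description.

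For part~(3), $G_{01} \leq \Z(F_{01})$: this is where I expect the main work, and it is the analogue of the commutativity arguments in \cite{GK, gkk} combined with Lemma~\ref{uniformaction}. The idea is that $G_{01}$ acts on $Y_{01}$ by ``right translation by an element of $\mor_\cG(b_0,b_0)$'' (via the identification of $G_{01}$ with $\Pi_{b_0}$ as in \cite[2.15]{gkk}), and by Lemma~\ref{uniformaction} such a right translation can be realized by an automorphism $\mu_\sigma$ of $\C$ fixing $\ov{b}_0$ pointwise and acting \emph{uniformly} (i.e., $g \mapsto g.\sigma$ for every $g$ in the relevant family) — and in particular fixing $\ov{b}_1$, hence $b_{01}$. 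Meanwhile an arbitrary $\tau \in F_{01}$ is \emph{some} permutation of $Y_{01}$ fixing $\ov{b}_0$. The commutation $\tau \circ \mu_\sigma = \mu_\sigma \circ \tau$ on $Y_{01}$ should follow from the observation that $\mu_\sigma$ is right-multiplication by a \emph{central} element of the abelian groupoid $\cG$ (the vertex group $\mor_\cG(b_0,b_0)$ is abelian), and that $\tau$ respects the groupoid structure enough that it commutes with such right translations: concretely, using that each $g \in Y_{01}$ can be written $g = f_{01} \circ h$ with $h \in \mor_\cG(b_0,b_0)$, and $\tau$ must send this to $\tau(f_{01}) \circ h$ (which is part~(4)), while $\mu_\sigma$ sends it to $(f_{01} \circ h) \circ \sigma = (f_{01} \circ \sigma) \circ h$ by commutativity of the vertex group; composing these two operations in either order gives $\tau(f_{01}) \circ \sigma \circ h$. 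This is the step I would write out most carefully.

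For part~(4), $\tau(f.e) = \tau(f).e$ for $\tau \in F_{01}$, $f \in \Pi_{b_{01}}$, $e \in \Pi_{b_0}$: write $f.e = f \circ e$ in $\cG$; since $e \in \mor_\cG(b_0,b_0) \subseteq \acl(b_0) = \dcl(b_0)$ and $\tau$ fixes $\ov{b}_0 \supseteq \dcl(b_0)$ pointwise, $\tau$ must send $f \circ e$ to $\tau(f) \circ e$ by definability of groupoid composition (it is $\ov{b}_0$-definable from $f$ once $e$ is fixed); this needs that $f.e \in Y_{01}$, which holds by part~(1) and the fact that $Y_{01}$ is closed under such right translations. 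The ``Moreover'' clause: if $\sigma \in G_{01}$ with $\sigma(f) = f.e$, apply $\sigma$ to $\tau(f)$; by part~(3), $\sigma$ and $\tau$ commute, so $\sigma(\tau(f)) = \tau(\sigma(f)) = \tau(f.e) = \tau(f).e$ by the first part of~(4), giving $\sigma(f, \tau(f)) = (f.e, \tau(f).e)$ as claimed. The main obstacle throughout is keeping straight the identifications among $G_{01}$, $\Pi_{b_0}$, and the right-translation actions, and verifying that Lemma~\ref{uniformaction} applies to give automorphisms acting uniformly on the whole (possibly infinite) set $Y_{01}$ rather than just on the finite set $\Pi_{b_{01}}$; I would handle this by applying Lemma~\ref{uniformaction} with the index set $C$ chosen to enumerate a set of realizations of $p$ rich enough that $Y_{01}$ is captured, exactly as $\widetilde{a_0 a_1}$ was handled via directed systems in Section~4.
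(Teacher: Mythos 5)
Parts (1) and (2) of your plan are essentially what the paper intends (it dismisses them as clear), and your direct elementarity argument for the first statement of (4) — extend $\tau$ to an automorphism of $\C$ over $\ov{b_0}$, use that $e\in\Pi_{b_0}\subseteq\ov{b_0}$ is fixed and that the composition of $\cG$ is invariant — is exactly the step the paper itself uses inside its proof of (3), so that part is fine. (Minor slip: you assert $\acl(b_0)=\dcl(b_0)$, which is neither assumed nor true in general; all you need is $e\in\Pi_{b_0}\subseteq\ov{b_0}$.)

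The genuine gap is in your part (3), and it is the heart of the claim. Your commutation computation writes an arbitrary $g\in Y_{01}$ as $g=f_{01}\circ h$ with $h\in\mor_{\cG}(b_0,b_0)$; but the set of such elements is exactly $\Pi_{b_{01}}=\mor_{\cG}(b_0,b_1)$, which is finite, whereas $Y_{01}$ is in general strictly larger (possibly infinite) — if $Y_{01}=\Pi_{b_{01}}$ then $F_{01}=G_{01}$ and both (3) and the whole non-commutative construction are vacuous. So your argument only shows that $\sigma\in G_{01}$ and $\tau\in F_{01}$ commute on $\Pi_{b_{01}}$, not on $Y_{01}$. The crux is precisely the point your plan skips: one must show that the unique $F_{01}$-extension of $\sigma$ still acts as right translation by the corresponding $\sigma_0\in\Pi_{b_0}$ at the point $h=\tau(f_{01})$, which need not lie in $\Pi_{b_{01}}$ (it is a $\cG$-morphism with domain $b_0$, but its codomain need not be $b_1$). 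The paper does this by applying Lemma~\ref{uniformaction} simultaneously to $f_{01}$ and $h$, producing an automorphism $\sigma'$ of $\C$ fixing $\ov{b_0}$ and $\ov{b_1}$ with $\sigma'(f_{01},h)=(f_{01}.\sigma_0,\,h.\sigma_0)$; since $\sigma'$ restricts to an element of $F_{01}$ agreeing with $\sigma$ at $f_{01}$, freeness of the action forces $\sigma(h)=h.\sigma_0$, and then $\tau\circ\sigma(f_{01})=h.\sigma_0=\sigma\circ\tau(f_{01})$, which by regularity gives $\sigma\tau=\tau\sigma$ on all of $Y_{01}$ from agreement at the single point $f_{01}$. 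Your closing remark about choosing $C$ ``rich enough to capture $Y_{01}$'' gestures at this, but Lemma~\ref{uniformaction} only produces uniform right translations on families of morphisms $g_i\in\Pi_{b_0c_i}$; by itself it says nothing about how the abstract extension of $\sigma$ inside $F_{01}$ acts on $Y_{01}\setminus\Pi_{b_{01}}$, and identifying the two is the missing essential step. Since your ``Moreover'' clause in (4) is deduced from (3), it inherits this gap.
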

\begin{proof} (1)  is clear.

(2) comes from the fact that for any  $g_0,g_1\in Y_{01}$, they are
interdefinable over $\ov{b_0}$, and $Y_{01} \subseteq \dcl(g_i\bar
b_0)=\dcl(f_{01}\bar b_0)$. Hence from (1),
 it follows $G_{01}$ is
a subgroup of $F_{01}$. The rest clearly follows.

(3) Suppose  $\sigma\in G_{01},\tau\in F_{01}$ are given.  Let
$g=\sigma(f_{01})=f_{01}.\sigma_0$  for some $\sigma_0\in \Pi_{b_0}$, and
let $h=\tau(f_{01})$. Then
$\tau(f_{01},g)=(\tau(f_{01}),\tau(g))=(h,\tau\circ \sigma(f_{01})).$ But
since $\tau$ fixes $\ov{b_0}$, it follows $\tau\circ
\sigma(f_{01})=\tau(f_{01}.\sigma_0)=\tau(f_{01}).\sigma_0=h.\sigma_0..$ Now
by Lemma \ref{uniformaction}, there is an automorphism $\sigma'$
fixing $\ov{b}_0$ and $\ov{b}_1$ and sending $(f_{01},h)$ to $(f_{01}.\sigma_0,h.\sigma_0)$, so
$\sigma(f_{01})=\sigma'(f_{01})=f_{01}.\sigma$. But then due to the
uniqueness of the extension of $\sigma$ in $F_{01}$, we must have
that $\sigma (h)=h.\sigma_0=\sigma'(h)$ as well. Thus
$\tau\circ\sigma(f_{01})=h.\sigma_0=\sigma(h)=\sigma\circ\tau(f_{01})..$
Then due to regularity, we conclude $\sigma\in \Z(F_{01})$.

(4) By (3),  $\tau(f.e)=\tau\circ\sigma(f)=\sigma\circ\tau(f)$. But
again by uniqueness with Lemma \ref{uniformaction},
$\sigma(\tau(f))=\tau(f).e$. Therefore
$\sigma(f,\tau(f))=(\sigma(f),\tau(\sigma(f)))=(f.e,\tau(f).e)$.
\end{proof}

In general $G_{01}$ need not be equal to $\Z(F_{01})$ (see the remarks
before Proposition \ref{isomorphic}).

We define $Y_{ab}$ just like $Y_{01}$ but with $b_{01}f_{01}$ replaced by $abf_{ab}$.

\begin{lemma}\label{wdfn}
Let $c\models p$ and $c\Ind ab$. Let $g\in \Pi_{ca}$. Then for $f\in
Y_{ab}$, it follows $h=f.g\in Y_{cb}$. Moreover  for $h_0=f_{ab}.g$,
we have
 $$h_0f_{ab}\equiv_{\ov{ac}}hf\mbox{ and }f_{ab}f\ov a\equiv_{\ov{b}} h_0h\ov
 c.$$
\end{lemma}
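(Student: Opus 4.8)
The plan is to verify the three assertions of Lemma~\ref{wdfn} in order, using only stationarity of types in the stable theory $T$, the definability of $Y_{ab}$ and $Y_{cb}$ over the relevant pairs, and Lemma~\ref{uniformaction}.

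First I would check that $h := f \circ g$ lies in $Y_{cb}$. We have $g \in \Pi_{ca} = \mor_{\cG}(c,a)$ and $f \in Y_{ab} \subseteq \dcl(f_{ab}, \ov{a})$, so $h$ is a morphism from $c$ to $b$ in $\cG$; the point is to show $h \equiv_{\ov{c}} f_{cb}$ (where $f_{cb}$ is any element of $\Pi_{cb}$) and that $\dcl(h\,\ov{c}) = \dcl(f_{cb}\,\ov{c})$. For the first, note that $c \ind ab$ and $g \in \dcl(\ov{a},\ov{c})$ (morphisms of $\cG$ between algebraically closed vertices are algebraic, and in fact the relevant morphism sets are in $\dcl$ of the two vertices), so by stationarity of $\tp(f/\ov{a})$ over $\ov{a}$ and the fact that $f \equiv_{\ov a} f_{ab}$, one gets that $(f,g) \equiv_{\ov a \ov c} (f_{ab}, g)$, hence $h = f\circ g \equiv_{\ov a \ov c} f_{ab}\circ g = h_0$; restricting to $\ov c$ and composing with the fact that $h_0 \in \Pi_{cb}$ gives $h \equiv_{\ov c} f_{cb}$. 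The interdefinability clause follows because $f$ and $f_{ab}$ are interdefinable over $\ov a$ (the defining property of $Y_{ab}$, together with the Claim preceding Lemma~\ref{uniformaction}), $g$ is fixed once $\ov a$ and $\ov c$ are fixed, and composing/decomposing with $g$ is a definable operation; so $\dcl(h\,\ov c)=\dcl(f\,\ov a\,\ov c \cap \cdots)$ reduces to $\dcl(f_{cb}\,\ov c)$ exactly as for $f_{ab}$ inside $Y_{ab}$. This is the step I expect to require the most care, since one has to be precise about which closures the various morphisms live in.

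Next, the relation $h_0 f_{ab} \equiv_{\ov{a}\ov{c}} h f$ is essentially what was already extracted above: from $(f,g)\equiv_{\ov a\ov c}(f_{ab},g)$ we get $(f\circ g, f) \equiv_{\ov a \ov c} (f_{ab}\circ g, f_{ab})$, i.e. $(h,f) \equiv_{\ov a\ov c} (h_0, f_{ab})$, which is the claimed equivalence (written in the order $h_0 f_{ab} \equiv h f$).

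Finally, for $f_{ab}\, f\, \ov{a} \equiv_{\ov{b}} h_0\, h\, \ov{c}$: here I would use that $\tp(\ov a / \ov b)$ and $\tp(\ov c/\ov b)$ are both realizations of (conjugates of) $p$ over $\ov b$ and are stationary, together with Lemma~\ref{uniformaction}, to produce an automorphism fixing $\ov b$ pointwise and carrying $\ov a$ to $\ov c$ and the pair $(f_{ab}, f)$ to $(h_0, h)$. Concretely, pick an automorphism $\sigma$ fixing $\ov b$ with $\sigma(\ov a) = \ov c$; then $\sigma(f_{ab})$ and $\sigma(f)$ are morphisms into $b$ with the right restricted types, and since $g = f_{cb}\circ(\text{something})$ can be absorbed by a further automorphism fixing $\ov c$ and $\ov b$ (Lemma~\ref{uniformaction}, second clause), we can adjust $\sigma$ so that $\sigma(f_{ab}) = h_0 = f_{ab}\circ g$ and $\sigma(f) = f\circ g = h$. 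Applying $\sigma$ to the tuple $f_{ab}\, f\, \ov a$ then yields $h_0\, h\, \ov c$, giving the equivalence over $\ov b$. The main obstacle throughout is bookkeeping of the $\dcl/\acl$ relationships among $f_{ab}$, $f$, $g$, and the algebraic closures of the vertices; once those are pinned down, each displayed equivalence is a one-line application of stationarity or of Lemma~\ref{uniformaction}.
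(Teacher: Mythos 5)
There are two genuine gaps. First, in showing $h\in Y_{cb}$ you only establish what comes easily: $h\equiv_{\ov{a}\,\ov{c}}h_0$ and interdefinability of $h$ with $h_0$ \emph{over $\ov{a}\,\ov{c}$} (since $f,f_{ab}$ are interdefinable over $\ov{a}$ and composition with $g$ is definable with parameters in $\ov{ac}$). But membership in $Y_{cb}$ requires $h\equiv_{\ov{c}}h_0$ together with $\dcl(h\,\ov{c})=\dcl(h_0\,\ov{c})$, i.e.\ interdefinability over $\ov{c}$ alone, and the descent from $\ov{a}\,\ov{c}$ to $\ov{c}$ is exactly the nontrivial point; your phrase ``reduces to $\dcl(f_{cb}\,\ov{c})$ exactly as for $f_{ab}$ inside $Y_{ab}$'' is not an argument. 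The paper proves it by contradiction: if $h'\equiv_{\ov{c}h_0}h$ with $h'\ne h$, then (since $h,h_0,h'\subseteq\acl(cb)\ind_{\ov{c}}\ov{a}$) stationarity upgrades this to an automorphism $\tau$ fixing $\ov{ca}=\acl(ca)$ with $\tau(h_0,h)=(h_0,h')$; as $\tau$ fixes $g$, it fixes $f_{ab}=h_0.g^{-1}$ and $\ov{a}$ but moves $f=h.g^{-1}$ to $h'.g^{-1}\ne f$, contradicting $f\in\dcl(f_{ab}\,\ov{a})$. A related slip: your claim that $g\in\dcl(\ov{a},\ov{c})$ is false in general — by clause (2) of the definition of a full symmetric witness these morphisms are precisely \emph{not} in the definable closure of the two closed vertices; what is true is $g\in\acl(ac)$, so the stationarity step must be run over $\ov{ac}$ (as the paper does), not over $\ov{a}\cup\ov{c}$ with $g$ thrown in for free.

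Second, the last equivalence $f_{ab}f\,\ov{a}\equiv_{\ov{b}}h_0h\,\ov{c}$ cannot be obtained by ``adjusting $\sigma$'' so that $\sigma(f_{ab})=h_0$ \emph{and} $\sigma(f)=h$: once $\sigma(\ov{a})=\ov{c}$ and $\sigma(f_{ab})=h_0$ are fixed, $\sigma(f)$ is already determined because $f\in\dcl(f_{ab}\,\ov{a})$, so there is no remaining freedom, and asserting that the determined image equals $h=f.g$ is precisely the statement to be proved (your outline is circular here; Lemma~\ref{uniformaction} only lets you translate morphisms by elements of the vertex groups, which cannot decouple the image of $f$ from that of $f_{ab}$). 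The paper's proof avoids this by an auxiliary vertex: choose $d\models p$ with $d\ind abc$, take $k_0\in\Pi_{db}$, and use the already-proved first part to get $k\in Y_{db}$ with $f=k.(k_0^{-1}.f_{ab})$, whence $h=k.k_0^{-1}.h_0$; since $k,k_0\in\ov{bd}$ and (by stationarity) $f_{ab}\,\ov{a}\equiv_{\ov{bd}}h_0\,\ov{c}$, the witnessing automorphism fixes $k,k_0$ and therefore carries $f$ to $h$ automatically, giving the equivalence over $\ov{bd}$ and hence over $\ov{b}$. Some device of this kind is needed; without it your third step does not go through.
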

\begin{proof}
Note that $h_0\in \Pi_{cb}$. By stationarity, there is a
$\ov{ca}$-automorphism $\mu$ such that  $\mu(f_{ab})=f$. Then
$\mu(h_0)=\mu(f_{ab}.g)=\mu(f_{ab}).\mu(g)=f.g=h\in\ov{cb}$. We want
to see that $h,h_0$ are interdefinable over $\ov{c}$. Suppose not
say there is $h'\equiv_{\ov{c}h_0}h$ and $h'\ne h$. Then again by
stationarity there is a $\ov{ca}$-automorphism $\tau$ such that
$\tau(h_0h)=\tau(h_0h')$. Then for $f=h.g^{-1}$ and $f'=h'.g^{-1}$,
we have $f\ne f'$ but $\tau(f_{ab},f)=\tau(h_0.g^{-1},h.g^{-1})
=(h_0.g^{-1},h'.g^{-1})=(f_{ab},f'),$ a contradiction. Similarly one
can show that $h_0\in\dcl(\ov{c}h)$. Hence $h\in Y_{cb}$. Now $\mu$
witnesses $h_0f_{ab}\equiv_{\ov{ac}}hf$. To show $f_{ab}f\ov
a\equiv_{\ov{b}} h_0h\ov c$, choose $d(\models p)\Ind abc$. Now for
$k_0\in \Pi_{db}$,  by our proof there is $k\in Y_{db}$ such that
$f=k.(k^{-1}_0.f_{ab})$. Then
$h=k.k^{-1}_0.(f_{ab}.g)=k.k^{-1}_0.h_0.$ Now by stationarity,
$f_{ab}\ov a\equiv_{\ov{bd}}h_0\ov c$. Since $k,k_0\in \ov{bd}$, as
desired $f_{ab}f\ov a\equiv_{\ov{bd}} h_0h\ov c$.
\end{proof}

Now we start to construct the new groupoid mentioned. At the first
approximation, our $\mor_{\CF}(a,b)$ will be $Y_{ab}$. Beware that
$Y_{ab}(\supseteq \Pi_{ab})$ need not be definable  nor
type-definable. It is just an $ab$-invariant set. So our groupoid
$\CF$ will  only be invariant, and it will be definable only under additional hypotheses (e.g. $\omega$-categoricity).

As explained in \cite[2.9, 2.10]{gkk}, there is the binding group
$G$ (isomorphic to $G_{ab}$, and so to $\Pi_a$) acting on $\G$. In
general the action is {\em not} a structure automorphism of the
groupoid as for example $\id_a\in \Pi_a$ need not be fixed. But it
is so for $\Pi_{ab}$ (or more generally as in Lemma
\ref{uniformaction} above), i.e. for $\bsigma\in G$ we have
$f_{ab}\equiv \bsigma\cd f_{ab}=f_{ab}\cd \bsigma\in \Pi_{ab}$ (see
\cite[2.10]{gkk}; We use $\cd$ for the group action of $G$ to $\G$).
Hence there is an induced isomorphism $\rho_{ab}:G\to G_{ab}$ such
that $\rho_{ab}(\bsigma)(f)=\bsigma\cd f$ for any $f\in \Pi_{ab}$.
 We write
$\sigma_{ab}$ for $\rho_{ab}(\bsigma)$. But  when there is no chance
of confusion, we use $\sigma$ for both $\bsigma\in G$ and
$\sigma_{ab}\in G_{ab}$. Also, $\sigma_a$ denotes the unique element
in $\bsigma\cap \Pi_a$, as described in \cite[Definition 2.9]{gkk}.
Hence for $f\in\Pi_{ab}$, $\sigma(f)=\sigma\cd
f=\sigma_b.f=f.\sigma_a$.

\begin{claim}\label{tpreserv}
For $\sigma\in G$, and $f\in \Pi_{ab}$ and $g\in \Pi_{cd}$ with
$cd\equiv ab$, we have $f,\sigma\cdot f\equiv g,\sigma\cdot g$.
\end{claim}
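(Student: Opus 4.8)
The plan is to prove the claim by a transport-of-structure argument: move the configuration $(a,b,f,\sigma\cdot f)$ by a single automorphism of $\C$ onto the pair $(c,d)$, and then use that $\Pi_{cd}$ is the solution set of a single complete type. First I recall the facts about $\cG$ and the binding-group action $\cdot$ that make this work, all established in the discussion above and in \cite{GK,gkk}: the groupoid $\cG$ together with the action $\cdot$ of $G$ on $\cG$ is invariant over $\dcl(\emptyset)=\acl(\emptyset)$; for any $e\in\ob(\cG)$ and $\bsigma\in G$, the component $\sigma_e$ is the unique element of $\bsigma$ lying in $\mor_{\cG}(e,e)\subseteq\acl(e)$, so $\sigma_e\in\dcl(\ov e)$; and for any independent pair $(e_0,e_1)$ of realizations of $p$ the set $\Pi_{e_0e_1}=\mor_{\cG}(e_0,e_1)$ is exactly the solution set of $\tp(f/\ov{e_0}\cup\ov{e_1})$ for any (equivalently, some) $f\in\Pi_{e_0e_1}$, so any two of its elements are conjugate over $\ov{e_0}\cup\ov{e_1}$. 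Since $\sigma\cdot f=\sigma_b\cdot f$ with $\sigma_b\in\dcl(\ov b)$ and composition in $\cG$ is $\dcl(\emptyset)$-definable, the map $f\mapsto\sigma\cdot f$ on $\Pi_{ab}$ is definable over $\ov b$; the analogous statement holds over $\ov d$ for $\Pi_{cd}$.

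Now the argument itself. Because $cd\equiv ab$, fix $\eta\in\Aut(\C)$ with $\eta(a)=c$ and $\eta(b)=d$; then $\eta$ carries $\ov a$ onto $\ov c$ and $\ov b$ onto $\ov d$ setwise, fixes $\dcl(\emptyset)$ pointwise, and therefore, by the invariance of $\cG$ and of $\cdot$, carries $\Pi_{ab}$ onto $\Pi_{cd}$, fixes each $\bsigma\in G$, and sends $\sigma_b$ to the unique element of $\bsigma$ in $\mor_{\cG}(d,d)$, namely $\sigma_d$. Hence for our given $f\in\Pi_{ab}$,
$$\eta(\sigma\cdot f)=\eta(\sigma_b\cdot f)=\eta(\sigma_b)\cdot\eta(f)=\sigma_d\cdot\eta(f)=\sigma\cdot\eta(f),$$
so $(f,\sigma\cdot f)\equiv(\eta(f),\sigma\cdot\eta(f))$ over $\emptyset$, with $\eta(f)\in\Pi_{cd}$. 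Finally, $\eta(f)$ and the given $g$ both lie in $\Pi_{cd}$, which is a single complete type over $\ov c\cup\ov d$, so $\eta(f)\equiv_{\ov c\ov d}g$; applying the $\ov d$-definable map $x\mapsto\sigma\cdot x$ to both sides gives $(\eta(f),\sigma\cdot\eta(f))\equiv_{\ov c\ov d}(g,\sigma\cdot g)$, hence in particular over $\emptyset$. Chaining the two equivalences yields $(f,\sigma\cdot f)\equiv(g,\sigma\cdot g)$, as required.

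I do not expect a serious obstacle here: the content is entirely in the invariance of $\cG$ and of the $G$-action over $\acl(\emptyset)$, which was set up in the construction of $\cG$, so the only thing requiring care is the bookkeeping that $\eta(\sigma_b)=\sigma_d$ and $\eta(\Pi_{ab})=\Pi_{cd}$, and that $\sigma\cdot(\,\cdot\,)$ is $\ov b$- respectively $\ov d$-definable. If one prefers to avoid extending $\eta$ across algebraic closures, one can instead route through the fixed datum $f_{ab}$: using that $\Pi_{ab}$ is a complete type over $\ov a\cup\ov b$ and $x\mapsto\sigma\cdot x$ is $\ov b$-definable, $(f,\sigma\cdot f)\equiv_{\ov a\ov b}(f_{ab},\sigma\cdot f_{ab})$; an automorphism taking $(a,b)\mapsto(c,d)$ then gives $(f_{ab},\sigma\cdot f_{ab})\equiv(f_{cd},\sigma\cdot f_{cd})$; and the same complete-type argument brings this down to $(g,\sigma\cdot g)$.
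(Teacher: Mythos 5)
Your proof is correct, but it takes a different route from the paper's. The paper proves the claim by choosing an auxiliary $e\models p$ independent from $abcd$, picking $h\in\Pi_{ae}$ and $k\in\Pi_{ce}$, and chaining $f,\sigma\cdot f\equiv h,\sigma\cdot h\equiv k,\sigma\cdot k\equiv g,\sigma\cdot g$ via Lemma~\ref{uniformaction}: at each step the two edges share a vertex $x$, the $\sigma$-action on both is composition with the \emph{same} component $\sigma_x\in\ov{x}$, and the transporting automorphism fixes $\ov{x}$ (indeed the closures of all relevant vertices) pointwise, so it automatically commutes with the action. You instead apply a single global automorphism $\eta$ with $\eta(a,b)=(c,d)$ and then finish by homogeneity over $\ov{c}\cup\ov{d}$; the crux of your argument is that $\eta(\bsigma)=\bsigma$, i.e.\ that the binding group $G$ sits inside $\acl^{eq}(\emptyset)=\dcl^{eq}(\emptyset)$ and is therefore fixed pointwise by all automorphisms, so that $\eta(\sigma_b)$ is again the component of the \emph{same} $\bsigma$ at $d$. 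Under the standing convention of this section (witness over $\emptyset=\acl(\emptyset)$, with $G$ identified with a subset of $\acl^{eq}(\emptyset)$ as in Section~4), that step is legitimate, and your proof is shorter and more direct. What the paper's detour through $e$ buys is that it never needs the global element $\bsigma$ to be automorphism-invariant: it only uses the local components $\sigma_x\in\ov{x}$ together with automorphisms fixing $\ov{x}$ pointwise, so it is insensitive to whether constants for $\acl^{eq}(\emptyset)$ have been added, and it reuses Lemma~\ref{uniformaction}, which is needed elsewhere in the section anyway. Your fallback variant through $f_{ab}$ does not avoid this dependence, since the middle step $(f_{ab},\sigma\cdot f_{ab})\equiv(f_{cd},\sigma\cdot f_{cd})$ again needs the transporting automorphism to fix $\bsigma$; so if you want an argument free of that convention, the shared-vertex chaining is the way to phrase it.
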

\begin{proof} Choose $e\models p$ independent from $abcd$. By
\ref{uniformaction} above, $f,\sigma\cdot f\equiv h,\sigma\cdot h
\equiv k,\sigma\cdot k \equiv g,\sigma\cdot g$ where $h\in\Pi_{ae}$
and $k\in \Pi_{ce}$.
\end{proof}

Now let $F_{ab}:=\Aut(Y_{ab}/\ov{a})$. Then as in Claim
\ref{centergp}.(2), $G_{ab}\leq F_{ab}$. As just said for any
$cd\equiv ab$, there is the canonical isomorphism between
$\rho_{cd}\circ\rho^{-1}_{ab}:G_{ab}\to G_{cd}$.
 We somehow try to find the
canonically extended isomorphism between $F_{ab}$ and $F_{cd}$ as
well. We do this as follows. Let $Y_{ab}=\{g_i\}_i\cup \{g'_j\}_j$
and let $Y_{cd}=\{h_i\}_i\cup \{h'_j\}_j$ such that
$\Pi_{ab}=\{g_i\}_i$, $\Pi_{cd}=\{h_i\}_i$ and the sequences $\la
g_i\ra^\frown \la g'_j\ra ab\equiv \la h_i\ra^\frown \la h'_j\ra
cd$. Now due to regularity of the  action, for each $i$ or $j$ there
is unique $\mu^{ab}_i$ or $\mu^{ab}_j\in F_{ab}$ such that
$\mu_i(g_0)=g_i$ or $\mu_j(g_0)=g'_j$. Similarly we have
$\mu^{cd}_i$ or $\mu^{cd}_j\in F_{cd}.$

\begin{claim}
The correspondence $\mu^{ab}_i\mapsto\mu^{cd}_i$ or
$\mu^{ab}_j\mapsto\mu^{cd}_j$ is a well-defined isomorphism from
$F_{ab}$ to $F_{cd}$ extending $\rho_{cd}\circ\rho^{-1}_{ab}$.
\end{claim}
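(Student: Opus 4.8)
The plan is to show that the displayed correspondence is nothing but conjugation by a single automorphism of $\C$, which makes it transparently a well-defined group isomorphism, and then to check separately that it restricts to $\rho_{cd}\circ\rho^{-1}_{ab}$ on $G_{ab}$.

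First I would produce the conjugating map. Since $\la g_i\ra^\frown\la g'_j\ra ab\equiv\la h_i\ra^\frown\la h'_j\ra cd$ and the tuple $\la g_i\ra^\frown\la g'_j\ra$ is small (it enumerates $Y_{ab}\subseteq\dcl(f_{ab}\ov a)$), by homogeneity of $\C$ there is $\theta\in\Aut(\C)$ with $\theta(a)=c$, $\theta(b)=d$, $\theta(g_i)=h_i$ for all $i$, and $\theta(g'_j)=h'_j$ for all $j$. Then $\theta$ carries $Y_{ab}$ onto $Y_{cd}$, $\Pi_{ab}$ onto $\Pi_{cd}$, $g_0$ to $h_0$, and $\acl(a)$ onto $\acl(c)$. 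Hence for $\tau\in F_{ab}$ the map $\theta\tau\theta^{-1}$ (extend $\tau$ to an automorphism of $\C$ first) restricts to an elementary permutation of $Y_{cd}$ fixing $\ov c$ pointwise, i.e.\ an element $\Phi(\tau)\in F_{cd}$, and this does not depend on the chosen extension of $\tau$ since on $Y_{cd}=\theta(Y_{ab})$ it is determined by $\tau\restriction Y_{ab}$. A routine check shows $\Phi\colon F_{ab}\to F_{cd}$ is a group homomorphism, and conjugation by $\theta^{-1}$ gives an inverse $F_{cd}\to F_{ab}$, so $\Phi$ is an isomorphism.

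Next I would identify $\Phi$ with the stated correspondence. For an index $\alpha$ (an $i$ or a $j$), $\mu^{ab}_\alpha$ is by definition the unique element of $F_{ab}$ with $\mu^{ab}_\alpha(g_0)=g_\alpha$; then $\Phi(\mu^{ab}_\alpha)$ sends $h_0=\theta(g_0)$ to $\theta(g_\alpha)=h_\alpha$, and since $F_{cd}$ acts regularly on $Y_{cd}$ (Claim~\ref{centergp}(2)) this forces $\Phi(\mu^{ab}_\alpha)=\mu^{cd}_\alpha$. So the correspondence $\mu^{ab}_\alpha\mapsto\mu^{cd}_\alpha$ coincides with $\Phi$ and is thus a well-defined group isomorphism $F_{ab}\to F_{cd}$. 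It then remains to see that $\Phi$ extends $\rho_{cd}\circ\rho^{-1}_{ab}$, where $G_{ab}\le F_{ab}$ via the unique-extension clause of Claim~\ref{centergp}(2). Let $\sigma=\rho_{ab}(\bsigma)\in G_{ab}$, acting on $\Pi_{ab}$ by $f\mapsto\bsigma\cd f$, and let $\sigma'=\rho_{cd}(\bsigma)$, acting on $\Pi_{cd}$ by $h\mapsto\bsigma\cd h$. Both $\Phi(\sigma)$ and $\sigma'$ lie in $F_{cd}$, so by regularity it suffices to check they agree at $h_0\in\Pi_{cd}\subseteq Y_{cd}$, i.e.\ that $\theta(\bsigma\cd g_0)=\bsigma\cd h_0$. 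By Claim~\ref{tpreserv}, $(g_0,\bsigma\cd g_0)\equiv(h_0,\bsigma\cd h_0)$; applying $\theta$ gives $(g_0,\bsigma\cd g_0)\equiv(h_0,\theta(\bsigma\cd g_0))$; hence $\theta(\bsigma\cd g_0)$ and $\bsigma\cd h_0$ realize the same type over $h_0$. But $\bsigma\cd h_0\in\dcl(h_0)$: the binding group element $\bsigma$ lies in $\acl(\emptyset)=\dcl(\emptyset)$, its local representative $\sigma_d$ at $d$ is $\emptyset$-definable from $(\bsigma,d)$, and $d\in\dcl(h_0)$ because $h_0\in\mor_{\cG}(c,d)$, so $\bsigma\cd h_0=\sigma_d.h_0\in\dcl(h_0)$. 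Being the unique realization of its type over $h_0$, it equals $\theta(\bsigma\cd g_0)$, so $\Phi(\sigma)=\sigma'$, as required.

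The main obstacle is this last step: one must know that the chosen $\theta$ is compatible with the binding-group action on the torsors $\Pi_{xy}$, which rests on the facts (set up in Section~4 and earlier in this section) that the binding group lies in $\acl(\emptyset)=\dcl(\emptyset)$ and that its action is given by $\emptyset$-definable data on the invariant sets $\Pi_{xy}$, together with Claim~\ref{tpreserv}; once the conjugation description of the correspondence is available, the well-definedness and isomorphism assertions are purely formal.
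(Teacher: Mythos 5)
Your conjugation construction does prove two of the three assertions cleanly: for the \emph{fixed} pair of enumerations, the map $\mu^{ab}_\alpha\mapsto\mu^{cd}_\alpha$ is a group isomorphism $F_{ab}\to F_{cd}$, and your verification that it extends $\rho_{cd}\circ\rho^{-1}_{ab}$ (via Claim~\ref{tpreserv} and $\bsigma\cd h_0\in\dcl(h_0)$) is in fact more detailed than the paper's one-line appeal to Claim~\ref{tpreserv}. But you have skipped the point that ``well-defined'' is actually carrying in this claim, and it is the only thing the paper proves in detail: the correspondence must not depend on which arrangement $\la h_i\ra^\frown\la h'_j\ra$ of $Y_{cd}$ (realizing the same type with $cd$ as $\la g_i\ra^\frown\la g'_j\ra$ does with $ab$) was chosen. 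Your $\Phi$ is built from an automorphism $\theta$ that carries the chosen $g$-enumeration to the chosen $h$-enumeration, so $\Phi$ is exactly as arrangement-dependent as the original definition; showing that the correspondence coincides with $\Phi$ says nothing about what happens if a different admissible arrangement $\la k_i\ra^\frown\la k'_j\ra$ (equivalently a different $\theta'$) is used. A priori the two resulting isomorphisms differ by conjugation by the permutation of $Y_{cd}$ induced by $\theta'\theta^{-1}$, and since $F_{cd}$ is in general non-abelian this is not automatically trivial.

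This independence is not a formality one can wave away: it is what makes the later choice of the extended binding group $F$ and the maps $\rho^F_{cd}$ canonical, and the well-definedness and automorphism-invariance of the composition in Claim~\ref{compost} rest on it. The paper's argument is precisely aimed at this point: two admissible arrangements of $Y_{cd}$ differ at the base point by some $\sigma\in G_{cd}$ with $\sigma(h_0)=k_0$, and then the centrality of $G_{cd}$ in $F_{cd}$ (Claim~\ref{centergp}(3),(4)), together with interdefinability of the elements of $Y_{cd}$ over $\ov c$, yields $\mu^{cd}_i(k_0)=k_i$ and $\mu^{cd}_j(k_0)=k'_j$, i.e.\ the new arrangement determines the \emph{same} elements of $F_{cd}$ and hence the same correspondence. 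To repair your proof you would have to add exactly this step (for instance, show that conjugation by $\theta'\theta^{-1}$ acts trivially on $F_{cd}$, which again comes down to Claim~\ref{centergp}); as written, the central idea of the claim's proof is missing.
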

\begin{proof}
Assume $\{k_i\}_i\cup \{k'_j\}_j$ is another arrangement of $Y_{cd}$
such that $\la k_i\ra^\frown \la k'_j\ra \equiv_{cd}\la
h_i\ra^\frown \la h'_j\ra$. Then $k_0=\sigma(h_0)$ for some
$\sigma\in G_{cd}$. Thus by Claim \ref{centergp}, we have
$\sigma(h_0,\mu^{cd}_i(h_0))=(k_0,\mu^{cd}_i(k_0))$ and so
$h_0,\mu^{cd}_i(h_0)\equiv_{\ov c}k_0,\mu^{cd}_i(k_0)$. Then due to
interdefinability, we must have $\mu^{cd}_i(k_0)=k_i$. Similarly
$\mu^{cd}_j(k_0)=k'_j$. Hence the map is well-defined. It easily
follows that the map in fact is an isomorphism.  Moreover due to
\ref{tpreserv} we  see that it  extends
$\rho_{cd}\circ\rho^{-1}_{ab}$.
\end{proof}

Hence now we  fix an {\em extended binding group} $F\geq G$
isomorphic to $F_{01}$. Then there is a canonical isomorphism
$\rho^F_{cd}:F\to F_{cd} $ extending $\rho_{cd}$ in such a way that
$\rho^F_{cd}\circ(\rho^F_{ab})^{-1}$ is the correspondence defined
above. Now for $\bmu\in F$, we use $\mu_{ cd}$ or simply $\mu$ to
denote $\rho^F_{cd}(\bmu)$. Note that a mapping $\bmu\cdot
f:=\mu_{cd}(f)$ is clearly a regular action of $F$ on $Y_{cd}$
extending that of $G$ on $\Pi_{cd}$.

\begin{claim}
If $cd\Ind a$, then for $f\in\Pi_{cd}, g\in\Pi_{ac}$, we have
$\bmu\cdot (f.g)= (\bmu\cdot f).g$.
\end{claim}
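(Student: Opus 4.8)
The plan is to prove the slightly stronger fact that for every $\bmu\in F$ one has $\rho^F_{ad}(\bmu)=\beta_g\circ\rho^F_{cd}(\bmu)\circ\beta_g^{-1}$, where $\beta_g\colon Y_{cd}\to Y_{ad}$ denotes right composition by $g$, i.e. $\beta_g(h)=h.g$. Granting this, the claim is immediate: for $f\in\Pi_{cd}$ we have $f.g=\beta_g(f)$, hence
\[
\bmu\cdot(f.g)=\rho^F_{ad}(\bmu)\bigl(\beta_g(f)\bigr)=\beta_g\bigl(\rho^F_{cd}(\bmu)(f)\bigr)=(\bmu\cdot f).g .
\]
First note that $\Pi_{ad}$, $Y_{ad}$, $F_{ad}$ and $\rho^F_{ad}$ are all defined, since $a\ind cd$ forces $(a,d)\models p^{(2)}$; also, for $\bmu\in G\leq F$ the statement is a direct consequence of associativity of composition in $\cG$ together with the description of the action of $G$ on $\cG$, so the real content is the extension to all of $F$.

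Next I would record the basic properties of $\beta_g$. Applying Lemma~\ref{wdfn} with its triple of objects $(a,b,c)$ renamed $(c,d,a)$ — so that its hypothesis $c\ind ab$ becomes our $a\ind cd$ and its morphism $g\in\Pi_{ca}$ becomes our $g\in\Pi_{ac}$ — right composition by $g$ maps $Y_{cd}$ into $Y_{ad}$; composing on the right by $g^{-1}\in\Pi_{ca}$ is a two-sided inverse, so $\beta_g$ is a bijection $Y_{cd}\to Y_{ad}$, and by associativity of composition in $\cG$ it restricts to a bijection $\Pi_{cd}\to\Pi_{ad}$.

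The heart of the proof is the tuple version of the type-preservation in Lemma~\ref{wdfn}. Fix a basepoint $f_0\in\Pi_{cd}$ and enumerate $Y_{cd}=\langle g_k\rangle_k$ with $g_0=f_0$ and with $\Pi_{cd}=\{g_k\}$ an initial segment. I claim that
\[
\langle g_k\rangle_k\, c\ \equiv_{\ov d}\ \langle g_k.g\rangle_k\, a .
\]
This follows from the single-element equivalence $f_0\,f\,\ov c\equiv_{\ov d}(f_0.g)(f.g)\,\ov a$ of Lemma~\ref{wdfn} by exactly the stationarity argument given there, now run for the whole tuple: one passes to a generic new vertex, writes each $g_k$ uniformly in terms of $f_0$ and morphisms frozen over that vertex, and transfers a single stationarity statement for $f_0$, using that $Y_{cd}\subseteq\dcl(f_0,\ov c)$ and $Y_{ad}\subseteq\dcl(f_0.g,\ov a)$. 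By the previous paragraph $\{g_k.g\}$ is an initial segment equal to $\Pi_{ad}$, so $\bigl(\langle g_k\rangle_k,\langle g_k.g\rangle_k\bigr)$ is a matched pair of arrangements of $Y_{cd}$ and $Y_{ad}$ of precisely the kind used to define the canonical isomorphism $\rho^F_{ad}\circ(\rho^F_{cd})^{-1}\colon F_{cd}\to F_{ad}$.

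Granting the displayed equivalence, the rest is bookkeeping: it provides an automorphism of $\C$ fixing $\ov d$ pointwise, carrying $c$ to $a$ and realizing $\beta_g$ on $Y_{cd}$, so conjugation by $\beta_g$ is an isomorphism $F_{cd}=\Aut(Y_{cd}/\ov c)\to\Aut(Y_{ad}/\ov a)=F_{ad}$; by regularity of the action (Claim~\ref{centergp}(2)) it sends the generator $\mu^{cd}_k\in F_{cd}$ with $\mu^{cd}_k(g_0)=g_k$ to the unique element of $F_{ad}$ fixing $\ov a$ that sends $g_0.g\mapsto g_k.g$, namely the generator $\mu^{ad}_k$. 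Thus conjugation by $\beta_g$ coincides with the correspondence $\mu^{cd}_k\mapsto\mu^{ad}_k$, which by the preceding well-definedness claim is exactly $\rho^F_{ad}\circ(\rho^F_{cd})^{-1}$, giving $\rho^F_{ad}(\bmu)=\beta_g\circ\rho^F_{cd}(\bmu)\circ\beta_g^{-1}$ for all $\bmu\in F$. The main obstacle is the tuple form of Lemma~\ref{wdfn}: the single-element case is already in hand, and the passage to (possibly infinite) tuples should be routine since $Y_{cd}$ and $Y_{ad}$ each lie in the definable closure of one basepoint morphism over $\ov c$, resp. $\ov a$; still, one must run the stationarity argument over the correct algebraically closed base and check that $\beta_g$ genuinely respects the initial-segment structure $\Pi_{cd}\mapsto\Pi_{ad}$, which is where the hypothesis $cd\ind a$ enters.
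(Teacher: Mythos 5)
Your proposal is correct and takes essentially the same route as the paper: the paper's own proof of this claim is just the one-line citation of Lemma~\ref{wdfn}, and your conjugation-by-$\beta_g$ argument (the tuple form of that lemma, a matched pair of enumerations of $Y_{cd}$ and $Y_{ad}$, and regularity from Claim~\ref{centergp}(2)) is a faithful working-out of exactly how the claim follows from it. The only step you leave as routine, the tuple upgrade of Lemma~\ref{wdfn}, is indeed routine, since all the single-element equivalences restrict to the same map on $(f_0,\ov{c})$ and $Y_{cd}\subseteq\dcl(f_0,\ov{c})$, so one automorphism over $\ov{d}$ handles the whole tuple at once.
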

\begin{proof}
This follows from Lemma \ref{wdfn}.
\end{proof}

Assume now $c(\models p)\Ind ab$, and $g\in Y_{ab}, h\in Y_{bc}$ are
given. We want to define a composition $h.g\in Y_{ac}$ extending
that for $\G$. Note now $g=\tau_0(g_0)$ and $h=\sigma_0(h_0)$ for
some $\mbox{\boldmath $\tau_0$}, \mbox{\boldmath $\sigma_0$}\in F$
and $g_0\in \Pi_{ab}, h_0\in\Pi_{bc}$. We define $h.g:=
(\mbox{\boldmath $\sigma_0$}\circ \mbox{\boldmath
$\tau_0$})\cdot(h_0.g_0)=\sigma_0\circ\tau_0(h_0.g_0).$

\begin{claim}\label{compost}
The composition map is well-defined, invariant under any automorphism of $\C$, and extends that of $\mor(\G)$. For any $f\in Y_{ac}$,
there is unique $h'\in Y_{bc}$ such that $f=h'.g$.
\end{claim}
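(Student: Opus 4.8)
The plan is to reduce all four assertions to three ingredients already available: regularity of the $F$-action on each set $Y_{cd}$ (Claim~\ref{centergp}(2) and its $Y_{ab}$-analogue noted before Lemma~\ref{wdfn}), the centrality $G\le\Z(F)$ (Claim~\ref{centergp}(3)), and compatibility of the $G$- and $F$-actions with composition in $\G$ (Lemma~\ref{wdfn} with the Claim following it, and the $\Pi$-level statement in Claim~\ref{centergp}(4)). Throughout I write $\circ$ for multiplication in $F$, a period for composition in $\G$, and $\bmu\cdot f$ for the action of $\bmu\in F$ on $f\in Y_{cd}$ through $\rho^F_{cd}$; recall $c\Ind ab$, so $(a,b,c)$ is an independent triple and every $\G$-composition written below is defined.

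First I would check that $h.g$ is independent of the chosen decompositions. Suppose $g=\btau_0\cdot g_0=\btau_1\cdot g_1$ with $g_0,g_1\in\Pi_{ab}$, and $h=\bsigma_0\cdot h_0=\bsigma_1\cdot h_1$ with $h_0,h_1\in\Pi_{bc}$. Since $G_{ab}$ acts regularly on $\Pi_{ab}$ there is $\bmu\in G$ with $g_1=\bmu\cdot g_0$, and similarly $\bmu'\in G$ with $h_1=\bmu'\cdot h_0$. Comparing the two expressions for $g$ and using regularity of the $F$-action on $Y_{ab}$ gives $\btau_0=\btau_1\circ\bmu$, and likewise $\bsigma_0=\bsigma_1\circ\bmu'$. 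Since $\G$ is abelian and the $G$-action is compatible with composition, $h_1.g_1=(\bmu'\cdot h_0).(\bmu\cdot g_0)=(\bmu'\circ\bmu)\cdot(h_0.g_0)$, so
$$(\bsigma_1\circ\btau_1)\cdot(h_1.g_1)=(\bsigma_1\circ\btau_1\circ\bmu'\circ\bmu)\cdot(h_0.g_0)=(\bsigma_1\circ\bmu'\circ\btau_1\circ\bmu)\cdot(h_0.g_0)=(\bsigma_0\circ\btau_0)\cdot(h_0.g_0),$$
the middle equality being exactly where $\bmu'\in G\le\Z(F)$ is used. That the new composition extends the one in $\mor(\G)$ is then immediate (take $\btau_0=\bsigma_0=1$ for $g\in\Pi_{ab}$, $h\in\Pi_{bc}$), and $\Aut(\C)$-invariance follows because $\G$, the sets $Y_{cd}$, the extended binding group $F$, and the isomorphisms $\rho^F_{cd}$ are all set up canonically over $\acl(\emptyset)$: if $\alpha\in\Aut(\C)$, applying $\alpha$ to a decomposition $g=\btau_0\cdot g_0$, $h=\bsigma_0\cdot h_0$ yields decompositions $\alpha(g)=\btau_0\cdot\alpha(g_0)$, $\alpha(h)=\bsigma_0\cdot\alpha(h_0)$, and since $\G$-composition and the $F$-action are invariant, $\alpha(h.g)=(\bsigma_0\circ\btau_0)\cdot\alpha(h_0.g_0)=(\bsigma_0\circ\btau_0)\cdot(\alpha(h_0).\alpha(g_0))=\alpha(h).\alpha(g)$.

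For the final clause, fix $g\in Y_{ab}$ and $f\in Y_{ac}$, and pick decompositions $g=\btau_0\cdot g_0$ ($g_0\in\Pi_{ab}$) and $f=\brho_0\cdot f_0$ ($f_0\in\Pi_{ac}$). Put $h_0^{*}:=f_0.g_0^{-1}\in\Pi_{bc}$, $\bsigma_0:=\brho_0\circ\btau_0^{-1}$, and $h':=\bsigma_0\cdot h_0^{*}\in Y_{bc}$; then by the definition and the well-definedness just proved, $h'.g=(\bsigma_0\circ\btau_0)\cdot(h_0^{*}.g_0)=\brho_0\cdot f_0=f$, giving existence. For uniqueness, suppose $h'_1.g=h'_2.g=f$ and write $h'_j=\bsigma_j\cdot h_{0,j}$ with $h_{0,j}\in\Pi_{bc}$, so $f=(\bsigma_j\circ\btau_0)\cdot(h_{0,j}.g_0)$ with $h_{0,j}.g_0\in\Pi_{ac}$. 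Any two decompositions of $f$ as $\bmu\cdot x$ with $x\in\Pi_{ac}$ differ by an element of $G$ (because $G_{ac}$ acts regularly on $\Pi_{ac}$ and $F$ acts regularly on $Y_{ac}$), so there is $\bmu_j\in G$ with $\bsigma_j\circ\btau_0=\brho_0\circ\bmu_j^{-1}$ and $h_{0,j}.g_0=\bmu_j\cdot f_0$; the latter forces $h_{0,j}=\bmu_j\cdot h_0^{*}$, whence
$$h'_j=\bsigma_j\cdot h_{0,j}=(\brho_0\circ\bmu_j^{-1}\circ\btau_0^{-1})\cdot(\bmu_j\cdot h_0^{*})=(\brho_0\circ\btau_0^{-1})\cdot h_0^{*},$$
using $\bmu_j\in\Z(F)$ in the last step; in particular $h'_1=h'_2$.

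I expect the main obstacle to be not any single computation but the bookkeeping that legitimizes the two ``canonicity'' inputs used above: that the extended binding group $F$ and the transition isomorphisms $\rho^F_{cd}$ behave well under $\Aut(\C)$ (so that invariance really goes through), and that the $F$-action on the sets $Y_{cd}$ is compatible with $\G$-composition in the precise forms invoked (this is where Lemma~\ref{wdfn} and the following Claim must be transported through $\rho^F$). Once those are pinned down, every clause collapses to regularity of the relevant actions together with the centrality $G\le\Z(F)$ of Claim~\ref{centergp}(3), which is the real mechanism making a possibly non-abelian $\CF$ cohere.
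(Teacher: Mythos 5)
Your proposal is correct and follows essentially the same route as the paper: decompose $g$ and $h$ through the regular $F$-action into an element of $F$ and an element of $\Pi$, and then reduce every clause (well-definedness, extension, invariance, existence and uniqueness of $h'$) to regularity of the actions, the compatibility of the $F$- and $G$-actions with $\G$-composition, and the centrality $G\le\Z(F)$ from Claim~\ref{centergp}(3); the only differences are cosmetic, e.g.\ you factor the ambiguity of the decompositions through central elements of $G$ where the paper inserts $\bsigma_0^{-1}\circ\bsigma_0$ and chases a chain of equalities, and you spell out the uniqueness of $h'$ which the paper leaves as ``easily follows.'' One tiny remark: your identity $(\bmu'\cdot h_0).(\bmu\cdot g_0)=(\bmu'\circ\bmu)\cdot(h_0.g_0)$ needs only the identity $\sigma\cdot f=\sigma_b.f=f.\sigma_a$ for the $G$-action (not abelianness of $\G$ per se), but that is exactly what the paper provides, so nothing is missing.
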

\begin{proof}
Let $g=\tau_1(g_1)$ and $h=\sigma_1(h_1)$ for some $\mbox{\boldmath
$\tau_1$}, \mbox{\boldmath $\sigma_1$}\in F$ and $g_1\in \Pi_{ab},
h_1\in\Pi_{bc}$. Then since $\sigma^{-1}_0\circ\sigma_1(h_1)=h_0$
and $\tau^{-1}_0\circ\tau_1(g_1)=g_0$, due to uniqueness we have
that both $\mbox{\boldmath $\sigma^{-1}_0$}\circ \mbox{\boldmath
$\sigma_1$}$, $\mbox{\boldmath $\tau^{-1}_0$}\circ \mbox{\boldmath
$\tau_1$}$ are in $G$ so in the center of $F$. Now
$$\begin{array}{cllll}
\sigma_0\circ\tau_0(h_0.g_0)&=
&\sigma_0\circ\tau_0\circ\sigma^{-1}_0\circ\sigma_0(h_0.g_0)&=&
\sigma_0\circ\tau_0\circ\sigma^{-1}_0(\sigma_0(h_0).g_0)\\
&=&\sigma_0\circ\tau_0\circ\sigma^{-1}_0(\sigma_1(h_1).g_0)&=&
\sigma_0\circ\tau_0\circ(\sigma^{-1}_0\circ\sigma_1)(h_1.g_0)\\
&=&\sigma_1\circ\tau_0(h_1.g_0)&=
&\sigma_1\circ\tau_1\circ(\tau^{-1}_1\circ\tau_0)(h_1.g_0)\\
&=&\sigma_1\circ\tau_1(h_1.(\tau^{-1}_1\circ\tau_0)(g_0))&=&
\sigma_1\circ\tau_1(h_1.(\tau^{-1}_1(\tau_1(g_1))))\\
&=&\sigma_1\circ\tau_1(h_1.g_1).& &
\end{array}$$
Automorphism invariance clearly follows from the same property for
$\mor(\G)$ and the choice of the isomorphism $\rho^F_{ab}$. Moreover
by taking $\tau_0=\sigma_0=\id$, we see that the composition clearly
extends that for $\G$. Lastly $f=\tau(f_1)$ for some $f_1\in
\Pi_{ac}$. Now there is $h'_1\in \Pi_{bc}$ such that $f_1=h'_1.g_1.$
Put $h'=\tau\circ\tau_1^{-1}(h'_1)$. Then by the definition,
$f=h'.g$. For any $h''(\ne h')\in Y_{bc}$ it easily follows that
$f\ne h''.g$. Hence $h'$ is unique such element.
\end{proof}

The rest of the construction of $\CF$ will be similar to that of $\G$ in
\cite{GK}. $\mbox{Ob}(\CF)$ will be the same as
$\mbox{Ob}(\G)=p(\C)$. Now for arbitrary $c,d\models p$, an {\em
$n$-step directed path} from $c$ to $d$ is a sequence
$(c_0,g_1,c_1,g_2...,c_n)$ such that $c=c_0, d=c_n, c_{i-1}c_i\equiv
ab$ and $g_i\in Y_{c_{i-1}c_i}$. Let $D^n(c,d)$ be the set of all
$n$-step directed paths.  For $q=(c_0,g_1,c_1,g_2...,c_n)\in
D^n(c,d)$ and $r=(d_0,h_1,d_1,h_2...,d_m)\in D^m(c,d)$ we say they
are equivalent (write $r\sim s$) if for some  $c^*(\models p)\Ind
qr$ and $g^*\in Y_{c^*c}$, we have $g^*_n=h^*_m\in Y_{c^*d}$ where
$g^*_0=h^*_0=g^*$ and  $g^*_{i+1}=g_{i+1}.g_i^*$ ($i=0,...,n-1$) and
$h^*_{j+1}=h_{j+1}.h_j^*$  ($j=0,...,m-1$). Due to stationarity the
 relation is independent from the choices of $c^*$ and
$g^*$, and is an equivalence relation. Similarly to Lemma
\cite[2.12]{GK}, one can easily see using Claim \ref{compost} that
for any $q\in D^n(c,d)$, there is $r\in D^2(c,d)$ such that $q\sim
r$. Then $D^2(c,d)/\sim$ will be our $\mor_{\CF}(c,d)$, and
composition will be concatenation of paths. The identity morphism in
$\mor_{\CF}(c,c)$ can be defined just like in \cite[2.15]{GK}.
Now our groupoid $\CF$ clearly extends $\G$. An argument similar to that in
\cite[2.14]{GK} implies there is a canonical 1-1 correspondence between
$Y_{ab}$ and $\mor_{\CF}(a,b).$ But $\CF$ need not be definable nor
type-definable nor hyperdefinable. It is just an invariant groupoid.

As pointed out in \ref{centergp},   $Y_{ab}$ is $ab$-invariant. Now
if it is type-definable then as it is a bounded union of definable
sets, by compactness it indeed is definable and a finite set. (This
happens when $T$ is $\omega$-categorical.) For this case let us add
a bit more explanations that are not explicitly mentioned in
\cite{GK}. By compactness now, $\sim$ turns out to be definable:
Note that $D^2(p):=\bigcup \{D^2(c,d)|\ c,d\models p\}$ is
$\emptyset$-type-definable. Then there clearly is an
$\emptyset$-definable equivalence relation $E$ on $D^2(p)$ each of
whose class is of the form $D^2(c,d)$. In each $E$-class, there are
exactly $|Y_{ab}|$-many $\sim$-classes. Hence $\sim$ is
$\emptyset$-definable relatively on $D^2(p)$ as well. Hence $[g]\in
\mor_{\CF}(c,d)$ is an imaginary element and the maps $[g]\mapsto c$
or $d$ (the first and last components of $g$) are
$\emptyset$-definable {\em domain} and {\em range} maps. Moreover
for $[f]\in \mor_{\CF}(c,d)\subseteq \ov{cd}$, we have
$[f]\equiv_{\ov c}[g]$. Therefore $\CF$ is a (relatively)
$\emptyset$-definable groupoid.

\medskip

We return to the general context of an invariant $\CF$. For notational simplicity,
use $\Phi_{cd}$ to denote $\mor_{\CF}(c,d)$, and use $\Phi_c$ for
$\Phi_{cc}$.   We finish this note by stating some observations
regarding $\CF$. First of all, one can construct an example where
$\Pi_a$ is not equal to $\Z(\Phi_a)$ but where $\Phi_a$ is
abelian. In the groupoid example \cite[Section 4.2]{GK} (which is
quite similar to $T_G$ in section 1 of this note), we may add some
of irrelevant elements to each object and also extend each morphism
tuple by capturing those elements. Then by permuting the elements we
can have larger $\Phi_{\bar x}$ than $\Pi_{\bar x}$ where $\bar x$
is an extended object. Now then $\Phi_{\bar x}$ simply is some
direct product of $\Phi_{\bar x}$, so if $G$ in \cite[4.2]{GK} is
already abelian then $G=\Pi_{\bar x}$ need not be equal to
$\Z(\Phi_{\bar x})=\Phi_{\bar x}$. But of course if we work with $T_G$
as in subsection 6.1 or the example in \cite[4.2]{GK} as they are, then
$\Pi_x=\Z(G)$ and $\Phi_x=G$.

Now for $f\in Y_{ab}$, we write $\ul{f}$ for its canonically
corresponding element in $\Phi_{ab}$. Note  that for $\sigma\in
F_{ab}$ and $f\in Y_{ab}$, we have $\sigma(f)\in Y_{ab}$ and both
$\ul{f}, \ul{\sigma(f)}\in \Phi_{ab}$. But $\sigma(\ul f)$ need not
be in $\Phi_{ab}$. In general $\sigma(\ul f)\in \Phi_{a\sigma(b)}.$
We get now the following results for $\CF$ similarly to those of
$\G$.

\begin{proposition}\label{isomorphic}
The group $F_{ab}$ is isomorphic to $\Phi_a$. In fact for any
$\mu\in F_{ab}$, there is $\mu_b\in\Phi_b$ such that for any $f\in
Y_{ab}$, $\ul{\mu(f)}=\mu_b.\ul f$. Hence $\Phi_b=\{\mu_b|\ \mu\in
F_{ab}\}$.
\end{proposition}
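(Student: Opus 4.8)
The plan is to mirror the proof of the analogous statement for $\G$ (Proposition~\ref{corrected2}, which was Proposition~2.15 of \cite{gkk}), using the machinery built up in Claims~\ref{centergp}--\ref{compost}. First I would fix once and for all the identification of $Y_{ab}$ with $\Phi_{ab}=\mor_{\CF}(a,b)$ via the canonical bijection $f\mapsto\ul f$ established at the end of the construction, and recall that $F_{ab}=\Aut(Y_{ab}/\ov a)$ acts regularly on $Y_{ab}$ by Claim~\ref{centergp}(2) (extended to $Y_{ab}$ from $Y_{01}$ via $\rho^F_{ab}$). The core computation is to produce, for a given $\mu\in F_{ab}$, an element $\mu_b\in\Phi_b$ realizing left-composition by $\mu$. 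To do this, pick any $f_0\in\Pi_{ab}$ and set $g=\mu(f_0)\in Y_{ab}$; then $\ul g\in\Phi_{ab}$, and since $\ul f_0$ is an isomorphism $a\to b$ in $\CF$, the element $\mu_b:=\ul g\circ (\ul f_0)^{-1}$ lies in $\Phi_b=\mor_{\CF}(b,b)$. The content of the proposition is that $\ul{\mu(f)}=\mu_b.\ul f$ for \emph{every} $f\in Y_{ab}$, not just $f=f_0$.

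The key step is this uniformity, and I expect it to be the main obstacle — exactly as in \cite{gkk}, it is where one must use that left-composition by a fixed groupoid element is ``the same'' on all of $\mor_{\CF}(a,b)$. Here is how I would argue it. Any $f\in Y_{ab}$ can be written $f=\sigma(f_0)$ for a unique $\bsigma\in F$ (regularity of the $F$-action on $Y_{ab}$). Since $F=G\cdot$(coset representatives) and, crucially, $G\leq\Z(F)$ by Claim~\ref{centergp}(3) together with the well-definedness of the extended isomorphism $\rho^F_{cd}\circ(\rho^F_{ab})^{-1}$, one can transport the relation $\ul g\circ(\ul f_0)^{-1}=\mu_b$ along $\sigma$: applying $\sigma$ (as an automorphism of $\C$, which is what an element of $F_{ab}\leq\Aut(\C/\ov a)$ amounts to on the relevant invariant sets) to the pair $(f_0,g=\mu(f_0))$ and using that $\mu\in F_{ab}$ fixes $\ov a$ pointwise so commutes appropriately, together with Lemma~\ref{uniformaction} to realize the translation on all morphisms out of $a$ simultaneously, gives $\ul{\mu(f)}=\ul{\mu(\sigma(f_0))}=\ul{\sigma(\mu(f_0))}=\ul{\sigma(g)}=\mu_b.\ul{\sigma(f_0)}=\mu_b.\ul f$. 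The delicate point is that $\mu$ and $\sigma$ need not literally commute as permutations of $Y_{ab}$; what saves the argument is Claim~\ref{centergp}(4) and Claim~\ref{tpreserv}, which say precisely that the $G$-part of $\sigma$ (the only part that could fail to commute with $\mu$) acts centrally and type-preservingly, so the composition $\mu_b.\ul f$ is independent of the decomposition $f=\sigma(f_0)$.

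Finally, I would check that $\mu\mapsto\mu_b$ is a group isomorphism $F_{ab}\to\Phi_b$. It is a homomorphism because $(\mu\nu)(f_0)=\mu(\nu(f_0))$ and, by the uniformity just proved, $\ul{\mu(\nu(f_0))}=\mu_b.\ul{\nu(f_0)}=\mu_b.(\nu_b.\ul f_0)=(\mu_b\circ\nu_b).\ul f_0$, whence $(\mu\nu)_b=\mu_b\circ\nu_b$ by freeness of the right $\Pi_b$-action on $\Phi_{ab}$ (or the analogous faithfulness of the $\Phi_b$-action, from Claim~\ref{compost}). Injectivity follows since $\mu_b=\id_b$ forces $\ul{\mu(f)}=\ul f$ for all $f$, hence $\mu=\id$ by regularity of the $F_{ab}$-action on $Y_{ab}$ and the injectivity of $f\mapsto\ul f$. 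Surjectivity: given $\phi\in\Phi_b$, the map $\ul f\mapsto \phi\circ\ul f$ is a permutation of $\Phi_{ab}=Y_{ab}$ fixing $\ov a$ pointwise (it moves only the ``$\Phi_b$-coordinate''), so it is some $\mu\in F_{ab}$ with $\mu_b=\phi$. This gives $\Phi_b=\{\mu_b:\mu\in F_{ab}\}$ and completes the proof.
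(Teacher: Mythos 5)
Your overall scheme (set $\mu_b:=\ul{\mu(f_0)}.(\ul{f_0})^{-1}\in\Phi_b$ and then prove the uniformity $\ul{\mu(f)}=\mu_b.\ul f$ for all $f\in Y_{ab}$) has the right shape, but the step you use to obtain uniformity is exactly the step that fails. You write $f=\sigma(f_0)$ with $\sigma\in F_{ab}$ and then pass from $\ul{\mu(\sigma(f_0))}$ to $\ul{\sigma(\mu(f_0))}$; this requires $\mu$ and $\sigma$ to commute in $F_{ab}$, and $F_{ab}$ is precisely the possibly non-abelian group this section is constructing (it is isomorphic to $\Phi_a$, and for the theory $T_G$ of Subsection~7.1 with $G$ non-abelian one gets $\Phi_a\cong G$). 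Your appeal to Claim~\ref{centergp} is backwards: part (3) says that the subgroup $G_{ab}$ is central in $F_{ab}$, so it is the coset-representative part of $\sigma$ --- not its ``$G$-part'' --- that has no reason to commute with $\mu$; Claim~\ref{tpreserv} and Claim~\ref{centergp}(4) only concern elements of $G$ and do not repair this. The next equality in your chain, $\ul{\sigma(g)}=\mu_b.\ul{\sigma(f_0)}$, has the same defect: it needs $\sigma$ to fix $\mu_b$, which is not justified.

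The correct route, and the one the paper takes (mirroring Claim~\ref{iso}), avoids commuting two elements of $F_{ab}$ altogether: one shows directly that for a vertex-group element $\phi$ the permutation of $Y_{ab}$ induced by $\ul f\mapsto\phi.\ul f$ is elementary over $\ov a$, hence defines an element $\eta(\phi)\in F_{ab}$. This is the real content of the proposition; it is exactly what your surjectivity paragraph asserts without proof (``it moves only the $\Phi_b$-coordinate''), and it is established using the automorphism-invariance of the composition (Claim~\ref{compost}) together with translation automorphisms as in Lemma~\ref{uniformaction} and the argument of \cite[Section 4.2]{GK}. Once this is known, uniformity is immediate from Claim~\ref{centergp}(2): $\eta(\mu_b)$ and $\mu$ both lie in $F_{ab}$ and agree at $f_0$, and a regular action is free, so $\eta(\mu_b)=\mu$, i.e.\ $\ul{\mu(f)}=\mu_b.\ul f$ for every $f\in Y_{ab}$. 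With that point supplied, your homomorphism, injectivity and surjectivity checks go through, and the two commuting regular actions of $\Phi_b$ and $F_{ab}$ on $Y_{ab}$ identify the groups as in the paper; as written, however, the proposal's central step is not valid.
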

\begin{proof}
The proof will be similar to that of \ref{iso}. Define a map
$\eta:\Phi_a\to F_{ab}$ such that for $\sigma\in \Phi_a$ and $f\in
Y_{ab}$, we let $\eta(\sigma)(f)=g$ where $\ul g=\sigma.\ul f$.
Clearly $\eta$ is a well-defined 1-1 map. It  is onto as well since
any $\mu\in F_{ab}$ is determined by $(f,\mu(f))$. But obviously for
some $\sigma'\in \Phi_a$, we have $\eta(\sigma')(f)=\mu(f)$. By
commutativity, it easily follows that this map is in fact an
isomorphism. Now we take $\mu_b=\eta^{-1}(\mu)$.
\end{proof}

\begin{proposition}
For $c(\models p)\Ind ab$ and $f\in Y_{ab},g\in Y_{bc}, h\in
Y_{ac}$, we have $h=g.f$ iff $\ul h=\ul{g}.\ul{f}$.
\end{proposition}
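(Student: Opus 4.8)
The plan is to unwind the definitions of the composition map on $Y_{ab}$ (Claim~\ref{compost}) and the composition in $\CF$ (the concatenation of directed paths), and show they match under the correspondence $f \mapsto \underline{f}$. First I would recall that for $f \in Y_{ab}$, the element $\underline{f} \in \Phi_{ab}$ is (by the construction of $\CF$) the $\sim$-class of some two-step directed path; in fact, by the definition of $\sim$ and the fact that any path is equivalent to a two-step one, we may represent $\underline{f}$ by any directed path whose ``total composite'' (in the sense of the iterated $Y$-composition used to define $\sim$) is $f$. The key point, which I would isolate as a small observation, is that the equivalence relation $\sim$ is defined precisely so that a path $(c_0, g_1, c_1, \ldots, c_n)$ from $a$ to $b$ has class $\underline{g_n \cdots g_1}$ (where the dots denote iterated composition in the $Y$'s), so the one-step path $(a, f, b)$ represents $\underline{f}$.

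Given that, the forward direction ($h = g.f \Rightarrow \underline{h} = \underline{g} \cdot \underline{f}$) is essentially immediate: the composite $\underline{g} \cdot \underline{f}$ in $\CF$ is, by definition of composition in $\CF$ as concatenation, the class of the two-step path $(a, f, b, g, c)$; but by the definition of $\sim$ (taking some $c^* \models p$ with $c^* \ind abc$ and $g^* \in Y_{c^* a}$, then forming $g^*_1 = f.g^*$ and $g^*_2 = g.g^*_1 = g.(f.g^*)$), this two-step path is equivalent to the one-step path $(a, (g.f), c)$, using associativity of the $Y$-composition (which follows from the well-definedness in Claim~\ref{compost} together with automorphism invariance, or can be checked directly as in the calculation in that claim's proof). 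Hence its class is $\underline{g.f} = \underline{h}$. The converse then follows from uniqueness: by the last sentence of Claim~\ref{compost}, given $f \in Y_{ab}$ and $h \in Y_{ac}$ there is a \emph{unique} $g' \in Y_{bc}$ with $h = g'.f$; since $\underline{h} = \underline{g}\cdot\underline{f} = \underline{g'.f} = \underline{g'}$ by the forward direction and the correspondence $Y_{bc} \to \Phi_{bc}$ is a bijection, we get $g = g'$, hence $h = g.f$.

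The main obstacle I anticipate is purely bookkeeping: making precise the claim that $\sim$-classes of directed paths correspond to iterated $Y$-composites, i.e. that the definition of $\sim$ (via the auxiliary element $c^* \models p$ and $g^* \in Y_{c^* c}$) really does collapse a path to the single element obtained by composing its legs in order. This requires checking that the $Y$-composition is associative and that composing on the left by $g^* \in Y_{c^* c}$ is compatible with it — but this is exactly the content of Claim~\ref{compost} (well-definedness and automorphism invariance) combined with stationarity (which guarantees independence of the choice of $c^*$ and $g^*$), so no genuinely new work is needed. Once that correspondence is in hand, the proof is two lines: forward by unwinding concatenation plus one application of $\sim$, converse by uniqueness of the factorization in Claim~\ref{compost}.

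\begin{proof}
By construction of $\CF$, the morphism $\underline{g}\cdot\underline{f} \in \Phi_{ac}$ is the $\sim$-class of the concatenated two-step path $(a, f, b, g, c)$. Pick $c^* \models p$ with $c^* \ind abc$ and $g^* \in Y_{c^* a}$. Following the definition of $\sim$, set $g^*_0 = g^*$, $g^*_1 = f.g^*$, $g^*_2 = g.g^*_1 = g.(f.g^*)$. Since $g.f$ is, by definition (Claim~\ref{compost}), an element of $Y_{ac}$, the one-step path $(a, (g.f), c)$ has ``lifted endpoint'' $(g.f).g^*$; and $(g.f).g^* = g.(f.g^*)$ by associativity of the composition on the $Y$'s (which follows from the well-definedness and automorphism invariance in Claim~\ref{compost}). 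Hence $(a,f,b,g,c) \sim (a, (g.f), c)$, so $\underline{g}\cdot\underline{f} = \underline{g.f}$. In particular, if $h = g.f$ then $\underline{h} = \underline{g}\cdot\underline{f}$.

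Conversely, suppose $\underline{h} = \underline{g}\cdot\underline{f}$. By Claim~\ref{compost}, there is a unique $g' \in Y_{bc}$ with $h = g'.f$; by the forward direction just proved, $\underline{h} = \underline{g'}\cdot\underline{f} = \underline{g}\cdot\underline{f}$, so $\underline{g'} = \underline{g}$. Since $g \mapsto \underline{g}$ is a bijection between $Y_{bc}$ and $\Phi_{bc}$, we conclude $g' = g$, and therefore $h = g.f$.
\end{proof}
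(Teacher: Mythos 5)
Your proof is correct, and it is a genuinely more self-contained route than the one the paper takes: the paper's proof simply notes that the composition of Claim~\ref{compost} is an invariant relation, encodes it by an invariant $\theta(x,y,z)$, and then defers to the argument of Proposition~2.12 of \cite{gkk}, whereas you verify the statement directly from the path construction of $\CF$, using the definition of $\sim$, associativity of the $Y$-composition, and the uniqueness clause of Claim~\ref{compost}. The mathematical content is the same (the canonical bijection $f\mapsto\ul{f}$ intertwines the $Y$-composition with concatenation of paths modulo $\sim$), but your version makes explicit two points the paper leaves implicit: (i) that the canonical correspondence between $Y_{ab}$ and $\mor_{\CF}(a,b)$ (the analogue of \cite[2.14]{GK}) is $f\mapsto[(a,f,b)]_{\sim}$, which is indeed the intended one; and (ii) associativity $(g.f).g^{*}=g.(f.g^{*})$, which is never stated in the paper but does follow in two lines from well-definedness in Claim~\ref{compost}: write each factor as an $F$-translate of an element of the corresponding $\Pi$, and use associativity in $\G$ and in $F$. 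One caution on your preliminary remarks: the claim that an arbitrary directed path collapses to the composite of its legs is only meaningful when the successive independences needed for the $Y$-composition hold (they can fail, e.g.\ for closed paths), but they do hold for the specific one- and two-step paths your proof uses, so nothing breaks. Finally, the converse can be streamlined: your forward computation already shows $\ul{g}.\ul{f}=\ul{g.f}$ unconditionally, so $\ul{h}=\ul{g}.\ul{f}$ gives $h=g.f$ by injectivity of $y\mapsto\ul{y}$ on $Y_{ac}$; as written, your cancellation step $\ul{g'}.\ul{f}=\ul{g}.\ul{f}\Rightarrow\ul{g'}=\ul{g}$ tacitly uses invertibility in $\CF$ (or the same injectivity via the uniqueness clause), which has not been explicitly established at that point, so it is better to avoid the detour.
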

\begin{proof}
 Since the composition relation defined in
\ref{compost} is invariant relation, we can find an $\emptyset$-invariant relation
$\theta(x,y,z)$ such that for any $a'b'c'\equiv
abc$ and $f'\in Y_{a'b'},g'\in Y_{b'c'}, h'\in Y_{a'c'}$, we have
$h'=g'.f'$ iff $\theta(a'b'f',b'c'g',a'c'h')$ holds. Then the rest
proof of the proposition will be exactly the same as that of
\cite[2.12]{gkk}, hence we omit it.
\end{proof}

\end{document}